\documentclass[11pt]{amsart}
\usepackage{amsmath}
\usepackage{amsfonts}
\usepackage{amssymb}
\usepackage[all]{xy}           
\usepackage{bbding}
\usepackage{txfonts}
\usepackage{amscd}
\usepackage[shortlabels]{enumitem}
\usepackage{ifpdf}
\ifpdf
  \usepackage[colorlinks,final,
  hyperindex]{hyperref}
\else
  \usepackage[colorlinks,final,
  hyperindex]{hyperref}
\fi
\usepackage{tikz}
\usepackage[active]{srcltx}

\makeatletter

\numberwithin{equation}{section}

\newtheorem{thm}{Theorem}[section]
\newtheorem{lem}[thm]{Lemma}
\newtheorem{cor}[thm]{Corollary}
\newtheorem{pro}[thm]{Proposition}
\newtheorem{ex}[thm]{Example}
\newtheorem{rmk}[thm]{Remark}
\newtheorem{defi}[thm]{Definition}

\newcommand{\gl}{\mathrm{End}}
\newcommand{\kl}{\mathfrak{l}}
\newcommand{\kr}{\mathfrak{r}}
\newcommand{\ku}{\mathfrak{u}}

\newcommand{\fu}{\mathbf u}
\newcommand{\fd}{\mathbf{d}}
\newcommand{\fl}{\mathbf{l}}
\newcommand{\fr}{\mathbf{r}}

\newcommand{\DAYBE}{\mathrm{DAYBE}}
\newcommand{\NYBE}{\mathrm{NYBE}}
\newcommand{\PYBE}{\mathrm{PYBE}}
\newcommand{\DPYBE}{\mathrm{DPYBE}}
\newcommand{\DNYBE}{\mathrm{dNYBE}}
\newcommand{\CYBE}{\mathrm{CYBD}}

\newcommand{\Img}{\mathrm{Im}}

\def\id{\mathop {\fam0 id}\nolimits}

\begin{document}

\title[Differential perm bialgebras and related bialgebra structures]
{Differential perm bialgebras, differential perm Yang-Baxter equation, and their
applications in related bialgebra structures}

\author{Ahuan Dong}
\address{School of Mathematics and Statistics, Henan University, Kaifeng 475004,
P.R. China}
\email{dah@henu.edu.cn}

\author{Bo Hou}
\address{School of Mathematics and Statistics, Henan University, Kaifeng 475004,
China}
\email{houbo@henu.edu.cn, bohou1981@163.com}


\begin{abstract}
We introduce the differential perm bialgebras by generalizing the study of perm bialgebras
to the context of differential perm algebras. They are characterized by the Manin triple of
differential perm algebras as well as the matched pairs of differential perm algebras.
The quasi-triangular (resp. triangular, factorizable) differential perm bialgebra are
introduced. In particular, symmetric solutions of an analogue of perm Yang-Baxter equation
in differential perm algebras provide triangular differential perm bialgebras, whereas
in turn the notions of $\mathcal{O}$-operators of differential perm algebras. As an
application, the relationship between differential perm bialgebras and related bialgebra
structures is discussed in detail. We show that there is a natural differential
infinitesimal bialgebra structure on the tensor product of a differential perm bialgebra
and a quadratic Zinbiel algebra, and this differential infinitesimal bialgebra structure
is quasi-triangular (resp. triangular, factorizable) if the original differential perm
bialgebra is quasi-triangular (resp. triangular, factorizable). Starting from a
differential perm bialgebras with certain conditions, we can construct Novikov
bialgebras, and these Novikov bialgebras can also be implemented by using the tensor
product of diNovikov bialgebras and quadratic Zinbiel algebras.
\end{abstract}

\keywords{differential infinitesimal algebra, differential perm bialgebra, Novikov bialgebras,
diNovikov bialgebras, classical Yang-Baxter equation, $\mathcal{O}$-operator.}
\makeatletter
\@namedef{subjclassname@2020}{\textup{2020} Mathematics Subject Classification}
\makeatother
\subjclass[2020]{
12H05, 
17A30, 
17A36, 
17B38, 
}

\maketitle
\vspace{-6mm}
\tableofcontents 
\vspace{-5mm}



\vspace{-4mm}

\section{Introduction}\label{sec:intr}

The aim of this paper is to study the bialgebra theory of differential perm algebras by
generalizing the bialgebra theory of perm algebras, to construct diNovikov bialgebras
by differential perm bialgebras. As a direct application of this construction, we present
the close connection between differential infinitesimal bialgebras,
differential perm bialgebras, Novikov bialgebras and diNovikov bialgebras.

Lie bialgebras, introduced by Drinfeld \cite{Dri}, are the infinitesimal versions of
Poisson-Lie groups, and play a crucial role in the study of quantum groups \cite{CP,Kas}.
For associative algebras, the notion of infinitesimal bialgebras was introduced by Joni and
Rota to provide an algebraic framework for the calculus of divided differences \cite{JR,Agu}.
Antisymmetric infinitesimal bialgebras, introduced in \cite{Bai}, regarded as the
associative analogues of Lie bialgebras. Generally speaking, a bialgebra structure
on a given algebra structure is obtained as a coalgebra structure together which
gives the same algebra structure on the dual space with a set of compatibility conditions.
In the past few years a similar approach has been taken to establish the bialgebra
theories for quite a few other algebraic structures, such as left-symmetric algebras
\cite{Bai1}, Poisson algebras \cite{NB}, Leibniz algebras \cite{TS}, Jordan algebras
\cite{Zhe}, Novikov algebras \cite{HBG}, perm algebras \cite{Hou,LZB}, Jacobi-Jordan
algebras \cite{BCHM}, differential algebras \cite{LLB}, transposed Poisson algebras
\cite{LB}, and so on.

As an important class of quasi-triangular Lie bialgebras, factorizable Lie bialgebras
are used to connect classical $r$-matrices with certain factorization problems, and have
various applications in integrable systems \cite{BGN,RS}. Recently, the factorizable Lie
bialgebras have received further research \cite{LS}, and the factorizable antisymmetric
infinitesimal bialgebras \cite{SW}, factorizable Leibniz bialgebras \cite{BLST},
factorizable perm bialgebras \cite{Lin}, factorizable Novikov bialgebras \cite{CH}
have been studied. In this paper, we introduce the differential perm bialgebras by
generalizing the study of perm bialgebras to the context of differential perm algebras,
and give some close connection between differential perm bialgebras, differential
infinitesimal bialgebras, Novikov bialgebras and diNovikov bialgebras.

\smallskip\noindent
1.1. {\bf Bialgebra theory of differential perm algebras.}
A (noncommutative) differential algebra is an associative algebra equipped
with a linear operator satisfying the Leibniz rule, which have applications in
mathematical physics such as Yang-Mills theory. A differential perm algebra is a
perm algebra equipped with a derivation, which could be regarded as a special differential
algebra. In this paper, we establish a bialgebra theory for differential perm algebras,
called differential perm bialgebras, by extending the study of perm bialgebras in
\cite{Hou,LZB} to the context of differential perm algebras. The derivations in a
differential perm algebra play an important role in such generalizations by introducing
an admissibility condition between the linear operators and the differential perm algebra,
which gives a reasonable bimodule over the differential perm algebra on the dual space.
Such an admissibility leads to the compatibility among the product, the coproduct and
the linear operators, so that the theory of perm bialgebras can be extended
to differential perm bialgebras.

Explicitly, we introduce the concepts of matched pairs and Manin triples of differential
perm algebras as the generalizations of matched pairs and Manin triples of perm algebras
respectively, and show that the differential perm bialgebras are
characterized equivalently by matched pairs of differential perm algebras and Manin
triples of differential perm algebras. The coboundary cases lead to the introduction
of the notion of differential perm Yang-Baxter equation ($\DPYBE$) whose solutions are
closely related to the quasi-triangular, triangular and factorizable differential perm
algebras. The notions of $\mathcal{O}$-operators of differential perm algebras is
introduced, and a one-to-one correspondence between $\mathcal{O}$-operators and
symmetric solutions of $\DPYBE$ is provided. We summarize these results in the
following diagram: \\[-8mm]

{\tiny
$$
\xymatrix@C=0.4cm@R=0.5cm{
&&\txt{{\small matched pair of}\\ {\small differential perm algebras}}\\
\txt{{\small $\mathcal{O}$-operators of a} \\ {\small differential perm algebra}}&
\txt{{\small solutions of the $\DPYBE$} \\ {\small in a differential perm algebra}}
\ar@2{->}[r] \ar@2{->}[l]&
\txt{{\small differential perm}\\ {\small bialgebras}}\ar@2{<->}[d]\ar@2{<->}[u]\\
&&\txt{{\small Manin triple of}\\ {\small differential perm algebras}}
}
$$}

\smallskip\noindent
1.2. {\bf The relationship between differential perm algebras and related bialgebra
structures.}
Recently, the relationship between different types of bialgebra structures
has received a lot of attention. Since the operad of (left) Novikov algebras
and the operad of right Novikov algebras are Koszul dual, in \cite{HBG}, Hong, Bai
and Guo have proposed a method for constructing infinite-dimensional Lie bialgebras
using the affinization of Novikov bialgebras. Similarly, Lin, Zhou and Bai have
constructed infinite-dimensional Lie bialgebras by using the pre-Lie bialgebras
and perm bialgebras, respectively \cite{LZB}. Hou and Lin have constructed Lie bialgebras
by using the Leibniz bialgebras and Zinbiel bialgebras \cite{HL}. In \cite{Hou1}, we have
provided a method for constructing infinite-dimensional antisymmetric infinitesimal
bialgebras using the affinization of dendriform $\mathrm{D}$-bialgebras (this affinization
independent of Koszul dual), and given some methods for constructing Lie bialgebras by
discussing in detail the relationship between dendriform $D$-bialgebras, pre-Lie bialgebras,
antisymmetric infinitesimal bialgebras and Lie bialgebras. Moreover, the relationship
between Novikov bialgebras and Lie conformal bialgebras, between diNovikov bialgebras
and Leibniz conformal bialgebras have been discussed in \cite{HBG1}, \cite{HBG2} and
\cite{XBH}. Here, we give some close connection between differential perm bialgebras,
differential infinitesimal bialgebras, diNovikov bialgebras andNovikov bialgebras as follows:
\begin{enumerate}\itemsep=0pt
\item[$\bullet$] There is a differential infinitesimal bialgebra structure on the tensor
product of a differential perm bialgebra and a quadratic Zinbiel algebra, and the
differential infinitesimal bialgebra is quasi-triangular (resp. triangular, factorizable)
if the original differential perm bialgebra is quasi-triangular (resp. triangular,
factorizable).
\item[$\bullet$] Each differential perm bialgebra $(P, \diamond, \vartheta, \fd, -\fd)$
induces a diNovikov bialgebra, and this induced diNovikov bialgebra is triangular if the
original differential perm bialgebra is triangular.
\item[$\bullet$] There is a Novikov bialgebra structure on the tensor product
of a diNovikov bialgebra and a quadratic Zinbiel algebra, and this induced Novikov bialgebra
is triangular if the original diNovikov bialgebra is triangular.
\item[$\bullet$] Each differential infinitesimal bialgebra $(A, \cdot, \Delta,
\fd, -\fd)$ induces a Novikov bialgebra, and the induced Novikov bialgebra is
quasi-triangular (resp. triangular, factorizable) if the original differential
infinitesimal bialgebra is quasi-triangular (resp. triangular, factorizable).
The specific proof of this part can be found in \cite{HBG1,CH}.
\end{enumerate}
Therefore, starting from the same differential perm bialgebra, there are two methods
to construct Novikov bialgebras. We can show that the Novikov bialgebras constructed
by these two methods are consistent. Thus, we obtain the following commutative
diagram (see Theorem \ref{thm:commdig}):
$$
\xymatrix@C=2cm@R=0.7cm{
\txt{$(P, \diamond, \vartheta, \fd, -\fd)$ \\ {\tiny a differential perm bialgebra}}
\ar[d]_{{\rm Thm.}~\ref{thm:ind-dinovbia}} \ar[r]^{{\rm Thm.}~\ref{thm:permbia-ASI}}
&\txt{$(P\otimes C, \cdot, \Delta, \hat{\fd}, -\hat{\fd})$\\
{\tiny a differential infinitesimal bialgebra}}\ar[d]^{{\rm Thm.}~\ref{thm:ind-novbia}} \\
\txt{$(P, \dashv_{\fd}, \vdash_{\fd}, \nu_{\dashv,\fd}, \nu_{\vdash,\fd})$ \\
{\tiny a diNovikov bialgebra}} \ar[r]^{{\rm Thm.}~\ref{thm:diNbia-Nbia}}
& \txt{$(P\otimes C, \ast_{\hat{\fd}}, \delta_{\hat{\fd}})$ \\ {\tiny a Novikov bialgebra}}}
$$
Moreover, by using the correspondence between Novikov bialgebras and Lie conformal bialgebras,
diNovikov bialgebras and Leibniz conformal bialgebras, these conclusions can also be applied
to the construction of Lie conformal bialgebras and Leibniz conformal bialgebras.

In \cite{Kup}, Kupershmidt found that the classical Yang-Baxter equation (or $\CYBE$)
in tensor form on Lie algebras can be converted into an $\mathcal{O}$-operator associated
to the coregular representation. This conclusion has been proven to be valid for various
algebra structures.
In this paper,
by discussing the connection between the solutions of the differential associative
Yang-Baxter equation in an admissible differential algebra $(A, \cdot, \fd, -\fd)$ and
the solutions of the Novikov Yang-Baxter equation in the induced Novikov algebra $(A,
\ast_{\fd})$, the solutions of the $\DPYBE$ in the induced admissible differential
perm algebra $(A\otimes P, \hat{\diamond}, \hat{\fd}, -\hat{\fd})$ (by a perm algebra
$(P, \diamond)$), as well as the solutions of the diNovikov Yang-Baxter equation in the
related diNovikov algebra $(A\otimes P, \dashv_{\hat{\fd}}, \vdash_{\hat{\fd}})$,
we present the close connections among triangular differential infinitesimal bialgebras
triangular Novikov bialgebras, triangular differential perm bialgebras and triangular
diNovikov bialgebras, as well as the connections among $\mathcal{O}$-operators on
differential algebras, Novikov algebras, differential perm algebras and diNovikov algebras.
These results, when put together, yield the following commutative diagram:\\[-6mm]

{\small
\begin{displaymath}
\xymatrix@R=0.4cm@C=-0.8cm{
&\txt{{\small $(P, \diamond, \vartheta_{r}, \fd, -\fd)$ }\\
{\tiny a triangular differential perm bialgebra}}
\ar@{->}[rr]|-{\txt{\tiny\textcolor{red}{Thm.}~\ref{thm:ind-dinovbia}}}
\ar@{->}[ld]|-{\txt{\tiny\textcolor{red}{Thm.}~\ref{thm:indu-asssdibia}}}
\ar@{<.}[dd]|-(0.8){\txt{\tiny\textcolor{red}{Pro.}~\ref{pro:quasi-dpba}}}&
&\txt{{\small $(P, \dashv_{\fd}, \vdash_{\fd}, \nu_{\dashv,r}, \nu_{\vdash,r})$} \\
{\tiny a triangular diNovikov bialgebra}}
\ar@{->}[ld]|-{\txt{\tiny\textcolor{red}{Thm.}~\ref{thm:diNbia-Nbia}}}
\ar@{<-}[dd]|-{\txt{\tiny\textcolor{red}{Pro.}~\ref{pro:tri-dinov}}}&\\
\txt{{\small $(P\otimes C, \cdot, \Delta_{\widehat{r}}, \hat{\fd}, -\hat{\fd})$} \\
{\tiny a triangular differential infinitesimal bialgebra}}
\ar@{<-}[dd]|-{\txt{\tiny\textcolor{red}{Pro.}~\ref{pro:diff-bia}}}
\ar@{->}[rr]|-(0.75){\txt{\tiny\textcolor{red}{Thm.}~\ref{thm:indu-spNbia}}}&
&\txt{{\small $(P\otimes C, \ast, \delta_{\hat{\fd}})$}\\
{\tiny a triangular Novikov bialgebra}}
\ar@{<-}[dd]|-(0.7){\txt{\tiny\textcolor{red}{Pro.}~\ref{pro:quasass-Nbia}}} \\
&\txt{{\small\bf $r$}\\ {\tiny\bf a symmetric solution} \\
{\tiny\bf of the $\DPYBE$ in $(P, \diamond, \fd, -\fd)$}}
\ar@{.>}[dd]|-(0.75){\txt{\tiny\textcolor{red}{Pro.}~\ref{pro:o-dperm}}}
\ar@{.>}[rr]|-(0.2){\rm\textcolor{red}{Pro.}~\ref{pro:ind-DPYBE}}&
&\txt{{\small\bf $r$} \\  {\tiny\bf a symmetric solution} \\
{\tiny\bf of the $\DNYBE$ in $(P, \dashv_{\fd}, \vdash_{\fd})$}}
\ar@{->}[dd]|-{\txt{\tiny\textcolor{red}{Pro.}~\ref{pro:o-dinov}}}\\
\txt{{\small\bf $\widehat{r}$}\\ {\tiny\bf a skew-symmetric solution} \\
{\tiny\bf of the $\DAYBE$ in $(P\otimes C, \cdot, \hat{\fd}, -\hat{\fd})$}}
\ar@{->}[dd]|-{\txt{\tiny\textcolor{red}{Pro.}~\ref{pro:o-dass}}}
\ar@{<.}[ru]|-{\txt{\tiny\textcolor{red}{Pro.}~\ref{pro:DPYBE-DAYBE}}}
\ar@{->}[rr]|-(0.2){\txt{\tiny\textcolor{red}{Pro.}~\ref{pro:indu-NYBE}}}&
&\txt{{\small\bf $\widehat{r}$}\\ {\tiny\bf a skew-symmetric solution} \\
{\tiny\bf of the $\NYBE$ in $(P\otimes C, \ast_{\hat{\fd}})$}}
\ar@{<-}[ru]|-{\txt{\tiny\textcolor{red}{Pro.}~\ref{pro:DNYBE-NYBE}}}
\ar@{->}[dd]|-(0.75){\txt{\tiny\textcolor{red}{Pro.}~\ref{pro:o-nov}}}\\
&\txt{{\small $r^{\sharp}$}\\{\tiny an $\mathcal{O}$-operator of $(P, \diamond, \fd)$} \\
{\tiny associated to the coregular bimodule}}
\ar@{.>}[ld]|-{\txt{\tiny\textcolor{red}{$-\otimes\kappa^{\sharp}$}}} &
&\txt{{\small $r^{\sharp}$}\\
{\tiny an $\mathcal{O}$-operator of $(P, \dashv_{\fd}, \vdash_{\fd})$ associated} \\
{\tiny to the coregular representation}}  \ar@{<.}[ll]\\
\txt{{\small $\widehat{r}^{\sharp}=r^{\sharp}\otimes\kappa^{\sharp}$}\\{\tiny an $\mathcal{O}$-operator of $(P\otimes C, \cdot, \hat{\fd})$} \\
{\tiny associated to the coregular bimodule}}&
&\txt{{\small $\widehat{r}^{\sharp}=r^{\sharp}\otimes\kappa^{\sharp}$}\\
{\tiny an $\mathcal{O}$-operator of $(P\otimes C, \ast_{\hat{\fd}})$} \\
{\tiny associated to the coregular representation}}
\ar@{<-}[ru]|-{\txt{\tiny\textcolor{red}{$-\otimes\kappa^{\sharp}$}}}\ar@{<-}[ll]&}
\end{displaymath}
}

\smallskip\noindent
1.3. {\bf Outline of the paper.}
This paper is organized as follows. In Section \ref{sec:prel} we recall the notions
of differential perm algebras and bimodules over differential perm algebras.
The relationships between differential perm algebras, differential algebras,
diNovikov algebras and Novikov algebras are given in a commutative diagram.
In Section \ref{sec:bialg} we introduce the notion of a differential perm bialgebra,
which is equivalent to a Manin triple of differential perm algebras, is equivalent to
a matched pair of differential perm algebras (see Theorem \ref{thm:equ}).
In Section \ref{sec:quasi} the study of some special differential perm bialgebras leads
to the introduction of the differential perm Yang-Baxter equation ($\DPYBE$) in a
differential perm algebra. By using the solution of $\DPYBE$, we introduce the concept
of quasi-triangular (resp. triangular) differential perm bialgebras, and provide a
one-to-one correspondence between the symmetric solutions of $\DPYBE$ in a
differential perm algebra and the $\mathcal{O}$-operators on this differential perm algebra.
In Section \ref{sec:app} we discuss in detail the relationship between differential
perm bialgebras, differential infinitesimal bialgebras, diNovikov bialgebras and
Novikov bialgebras. We show that there is a differential infinitesimal bialgebra structure
on the tensor product of a differential perm bialgebra and a quadratic Zinbiel
algebra in Theorem \ref{thm:permbia-ASI}, there is aNovikov bialgebra structure
on the tensor product of a diNovikov bialgebra and a quadratic Zinbiel algebra in Theorem
\ref{thm:diNbia-Nbia}. Each differential infinitesimal bialgebra induces a Novikov
bialgebra and each differential perm bialgebra induces a diNovikov bialgebra (see
Theorems \ref{thm:ind-novbia} and \ref{thm:ind-dinovbia}). Therefore, we provide
two methods to construct a Novikov bialgebra from a differential perm bialgebra, and
we show that the Novikov bialgebras obtained by these two methods are consistent.
In particular, by considering the correspondence between the symmetric (or
skew-symmetric) solutions of the Yang-Baxter equation, the triangular bialgebra structures,
and the $\mathcal{O}$-operators, we present the three-dimensional commutative diagram as above.

Throughout this paper, we fix $\Bbbk$ as a field of characteristic zero.
All the vector spaces, algebras are over $\Bbbk$ and are finite-dimensional
unless otherwise specified, and all tensor products are also over $\Bbbk$.
We denote the identity map by $\id$. For any finite-dimensional $\Bbbk$-vector
space $V$, we denote $V^{\ast}$ the dual space.

\section{Differential perm algebras and bimodules} \label{sec:prel}
In this section, we recall the notions of differential perm algebras and bimodules over
differential perm algebras. The relationships between differential algebras,
differential perm algebras, Novikov algebras and diNovikov algebras are also given.

\begin{defi}\label{def:anti}
Let $P$ be a vector space with a binary operation $\diamond: P\otimes P
\rightarrow P$, $p_{1}\otimes p_{2}\mapsto p_{1}\diamond p_{2}$ satisfying the
(left commutative and associative) identities:
$$
(p_{1}\diamond p_{2})\diamond p_{3}=(p_{2}\diamond p_{1})\diamond p_{3}
=p_{1}\diamond(p_{2}\diamond p_{3}),
$$
for any $p_{1}, p_{2}, p_{3}\in P$. We call $(P, \diamond)$ a {\bf perm algebra}.
\end{defi}

Recall that an {\bf associative algebra} $(A, \cdot)$ is a vector space $A$ with a binary
operation $\cdot: A\otimes A\rightarrow A$, $a_{1}\otimes a_{2}\mapsto a_{1}a_{2}:=
a_{1}\cdot a_{2}$ satisfying $(a_{1}a_{2})a_{3}=a_{1}(a_{2}a_{3})$ for any $a_{1},
a_{2}, a_{3}\in A$. An associative algebra $(A, \cdot)$ is called {\bf commutative}
if for any $a_{1}, a_{2}\in A$, $a_{1}a_{2}=a_{2}a_{1}$. Any commutative associative
algebra is a perm algebra. A perm algebra with identity is nothing but a unital
commutative associative algebra. The perm algebra we consider in this paper does not
require identity element. Let $(P, \diamond)$ be a perm algebra (resp. commutative
associative algebra) and $\fd: P\rightarrow P$ be a linear map. If for any $p_{1},
p_{2}\in P$, $\fd(p_{1}\diamond p_{2})=p_{1}\diamond\fd(p_{2})+\fd(p_{1})\diamond p_{2}$,
we call that $\fd$ is a {\bf derivation} on this perm algebra (resp. commutative
associative algebra). A commutative associative algebra $(A, \cdot)$ with a derivation
$\fd$ is called a (commutative) {\bf differential algebra}. A perm algebra with a
derivation $(P, \diamond, \fd)$ is called a {\bf differential perm algebra}.
Let $(P, \diamond, \fd)$ and $(Q, \diamond', \fd')$ be two differential perm algebras.
A linear map $f: P\rightarrow Q$ is called a {\bf differential perm algebra homomorphism}
if for any $p_{1}, p_{2}\in P$, $f(p_{1}\diamond p_{2})=f(p_{1})\diamond f(p_{2})$ and
$\fd'\circ f=f\circ\fd$. A differential perm algebra homomorphism $f$ is said to be an
isomorphism if $f$ is a bijection.


Let $D$ be a vector space with two binary operations $\vdash$ and $\dashv$. If for any
$d_{1}, d_{2}, d_{3}\in D$, they satisfy the following equalities:
\begin{align*}
&\qquad\qquad\qquad d_{1}\dashv(d_{2}\dashv d_{3})=d_{1}\dashv(d_{2}\vdash d_{3}),\\
&\qquad\qquad\qquad (d_{1} \dashv d_{2})\dashv d_{3}=(d_{1}\dashv d_{3})\dashv d_{2},\\
&\qquad\quad (d_{1}\vdash d_{2})\dashv d_{3}=(d_{1}\vdash d_{3})\vdash d_{2}
=(d_{1}\dashv d_{3})\vdash d_{2},\\
& (d_{1}\dashv d_{2})\dashv d_{3}-d_{1}\dashv(d_{2}\dashv d_{3})
=(d_{2}\vdash d_{1})\dashv d_{3}-d_{2}\vdash(d_{1}\dashv d_{3}),\\
& (d_{1}\vdash d_{2})\vdash d_{3}-d_{1}\vdash(d_{2}\vdash d_{3})
=(d_{2}\vdash d_{1})\vdash d_{3}-d_{2}\vdash(d_{1}\vdash d_{3}),
\end{align*}
then $(D, \dashv, \vdash)$ is called a (left) {\bf diNovikov algebra}.
In \cite{KS} and \cite{XBH}, the diNovikov algebra is also called Novikov dialgebra.

\begin{pro}[\cite{KS}]\label{pro:dperm-diN}
Let $(P, \diamond, \fd)$ be a differential perm algebra. Define two binary operations
$\vdash_{\fd}, \dashv_{\fd}: P\otimes P\rightarrow P$ by
\begin{align}
p_{1}\vdash_{\fd}p_{2}:=p_{1}\diamond\fd(p_{2}),\qquad\qquad
p_{1}\dashv_{\fd}p_{2}:=\fd(p_{2})\diamond p_{1},\label{ind-dinov}
\end{align}
for any $p_{1}, p_{2}\in P$. Then $(P, \dashv_{\fd}, \vdash_{\fd})$ is a diNovikov algebra.
\end{pro}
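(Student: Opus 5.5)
The plan is a direct verification. I will rewrite each of the five diNovikov identities in terms of $\diamond$ and $\fd$ alone, and then reduce both sides using two facts about perm algebras together with the Leibniz rule for $\fd$. The first fact is that a triple product depends only on the multiset of its first two factors and on the identity of the last factor:
$$(p_{1}\diamond p_{2})\diamond p_{3}=(p_{2}\diamond p_{1})\diamond p_{3}=p_{1}\diamond(p_{2}\diamond p_{3}).$$
The second fact is left commutativity, $p_{1}\diamond(p_{2}\diamond p_{3})=p_{2}\diamond(p_{1}\diamond p_{3})$, which follows at once from the perm identities. So every bracketed triple product can be written unambiguously as $[x,y\,|\,z]$, symmetric in $x$ and $y$. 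Everything then reduces to comparing such symbols after expanding $\fd$ of products.

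First, the three ``easy'' identities.

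For $d_{1}\dashv(d_{2}\dashv d_{3})=d_{1}\dashv(d_{2}\vdash d_{3})$, both sides have the form $\fd(u\diamond v)\diamond d_{1}$, with $(u,v)=(\fd d_{3},d_{2})$ on the left and $(u,v)=(d_{2},\fd d_{3})$ on the right. Expanding with the Leibniz rule gives
$$[\fd^{2}d_{3},d_{2}\,|\,d_{1}]+[\fd d_{3},\fd d_{2}\,|\,d_{1}]$$
on both sides, by symmetry in the first two slots.

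The identity $(d_{1}\dashv d_{2})\dashv d_{3}=(d_{1}\dashv d_{3})\dashv d_{2}$ reads $\fd d_{3}\diamond(\fd d_{2}\diamond d_{1})=\fd d_{2}\diamond(\fd d_{3}\diamond d_{1})$, which is exactly left commutativity.

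For the triple equality in the third identity, all three expressions $\fd d_{3}\diamond(d_{1}\diamond\fd d_{2})$, $(d_{1}\diamond\fd d_{3})\diamond\fd d_{2}$ and $(\fd d_{3}\diamond d_{1})\diamond\fd d_{2}$ equal $[d_{1},\fd d_{3}\,|\,\fd d_{2}]$.

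The main work is in the last two identities, the ``associator'' identities. There $\fd$ hits a product, and I must track the cancellation after expanding.

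For the fourth identity, the left side is
$$\fd d_{3}\diamond(\fd d_{2}\diamond d_{1})-\fd(\fd d_{3}\diamond d_{2})\diamond d_{1}.$$
Expanding the second term, the piece $[\fd d_{3},\fd d_{2}\,|\,d_{1}]$ cancels the first term. This leaves $-[\fd^{2}d_{3},d_{2}\,|\,d_{1}]$. The right side is
$$\fd d_{3}\diamond(d_{2}\diamond\fd d_{1})-d_{2}\diamond\fd(\fd d_{3}\diamond d_{1}).$$
Expanding, $[\fd d_{3},d_{2}\,|\,\fd d_{1}]$ cancels, and $-[d_{2},\fd^{2}d_{3}\,|\,d_{1}]$ remains. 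The two sides therefore agree.

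For the fifth identity, the left side is
$$(d_{1}\diamond\fd d_{2})\diamond\fd d_{3}-d_{1}\diamond\fd(d_{2}\diamond\fd d_{3}),$$
which reduces after expansion to $-[d_{1},d_{2}\,|\,\fd^{2}d_{3}]$. This expression is symmetric in $d_{1}$ and $d_{2}$. The right side is the same expression with $d_{1}$ and $d_{2}$ swapped, so the two sides are equal.

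The only real care needed is to apply the Leibniz rule correctly and keep track of which factor sits in the last slot. Once that is done, each identity is a one-line cancellation.
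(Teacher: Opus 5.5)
Your verification is correct and complete. Each of the five identities checks out exactly as you compute. In the two associator identities, the cancellations leave $-[\fd^{2}d_{3},d_{2}\,|\,d_{1}]$ on both sides of the fourth identity, and the $d_{1}\leftrightarrow d_{2}$-symmetric term $-[d_{1},d_{2}\,|\,\fd^{2}d_{3}]$ in the fifth. The paper gives no proof of its own here and simply cites \cite{KS}, so your direct check supplies the routine verification the paper omits.
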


Recall that a (left)
{\bf Novikov algebra} $(B, \ast)$ is a vector space $B$ with a binary operation
$\ast: B\otimes B\rightarrow B$ satisfying the following condition:
\begin{align*}
& \qquad\qquad (b_{1}\ast b_{2})\ast b_{3}=(b_{1}\ast b_{3})\ast b_{2},\\
& (b_{1}\ast b_{2})\ast b_{3}-b_{1}\ast (b_{2}\ast b_{3})
=(b_{2}\ast b_{1})\ast b_{3}-b_{2}\ast (b_{1}\ast b_{3}),
\end{align*}
for any $b_{1}, b_{2}, b_{3}\in B$. Given a differential algebra $(A, \cdot, \fd)$,
the binary operation
\begin{align}
a_{1}\ast_{\fd}a_{2}:=a_{1}\fd(a_{2}),  \label{ind-nov}
\end{align}
for any $a_{1}, a_{2}\in A$, defines a Novikov algebra $(A, \ast_{\fd})$, which is called
{\bf the Novikov algebra induced by the differential algebra $(A, \cdot, \fd)$}.
Moreover, commutative associative algebras and perm algebras, Novikov algebras and
diNovikov algebras are also closely related, and we can use Zinbiel algebra to
connect them together. Recall that a {\bf Zinbiel algebra} $(C, \star)$ is a vector
space $C$ together with a bilinear operation $\star: C\otimes C\rightarrow C$ such that
$$
c_{1}\star(c_{2}\star c_{3})=(c_{1}\star c_{2})\star c_{3}+(c_{2}\star c_{1})\star c_{3},
$$
for any $c_{1}, c_{2}, c_{3}\in C$.

\begin{pro}\label{pro:tensor}
Let $(P, \diamond, \fd)$ be a differential perm algebra, $(D, \dashv, \vdash)$ be a
diNovikov algebra and $(C, \star)$ be a Zinbiel algebra.
\begin{enumerate}\itemsep=0pt
\item[$(i)$] Define a binary operation $\cdot: (P\otimes C)\otimes(P\otimes C)
    \rightarrow(P\otimes C)$ by
$$
(p_{1}\otimes c_{1})\cdot(p_{2}\otimes c_{2})
:=(p_{1}\diamond p_{2})\otimes(c_{1}\star c_{2})
+(p_{2}\diamond p_{1})\otimes(c_{2}\star c_{1}),
$$
for any $p_{1}, p_{2}\in P$ and $c_{1}, c_{2}\in C$. Then $(P\otimes C, \cdot,
\hat{\fd})$ is a differential algebra, which is called {\bf the differential
algebra induced from $(P, \diamond, \fd)$ by $(C, \star)$},
where $\hat{\fd}=\fd\otimes\id$.
\item[$(ii)$] Define a binary operation $\ast: (D\otimes C)\otimes(D\otimes C)
    \rightarrow(D\otimes C)$ by
$$
(d_{1}\otimes c_{1})\ast(d_{2}\otimes c_{2})
:=(d_{1}\vdash d_{2})\otimes(c_{1}\star c_{2})
+(d_{1}\dashv d_{2})\otimes(c_{2}\star c_{1}),
$$
for any $d_{1}, d_{2}\in D$ and $c_{1}, c_{2}\in C$. Then $(D\otimes C, \ast)$
is a Novikov algebra, which is called {\bf the Novikov algebra induced from
$(D, \dashv, \vdash)$ by $(C, \star)$}.
\end{enumerate}
\end{pro}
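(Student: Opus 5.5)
Both parts are verified by direct computation on pure tensors, using only the defining identities. Part (i) needs three facts: $\cdot$ is commutative, $\cdot$ is associative, and $\hat{\fd}$ is a derivation. Part (ii) needs the two Novikov identities for $\ast$.

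\emph{Part (i).} Commutativity holds by inspection: swapping $(p_1,c_1)$ and $(p_2,c_2)$ exchanges the two summands in the definition of $\cdot$.

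The derivation property is equally direct. Since $\hat{\fd}=\fd\otimes\id$ acts only on the $P$-factor and $\fd$ is a derivation of $\diamond$, applying $\hat{\fd}$ to each summand splits it into two terms. These four terms regroup exactly into $\hat{\fd}(x)\cdot y+x\cdot\hat{\fd}(y)$ for $x=p_1\otimes c_1$, $y=p_2\otimes c_2$.

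For associativity, expand both sides into four terms each:
\[
(x\cdot y)\cdot z=\sum (\text{perm product of three }p\text{'s})\otimes(\text{Zinbiel product of three }c\text{'s}).
\]
\begin{enumerate}
\item Use the perm identities to normalize each $P$-factor. Any product $(p_i\diamond p_j)\diamond p_k$ or $p_i\diamond(p_j\diamond p_k)$ depends only on the last letter $p_k$, because the first two letters commute. So every $P$-factor becomes $p_i\diamond(p_j\diamond p_k)$ with $\{i,j\}$ unordered.
\item Group terms by the last letter $k\in\{1,2,3\}$. Associativity then reduces, for each $k$, to an identity in $C$.
\item For the last letter $p_3$, the left side $(x\cdot y)\cdot z$ gives $(c_1\star c_2)\star c_3+(c_2\star c_1)\star c_3$. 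The right side $x\cdot(y\cdot z)$ gives $c_1\star(c_2\star c_3)$. These agree by the Zinbiel identity.
\item The groups for last letter $p_1$ and $p_2$ follow symmetrically, using the Zinbiel identity and commutativity of $\cdot$.
\end{enumerate}
So the main task is bookkeeping: sort the eight terms by last letter and check each group against the Zinbiel relation $c_1\star(c_2\star c_3)=(c_1\star c_2+c_2\star c_1)\star c_3$.

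\emph{Part (ii).} The same strategy applies.
\begin{enumerate}
\item Expand $(x\ast y)\ast z$ into four terms, with $D$-factors among $(d_1\vdash d_2)\vdash d_3$, $(d_1\vdash d_2)\dashv d_3$, $(d_1\dashv d_2)\vdash d_3$, $(d_1\dashv d_2)\dashv d_3$. Each is paired with a $C$-factor of the form $(c_a\star c_b)\star c_c$ or $c_c\star(c_a\star c_b)$.
\item \textbf{Right commutativity} $(x\ast y)\ast z=(x\ast z)\ast y$. Compare term by term after rewriting $C$-factors in a basis of Zinbiel monomials. Use $(d_1\vdash d_2)\dashv d_3=(d_1\vdash d_3)\vdash d_2=(d_1\dashv d_3)\vdash d_2$ and $(d_1\dashv d_2)\dashv d_3=(d_1\dashv d_3)\dashv d_2$. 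Where needed, use the Zinbiel identity to convert $c\star(c'\star c'')$ into left-bracketed sums.
\item \textbf{Left symmetry of the associator.} The associator of $\ast$ expands into $D$-associators of mixed type, tensored with Zinbiel monomials. The $\vdash\vdash$ part is handled by the last diNovikov axiom. The mixed $\dashv$ parts are handled by the fourth axiom together with $d_1\dashv(d_2\dashv d_3)=d_1\dashv(d_2\vdash d_3)$.
\end{enumerate}

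The main obstacle is the associator identity in (ii). The Zinbiel monomials in three letters are not linearly independent, so one cannot simply match coefficients. I would first rewrite every $C$-factor in terms of the left-bracketed monomials $(c_a\star c_b)\star c_c$ via the Zinbiel identity, and then collect the coefficient of each monomial.

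A cleaner alternative, to try first, is an operadic shortcut. Novikov is the operad obtained from diNovikov by Manin white product with Zinbiel, in the same way that part (i) reflects $\mathrm{Perm}\circ\mathrm{Zinb}=\mathrm{Com}$ (cf.\ \cite{KS}). This would make (ii) immediate. Since that relation is not established in the paper, the direct computation remains the fallback.
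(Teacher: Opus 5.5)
Your plan is correct and is essentially the paper's proof. For (ii) the paper also expands both sides, rewrites each $C$-factor as a left-bracketed monomial $(c_a\star c_b)\star c_c$ via the Zinbiel identity, and matches coefficients with the diNovikov axioms; it writes out right-commutativity and calls the associator identity ``similar''. For (i) it cites [GH, Proposition 2.10] for commutativity and associativity, where you give a self-contained last-letter normalization, which works.

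Two small corrections. First, the $(c_1\star c_2)\star c_3$ and $(c_2\star c_1)\star c_3$ coefficients of the associator need, besides the last axiom, the third one in the form $(d_i\dashv d_j)\vdash d_3=(d_i\vdash d_j)\vdash d_3$. Second, your operadic aside is misstated. The white products have $4$- and $8$-dimensional arity-two parts, so they are not $\mathrm{Com}$ or $\mathrm{Nov}$. What holds is only an operad morphism $\mathrm{Nov}\to\mathrm{diNov}\circ\mathrm{Zinb}$, which is a restatement of (ii) rather than a shortcut.
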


\begin{proof}
$(i)$ First, by \cite[Proposition 2.10]{GH}, we get that $(P\otimes C, \cdot)$ is a
commutative associative algebra. Second, by direct calculation, for any $p_{1},
p_{2}\in P$ and $c_{1}, c_{2}\in C$, we have
\begin{align*}
&\; \hat{\fd}(p_{1}\otimes c_{1})\cdot(p_{2}\otimes c_{2})
+(p_{1}\otimes c_{1})\cdot\hat{\fd}(p_{2}\otimes c_{2})\\
=&\; (\fd(p_{1})\diamond p_{2}+p_{1}\diamond\fd(p_{2}))\otimes(c_{1}\star c_{2})
+(\fd(p_{2})\diamond p_{1}+p_{2}\diamond\fd(p_{1}))\otimes(c_{2}\star c_{1})\\
=&\;\hat{\fd}\big((p_{1}\otimes c_{1})\cdot(p_{2}\otimes c_{2})\big),
\end{align*}
since $\fd$ is a derivation on $(P, \diamond)$. That is, $\hat{\fd}$ is a derivation
on $(P\otimes C, \cdot)$, and so that $(P\otimes C, \cdot, \hat{\fd})$ is a
differential algebra.

$(ii)$ For any $d_{1}, d_{2}, d_{3}\in D$ and $c_{1}, c_{2}, c_{3}\in C$,
since $(D, \dashv, \vdash)$ is a diNovikov algebra, we have
\begin{align*}
&\; \big((d_{1}\otimes c_{1})\ast(d_{2}\otimes c_{2})\big)\ast(d_{3}\otimes c_{3})\\
=&\; ((d_{1}\vdash d_{2})\vdash d_{3})\otimes((c_{1}\star c_{2})\star c_{3})
+((d_{1}\vdash d_{2})\dashv d_{3})\otimes((c_{1}\star c_{3})\star c_{2})\\[-1mm]
&\ \ +((d_{1}\vdash d_{2})\dashv d_{3})\otimes((c_{3}\star c_{1})\star c_{2})
+((d_{1}\dashv d_{2})\vdash d_{3})\otimes((c_{2}\star c_{1})\star c_{3})\\[-1mm]
&\ \ +((d_{1}\dashv d_{2})\dashv d_{3})\otimes((c_{3}\star c_{2})\star c_{1})
+((d_{1}\dashv d_{2})\dashv d_{3})\otimes((c_{2}\star c_{3})\star c_{1})\\
=&\; ((d_{1}\vdash d_{3})\vdash d_{2})\otimes((c_{1}\star c_{3})\star c_{2})
+((d_{1}\vdash d_{3})\dashv d_{2})\otimes((c_{2}\star c_{1})\star c_{3})\\[-1mm]
&\ \ +((d_{1}\vdash d_{3})\dashv d_{2})\otimes((c_{1}\star c_{2})\star c_{3})
+((d_{1}\dashv d_{3})\vdash d_{2})\otimes((c_{3}\star c_{1})\star c_{2})\\[-1mm]
&\ \ +((d_{1}\dashv d_{3})\dashv d_{2})\otimes((c_{2}\star c_{3})\star c_{1})
+((d_{1}\dashv d_{3})\dashv d_{2})\otimes((c_{3}\star c_{2})\star c_{1})\\
=&\; \big((d_{1}\otimes c_{1})\ast(d_{3}\otimes c_{3})\big)\ast(d_{2}\otimes c_{2}).
\end{align*}
Similarly, $((d_{1}\otimes c_{1})\ast(d_{2}\otimes c_{2}))\ast(d_{3}\otimes c_{3})
-(d_{1}\otimes c_{1})\ast((d_{2}\otimes c_{2})\ast(d_{3}\otimes c_{3}))
=((d_{2}\otimes c_{2})\ast(d_{1}\otimes c_{1}))\ast(d_{3}\otimes c_{3})
-(d_{2}\otimes c_{2})\ast((d_{1}\otimes c_{1})\ast(d_{3}\otimes c_{3}))$.
Thus, $(D\otimes C, \ast)$ is a Novikov algebra.
\end{proof}

Let $(P, \diamond, \fd)$ be a differential perm algebra, $(D, \dashv, \vdash)$ be a
diNovikov algebra and $(C, \star)$ be a Zinbiel algebra. By the differential algebra
$(P\otimes C, \cdot, \hat{\fd})$, we obtain a Novikov algebra $(P\otimes C,
\ast_{\hat{\fd}})$, where
$$
(p_{1}\otimes c_{1})\ast_{\hat{\fd}}(p_{2}\otimes c_{2})
=(p_{1}\diamond\fd(p_{2}))\otimes(c_{1}\star c_{2})
+(\fd(p_{2})\diamond p_{1})\otimes(c_{2}\star c_{1}),
$$
for any $p_{1}, p_{2}\in P$ and $c_{1}, c_{2}\in C$.
On the other hand, the Novikov algebra structure on the tensor product of $(D, \dashv_{\fd},
\vdash_{\fd})$ and $(C, \star)$ is given by
$$
(p_{1}\otimes c_{1})\ast(p_{2}\otimes c_{2})
=(p_{1}\diamond\fd(p_{2}))\otimes(c_{1}\star c_{2})
+(\fd(p_{2})\diamond p_{1})\otimes(c_{2}\star c_{1}).
$$
That is, $(P\otimes C, \ast)=(P\otimes C, \ast_{\hat{\fd}})$ as Novikov algebras,
and so that, we obtain the following commutative diagram:
$$
\xymatrix@C=2cm@R=0.5cm{
\txt{$(P, \diamond, \fd)$ \\ {\tiny a differential perm algebra}}\ar[d]\ar[r]
&\txt{$(P, \dashv_{\fd}, \vdash_{\fd})$\\ {\tiny a diNovikov algebra}}\ar[d] \\
\txt{$(P\otimes C, \cdot, \hat{\fd})$ \\ {\tiny a differential algebra}}\ar[r]
& \txt{$(P\otimes C, \ast_{\hat{\fd}})$ \\ {\tiny a Novikov algebra}}}
$$
One of the main conclusions of this paper is that this commutative diagram also holds
at the bialgebra level.

Let $(C, \star)$ be a Zinbiel algebra. Recall that a bimodule over $(C, \star)$
is a vector space $V$ with two linear maps $\tilde{\kl}, \tilde{\kr}:
C\rightarrow\gl(V)$ such that for any $c_{1}, c_{2}\in C$,
$$
\tilde{\kl}(c_{2})\circ\tilde{\kl}(c_{1})=\tilde{\kl}(c_{2}\star c_{1})
+\tilde{\kl}(c_{1}\star c_{2}),\qquad\qquad
\tilde{\kr}(c_{1}\star c_{2})=\tilde{\kr}(c_{2})\circ\tilde{\kr}(c_{1})
+\tilde{\kr}(c_{2})\circ\tilde{\kl}(c_{1})=\tilde{\kl}(c_{1})\circ\tilde{\kr}(c_{2}).
$$
In particular, if we define $\tilde{\fl}_{C}, \tilde{\fr}_{C}: C\rightarrow\gl(C)$ by
$\tilde{\fl}_{C}(c_{1})(c_{2})=c_{1}\star c_{2}=\tilde{\fr}_{C}(c_{2})(c_{1})$ for any
$c_{1}, c_{2}\in C$, then $(C, \tilde{\fl}_{C}, \tilde{\fr}_{C})$ is a bimodule over
$(C, \star)$. Next we introduce the definition of bimodule over a differential perm algebra.
Let $(P, \diamond)$ be a perm algebra. Recall that a {\bf bimodule $(V, \kl, \kr)$ over
perm algebra $(P, \diamond)$} is a vector space $V$ with two linear maps $\kl, \kr:
P\rightarrow\gl(V)$ such that for any $p_{1}, p_{2}\in p$,
$$
\kl(p_{1}\diamond p_{2})=\kl(p_{1})\circ\kl(p_{2})=\kl(p_{2})\circ\kl(p_{1}),\qquad\qquad
\kr(p_{1}\diamond p_{2})=\kr(p_{2})\circ\kr(p_{1})=\kr(p_{2})\circ\kl(p_{1})
=\kl(p_{1})\circ\kr(p_{2}).
$$

\begin{defi}\label{def:anti-mod}
Let $(P, \diamond, \fd)$ be a differential perm algebra and $V$ be a vector space.
If there exist linear maps $\kl, \kr: P\rightarrow\gl(V)$ and $\partial: V\rightarrow V$,
such that $(V, \kl, \kr)$ is a bimodule over perm algebra $(P, \diamond)$ and
$$
\partial(\kl(p)(v))=\kl(\fd(p))(v)+\kl(p)(\partial(v)),\qquad\quad
\partial(\kr(p)(v))=\kr(\fd(p))(v)+\kr(p)(\partial(v)),
$$
for any $p\in P$, $v\in V$. We call that $(V, \kl, \kr, \partial)$ is a
{\bf bimodule over $(P, \diamond, \fd)$}.
\end{defi}

By direct calculations, we can give an equivalent condition as follows.

\begin{pro}\label{pro:rep}
Let $(P, \diamond, \fd)$ be a differential perm algebra, $V$ be a vector space,
$\kl, \kr: P\rightarrow\gl(V)$ and $\partial: V\rightarrow V$ be linear maps.
Then $(V, \kl, \kr, \partial)$ is a bimodule over $(P, \diamond, \fd)$ if and only if
$(P\oplus V, \fd\oplus\partial)$ is a differential perm algebra
under the following operations:
$$
(p_{1}, v_{1})(p_{2}, v_{2})=\big(p_{1}\diamond p_{2},\ \
\kl(p_{1})(v_{2})+\kr(p_{2})(v_{1})\big),
$$
for all $p_{1}, p_{2}\in P$ and $v_{1}, v_{2}\in V$. This differential perm algebra is
called the {\bf semidirect product} of $(P, \diamond, \fd)$ by bimodule $(V, \kl, \kr,
\partial)$, denoted by $P\ltimes V$.
\end{pro}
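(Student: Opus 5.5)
The proof splits into two independent equivalences: one for the perm-algebra axioms and one for the derivation property. Write $\hat{\fd}:=\fd\oplus\partial$ and denote the product on $P\oplus V$ by juxtaposition.

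\emph{Perm part.} The standard semidirect-product criterion for perm algebras says $(P\oplus V,\cdot)$ is a perm algebra exactly when $(V,\kl,\kr)$ is a bimodule over $(P,\diamond)$. I will verify this directly. Expand
$$((p_{1},v_{1})(p_{2},v_{2}))(p_{3},v_{3}),\quad ((p_{2},v_{2})(p_{1},v_{1}))(p_{3},v_{3}),\quad (p_{1},v_{1})((p_{2},v_{2})(p_{3},v_{3})).$$
The $P$-components agree precisely by the perm identities for $\diamond$. The $V$-components are sums of terms linear in $v_{1}$, $v_{2}$, $v_{3}$ separately, so I compare coefficients:
\begin{itemize}
\item the $v_{3}$-terms give $\kl(p_{1}\diamond p_{2})=\kl(p_{2}\diamond p_{1})=\kl(p_{1})\kl(p_{2})$;
\item the $v_{1}$-terms give $\kr(p_{3})\kr(p_{2})=\kr(p_{3})\kl(p_{2})=\kr(p_{2}\diamond p_{3})$;
\item the $v_{2}$-terms give $\kr(p_{3})\kl(p_{1})=\kr(p_{3})\kr(p_{1})=\kl(p_{1})\kr(p_{3})$.
\end{itemize}
Since $P$ is perm, $\kl(p_{1}\diamond p_{2})=\kl(p_{2}\diamond p_{1})$ is automatic, so the first line reduces to the $\kl$-axiom. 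After relabelling indices, the remaining lines are exactly the bimodule axioms. Both directions follow: for the converse, set the appropriate $v_{i}$ to zero to isolate each coefficient.

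\emph{Derivation part.} Compute
$$\hat{\fd}\big((p_{1},v_{1})(p_{2},v_{2})\big)-\hat{\fd}(p_{1},v_{1})(p_{2},v_{2})-(p_{1},v_{1})\hat{\fd}(p_{2},v_{2}).$$
Its $P$-component vanishes because $\fd$ is a derivation. Its $V$-component equals
$$\big[\partial\kl(p_{1})v_{2}-\kl(\fd p_{1})v_{2}-\kl(p_{1})\partial v_{2}\big]+\big[\partial\kr(p_{2})v_{1}-\kr(\fd p_{2})v_{1}-\kr(p_{2})\partial v_{1}\big].$$
This vanishes identically if and only if both brackets vanish separately, which is seen by taking $v_{1}=0$ or $v_{2}=0$. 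The two brackets vanishing are exactly the two compatibility conditions of Definition~\ref{def:anti-mod}.

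Combining the two parts gives the equivalence. There is no real obstacle here; the only care needed is the bookkeeping of coefficients in the perm part, in particular confirming that the two equalities in each bimodule axiom correspond to the two perm identities, so that nothing extra is required and nothing is missed.
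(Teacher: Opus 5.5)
Your proof is the same direct verification the paper has in mind (the paper only says the result follows by direct calculation). Your coefficient comparisons in all three lines and the derivation computation are correct.

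One justification is wrong, however. The identity $\kl(p_{1}\diamond p_{2})=\kl(p_{2}\diamond p_{1})$ is \emph{not} automatic from the perm identities, because $\diamond$ need not be commutative. In Example~\ref{ex:YBE-bialg}, $x_{1}\diamond x_{2}=x_{2}$ while $x_{2}\diamond x_{1}=0$, so any linear map $\kl$ with $\kl(x_{2})\neq 0$ violates it.

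The equivalence of your $v_{3}$-line with the $\kl$-axiom still holds, but for a different reason. Applying the $v_{3}$-line with indices swapped gives $\kl(p_{2})\circ\kl(p_{1})=\kl(p_{2}\diamond p_{1})=\kl(p_{1}\diamond p_{2})$, which supplies the commutation part of the axiom. Conversely, the $\kl$-axiom gives $\kl(p_{2}\diamond p_{1})=\kl(p_{2})\circ\kl(p_{1})=\kl(p_{1})\circ\kl(p_{2})=\kl(p_{1}\diamond p_{2})$.

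With that one-line replacement, the argument is complete.
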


For a differential perm algebra $(P, \diamond, \fd)$, we get linear maps $\fl_{P}, \fr_{P}:
P\rightarrow\gl(P)$, $\fl_{P}(p_{1})(p_{2})=p_{1}\diamond p_{2}=\fr_{P}(p_{2})(p_{1})$
for any $p_{1}, p_{2}\in P$. It is easy to see that $(P, \fl_{P}, \fr_{P}, \fd)$
is a bimodule over $(P, \diamond, \fd)$. We call this bimodule the {\bf regular bimodule}
of $(P, \diamond, \fd)$. Let $(V, \kl, \kr, \partial)$ and $(V', \kl', \kr', \partial')$
be two bimodules over a differential perm algebra $(P, \diamond, \fd)$. A linear map
$f: V\rightarrow V'$ is called a {\bf bimodule homomorphism} over $(P, \diamond, \fd)$
if for any $p\in P$, $f\circ\kl(p)=\kl'(p)\circ f$, $f\circ\kr(p)=\kr'(p)\circ f$ and
$\partial'\circ f=f\circ\partial$. If the homomorphism $f$ is a bijection, we call that
$f$ is an {\bf isomorphism} and the bimodules $(V, \kl, \kr, \partial)$ and $(V', \kl',
\kr', \partial')$ are isomorphic.

Let $V$ be a vector space. Denote the standard pairing between the dual space
$V^{\ast}$ and $V$ by
$$
\langle-,-\rangle:\quad V^{\ast}\otimes V\rightarrow \Bbbk, \qquad\quad
\langle \xi,\; v \rangle:=\xi(v),
$$
for any $\xi\in V^{\ast}$ and $v\in V$. Let $V$, $W$ be two vector spaces. For a linear
map $\varphi: V\rightarrow W$, the transpose map $\varphi^{\ast}: W^{\ast}\rightarrow
V^{\ast}$ is defined by
$$
\langle \varphi^{\ast}(\xi),\; v \rangle:=\langle\xi,\; \varphi(v)\rangle,
$$
for any $v\in V$ and $\xi\in W^{\ast}$. Let $(P, \diamond)$ be a perm algebra
and $V$ be a vector space. For a linear map $\psi: P\rightarrow\gl(V)$, the linear map
$\psi^{\ast}: P\rightarrow\gl(V^{\ast})$ is defined by
$$
\langle\psi^{\ast}(p)(\xi),\; v\rangle:=-\langle\xi,\; \psi(p)(v)\rangle,
$$
for any $p\in P$, $v\in V$, $\xi\in V^{\ast}$. That is, $\psi^{\ast}(p)=-\psi(p)^{\ast}$
for all $p\in P$. Let $(P, \diamond)$ be a perm algebra and $(V, \kl, \kr)$ be a
bimodule over $(P, \diamond)$. Then $(V^{\ast}, -\kl^{\ast}, \kr^{\ast}-\kl^{\ast})$ is
also a bimodule over $(P, \diamond)$, which is called the {\bf dual bimodule} of
$(V, \kl, \kr)$.

\begin{pro}\label{pro:dualmod}
Let $(P, \diamond, \fd)$ be a differential perm algebra, $(V, \kl, \kr)$ be a
bimodule over $(P, \diamond)$ and $\partial: V\rightarrow V$ be a linear map.
Then the quadruple $(V^{\ast}, -\kl^{\ast}, \kr^{\ast}-\kl^{\ast}, \partial^{\ast})$
is a bimodule over the differential perm algebra $(P, \diamond, \fd)$ if and only if
the following equations hold:
$$
\partial(\kl(p)(v))=\kl(p)(\partial(v))-\kl(\fd(p))(v), \qquad\qquad
\partial(\kr(p)(v))=\kr(p)(\partial(v))-\kr(\fd(p))(v),
$$
for any $p\in P$ and $v\in V$. Therefore, we get that $(V^{\ast}, -\kl^{\ast},
\kr^{\ast}-\kl^{\ast}, \partial^{\ast})$ is a bimodule over $(P, \diamond, \fd)$
if $(V, \kl, \kr, -\partial)$ is a bimodule over $(P, \diamond, \fd)$.
\end{pro}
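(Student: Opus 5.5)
Since $(V^{\ast}, -\kl^{\ast}, \kr^{\ast}-\kl^{\ast})$ is already known to be a bimodule over the perm algebra $(P, \diamond)$ (the dual bimodule), the only thing to check in Definition \ref{def:anti-mod} is the pair of compatibility conditions between $\partial^{\ast}$ and the two actions. The plan is to pair each condition with an arbitrary $v\in V$, move everything back to $V$ by transposition, and use nondegeneracy of the pairing.

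First condition. It reads
$$
\partial^{\ast}(-\kl^{\ast}(p)\xi)=-\kl^{\ast}(\fd(p))\xi-\kl^{\ast}(p)(\partial^{\ast}\xi).
$$
Recall $\kl^{\ast}(p)=-\kl(p)^{\ast}$, so $-\kl^{\ast}(p)=\kl(p)^{\ast}$. Pairing with $v$, the left side becomes $\langle\xi,\kl(p)\partial(v)\rangle$. The right side becomes $\langle\xi,\kl(\fd(p))v\rangle+\langle\xi,\partial(\kl(p)v)\rangle$. Since $\xi$ and $v$ are arbitrary, this is equivalent to
$$
\partial(\kl(p)v)=\kl(p)\partial(v)-\kl(\fd(p))v,
$$
which is the first displayed equation.

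Second condition. Set $R(p):=\kr^{\ast}(p)-\kl^{\ast}(p)=\kl(p)^{\ast}-\kr(p)^{\ast}$. The same computation shows the second condition is equivalent to
$$
\partial\big((\kl(p)-\kr(p))v\big)=(\kl(p)-\kr(p))\partial(v)-(\kl(\fd(p))-\kr(\fd(p)))v .
$$
Subtracting the first equation, this is equivalent to the second displayed equation. So the two conditions together are equivalent to the two stated equations, which proves the "if and only if".

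The final claim is immediate: the stated equations are exactly the conditions of Definition \ref{def:anti-mod} for $(V,\kl,\kr,-\partial)$, once $\partial$ is replaced by $-\partial$ there.

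The only delicate point is tracking signs from the convention $\psi^{\ast}(p)=-\psi(p)^{\ast}$ and from transposing compositions, which reverses their order. Everything else is routine.
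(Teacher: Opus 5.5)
Your proposal is correct and follows the paper's proof: pair each compatibility condition for $\partial^{\ast}$ with an arbitrary $v\in V$, transpose back to $V$, and conclude by nondegeneracy of the pairing. Your treatment of the second condition is slightly more careful than the paper's ``Similarly''. On its own, that condition is equivalent to the identity for $\kl-\kr$, and only together with the first equation does it give the stated identity for $\kr$.
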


\begin{proof}
Since for any $p\in P$, $v\in V$ and $\xi\in V^{\ast}$,
\begin{align*}
&\qquad\qquad\qquad\qquad\langle\partial^{\ast}(-\kl^{\ast}(p)(\xi)),\; v\rangle
=\langle\xi,\; \kl(p)(\partial(v))\rangle,\\
&\langle-\kl^{\ast}(\fd(p))(\xi),\; v\rangle
=\langle\xi,\; \kl(\fd(p))(v)\rangle,\qquad\quad
\langle-\kl^{\ast}(p)(\partial^{\ast}(\xi)),\; v\rangle
=\langle\xi,\; \partial(\kl(p)(v))\rangle,
\end{align*}
we get $\partial^{\ast}(-\kl^{\ast}(p)(\xi))=-\kl^{\ast}(\fd(p))(\xi)
-\kl^{\ast}(p)(\partial^{\ast}(\xi))$ if and only if
$\partial(\kl(p)(v))=\kl(p)(\partial(v))-\kl(\fd(p))(v)$.
Similarly, we have $\partial^{\ast}((\kr^{\ast}-\kl^{\ast})(p)(\xi))=
(\kr^{\ast}-\kl^{\ast})(\fd(p))(\xi)+(\kr^{\ast}-\kl^{\ast})(p)(\partial^{\ast}(\xi))$
if and only if $\partial(\kr(p)(v))=\kr(p)(\partial(v))-\kr(\fd(p))(v)$.
Thus, we get this proposition.
\end{proof}

In particular, we consider the regular bimodule $(P, \fl_{P}, \fr_{P}, \fd)$ over
a differential perm algebra $(P, \diamond, \fd)$. Then $(P^{\ast}, -\fl_{P}^{\ast},
\fr_{P}^{\ast}-\fl_{P}^{\ast}, -\fd^{\ast})$ is also a bimodule over $(P, \diamond, \fd)$,
which is called the {\bf coregular bimodule}. If there is a linear map $\partial:
P\rightarrow P$ such that $(P, \fl_{P}, \fr_{P}, -\partial)$ is a bimodule over
$(P, \diamond, \fd)$, $\partial$ is said to be {\bf admissible to $(P, \diamond, \fd)$}
and $(P, \diamond, \fd, \partial)$ is called {\bf an admissible differential
perm algebra}. Let $(P, \diamond, \fd)$ be a differential perm algebra and $\partial:
P\rightarrow P$ be a linear map. Then $(P, \diamond, \fd, \partial)$ is an admissible
differential perm algebra if and only if for any $p_{1}, p_{2}\in P$,
$$
\partial(p_{1}\diamond p_{2})=p_{1}\diamond\partial(p_{2})-\fd(p_{1})\diamond p_{2}
=\partial(p_{1})\diamond p_{2}-p_{1}\diamond\fd(p_{2}).
$$
In particular, $(P, \diamond, \fd, -\fd)$ is an admissible differential perm algebra.

\section{Differential perm bialgebras}\label{sec:bialg}
In this section, we introduce the notion of the differential perm bialgebras,
and give their equivalence in terms of matched pairs and Manin triples of averaging algebras.
We first recall the concept of a matched pair of perm algebras.

\begin{defi}[\cite{Hou,LZB}] \label{def:mat-perm}
A {\bf matched pair of perm algebras} consists of two perm algebras $(P, \diamond)$,
$(Q, \diamond')$, and linear maps $\kl_{P}, \kr_{P}: P\rightarrow\gl(Q)$
and $\kl_{Q}, \kr_{Q}: Q\rightarrow\gl(P)$, such that $(P\oplus Q, \bar{\diamond})$ is
also a perm algebra, where $\bar{\diamond}$ is defined by
\begin{align*}
(p_{1}, q_{1})\bar{\diamond}(p_{2}, q_{2})=\Big(p_{1}\diamond p_{2}+\kl_{Q}(q_{1})(p_{2})
+\kr_{Q}(q_{2})(p_{1}),\quad  q_{1}\diamond'q_{2}+\kl_{P}(p_{1})(q_{2})
+\kr_{P}(p_{2})(q_{1})\Big),
\end{align*}
for all $p_{1}, p_{2}\in P$ and $q_{1}, q_{2}\in Q$. The matched pair is denoted by
$(P, Q, \kl_{P}, \kr_{P}, \kl_{Q}, \kr_{Q})$ and the resulting algebra is denoted
by $P\bowtie Q:=(P\oplus Q, \bar{\diamond})$.
\end{defi}

For a matched pair of perm algebras $(P, Q, \kl_{P}, \kr_{P}, \kl_{Q}, \kr_{Q})$,
it is easy to see that $(P, \kl_{Q}, \kr_{Q})$ is a bimodule over $(Q, \diamond')$ and
$(Q, \kl_{P}, \kr_{P})$ is a bimodule over $(P, \diamond)$.

\begin{defi}\label{def:mat-diperm}
Let $(P, \diamond, \fd_{P})$ and $(Q, \diamond', \fd_{Q})$ be two differential perm algebras.
Suppose that $\kl_{P}, \kr_{P}: P\rightarrow\gl(Q)$ and $\kl_{Q}, \kr_{Q}:
Q\rightarrow\gl(P)$ are linear maps. If the following conditions are satisfied:
\begin{enumerate}\itemsep=0pt
\item[$(i)$] $(P, \kl_{Q}, \kr_{Q}, \fd_{P})$ is a bimodule over $(Q, \diamond', \fd_{Q})$;
\item[$(ii)$] $(Q, \kl_{P}, \kr_{P}, \fd_{Q})$ is a bimodule over $(P, \diamond, \fd_{P})$;
\item[$(iii)$] $(P, Q, \kl_{P}, \kr_{P}, \kl_{Q}, \kr_{Q})$ is a matched pair of
     perm algebras,
\end{enumerate}
then $((P, \fd_{P}), (Q, \fd_{Q}), \kl_{P}, \kr_{P}, \kl_{Q}, \kr_{Q})$ is called a
{\bf matched pair of differential perm algebras}.
\end{defi}

\begin{pro}\label{pro:matda}
Let $(P, \diamond, \fd_{P})$ and $(Q, \diamond', \fd_{Q})$ be two differential perm algebras.
Suppose that $(P, Q, \kl_{P}$, $\kr_{P}, \kl_{Q}, \kr_{Q})$ is a matched pair
of perm algebras. Then $(P\bowtie Q, \fd_{P}\oplus\fd_{Q})$ is a differential perm
algebra if and only if $((P, \fd_{P}), (Q, \fd_{Q}), \kl_{P}, \kr_{P}, \kl_{Q}, \kr_{Q})$
is a matched pair of differential perm algebras.
\end{pro}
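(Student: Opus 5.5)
Since $(P, Q, \kl_{P}, \kr_{P}, \kl_{Q}, \kr_{Q})$ is already a matched pair of perm algebras, $P\bowtie Q$ is a perm algebra. So $(P\bowtie Q, \fd_{P}\oplus\fd_{Q})$ is a differential perm algebra exactly when $\fd:=\fd_{P}\oplus\fd_{Q}$ is a derivation of $\bar{\diamond}$. Condition $(iii)$ of Definition \ref{def:mat-diperm} holds by hypothesis, so the statement reduces to the following claim: $\fd$ is a derivation of $(P\oplus Q, \bar{\diamond})$ if and only if conditions $(i)$ and $(ii)$ hold. These say that $\fd_{P}$ intertwines the $Q$-actions $\kl_{Q},\kr_{Q}$ according to Definition \ref{def:anti-mod}, and symmetrically for $\fd_{Q}$.

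For the computation, take arbitrary $(p_{1},q_{1}),(p_{2},q_{2})\in P\oplus Q$ and expand
$$
\fd\big((p_{1},q_{1})\bar{\diamond}(p_{2},q_{2})\big)-\fd(p_{1},q_{1})\bar{\diamond}(p_{2},q_{2})-(p_{1},q_{1})\bar{\diamond}\fd(p_{2},q_{2}).
$$
By bilinearity this splits into four types of terms: $p\otimes p$, $q\otimes q$, $q\otimes p$ and $p\otimes q$. The $p\otimes p$ and $q\otimes q$ contributions vanish, because $\fd_{P}$ and $\fd_{Q}$ are derivations of $\diamond$ and $\diamond'$. The $P$-component of the $q_{1}\otimes p_{2}$ part is
$$
\fd_{P}(\kl_{Q}(q_{1})p_{2})-\kl_{Q}(\fd_{Q}q_{1})p_{2}-\kl_{Q}(q_{1})\fd_{P}p_{2},
$$
and its $Q$-component is the analogous expression in $\kr_{P}(p_{2})q_{1}$. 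The $p_{1}\otimes q_{2}$ part similarly gives the $\kr_{Q}$ and $\kl_{P}$ conditions.

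For the forward direction, assume $\fd$ is a derivation. Specializing to $(0,q_{1}),(p_{2},0)$ and then to $(p_{1},0),(0,q_{2})$, and separating components (they lie in $P$ and $Q$ respectively), we obtain exactly the four compatibility identities required in Definition \ref{def:anti-mod}. The bimodule axioms for perm algebras themselves are already available: as noted after Definition \ref{def:mat-perm}, they follow from the matched-pair hypothesis. This gives $(i)$ and $(ii)$. For the converse, the four identities kill all mixed terms, so $\fd$ is a derivation.

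There is no real obstacle here, only bookkeeping. The one point needing care is to match each mixed term of $\bar{\diamond}$ with the correct action map and argument order, namely $\kr_{Q}(q_{2})(p_{1})$ versus $\kl_{Q}(q_{1})(p_{2})$. One also has to observe that the identities obtained decouple, so the components can be separated by these specializations.
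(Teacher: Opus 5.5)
Your proposal is correct and is exactly the ``straightforward verification'' the paper invokes. You reduce to checking that $\fd_{P}\oplus\fd_{Q}$ is a derivation of $\bar{\diamond}$, isolate the four mixed-term identities of Definition~\ref{def:anti-mod} through the specializations $(0,q_{1}),(p_{2},0)$ and $(p_{1},0),(0,q_{2})$, and correctly note that the perm-bimodule parts of conditions $(i)$ and $(ii)$ already follow from the matched-pair hypothesis.
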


\begin{proof}
It follows from a straightforward verification.
\end{proof}

We next recall the concept of the Manin triple of perm algebra.

\begin{defi}\label{def:form}
Let $V$ be a vector space and $\mathfrak{B}(-,-): V\otimes V\rightarrow\Bbbk$ be a
bilinear form on $V$.
\begin{enumerate}\itemsep=0pt
\item[-] $\mathfrak{B}(-,-)$ is called {\bf nondegenerate} if $\mathfrak{B}(v_{1},\;
        v_{2})=0$ for any $v_{2}\in V$, then $v_{1}=0$;
\item[-] $\mathfrak{B}(-,-)$ is called {\bf skew-symmetric} if $\mathfrak{B}(v_{1},\; v_{2})
        =-\mathfrak{B}(v_{2},\; v_{1})$, for any $v_{1}, v_{2}\in V$;
\item[-] $\mathfrak{B}(-,-)$ is called {\bf symmetric} if $\mathfrak{B}(v_{1},\; v_{2})
        =\mathfrak{B}(v_{2},\; v_{1})$, for any $v_{1}, v_{2}\in V$;
\end{enumerate}
A bilinear form $\mathfrak{B}(-,-)$ on a perm algebra $(P, \diamond)$ is called
{\bf invariant} if
$$
\mathfrak{B}(p_{1}\diamond p_{2},\; p_{3})=\mathfrak{B}(p_{1},\; p_{2}\diamond p_{3}
-p_{3}\diamond p_{2}),
$$
for any $p_{1}, p_{2}, p_{3}\in P$. A {\bf quadratic perm algebra} $(P, \diamond,
\mathfrak{B})$ is a perm algebra $(P, \diamond)$ with a nondegenerate skew-symmetric
invariant bilinear form $\mathfrak{B}(-,-)$.
\end{defi}

Let $(P, \diamond)$ be a perm algebra. Suppose that there is a perm algebra structure
$\diamond'$ on its dual space $P^{\ast}$ and a perm algebra structure $\bar{\diamond}$
on the direct sum $P\oplus P^{\ast}$ of the underlying vector spaces $P$ and $P^{\ast}$,
which contains both $(P, \diamond)$ and $(P^{\ast}, \diamond')$ as subalgebras.
Define a bilinear form on $P\oplus P^{\ast}$ by
\begin{align*}
\mathfrak{B}_{d}\big((p_{1}, \xi_{1}),\; (p_{2}, \xi_{2})\big)
:=\langle\xi_{2},\; p_{1}\rangle-\langle\xi_{1},\; p_{2}\rangle,
\end{align*}
for any $p_{1}, p_{2}\in P$ and $\xi_{1}, \xi_{2}\in P^{\ast}$.
If $(P\oplus P^{\ast}, \bar{\diamond}, \mathfrak{B}_{d})$ is a quadratic perm algebra,
then the triple $((P\oplus P^{\ast}, \mathfrak{B}_{d}), (P, \diamond), (P^{\ast},
\diamond'))$ is called a (standard) {\bf Manin triple} of $(P, \diamond)$ and
$(P^{\ast}, \diamond')$. 

\begin{defi}\label{def:dif-qua}
A {\bf quadratic differential perm algebra} is a quadruple $(P, \diamond, \fd, \mathfrak{B})$,
where $(P, \diamond, \fd)$ is a differential perm algebra and $\mathfrak{B}(-,-)$ is a
nondegenerate skew-symmetric invariant bilinear form on $(P, \diamond)$.
A linear map $\widetilde{\fd}: P\rightarrow P$ is called {\bf the adjoint linear operator
of $\fd$} under the nondegenerate bilinear form $\mathfrak{B}(-,-)$, if for
any $p_{1}, p_{2}\in P$,
$$
\mathfrak{B}(\fd(p_{1}),\; p_{2})=\mathfrak{B}(p_{1},\; \widetilde{\fd}(p_{2})).
$$
\end{defi}

For the adjoint linear operator, we have the following conclusion.

\begin{pro}\label{pro:difperm-mod}
Let $(P, \diamond, \fd, \mathfrak{B})$ be a quadratic differential perm algebra,
and $\widetilde{\fd}$ be the adjoint of $\fd$ with respect to $\mathfrak{B}(-,-)$.
Then, $(P^{\ast}, -\fl_{P}^{\ast}, \fr_{P}^{\ast}-\fl_{P}^{\ast}, \widetilde{\fd}^{\ast})$
is a bimodule over the differential perm algebra $(P, \diamond, \fd)$, and as bimodules
over $(P, \diamond, \fd)$, $(P, \fl_{P}, \fr_{P}, \fd)$ and $(P^{\ast}, -\fl_{P}^{\ast},
\fr_{P}^{\ast}-\fl_{P}^{\ast}, \widetilde{\fd}^{\ast})$ are isomorphic.
Moreover, let $(P, \diamond, \fd)$ be a differential perm algebra, and
$\partial: P\rightarrow P$ be a linear map. If $(P^{\ast}, -\fl_{P}^{\ast},
\fr_{P}^{\ast}-\fl_{P}^{\ast}, \partial^{\ast})$ is a bimodule over $(P, \diamond, \fd)$,
and it is isomorphic to the regular bimodule $(P, \fl_{P}, \fr_{P}, \fd)$, then
there exists a nondegenerate invariant bilinear form $\mathfrak{B}(-,-)$ on
$(P, \diamond)$ such that $\partial=\widetilde{\fd}$.
\end{pro}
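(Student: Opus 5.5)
The natural candidate for the isomorphism is the map $\varphi: P\rightarrow P^{\ast}$ given by $\langle\varphi(p_{1}),\; p_{2}\rangle:=\mathfrak{B}(p_{1},\; p_{2})$. Nondegeneracy of $\mathfrak{B}$ and finite-dimensionality make $\varphi$ a linear bijection. I would first verify that $\varphi$ intertwines the perm actions; this already implies that $(P^{\ast}, -\fl_{P}^{\ast}, \fr_{P}^{\ast}-\fl_{P}^{\ast})$ is a bimodule over $(P, \diamond)$, which is in any case known for the dual bimodule. I would then check that $\varphi$ intertwines $\fd$ with $\widetilde{\fd}^{\ast}$. The bimodule axioms for $(P^{\ast}, -\fl_{P}^{\ast}, \fr_{P}^{\ast}-\fl_{P}^{\ast}, \widetilde{\fd}^{\ast})$ over $(P, \diamond, \fd)$ are then transported from the regular bimodule along $\varphi$, so no separate check via Proposition \ref{pro:dualmod} is needed.

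For the left actions, I need $\varphi(p_{1}\diamond p_{2})=-\fl_{P}^{\ast}(p_{1})\varphi(p_{2})$. Pairing with $p_{3}$, the left side is $\mathfrak{B}(p_{1}\diamond p_{2},\; p_{3})$ and the right side is $\mathfrak{B}(p_{2},\; p_{1}\diamond p_{3})$. Using skew-symmetry and invariance,
$\mathfrak{B}(p_{2}, p_{1}\diamond p_{3})=-\mathfrak{B}(p_{1}\diamond p_{3}, p_{2})=-\mathfrak{B}(p_{1}, p_{3}\diamond p_{2}-p_{2}\diamond p_{3})$,
while $\mathfrak{B}(p_{1}\diamond p_{2}, p_{3})=\mathfrak{B}(p_{1}, p_{2}\diamond p_{3}-p_{3}\diamond p_{2})$, so the two agree. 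For the right actions, I need $\varphi(p_{2}\diamond p_{1})=(\fr_{P}^{\ast}-\fl_{P}^{\ast})(p_{1})\varphi(p_{2})$. Pairing with $p_{3}$ gives $\mathfrak{B}(p_{2}\diamond p_{1}, p_{3})$ on the left and $-\mathfrak{B}(p_{2}, p_{3}\diamond p_{1})+\mathfrak{B}(p_{2}, p_{1}\diamond p_{3})$ on the right. This is exactly the invariance identity $\mathfrak{B}(p_{2}\diamond p_{1}, p_{3})=\mathfrak{B}(p_{2}, p_{1}\diamond p_{3}-p_{3}\diamond p_{1})$. For the derivation,
$\langle\widetilde{\fd}^{\ast}\varphi(p_{1}), p_{2}\rangle=\mathfrak{B}(p_{1}, \widetilde{\fd}(p_{2}))=\mathfrak{B}(\fd(p_{1}), p_{2})=\langle\varphi(\fd(p_{1})), p_{2}\rangle$.
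Hence $\varphi$ is an isomorphism of bimodules over $(P, \diamond, \fd)$.

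For the converse, let $\varphi: P\rightarrow P^{\ast}$ be a bimodule isomorphism from $(P, \fl_{P}, \fr_{P}, \fd)$ to $(P^{\ast}, -\fl_{P}^{\ast}, \fr_{P}^{\ast}-\fl_{P}^{\ast}, \partial^{\ast})$. Define $\mathfrak{B}(p_{1}, p_{2}):=\langle\varphi(p_{1}), p_{2}\rangle$, which is nondegenerate because $\varphi$ is injective. Compatibility with the left action gives $\mathfrak{B}(p_{1}\diamond p_{2}, p_{3})=\mathfrak{B}(p_{2}, p_{1}\diamond p_{3})$. Compatibility with the right action gives $\mathfrak{B}(p_{2}\diamond p_{1}, p_{3})=\mathfrak{B}(p_{2}, p_{1}\diamond p_{3}-p_{3}\diamond p_{1})$, which is exactly invariance. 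Finally, $\varphi\circ\fd=\partial^{\ast}\circ\varphi$ gives $\mathfrak{B}(\fd(p_{1}), p_{2})=\mathfrak{B}(p_{1}, \partial(p_{2}))$. Since $\mathfrak{B}$ is nondegenerate, the adjoint of $\fd$ is unique, so $\partial=\widetilde{\fd}$.

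The step I expect to need the most care is the bookkeeping of signs and orderings in the two action identities, since the convention $\psi^{\ast}(p)=-\psi(p)^{\ast}$ has to be combined with skew-symmetry in the correct slot. The converse statement asks only for a nondegenerate invariant form and does not assert skew-symmetry, so I would not try to derive skew-symmetry there.
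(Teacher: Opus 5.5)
Your proof is correct. The map $\varphi$, the three intertwining checks, and the converse are the same as in the paper. The one genuine difference is how you show that $(P^{\ast}, -\fl_{P}^{\ast}, \fr_{P}^{\ast}-\fl_{P}^{\ast}, \widetilde{\fd}^{\ast})$ is a bimodule over $(P, \diamond, \fd)$.

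\begin{itemize}
\item The paper first checks directly, using that $\fd$ is a derivation, that $(P, \fl_{P}, \fr_{P}, -\widetilde{\fd})$ is a bimodule over $(P, \diamond, \fd)$, i.e.\ that $-\widetilde{\fd}$ is admissible. It then invokes Proposition~\ref{pro:dualmod}, and only afterwards verifies that $\varphi$ is an isomorphism.
\item You verify the intertwining identities first and then transport the regular bimodule structure along the bijection $\varphi$. The first claim then comes for free, and the derivation property enters only through the fact that $(P, \fl_{P}, \fr_{P}, \fd)$ is a bimodule.
\end{itemize}

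Your route is shorter. The paper's route produces the admissibility of $-\widetilde{\fd}$ as an explicit intermediate fact, and Lemma~\ref{lem:douadm}(i) later cites exactly this, by appeal to the proof of the present proposition. You can recover it from your conclusion via the converse direction of the equivalence in Proposition~\ref{pro:dualmod}.

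You also make explicit two points the paper passes over:
\begin{itemize}
\item In the forward direction, the left-action identity $\mathfrak{B}(p_{1}\diamond p_{2}, p_{3})=\mathfrak{B}(p_{2}, p_{1}\diamond p_{3})$ needs skew-symmetry together with invariance.
\item In the converse, invariance follows from the right-action compatibility alone, which is consistent with the statement not asserting skew-symmetry there.
\end{itemize}
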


\begin{proof}
First, suppose that $(P, \diamond, \fd, \mathfrak{B})$ is a quadratic differential
perm algebra. For any $p_{1}, p_{2}, p_{3}\in P$, since
\begin{align*}
\mathfrak{B}(-\widetilde{\fd}(\fl_{P}(p_{1})(p_{2})),\; p_{3})
&=-\mathfrak{B}(\fl_{P}(p_{1})(p_{2}),\; \fd(p_{3}))
=-\mathfrak{B}(p_{2},\; p_{1}\diamond\fd(p_{3})),\\
\mathfrak{B}(\fl_{P}(\fd(p_{1}))(p_{2}),\; p_{3})
&=\mathfrak{B}(p_{2},\; \fd(p_{1})\diamond p_{3}),\\
\mathfrak{B}(\fl_{P}(p_{1})(-\widetilde{\fd}(p_{2})),\; p_{3})
&=-\mathfrak{B}(\widetilde{\fd}(p_{2}),\; p_{1}\diamond p_{3})
=-\mathfrak{B}(p_{2},\; \fd(p_{1}\diamond p_{3})),
\end{align*}
and $\fd$ is a derivation, we get $-\widetilde{\fd}(\fl_{P}(p_{1})(p_{2}))=
\fl_{P}(\fd(p_{1}))(p_{2})+\fl_{P}(p_{1})(-\widetilde{\fd}(p_{2}))$. Similarly,
we also have $-\widetilde{\fd}(\fr_{P}(p_{1})(p_{2}))=\fr_{P}(\fd(p_{1}))(p_{2})
+\fr_{P}(p_{1})(-\widetilde{\fd}(p_{2}))$. Thus, $(P, \fl_{P}, \fr_{P},
-\widetilde{\fd})$ is a bimodule over $(P, \diamond, \fd)$. By Proposition \ref{pro:dualmod},
we get that $(P^{\ast}, -\fl_{P}^{\ast}, \fr_{P}^{\ast}-\fl_{P}^{\ast},
\widetilde{\fd}^{\ast})$ is a bimodule over $(P, \diamond, \fd)$.

Define a linear map $\varphi: P\rightarrow P^{\ast}$ by
$\varphi(p_{1})(p_{2})=\mathfrak{B}(p_{1}, p_{2})$ for any $p_{1}, p_{2}\in P$.
Then, $\varphi$ is a linear isomorphism and $\varphi\circ\fd=\widetilde{\fd}^{\ast}
\circ\varphi$. Moreover, for any $p_{1}, p_{2}, p_{3}\in P$, we have
\begin{align*}
\langle\varphi(\fl_{P}(p_{1})(p_{2})),\; p_{3}\rangle
&=\mathfrak{B}(p_{1}\diamond p_{2},\; p_{3})
=\langle\varphi(p_{2}),\; p_{1}\diamond p_{3}\rangle
=-\langle\fl_{P}^{\ast}(p_{1})(\varphi(p_{2})),\; p_{3}\rangle, \\
\langle\varphi(\fr_{P}(p_{1})(p_{2})),\; p_{3}\rangle
&=\mathfrak{B}(p_{2}\diamond p_{1},\; p_{3})
=\langle\varphi(p_{2}),\; p_{1}\diamond p_{3}-p_{3}\diamond p_{1}\rangle
=-\langle(\fl_{P}^{\ast}-\fr_{P}^{\ast})(p_{1})(\varphi(p_{2})),\; p_{3}\rangle.
\end{align*}
Hence, $\varphi$ is an isomorphism of bimodules over $(P, \diamond, \fd)$.

Second, suppose that $\varphi: P\rightarrow P^{\ast}$ is the isomorphism
from $(P, \fl_{P}, \fr_{P}, \alpha)$ to $(P^{\ast}, -\fl_{P}^{\ast},
\fr_{P}^{\ast}-\fl_{P}^{\ast}, \partial^{\ast})$.
Define a bilinear form $\mathfrak{B}(-,-)$ on $P$ by $\mathfrak{B}(p_{1},
p_{2}):=\langle\varphi(p_{1}),\; p_{2}\rangle$ for any $p_{1}, p_{2}\in P$.
Then by a similar argument as above, we show that $\mathfrak{B}(-,-)$ is a nondegenerate
invariant bilinear form on $(P, \diamond)$ such that $\partial=\widetilde{\fd}$.
\end{proof}

We now give the notion of Manin triple of differential perm algebras.

\begin{defi}\label{def:doucon}
Let $(P, \diamond, \fd)$ and $((P^{\ast}, \diamond', \fd'))$ be two differential perm
algebras and $P\bowtie P^{\ast}=(P\oplus P^{\ast}, \bar{\diamond})$ be the perm algebra
given in Definition \ref{def:mat-perm}. If $(P\bowtie P^{\ast}, \fd\oplus\fd',
\mathfrak{B}_{d})$ is a quadratic differential perm algebra, then the triple
$((P\bowtie P^{\ast}, \fd\oplus\fd', \mathfrak{B}_{d}), P, P^{\ast})$ is called a
(standard) {\bf Manin triple} of $(P, \diamond, \fd)$ and $(P^{\ast}, \diamond', \fd')$.
\end{defi}

\begin{lem}\label{lem:douadm}
Let $((P\bowtie P^{\ast}, \fd\oplus\partial^{\ast}, \mathfrak{B}_{d}), P, P^{\ast})$ be a
Manin triple of differential perm algebras $(P, \diamond, \fd)$ and $(P^{\ast}, \diamond',
\partial^{\ast}))$. Then,
\begin{enumerate}\itemsep=0pt
\item[$(i)$] The adjoint $\widetilde{\fd\oplus\partial^{\ast}}$ of $\fd\oplus\partial^{\ast}$
     with respect to $\mathfrak{B}_{d}(-,-)$ is $\partial\oplus\fd^{\ast}$, and $(P\oplus
     P^{\ast}, \fl_{P\bowtie P^{\ast}}, \fr_{P\bowtie P^{\ast}}, -\partial\oplus\fd^{\ast})$
     is a bimodule over $(P\bowtie P^{\ast}, \fd\oplus\partial^{\ast})$;
\item[$(ii)$] $(P, \diamond, \fd, \partial)$ and $(P^{\ast}, \diamond', \partial^{\ast},
     \fd^{\ast})$ are admissible.
\end{enumerate}
\end{lem}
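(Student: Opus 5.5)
For part $(i)$, I will compute the adjoint directly from the definition of $\mathfrak{B}_{d}$. For $p_{1},p_{2}\in P$ and $\xi_{1},\xi_{2}\in P^{\ast}$,
\begin{align*}
\mathfrak{B}_{d}\big((\fd(p_{1}), \partial^{\ast}(\xi_{1})),\; (p_{2}, \xi_{2})\big)
&=\langle\xi_{2},\; \fd(p_{1})\rangle-\langle\partial^{\ast}(\xi_{1}),\; p_{2}\rangle\\
&=\langle\fd^{\ast}(\xi_{2}),\; p_{1}\rangle-\langle\xi_{1},\; \partial(p_{2})\rangle\\
&=\mathfrak{B}_{d}\big((p_{1}, \xi_{1}),\; (\partial(p_{2}), \fd^{\ast}(\xi_{2}))\big).
\end{align*}
Since $\mathfrak{B}_{d}$ is nondegenerate, the adjoint is unique, so $\widetilde{\fd\oplus\partial^{\ast}}=\partial\oplus\fd^{\ast}$.

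The bimodule claim then follows from the first part of the proof of Proposition \ref{pro:difperm-mod}, applied to the quadratic differential perm algebra $(P\bowtie P^{\ast}, \fd\oplus\partial^{\ast}, \mathfrak{B}_{d})$. That argument shows that if $\widetilde{D}$ is the adjoint of a derivation $D$ with respect to a nondegenerate skew-symmetric invariant form, then $(\,\cdot\,, \fl, \fr, -\widetilde{D})$ is a bimodule over $(\,\cdot\,, D)$. It uses only invariance, skew-symmetry and the derivation property. Taking $D=\fd\oplus\partial^{\ast}$ gives that $(P\oplus P^{\ast}, \fl_{P\bowtie P^{\ast}}, \fr_{P\bowtie P^{\ast}}, -(\partial\oplus\fd^{\ast}))$ is a bimodule, which is what the statement means by $-\partial\oplus\fd^{\ast}$.

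For part $(ii)$, I will use the characterization of admissibility at the end of Section \ref{sec:prel}: $(A, \diamond, D, \partial)$ is admissible if and only if $\partial(a_{1}a_{2})=a_{1}\partial(a_{2})-D(a_{1})a_{2}=\partial(a_{1})a_{2}-a_{1}D(a_{2})$.

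By part $(i)$, for all $x,y\in P\bowtie P^{\ast}$, writing $E=\partial\oplus\fd^{\ast}$ and $D=\fd\oplus\partial^{\ast}$, we have
\[
-E(x\bar{\diamond}y)=D(x)\bar{\diamond}y-x\bar{\diamond}E(y)
\]
from the left action, and
\[
-E(x\bar{\diamond}y)=x\bar{\diamond}D(y)-E(x)\bar{\diamond}y
\]
from the right action.

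Restricting to $x,y\in P$: since $P$ is a subalgebra and $E|_{P}=\partial$, $D|_{P}=\fd$, both identities stay inside $P$. They read exactly
\[
\partial(p_{1}\diamond p_{2})=p_{1}\diamond\partial(p_{2})-\fd(p_{1})\diamond p_{2}=\partial(p_{1})\diamond p_{2}-p_{1}\diamond\fd(p_{2}).
\]
So $(P,\diamond,\fd,\partial)$ is admissible.

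Likewise, restricting to $x,y\in P^{\ast}$, where $D|_{P^{\ast}}=\partial^{\ast}$ and $E|_{P^{\ast}}=\fd^{\ast}$, gives the admissibility of $(P^{\ast},\diamond',\partial^{\ast},\fd^{\ast})$.

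The main point needing care is the bimodule step in $(i)$. Proposition \ref{pro:difperm-mod} states the regular-bimodule conclusion only via the dual bimodule. I will therefore extract the intermediate statement from its proof, or redo the three-line computation for $\fl$ and the analogous one for $\fr$. The $\fr$ case uses the invariance in the form $\mathfrak{B}(p_{1}\diamond p_{2},p_{3})=\mathfrak{B}(p_{1},p_{2}\diamond p_{3}-p_{3}\diamond p_{2})$ together with skew-symmetry, and it is where sign errors are most likely.
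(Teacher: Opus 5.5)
Your proposal is correct and follows the paper's proof essentially step for step: the same direct computation of the adjoint of $\fd\oplus\partial^{\ast}$ with respect to $\mathfrak{B}_{d}$, the same appeal to the first half of the proof of Proposition~\ref{pro:difperm-mod} for the bimodule $(P\oplus P^{\ast}, \fl_{P\bowtie P^{\ast}}, \fr_{P\bowtie P^{\ast}}, -(\partial\oplus\fd^{\ast}))$, and the same restriction of the two bimodule identities to $P$ and to $P^{\ast}$ to get admissibility. Your reading of $-\partial\oplus\fd^{\ast}$ as $-(\partial\oplus\fd^{\ast})$ is the one the paper's proof uses. The invariance identity that step relies on, $\mathfrak{B}(p_{1}\diamond p_{2},p_{3})=\mathfrak{B}(p_{2},p_{1}\diamond p_{3})$, does follow from the stated invariance together with skew-symmetry.
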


\begin{proof}
$(i)$ For any $p_{1}, p_{2}\in P$ and $\xi_{1}, \xi_{2}\in P^{\ast}$, we have
\begin{align*}
\mathfrak{B}_{d}((\fd\oplus\partial^{\ast})(p_{1}, \xi_{1}),\; (p_{2}, \xi_{2}))
&=\langle\xi_{2},\; \fd(p_{1})\rangle-\langle\partial^{\ast}(\xi_{1}),\; p_{2}\rangle\\
&=\langle\fd^{\ast}(\xi_{2}),\; p_{1}\rangle-\langle\xi_{1},\; \partial(p_{2})\rangle
=\mathfrak{B}_{d}((p_{1}, \xi_{1}),\; (\partial\oplus\fd^{\ast})(p_{2}, \xi_{2})).
\end{align*}
That is to say, the adjoint of $\fd\oplus\partial^{\ast}$ with respect to
$\mathfrak{B}_{d}(-,-)$ is $\partial\oplus\fd^{\ast}$. Moreover, by the proof of
Proposition \ref{pro:difperm-mod}, we get $(P\oplus P^{\ast}, \fl_{P\bowtie P^{\ast}},
\fr_{P\bowtie P^{\ast}}, -(\partial\oplus\fd^{\ast}))$ is a bimodule over
$(P\bowtie P^{\ast}, \fd\oplus\partial^{\ast})$.

$(ii)$ Since $(P\oplus P^{\ast}, \fl_{P\bowtie P^{\ast}}, \fr_{P\bowtie P^{\ast}},
-\partial\oplus\fd^{\ast})$ is a bimodule over $(P\bowtie P^{\ast}, \fd\oplus
\partial^{\ast})$, we get for any $p_{1}, p_{2}\in P$ and $\xi_{1}, \xi_{2}\in P^{\ast}$,
we have
\begin{align*}
-(\partial\oplus\fd^{\ast})(\fl_{P\bowtie P^{\ast}}(p_{1}, \xi_{1})(p_{2}, \xi_{2}))
&=\fl_{P\bowtie P^{\ast}}((\fd\oplus\partial^{\ast})(p_{1}, \xi_{1}))(p_{2}, \xi_{2})
-\fl_{P\bowtie P^{\ast}}(p_{1}, \xi_{1})((\partial\oplus\fd^{\ast})(p_{2}, \xi_{1})),\\
-(\partial\oplus\fd^{\ast})(\fr_{P\bowtie P^{\ast}}(p_{1}, \xi_{1})(p_{2}, \xi_{2}))
&=\fr_{P\bowtie P^{\ast}}((\fd\oplus\partial^{\ast})(p_{1}, \xi_{1}))(p_{2}, \xi_{2})
-\fr_{P\bowtie P^{\ast}}(p_{1}, \xi_{1})((\partial\oplus\fd^{\ast})(p_{2}, \xi_{1})).
\end{align*}
Taking $\xi_{1}=\xi_{2}=0$ in the above equations, we get that $(P, \fl_{P}, \fr_{P},
-\partial)$ is a bimodule over $(P, \diamond, \fd)$, and taking $p_{1}=p_{2}=0$, we get
that $(P^{\ast}, \fl_{P^{\ast}}, \fr_{P^{\ast}}, -\fd^{\ast})$ is a bimodule over
$(P^{\ast}, \diamond', \partial^{\ast})$.
\end{proof}

\begin{pro}\label{pro:dou-mat}
Let $(P, \diamond, \fd)$ be a differential perm algebra and $\partial: P\rightarrow P$
be a linear map. Suppose that there is a bilinear map $\diamond': P^{\ast}\otimes P^{\ast}
\rightarrow P^{\ast}$ such that $(P^{\ast}, \diamond', \partial^{\ast})$ is a differential
perm algebra. Then $((P\bowtie P^{\ast}, \fd\oplus\partial^{\ast}, \mathfrak{B}_{d}), P,
P^{\ast})$ is a Manin triple of differential perm algebras $(P, \diamond, \fd)$ and
$(P^{\ast}, \diamond', \partial^{\ast})$ if and only if $((P, \fd), (P^{\ast},
\partial^{\ast}), -\fl_{P}^{\ast}, \fr_{P}^{\ast}-\fl_{P}^{\ast}, -\fl_{P^{\ast}}^{\ast},
\fr_{P^{\ast}}^{\ast}-\fl_{P^{\ast}}^{\ast})$ is a matched pair of differential perm algebras.
\end{pro}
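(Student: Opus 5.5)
We reduce to the known non-differential statement and then treat the differential conditions separately. By the perm-algebra theory of \cite{Hou,LZB}, $(P\oplus P^{\ast}, \bar{\diamond}, \mathfrak{B}_{d})$ is a (standard) Manin triple of perm algebras, i.e. $\mathfrak{B}_{d}$ is invariant on $P\bowtie P^{\ast}$, if and only if $(P, P^{\ast}, -\fl_{P}^{\ast}, \fr_{P}^{\ast}-\fl_{P}^{\ast}, -\fl_{P^{\ast}}^{\ast}, \fr_{P^{\ast}}^{\ast}-\fl_{P^{\ast}}^{\ast})$ is a matched pair of perm algebras. We take this equivalence as given; if a self-contained check is wanted, expand $\mathfrak{B}_{d}(x\bar{\diamond}y, z)=\mathfrak{B}_{d}(x, y\bar{\diamond}z-z\bar{\diamond}y)$ on mixed triples of elements of $P$ and $P^{\ast}$. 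This forces the mixed products to be given exactly by the coregular actions, and conversely these actions make the form invariant. It then remains to compare the differential requirements on the two sides.

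For the Manin-triple side, the extra condition is that $\fd\oplus\partial^{\ast}$ be a derivation of $P\bowtie P^{\ast}$. For the matched-pair side, by Definition \ref{def:mat-diperm} and Proposition \ref{pro:matda}, the extra condition is that $(P^{\ast}, -\fl_{P}^{\ast}, \fr_{P}^{\ast}-\fl_{P}^{\ast}, \partial^{\ast})$ be a bimodule over $(P, \diamond, \fd)$ and that $(P, -\fl_{P^{\ast}}^{\ast}, \fr_{P^{\ast}}^{\ast}-\fl_{P^{\ast}}^{\ast}, \fd)$ be a bimodule over $(P^{\ast}, \diamond', \partial^{\ast})$. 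Here we use $\fd=(\fd^{\ast})^{\ast}$ on $P=P^{\ast\ast}$. Proposition \ref{pro:matda} already says these two bimodule conditions together are equivalent to $\fd\oplus\partial^{\ast}$ being a derivation of the perm algebra $P\bowtie P^{\ast}$, once the matched pair of perm algebras is in place.

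Hence the two statements coincide. Assume the Manin triple. The perm-level equivalence gives the matched pair of perm algebras, and $\fd\oplus\partial^{\ast}$ is a derivation of $P\bowtie P^{\ast}$ by definition. Proposition \ref{pro:matda} then upgrades this to a matched pair of differential perm algebras. Conversely, assume the differential matched pair. Proposition \ref{pro:matda} gives that $(P\bowtie P^{\ast}, \fd\oplus\partial^{\ast})$ is a differential perm algebra, and the perm-level equivalence gives invariance of $\mathfrak{B}_{d}$. Nondegeneracy and skew-symmetry of $\mathfrak{B}_{d}$ are automatic.

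As a consistency check, we also note that the bimodule conditions can be translated via Proposition \ref{pro:dualmod}. They are equivalent to $(P, \diamond, \fd, \partial)$ and $(P^{\ast}, \diamond', \partial^{\ast}, \fd^{\ast})$ being admissible, which matches Lemma \ref{lem:douadm}(ii). The main obstacle is the perm-level equivalence, in particular confirming that invariance forces precisely the actions $-\fl^{\ast}$ and $\fr^{\ast}-\fl^{\ast}$ with the paper's sign conventions for the skew-symmetric $\mathfrak{B}_{d}$. Our plan is to verify this through the four mixed pairings $\mathfrak{B}_{d}(p_{1}\bar{\diamond}\xi, p_{2})$, $\mathfrak{B}_{d}(\xi\bar{\diamond}p_{1}, p_{2})$ and their duals.
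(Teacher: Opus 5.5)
Your proof is correct. It has the same skeleton as the paper's: both quote the perm-level equivalence between Manin triples and matched pairs with the coregular actions, and both use Proposition \ref{pro:matda} for the converse direction. You differ only in the forward direction.

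\emph{The paper's route.} It obtains the two bimodule conditions indirectly. It uses the invariant form $\mathfrak{B}_{d}$ to identify the adjoint of $\fd\oplus\partial^{\ast}$ as $\partial\oplus\fd^{\ast}$. From this, Lemma \ref{lem:douadm} gives that $(P,\diamond,\fd,\partial)$ and $(P^{\ast},\diamond',\partial^{\ast},\fd^{\ast})$ are admissible. It then dualizes these via Proposition \ref{pro:dualmod}.

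\emph{Your route.} You observe that, once the perm matched pair is in place, the derivation property of $\fd\oplus\partial^{\ast}$ on $P\bowtie P^{\ast}$ is literally the mixed bimodule conditions. So Proposition \ref{pro:matda} applies symmetrically in both directions.

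\emph{Comparison.} Your argument is shorter and makes clear that, in the differential part, the form plays no role beyond fixing the coregular actions. The paper's detour has the side benefit of exhibiting the admissibility conditions explicitly. These are the conditions that enter Definition \ref{def:avebialg} and Theorem \ref{thm:equ}, and your closing consistency check via Proposition \ref{pro:dualmod} recovers exactly them.
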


\begin{proof}
By \cite{Hou,LZB}, we get $(P, P^{\ast}, -\fl_{P}^{\ast}, \fr_{P}^{\ast}-\fl_{P}^{\ast},
-\fl_{P^{\ast}}^{\ast}, \fr_{P^{\ast}}^{\ast}-\fl_{P^{\ast}}^{\ast})$ is a matched
pair of perm algebras if and only if $((P\bowtie P^{\ast}, \mathfrak{B}_{d}), P,
P^{\ast})$ is a Manin triple of perm algebras. Now, if $((P\bowtie P^{\ast},
\fd\oplus\partial^{\ast}, \mathfrak{B}_{d}), P, P^{\ast})$ is a Manin triple of
differential perm algebras, by Lemma \ref{lem:douadm}, $(P, \fl_{P}, \fr_{P}, -\partial)$
is a bimodule over $(P, \diamond, \fd)$, and so that $(P^{\ast}, -\fl_{P}^{\ast},
\fr_{P}^{\ast}-\fl_{P}^{\ast}, \partial^{\ast})$ is a bimodule over $(P, \diamond, \fd)$.
Similarly, $(P, -\fl_{P^{\ast}}^{\ast}, \fr_{P^{\ast}}^{\ast}-\fl_{P^{\ast}}^{\ast},
\fd)$ is a bimodule over $(P^{\ast}, \diamond', \partial^{\ast})$. Thus, $((P, \fd),
(P^{\ast}, \partial^{\ast}), -\fl_{P}^{\ast}, \fr_{P}^{\ast}-\fl_{P}^{\ast},
-\fl_{P^{\ast}}^{\ast}, \fr_{P^{\ast}}^{\ast}-\fl_{P^{\ast}}^{\ast})$ is a matched
pair of differential perm algebras.

Conversely, if $((P, \fd), (P^{\ast}, \partial^{\ast}), -\fl_{P}^{\ast}, \fr_{P}^{\ast}
-\fl_{P}^{\ast}, -\fl_{P^{\ast}}^{\ast}, \fr_{P^{\ast}}^{\ast}-\fl_{P^{\ast}}^{\ast})$
is a matched pair of differential perm algebras, by Proposition \ref{pro:matda},
we get $((P\bowtie P^{\ast}, \fd\oplus\partial^{\ast}, \mathfrak{B}_{d}), P,
P^{\ast})$ is a Manin triple of differential perm algebras.
\end{proof}

Next, we consider the bialgebras of differential perm algebras.
Recall that a {\bf perm coalgebra} $(P, \vartheta)$ is a vector space $P$
with a linear map $\vartheta: P\rightarrow P\otimes P$ satisfying:
$$
(\vartheta\otimes\id)\circ\vartheta=(\id\otimes\vartheta)\circ\vartheta
=((\tau\circ\vartheta)\otimes\id)\circ\vartheta,
$$
where $\tau: P\otimes P\rightarrow P\otimes P$ is the twist map defined by
$\tau(p_{1}\otimes p_{2}):=p_{2}\otimes p_{1}$ for all $p_{1}, p_{2}\in P$.

\begin{defi}[\cite{Hou,LZB}]\label{def:permbialgs}
A {\bf perm bialgebra} is a triple $(P, \diamond, \vartheta)$ consisting of a vector space
$P$ and two linear maps $\diamond: P\otimes P\rightarrow P$, $\vartheta: P\rightarrow
P\otimes P$, such that $(P, \diamond)$ is a perm algebra, $(P, \vartheta)$ is a perm
coalgebra and
\begin{align}
&\qquad\qquad (\fr_{P}(p_{1})\otimes\id)(\vartheta(p_{2}))
=\tau((\fr_{P}(p_{2})\otimes\id)(\vartheta(p_{1}))),                     \label{bialg1} \\
&\qquad \vartheta(p_{1}\diamond p_{2})=((\fl_{P}-\fr_{P})(p_{1})\otimes\id)(\vartheta(p_{2}))
+(\id\otimes\fr_{P}(p_{2}))(\vartheta(p_{1})),                          \label{bialg2} \\
& \vartheta(p_{1}\diamond p_{2})=(\id\otimes\fl_{P}(p_{1}))(\vartheta(p_{2}))
+((\fl_{P}-\fr_{P})(p_{2})\otimes\id)(\vartheta(p_{1})-\tau(\vartheta(p_{1}))),\label{bialg3}
\end{align}
for any $p_{1}, p_{2}\in P$.
\end{defi}

Let $(P, \vartheta)$ be a perm coalgebra. A linear map $\partial: P\rightarrow P$ is called
a {\bf coderivation} on $(P, \vartheta)$ if $\vartheta\circ\partial=(\id\otimes\partial
+\partial\otimes\id)\circ\vartheta$. A {\bf codifferential perm coalgebra} is a triple
$(P, \vartheta, \partial)$, consisting of a perm coalgebra $(P, \vartheta)$ and a
coderivation $\partial: P\rightarrow P$. Let $(P, \vartheta, \partial)$ be a codifferential
perm coalgebra and $\fd: P\rightarrow P$ be a linear map. If
$$
\vartheta\circ\fd=(\id\otimes\fd-\partial\otimes\id)\circ\vartheta
=(\fd\otimes\id-\id\otimes\partial)\circ\vartheta,
$$
then $(P, \vartheta, \partial, \fd)$ is said to be an {\bf admissible codifferential
perm coalgebra}. The notion of an admissible codifferential perm coalgebra
is the dualization of an admissible differential perm algebra, that is,
$(P, \vartheta, \partial, \fd)$ is an admissible codifferential perm coalgebra if
and only if $(P^{\ast}, \vartheta^{\ast}, \partial^{\ast}, \fd^{\ast})$ is an admissible
differential perm algebra.

\begin{defi}\label{def:avebialg}
A {\bf differential perm bialgebra} is a quintuple $(P, \diamond, \vartheta,
\fd, \partial)$ satisfying
\begin{enumerate}\itemsep=0pt
\item[-] $(P, \diamond, \vartheta)$ is a perm bialgebra;
\item[-] $(P, \diamond, \fd, \partial)$ is an admissible differential perm algebra;
\item[-] $(P, \vartheta, \partial, \fd)$ is an admissible codifferential perm coalgebra.
\end{enumerate}
\end{defi}

Let $(P, \diamond, \vartheta)$ be a perm bialgebra. A linear map $\fd: P\rightarrow P$
is called {\bf derivation on} $(P, \diamond, \vartheta)$ if $\fd$ is both a derivation on
$(P, \diamond)$ and a coderivation on $(P, \vartheta)$. Clearly, if $\fd$ is a derivation
on $(P, \diamond, \vartheta)$, then $(P, \diamond, \vartheta, \fd, -\fd)$ is a
differential perm bialgebra.

\begin{pro}\label{pro:bi-mat}
Let $(P, \diamond, \fd)$ be a differential perm algebra and $\partial: P\rightarrow P$
be a linear map. Suppose that there is a linear map $\vartheta: P\rightarrow P\otimes P$
such that $(P, \vartheta, \partial)$ is a codifferential perm coalgebra. Then the quintuple
$(P, \diamond, \vartheta, \fd, \partial)$ is a differential perm bialgebra if and
only if $((P, \fd), (P^{\ast}, \partial^{\ast}), -\fl_{P}^{\ast}, \fr_{P}^{\ast}
-\fl_{P}^{\ast}, -\fl_{P^{\ast}}^{\ast}, \fr_{P^{\ast}}^{\ast}-\fl_{P^{\ast}}^{\ast})$
is a matched pair of differential perm algebras, where $(P^{\ast}, \vartheta^{\ast},
\partial^{\ast})$ is the dual algebra of $(P, \vartheta, \partial)$.
\end{pro}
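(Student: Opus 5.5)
We split the statement into its perm-algebra part and its differential part, and compare the two sides condition by condition. By definition, $(P, \diamond, \vartheta, \fd, \partial)$ is a differential perm bialgebra exactly when three things hold: $(P, \diamond, \vartheta)$ is a perm bialgebra, $(P, \diamond, \fd, \partial)$ is admissible, and $(P, \vartheta, \partial, \fd)$ is an admissible codifferential perm coalgebra. By Definition \ref{def:mat-diperm}, the matched pair condition in the statement also consists of three parts:
\begin{enumerate}
\item[(a)] $(P, P^{\ast}, -\fl_{P}^{\ast}, \fr_{P}^{\ast}-\fl_{P}^{\ast}, -\fl_{P^{\ast}}^{\ast}, \fr_{P^{\ast}}^{\ast}-\fl_{P^{\ast}}^{\ast})$ is a matched pair of perm algebras;
\item[(b)] $(P^{\ast}, -\fl_{P}^{\ast}, \fr_{P}^{\ast}-\fl_{P}^{\ast}, \partial^{\ast})$ is a bimodule over $(P, \diamond, \fd)$;
\item[(c)] $(P, -\fl_{P^{\ast}}^{\ast}, \fr_{P^{\ast}}^{\ast}-\fl_{P^{\ast}}^{\ast}, \fd)$ is a bimodule over $(P^{\ast}, \diamond', \partial^{\ast})$, where $\diamond'=\vartheta^{\ast}$.
\end{enumerate}
We match the bialgebra conditions with (a), (b), (c) in turn.

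For (a), we invoke the known result from \cite{Hou,LZB}: $(P, \diamond, \vartheta)$ is a perm bialgebra if and only if the pair in (a) is a matched pair of perm algebras. This handles the non-differential part. The hypothesis that $(P, \vartheta)$ is a perm coalgebra guarantees that $(P^{\ast}, \vartheta^{\ast})$ is a perm algebra. The hypothesis that $\partial$ is a coderivation guarantees that $\partial^{\ast}$ is a derivation, so $(P^{\ast}, \diamond', \partial^{\ast})$ is indeed a differential perm algebra.

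For (b), we apply Proposition \ref{pro:dualmod} to the regular bimodule $(P, \fl_{P}, \fr_{P})$ with $\partial$ in the role of the map on $V$. The proposition says that (b) holds if and only if
$$\partial(p_{1}\diamond p_{2})=p_{1}\diamond\partial(p_{2})-\fd(p_{1})\diamond p_{2}, \qquad \partial(p_{2}\diamond p_{1})=\partial(p_{2})\diamond p_{1}-p_{2}\diamond\fd(p_{1}).$$
This is exactly the characterization of $(P, \diamond, \fd, \partial)$ being admissible stated at the end of Section \ref{sec:prel}.

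For (c), we apply Proposition \ref{pro:dualmod} again, now to the differential perm algebra $(P^{\ast}, \diamond', \partial^{\ast})$ with its regular bimodule and the map $\fd^{\ast}$ on $P^{\ast}$. Using $(\fd^{\ast})^{\ast}=\fd$ under $P^{\ast\ast}=P$, condition (c) is equivalent to $(P^{\ast}, \diamond', \partial^{\ast}, \fd^{\ast})$ being an admissible differential perm algebra. By the stated duality between admissible codifferential perm coalgebras and admissible differential perm algebras, this holds if and only if $(P, \vartheta, \partial, \fd)$ is admissible.

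Combining the three equivalences proves the proposition. The only point needing care is (c), where we must check the identifications: that $(P^{\ast})^{\ast}=P$ carries the dual maps in the right order, and that the sign conventions $\psi^{\ast}(p)=-\psi(p)^{\ast}$ make the two admissibility identities dualize exactly onto the coalgebra conditions $\vartheta\circ\fd=(\id\otimes\fd-\partial\otimes\id)\circ\vartheta=(\fd\otimes\id-\id\otimes\partial)\circ\vartheta$. We would confirm this by pairing both sides with $\xi_{1}\otimes\xi_{2}$ and reading off the coalgebra conditions directly.
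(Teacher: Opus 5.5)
Your proposal is correct and follows the same route as the paper. Like the paper, you split the statement into the perm-level equivalence (perm bialgebra $\Leftrightarrow$ matched pair of perm algebras, from \cite{Hou,LZB}) and the two bimodule conditions, which correspond to admissibility of $(P,\diamond,\fd,\partial)$ and of $(P,\vartheta,\partial,\fd)$. You justify those two correspondences explicitly via Proposition \ref{pro:dualmod}, applied once to $P$ and once to $P^{\ast}$; the paper simply asserts them as following ``by the definition.''
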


\begin{proof}
If the quintuple $(P, \diamond, \vartheta, \fd, \partial)$ is a differential perm
bialgebra, then $(P, \diamond, \vartheta)$ is a perm bialgebra, and so that $(P,
P^{\ast}, -\fl_{P}^{\ast}, \fr_{P}^{\ast}-\fl_{P}^{\ast}, -\fl_{P^{\ast}}^{\ast},
\fr_{P^{\ast}}^{\ast}-\fl_{P^{\ast}}^{\ast})$ is a matched pair of perm algebras.
Moreover, by the definition of differential perm bialgebra, we get $(P^{\ast},
-\fl_{P}^{\ast}, \fr_{P}^{\ast}-\fl_{P}^{\ast}, \partial^{\ast})$ is a bimodule
over $(P, \diamond, \fd)$, and $(P, -\fl_{P^{\ast}}^{\ast}, \fr_{P^{\ast}}^{\ast}
-\fl_{P^{\ast}}^{\ast}, \fd)$ is a bimodule over $(P^{\ast}, \vartheta^{\ast},
\partial^{\ast})$. Hence, $((P, \fd), (P^{\ast}, \partial^{\ast}), -\fl_{P}^{\ast},
\fr_{P}^{\ast}-\fl_{P}^{\ast}, -\fl_{P^{\ast}}^{\ast}, \fr_{P^{\ast}}^{\ast}
-\fl_{P^{\ast}}^{\ast})$ is a matched pair of differential perm algebras.

Conversely, if $((P, \fd), (P^{\ast}, \partial^{\ast}), -\fl_{P}^{\ast}, \fr_{P}^{\ast}
-\fl_{P}^{\ast}, -\fl_{P^{\ast}}^{\ast}, \fr_{P^{\ast}}^{\ast}-\fl_{P^{\ast}}^{\ast})$
is a matched pair of differential perm algebras, then $(P, P^{\ast}, -\fl_{P}^{\ast},
\fr_{P}^{\ast}-\fl_{P}^{\ast}, -\fl_{P^{\ast}}^{\ast}, \fr_{P^{\ast}}^{\ast}
-\fl_{P^{\ast}}^{\ast})$ is a matched pair of perm algebras, and so that,
$(P, \diamond, \vartheta)$ is a perm bialgebra. Moreover, by the definition of matched
pair of differential perm algebras, we get $(P^{\ast}, -\fl_{P}^{\ast}, \fr_{P}^{\ast}
-\fl_{P}^{\ast}, \partial^{\ast})$ is a bimodule over $(P, \diamond, \fd)$ and $(P,
-\fl_{P^{\ast}}^{\ast}, \fr_{P^{\ast}}^{\ast}-\fl_{P^{\ast}}^{\ast}, \fd)$ is a
bimodule over $(P^{\ast}, \vartheta^{\ast}, \partial^{\ast})$. Thus,
$(P, \diamond, \vartheta, \fd, \partial)$ is a differential perm bialgebra.
\end{proof}

Combining Propositions \ref{pro:dou-mat} and \ref{pro:bi-mat},
we have the following conclusion.

\begin{thm}\label{thm:equ}
Let $(P, \diamond, \fd)$ be a differential perm algebra and $\partial: P\rightarrow P$ be
a linear map. Suppose that there is a linear map $\vartheta: P\rightarrow
P\otimes P$ such that $(P^{\ast}, \vartheta^{\ast}, \partial^{\ast})$
is a differential perm algebra. Then the following conditions are equivalent:
\begin{enumerate}\itemsep=0pt
\item[$(i)$] $((P\bowtie P^{\ast}, \fd\oplus\partial^{\ast}, \mathfrak{B}_{d}), P,
     P^{\ast})$ is a Manin triple of differential perm algebras $(P, \diamond, \fd)$ and
     $(P^{\ast}, \vartheta^{\ast}, \partial^{\ast})$;
\item[$(ii)$] $((P, \fd), (P^{\ast}, \partial^{\ast}), -\fl_{P}^{\ast}, \fr_{P}^{\ast}
     -\fl_{P}^{\ast}, -\fl_{P^{\ast}}^{\ast}, \fr_{P^{\ast}}^{\ast}-\fl_{P^{\ast}}^{\ast})$
     is a matched pair of differential perm algebras;
\item[$(iii)$] $(P, \diamond, \vartheta, \fd, \partial)$ is a differential perm bialgebra.
\end{enumerate}
\end{thm}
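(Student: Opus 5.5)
The plan is to obtain Theorem~\ref{thm:equ} by chaining Propositions~\ref{pro:dou-mat} and \ref{pro:bi-mat}. First I will check that the hypotheses of the theorem put us in the setting of both propositions.

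Since $(P^{\ast}, \vartheta^{\ast}, \partial^{\ast})$ is assumed to be a differential perm algebra, setting $\diamond':=\vartheta^{\ast}$ gives exactly the hypothesis of Proposition~\ref{pro:dou-mat}. That proposition then yields the equivalence $(i)\Leftrightarrow(ii)$ at once.

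For $(ii)\Leftrightarrow(iii)$ I will invoke Proposition~\ref{pro:bi-mat}. Its hypothesis is that $(P, \vartheta, \partial)$ is a codifferential perm coalgebra, so I need to dualize the assumption. Finite-dimensionality identifies $P^{\ast\ast}=P$ and $(P\otimes P)^{\ast}=P^{\ast}\otimes P^{\ast}$. Under these identifications:
\begin{enumerate}
\item[(a)] the perm identities for $\vartheta^{\ast}$ transpose to the perm coalgebra identities $(\vartheta\otimes\id)\vartheta=(\id\otimes\vartheta)\vartheta=((\tau\vartheta)\otimes\id)\vartheta$;
\item[(b)] the statement that $\partial^{\ast}$ is a derivation of $\vartheta^{\ast}$ transposes to $\vartheta\circ\partial=(\partial\otimes\id+\id\otimes\partial)\circ\vartheta$.
\end{enumerate}
For (b), one pairs $\partial^{\ast}(\xi_1\diamond'\xi_2)$ with $p\in P$ and uses $\langle\xi_1\otimes\xi_2,\vartheta(p)\rangle=\langle\xi_1\diamond'\xi_2,p\rangle$. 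Hence $(P,\vartheta,\partial)$ is a codifferential perm coalgebra, and Proposition~\ref{pro:bi-mat} applies with the same dual algebra $(P^{\ast},\vartheta^{\ast},\partial^{\ast})$.

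The only real obstacle is bookkeeping. The matched-pair data in Propositions~\ref{pro:dou-mat} and \ref{pro:bi-mat} must be literally identical: the same representations $-\fl_{P}^{\ast}, \fr_{P}^{\ast}-\fl_{P}^{\ast}, -\fl_{P^{\ast}}^{\ast}, \fr_{P^{\ast}}^{\ast}-\fl_{P^{\ast}}^{\ast}$ and the same derivations $\fd$ and $\partial^{\ast}$. I will check this before concluding, and then combining the two equivalences gives $(i)\Leftrightarrow(ii)\Leftrightarrow(iii)$.
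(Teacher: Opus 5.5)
Your proposal is correct and takes the same route as the paper, which proves the theorem simply by combining Propositions \ref{pro:dou-mat} and \ref{pro:bi-mat}. Your dualization check, showing that $(P^{\ast},\vartheta^{\ast},\partial^{\ast})$ being a differential perm algebra is equivalent to $(P,\vartheta,\partial)$ being a codifferential perm coalgebra, spells out the hypothesis-matching for Proposition \ref{pro:bi-mat} that the paper leaves implicit.
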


\section{Quasi-triangular differential perm bialgebras and perm Yang-Baxter equation}
\label{sec:quasi}
In this section, we study the quasi-triangular differential perm bialgebras
and some special quasi-triangular differential perm bialgebras. Moreover,
introduce the notion of the differential perm Yang-Baxter equation in a differential
perm algebra. First, we give the notion of coboundary differential perm bialgebras.
A differential perm bialgebra is called coboundary if it as a perm bialgebra is coboundary.

\begin{defi}\label{def:cob}
A differential perm bialgebra $(P, \diamond, \vartheta, \fd, \partial)$ is called
{\bf coboundary} if there exists an element $r\in P\otimes P$, such that
\begin{align}
\vartheta(p)=\vartheta_{r}(p):=((\fl_{P}-\fr_{P})(p)\otimes\id
-\id\otimes\fr_{P}(p))(r),\label{cobo}
\end{align}
for any $p\in P$. We call $(P, \diamond, \vartheta_{r}, \fd, \partial)$ a differential
perm bialgebra induced by $r$.
\end{defi}

Let $(P, \diamond)$ be a perm algebra and $r=\sum_{i}x_{i}\otimes y_{i}\in P\otimes P$.
Then
$$
\mathbf{P}_{r}:=r_{12}\diamond r_{23}-r_{13}\diamond r_{23}
+r_{12}\diamond r_{13}-r_{13}\diamond r_{12}=0
$$
is called the {\bf perm Yang-Baxter equation} (or $\PYBE$) in $(P, \diamond)$,
where $r_{12}\diamond r_{23}:=\sum_{i,j}x_{i}\otimes(y_{i}\diamond x_{j})\otimes y_{j}$,
$r_{13}\diamond r_{23}:=\sum_{i,j}x_{i}\otimes x_{j}\otimes(y_{i}\diamond y_{j})$,
$r_{12}\diamond r_{13}:=\sum_{i,j}(x_{i}\diamond x_{j})\otimes y_{i}\otimes y_{j}$,
$r_{13}\diamond r_{12}:=\sum_{i,j}(x_{i}\diamond x_{j})\otimes y_{j}\otimes y_{i}$.
An element $r\in P\otimes P$ is called {\bf perm-invariant} if
$$
\vartheta_{r}(p):=((\fl_{P}-\fr_{P})(p)\otimes\id-\id\otimes\fr_{P}(p))(r)=0
$$
for any $p\in P$, $r$ is called {\bf symmetric} if $r=\tau(r)$.
Clearly, $r-\tau(r)=0$ is perm-invariant if $r$ is symmetric.

\begin{pro}[\cite{Lin}]\label{pro:quasi-ba}
Let $(P, \diamond)$ be a perm algebra and $r\in P\otimes P$.
If $r$ is a solution of the $\PYBE$ in $(P, \diamond)$ and $r-\tau(r)$ is perm-invariant,
then the $(P, \diamond, \vartheta_{r})$ with $\vartheta_{r}$ defined by Eq. \eqref{cobo} is
a perm bialgebra, which is called a {\bf quasi-triangular perm bialgebra}.
In particular, if $r$ is symmetric solution of the $\PYBE$ in $(P, \diamond)$, then
$(P, \diamond, \vartheta_{r})$ is called a {\bf triangular perm bialgebra}.
\end{pro}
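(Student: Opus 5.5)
Since $\vartheta_r$ is given by Eq. \eqref{cobo}, I will check the three conditions of Definition \ref{def:permbialgs} directly: Eqs. \eqref{bialg1}--\eqref{bialg3} and the perm coalgebra axioms. Throughout I write $r=\sum_i x_i\otimes y_i$ and expand everything in terms of the $x_i,y_i$, using only the perm identities $(p_1\diamond p_2)\diamond p_3=(p_2\diamond p_1)\diamond p_3=p_1\diamond(p_2\diamond p_3)$.

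\emph{Step 1: compatibility conditions.} For \eqref{bialg2}, I expand $\vartheta_r(p_1\diamond p_2)$ using $(\fl_P-\fr_P)(p_1\diamond p_2)$ and $\fr_P(p_1\diamond p_2)=\fr_P(p_2)\fr_P(p_1)$. I expect it to hold identically for every $r$, since the perm relations make $\vartheta_r$ a $1$-cocycle-type map for the relevant bimodule. For \eqref{bialg1} and \eqref{bialg3}, I expect the discrepancy to be a linear expression in $r-\tau(r)$. For instance, the difference between the two sides of \eqref{bialg3} should be something like
\[
((\fl_P-\fr_P)(p_2)\otimes \id)\bigl(\vartheta_{r-\tau(r)}(p_1)\bigr)
\]
plus similar terms, all built from the operator $p\mapsto((\fl_P-\fr_P)(p)\otimes\id-\id\otimes\fr_P(p))$ applied to $r-\tau(r)$. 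The perm-invariance of $r-\tau(r)$ then kills these terms.

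\emph{Step 2: coassociativity-type identities.} I compute
\[
(\vartheta_r\otimes\id)\vartheta_r(p)-(\id\otimes\vartheta_r)\vartheta_r(p)
\]
and
\[
(\id\otimes\vartheta_r)\vartheta_r(p)-((\tau\vartheta_r)\otimes\id)\vartheta_r(p).
\]
The aim is to write each difference as a sum of operators of the form $\Phi(p)=A(p)\otimes B\otimes C$, built from $\fl_P,\fr_P$, acting on $\mathbf{P}_r$ or its images under permutations of tensor factors. The remaining terms should have the form (operator)$\,\cdot\,$(perm-invariance expression of $r-\tau(r)$). This is the standard pattern in Lin's and in Hou/LZB's coboundary perm bialgebra theory. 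Concretely, I group the terms by the position where $p$ multiplies. I use left commutativity to move $p$ to the outside, and then use $(x_i\diamond x_j)\diamond$ identities to recognize the four summands of $\mathbf{P}_r$. With $\mathbf{P}_r=0$ and perm-invariance of $r-\tau(r)$, both differences vanish.

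\emph{Step 3: triangular case.} If $r$ is symmetric, then $r-\tau(r)=0$ is trivially perm-invariant, so the triangular case follows immediately.

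The main obstacle is Step 2, namely the bookkeeping that identifies the leftover terms exactly as permuted copies of $\mathbf{P}_r$ plus invariance terms. To make this manageable, I would first prove a lemma. It gives explicit formulas for the two coassociator differences as $(\fl_P(p)\otimes\id\otimes\id)\mathbf{P}_r$-type terms, together with terms involving $\vartheta_{r-\tau(r)}$, valid for arbitrary $r$. I would derive the lemma by dualizing: the coassociators pair against $\xi_1\otimes\xi_2\otimes\xi_3$, which reduces them to statements about the product $\vartheta_r^*$ on $P^*$. I would fall back on the direct tensor expansion only if needed.
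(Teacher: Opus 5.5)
The paper gives no proof of this proposition; it quotes it from Lin's work, so there is no internal argument to compare your route with. Your plan is the standard one: check \eqref{bialg1}--\eqref{bialg3} and the two coassociator identities directly. I checked each of your predictions, and they are correct.

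What your write-up still leaves open is the Step 2 bookkeeping, and it does close up as follows. Write $r=\sum_j x_j\otimes y_j$ and $s:=r-\tau(r)$, so $\tau(s)=-s$.

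In Step 1, \eqref{bialg2} is an identity for every $r$. The two sides of \eqref{bialg1} differ by
$(\fr_P(p_1)\otimes\id)\vartheta_s(p_2)=-(\fr_P(p_1)\otimes\fr_P(p_2))(s)$.
The two sides of \eqref{bialg3} differ by
$((\fl_P-\fr_P)(p_2)\otimes\id)\,\tau(\vartheta_s(p_1))$; note the $\tau$, which your guess omitted.
So Step 1 uses only the invariance of $s$, never the $\PYBE$.

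In Step 2, for symmetric $r$ and with no assumption on $\mathbf{P}_r$, one gets exactly
\[
(\vartheta_r\otimes\id)\vartheta_r(p)-(\id\otimes\vartheta_r)\vartheta_r(p)
=(\id\otimes\id\otimes\fr_P(p))\mathbf{P}_r-\big((\fl_P-\fr_P)(p)\otimes\id\otimes\id\big)\big(\mathbf{P}_r-\tau_{13}\mathbf{P}_r\big),
\]
\[
(\vartheta_r\otimes\id)\vartheta_r(p)-(\tau\otimes\id)(\vartheta_r\otimes\id)\vartheta_r(p)
=(\id\otimes\id\otimes\fr_P(p))\big(\mathbf{P}_r-(\tau\otimes\id)\mathbf{P}_r\big),
\]
where $\tau_{13}$ swaps the first and third tensor factors.

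For general $r$, each right-hand side acquires extra terms that are linear in $s$. In the second identity they are
$-\sum_j\big(((\fl_P-\fr_P)(p)\circ\fr_P(x_j)\otimes\id)(s)+(\id\otimes(\fl_P-\fr_P)(p)\circ\fr_P(x_j))(s)\big)\otimes y_j$.
These extra terms vanish using four facts:
the invariance identity itself;
its transpose $(\id\otimes(\fl_P-\fr_P)(q))(s)=(\fr_P(q)\otimes\id)(s)$, which follows from $\tau(s)=-s$;
the consequence $(\fr_P(q)\otimes\fr_P(q'))(s)=0$, obtained by applying $\fr_P(q)\otimes\id$ to the invariance identity and using left commutativity;
and $\fl_P(q\diamond q')=\fl_P(q'\diamond q)$.

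For the first coassociator, it is convenient to write $r=\tfrac12 s+\mathfrak{s}$ with $\mathfrak{s}$ symmetric, and to reduce by linearity to $\mathfrak{s}=u\otimes u$. The extra terms then cancel in pairs.

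With these identities filled in, your outline is a complete proof, and your Step 3 remark handles the triangular case correctly.
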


We next consider the differential perm bialgebras.

\begin{lem}\label{lem:coba}
Let $(P, \diamond, \fd, \partial)$ be an admissible differential perm algebra and
$r\in P\otimes P$. If the linear map $\vartheta_{r}: P\rightarrow P\otimes P$ by Eq.
\eqref{cobo} defines a perm coalgebra structure on $(P, \diamond)$, then $\partial$ is a
coderivation on $(P, \vartheta_{r})$ if and only if for any $p\in P$,
\begin{align}
\big((\id\otimes\fr_{P}(p))(\id\otimes\fd-\partial\otimes\id)
+((\fr_{P}-\fl_{P})(p)\otimes\id)(\fd\otimes\id-\id\otimes\partial)\big)(r)=0, \label{coba1}
\end{align}
If $(P, \vartheta_{r}, \partial)$ is a codifferential perm coalgebra, $(P^{\ast},
\vartheta_{r}^{\ast}, \partial^{\ast}, \fd^{\ast})$ is admissible
if and only if for any $p\in P$,
\begin{align}
&\big(((\fr_{P}-\fl_{P})(p)\otimes\id+\id\otimes\fr_{P}(p))(\partial\otimes\id
-\id\otimes\fd)\big)(r)=0,                                    \label{coba2}\\
&\big(((\fr_{P}-\fl_{P})(p)\otimes\id+\id\otimes\fr_{P}(p))(\id\otimes\partial
-\fd\otimes\id)\big)(r)=0,                                    \label{coba3}
\end{align}
\end{lem}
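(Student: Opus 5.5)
The plan is to compute both sides of each required identity directly. We write $r=\sum_i x_i\otimes y_i$ and use two facts: the defining formula \eqref{cobo} for $\vartheta_r$, and the admissibility identities
$$\partial(p_1\diamond p_2)=p_1\diamond\partial(p_2)-\fd(p_1)\diamond p_2=\partial(p_1)\diamond p_2-p_1\diamond\fd(p_2).$$
In operator form these read
$$\partial\circ\fl_P(p)=\fl_P(p)\circ\partial-\fl_P(\fd(p)), \qquad \partial\circ\fr_P(p)=\fr_P(p)\circ\partial-\fr_P(\fd(p)).$$
A second form of each also holds, namely $\partial\circ\fl_P(p)=\fl_P(\partial(p))-\fr_P(p)\circ\fd$ and $\partial\circ\fr_P(p)=\fr_P(\partial(p))-\fl_P(p)\circ\fd$. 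The key trick is to choose, for each tensor factor, the form of admissibility that moves $\partial$ or $\fd$ off the element $p$ and onto $r$.

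For the coderivation condition, we must show $\vartheta_r(\partial(p))=(\id\otimes\partial+\partial\otimes\id)\vartheta_r(p)$. On the left we have
$$\vartheta_r(\partial p)=\sum_i(\partial p\diamond x_i-x_i\diamond\partial p)\otimes y_i-x_i\otimes y_i\diamond\partial p.$$
On the right, we apply $\partial\otimes\id$ to the first group of terms and $\id\otimes\partial$ to the second. Using the second form of admissibility, we rewrite
\begin{align*}
\partial(p\diamond x_i)&=\partial(p)\diamond x_i-p\diamond\fd(x_i),\\
\partial(x_i\diamond p)&=x_i\diamond\partial(p)-\fd(x_i)\diamond p,\\
\partial(y_i\diamond p)&=y_i\diamond\partial(p)-\fd(y_i)\diamond p.
\end{align*}
The terms in which $\partial(p)$ appears then reproduce exactly $\vartheta_r(\partial p)$. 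The remaining terms, where $\fd$ or $\partial$ falls on a leg of $r$, together with the cross terms $\partial(y_i)$ and $\partial(x_i)$ coming from the other factor, assemble into
$$\big((\id\otimes\fr_P(p))(\id\otimes\fd-\partial\otimes\id)+((\fr_P-\fl_P)(p)\otimes\id)(\fd\otimes\id-\id\otimes\partial)\big)(r).$$
Hence the coderivation identity holds if and only if this expression vanishes, which is \eqref{coba1}.

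For the admissibility of $(P^{\ast},\vartheta_r^{\ast},\partial^{\ast},\fd^{\ast})$, we use the remark before Definition \ref{def:avebialg}. It reduces the claim to the two coalgebra identities
$$\vartheta_r\circ\fd=(\id\otimes\fd-\partial\otimes\id)\circ\vartheta_r=(\fd\otimes\id-\id\otimes\partial)\circ\vartheta_r.$$
We treat each equality with the same recipe. Since $\fd$ is a derivation, $\vartheta_r(\fd p)$ is expressed by expanding $\fd(p\diamond x_i)$ and the similar terms, so that $\fd p$ is isolated. On the other side, we push $\fd$ or $\partial$ through the products on each factor, using the Leibniz rule for $\fd$ and admissibility for $\partial$, arranged so that each $p$-dependent term carries $\fd(p)$. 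After cancellation, what survives is $\big(((\fr_P-\fl_P)(p)\otimes\id+\id\otimes\fr_P(p))(\partial\otimes\id-\id\otimes\fd)\big)(r)$ for the first equality, giving \eqref{coba2}. The second equality yields the analogous expression with $\id\otimes\partial-\fd\otimes\id$, giving \eqref{coba3}.

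The main obstacle is pure bookkeeping: choosing the correct one of the two admissibility forms at each tensor slot so that the $p$-terms match exactly and the remainder takes the stated compact form. The ingredients are only the two admissibility relations and the Leibniz rule, so I would first fix operator identities such as $\partial\fl_P(p)=\fl_P(p)\partial-\fl_P(\fd p)$ and then compose operators abstractly, without expanding indices. The perm coalgebra hypothesis plays no role beyond making "coderivation" meaningful.
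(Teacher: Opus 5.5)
Your proposal is correct and follows essentially the paper's argument: a direct computation that uses admissibility to move $\partial$ (resp.\ $\fd$) off $p$. The paper does this by rewriting $\vartheta_{r}(\partial(p))$ with $\fl_{P}(\partial(p))=\partial\circ\fl_{P}(p)+\fl_{P}(p)\circ\fd$ and $\fr_{P}(\partial(p))=\partial\circ\fr_{P}(p)+\fr_{P}(p)\circ\fd$, which are exactly your elementwise expansions; the remainder is then \eqref{coba1}, and \eqref{coba2}, \eqref{coba3} follow from the same bookkeeping with the other pair of admissibility identities. One slip: your operator versions of the ``second form'' have $\fl_{P}$ and $\fr_{P}$ swapped in the $\circ\fd$ terms and should read $\partial\circ\fl_{P}(p)=\fl_{P}(\partial(p))-\fl_{P}(p)\circ\fd$ and $\partial\circ\fr_{P}(p)=\fr_{P}(\partial(p))-\fr_{P}(p)\circ\fd$, though the elementwise expansions you actually use are correct.
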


\begin{proof}
First, since $(P, \fl_{P}, \fr_{P}, -\partial)$ is a bimodule over $(P, \diamond, \fd)$,
for any $p\in P$, we get $\fl_{P}(\partial(p))=\partial\circ\fl_{P}(p)+\fl_{P}(p)\circ\fd$
and $\fr_{P}(\partial(p))=\partial\circ\fr_{P}(p)+\fr_{P}(p)\circ\fd$, and so that,
\begin{align*}
&\;\vartheta_{r}(\partial(p))-(\id\otimes\partial+\partial\otimes\id)(\vartheta_{r}(p))\\
=&\;\Big(\id\otimes(\partial\circ\fr_{P}(p))+\id\otimes(\fr_{P}(p)\circ\fd)
+(\partial\circ\fr_{P}(p))\otimes\id+(\fr_{P}(p)\circ\fd)\otimes\id\\[-2mm]
&\qquad-(\partial\circ\fl_{P}(p))\otimes\id+(\fl_{P}(p)\circ\fd)\otimes\id
-\id\otimes(\partial\circ\fr_{P}(p))-\fr_{P}(p)\otimes\partial\\[-2mm]
&\qquad\qquad+\fl_{P}(p)\otimes\partial-\partial\otimes\fr_{P}(p)
-(\partial\circ\fr_{P}(p))\otimes\id+(\partial\circ\fl_{P}(p))\otimes\id\Big)(r)\\
=&\;\Big((\id\otimes\fr_{P}(p))(\id\otimes\fd-\partial\otimes\id)
+((\fr_{P}-\fl_{P})(p)\otimes\id)(\fd\otimes\id-\id\otimes\partial)\Big)(r).
\end{align*}
Thus, $\partial$ is a coderivation on $(P, \vartheta_{r})$ if and only if
Eq. \eqref{coba1} holds.

Next, since $(P^{\ast}, \fl_{P^{\ast}}, \fr_{P^{\ast}}, -\fd^{\ast})$ is a bimodule
over $(P^{\ast}, \vartheta_{r}^{\ast}, \partial^{\ast})$ if and only if
$$
(\id\otimes\fd)\circ\vartheta_{r}-(\partial\otimes\id)\circ\vartheta_{r}
=\vartheta_{r}\circ\fd=(\fd\otimes\id)\circ\vartheta_{r}-(\id\otimes\partial)\circ\vartheta_{r}.
$$
Similar to the calculation above, we can get that $\vartheta_{r}\circ\fd=
(\id\otimes\fd)\circ\vartheta_{r}-(\partial\otimes\id)\circ\vartheta_{r}$ if and only
if Eq. \eqref{coba2} holds, and $\vartheta_{r}\circ\fd=(\fd\otimes\id)\circ\vartheta_{r}
-(\id\otimes\partial)\circ\vartheta_{r}$ if and only if Eq.
\eqref{coba3} holds. The proof is completed.
\end{proof}

Let $(P, \diamond, \fd, \partial)$ be an admissible differential perm algebra and
$r\in P\otimes P$. Define a linear map $\vartheta_{r}$ by Eq. \eqref{cobo}.
Then $(P, \diamond, \vartheta_{r}, \fd, \partial)$ is a differential perm bialgebra if and
only if Eqs. \eqref{coba1}-\eqref{coba3} hold. In particular, $(P, \diamond, \vartheta_{r},
\fd, -\fd)$ is a differential perm bialgebra if and only if Eq. \eqref{coba1}
holds for $\partial=-\fd$.

\begin{defi}\label{def:dpYBE}
Let $(P, \diamond, \fd, \partial)$ be an admissible differential perm algebra and
$r\in P\otimes P$. Then equation $\mathbf{P}_{r}=0$ and
$$
(\fd\otimes\id-\id\otimes\partial)(r)=0,\qquad\qquad
(\partial\otimes\id-\id\otimes\fd)(r)=0,
$$
are called the {\bf differential perm Yang-Baxter equations (or $\DPYBE$) in
$(P, \diamond, \fd, \partial)$}.
\end{defi}

By Proposition \ref{pro:quasi-ba} and Lemma \ref{lem:coba}, we have:

\begin{pro}\label{pro:quasi-dpba}
Let $(P, \diamond, \fd, \partial)$ be an admissible differential perm algebra and
$r\in P\otimes P$. If $r$ is a solution of the $\DPYBE$ in $(P, \diamond, \fd, \partial)$
and $r-\tau(r)$ is perm-invariant, then the $(P, \diamond, \vartheta_{r}, \fd, \partial)$
with $\vartheta_{r}$ defined by Eq. \eqref{cobo} is a differential perm bialgebra,
which is called a {\bf quasi-triangular differential perm bialgebra}.
In particular, if $r$ is symmetric solution of the $\DPYBE$ in $(P, \diamond, \fd,
\partial)$, then $(P, \diamond, \vartheta_{r}, \fd, \partial)$ is called a {\bf
triangular differential perm bialgebra}.
\end{pro}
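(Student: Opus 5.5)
The statement combines Proposition \ref{pro:quasi-ba} with Lemma \ref{lem:coba}. We split the verification of Definition \ref{def:avebialg} into its three requirements.

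The first two requirements need essentially no work. Since $r$ satisfies $\mathbf{P}_{r}=0$ and $r-\tau(r)$ is perm-invariant, Proposition \ref{pro:quasi-ba} shows that $(P, \diamond, \vartheta_{r})$ is a perm bialgebra. In particular $(P, \vartheta_{r})$ is a perm coalgebra, which is the hypothesis Lemma \ref{lem:coba} needs. The admissibility of $(P, \diamond, \fd, \partial)$ is assumed. So it only remains to show that $(P, \vartheta_{r}, \partial, \fd)$ is an admissible codifferential perm coalgebra. By Lemma \ref{lem:coba} and the remark after it, this is equivalent to Eqs. \eqref{coba1}--\eqref{coba3}. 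Eq. \eqref{coba1} gives that $\partial$ is a coderivation, and Eqs. \eqref{coba2}, \eqref{coba3} give the two identities $\vartheta_{r}\circ\fd=(\id\otimes\fd-\partial\otimes\id)\circ\vartheta_{r}=(\fd\otimes\id-\id\otimes\partial)\circ\vartheta_{r}$.

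The main step is to check that the two linear conditions of the $\DPYBE$,
$$
(\fd\otimes\id-\id\otimes\partial)(r)=0,\qquad (\partial\otimes\id-\id\otimes\fd)(r)=0,
$$
imply all three equations. Eqs. \eqref{coba2} and \eqref{coba3} are immediate. In each, the operator applied first to $r$ is $\pm(\partial\otimes\id-\id\otimes\fd)$ or $\pm(\fd\otimes\id-\id\otimes\partial)$, which kills $r$, and the outer operator $(\fr_{P}-\fl_{P})(p)\otimes\id+\id\otimes\fr_{P}(p)$ is then applied to $0$.

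Eq. \eqref{coba1} needs a little care. Its second summand vanishes because $(\fd\otimes\id-\id\otimes\partial)(r)=0$. The first summand involves $(\id\otimes\fd-\partial\otimes\id)(r)$, which equals $-(\partial\otimes\id-\id\otimes\fd)(r)=0$. So every term vanishes and Eq. \eqref{coba1} holds.

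The only real obstacle is bookkeeping. We must match the order of the tensor factors in Lemma \ref{lem:coba} against the two conditions in Definition \ref{def:dpYBE}, making sure that each inner operator is one of these two expressions up to sign. No commutation of $\fr_{P}(p)$ past $\fd$ or $\partial$ is needed, because the operators built from $\fl_{P}$ and $\fr_{P}$ are applied after the differential operators.

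With all requirements verified, $(P, \diamond, \vartheta_{r}, \fd, \partial)$ is a differential perm bialgebra. The triangular case follows directly. If $r$ is symmetric, then $r-\tau(r)=0$ is trivially perm-invariant, so a symmetric solution of the $\DPYBE$ satisfies the hypotheses and the same argument applies.
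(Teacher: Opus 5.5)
Your proposal is correct and follows the same route as the paper, which proves the statement simply by citing Proposition \ref{pro:quasi-ba} and Lemma \ref{lem:coba}: the former gives the perm bialgebra $(P,\diamond,\vartheta_{r})$, and in Eqs. \eqref{coba1}--\eqref{coba3} each operator applied first to $r$ is, up to sign, $\fd\otimes\id-\id\otimes\partial$ or $\partial\otimes\id-\id\otimes\fd$, both of which annihilate $r$ by the linear part of the DPYBE. Your treatment of the triangular case ($r-\tau(r)=0$ is trivially perm-invariant) is also fine.
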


Let $(P, \diamond)$ be a perm algebra and $(V, \kl, \kr)$ be a bimodule over
$(P, \diamond)$. Recall that a linear map $T: V\rightarrow P$ is called an {\bf
$\mathcal{O}$-operator of $(P, \diamond)$ associated to $(V, \kl, \kr)$} if for
any $v_{1}, v_{2}\in V$,
$$
T(v_{1})\diamond T(v_{2})=T\big(\kl(T(v_{1}))(v_{2})+\kr(T(v_{2}))(v_{1})\big).
$$
Let $(P, \diamond, \fd)$ be a differential perm algebra and $(V, \kl, \kr, \partial)$
be a bimodule over $(P, \diamond, \fd)$. A linear map $T: V\rightarrow P$ is called an {\bf
$\mathcal{O}$-operator of $(P, \diamond, \fd)$ associated to $(V, \kl, \kr, \partial)$} if
$T$ is an $\mathcal{O}$-operator of $(P, \diamond)$ associated to $(V, \kl, \kr)$ and
$T\circ\partial=\fd\circ T$. Let $V$ be a linear space and $r\in V\otimes V$. We define
a linear map $r^{\sharp}: V^{\ast}\rightarrow V$ by
$$
\langle r^{\sharp}(\xi_{1}),\; \xi_{2}\rangle=\langle\xi_{1}\otimes\xi_{2},\; r\rangle,
$$
for any $\xi_{1}, \xi_{2}\in V^{\ast}$. Then we have the following proposition.

\begin{pro}\label{pro:o-dperm}
Let $(P, \diamond, \fd, \partial)$ be an admissible differential perm algebra and
$r\in P\otimes P$ be symmetric. Then $r$ is a solution of the $\DPYBE$ in $(P, \diamond,
\fd, \partial)$ if and only if $r^{\sharp}: P^{\ast}\rightarrow P$ is an
$\mathcal{O}$-operator of $(P, \diamond, \fd)$ associated to the coregular bimodule
$(P^{\ast}, -\fl_{P}^{\ast}, \fr_{P}^{\ast}-\fl_{P}^{\ast}, \partial^{\ast})$.
\end{pro}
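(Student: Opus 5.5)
The proof splits into two independent parts, matching the two halves of the definitions: the perm Yang-Baxter part $\mathbf{P}_{r}=0$ against the $\mathcal{O}$-operator identity for $(P,\diamond)$ with the coregular bimodule $(P^{\ast},-\fl_{P}^{\ast},\fr_{P}^{\ast}-\fl_{P}^{\ast})$, and the two linear conditions in the $\DPYBE$ against the intertwining condition $r^{\sharp}\circ\partial^{\ast}=\fd\circ r^{\sharp}$.

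For the first part, the natural move is to appeal to the known perm algebra result for symmetric $r$ (as in \cite{Hou,LZB,Lin}): $r$ satisfies $\PYBE$ if and only if $r^{\sharp}$ is an $\mathcal{O}$-operator of $(P,\diamond)$ associated to the coregular bimodule. If a self-contained argument is preferred, write $r=\sum_i x_i\otimes y_i$ and pair $\mathbf{P}_{r}$ with $\xi_1\otimes\xi_2\otimes\xi_3$. Using symmetry, $\langle \xi_1\otimes\xi_2\otimes\xi_3, r_{12}\diamond r_{23}\rangle=\langle \xi_2, r^{\sharp}(\xi_1)\diamond r^{\sharp}(\xi_3)\rangle$, and the remaining three terms turn into pairings of $\xi_1$ or $\xi_2$ with expressions such as $r^{\sharp}(\fl_{P}^{\ast}(r^{\sharp}(\xi_3))\xi_2)$ and $r^{\sharp}((\fr_{P}^{\ast}-\fl_{P}^{\ast})(r^{\sharp}(\xi_2))\xi_3)$. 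After relabelling, one should obtain exactly
\[
\big\langle \xi,\; r^{\sharp}(\eta_1)\diamond r^{\sharp}(\eta_2)-r^{\sharp}\big(-\fl_{P}^{\ast}(r^{\sharp}(\eta_1))\eta_2+(\fr_{P}^{\ast}-\fl_{P}^{\ast})(r^{\sharp}(\eta_2))\eta_1\big)\big\rangle
\]
for a suitable assignment of $\xi,\eta_1,\eta_2$ among the $\xi_i$. This bookkeeping, in particular matching the sign conventions $\psi^{\ast}(p)=-\psi(p)^{\ast}$ and the placement of the twisted term $r_{13}\diamond r_{12}$, is the main computational obstacle. I would carry it out term by term, fixing which tensor slot corresponds to the output.

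For the second part, pair $(\fd\otimes\id-\id\otimes\partial)(r)$ with $\xi_1\otimes\xi_2$. This gives $\langle \fd^{\ast}\xi_1\otimes\xi_2,r\rangle-\langle\xi_1\otimes\partial^{\ast}\xi_2,r\rangle=\langle r^{\sharp}(\fd^{\ast}\xi_1),\xi_2\rangle-\langle \partial(r^{\sharp}(\xi_1)),\xi_2\rangle$. By symmetry of $r$, this can equally be written as $\langle \fd(r^{\sharp}(\xi_2)),\xi_1\rangle-\langle r^{\sharp}(\partial^{\ast}\xi_2),\xi_1\rangle$. Hence $(\fd\otimes\id-\id\otimes\partial)(r)=0$ is equivalent to $\fd\circ r^{\sharp}=r^{\sharp}\circ\partial^{\ast}$. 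Applying $\tau$, and using $\tau(r)=r$, the second equation $(\partial\otimes\id-\id\otimes\fd)(r)=0$ is just the $\tau$-image of the first, so it is automatically equivalent to the same condition.

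Combining the two parts gives the equivalence. Note that the coregular bimodule $(P^{\ast},-\fl_{P}^{\ast},\fr_{P}^{\ast}-\fl_{P}^{\ast},\partial^{\ast})$ is indeed a bimodule over $(P,\diamond,\fd)$ by Proposition~\ref{pro:dualmod}, since admissibility means $(P,\fl_P,\fr_P,-\partial)$ is a bimodule, so the notion of $\mathcal{O}$-operator makes sense.
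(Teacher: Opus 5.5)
Your proposal is correct and matches the paper's proof. The paper also invokes the known perm-algebra correspondence between symmetric solutions of the $\PYBE$ and $\mathcal{O}$-operators associated to $(P^{\ast},-\fl_{P}^{\ast},\fr_{P}^{\ast}-\fl_{P}^{\ast})$, and then uses the symmetry of $r$ to show that both linear equations of the $\DPYBE$ are equivalent to $r^{\sharp}\circ\partial^{\ast}=\fd\circ r^{\sharp}$; it pairs $(\partial\otimes\id)(r)$ and $(\id\otimes\fd)(r)$ directly, whereas you pair the other equation and transpose, which is the same computation. Your optional self-contained check of the perm part also works, with the assignment $\xi=\xi_{2}$, $\eta_{1}=\xi_{1}$, $\eta_{2}=\xi_{3}$.
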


\begin{proof}
First, since $r$ is symmetric, by Corollary 3.35 in \cite{LZB}, we get $r$ is a
solution of the $\PYBE$ in $(P, \diamond)$ if and only if $r^{\sharp}: P^{\ast}
\rightarrow P$ is an $\mathcal{O}$-operator of $(P, \diamond)$ associated to the
bimodule $(P^{\ast}, -\fl_{P}^{\ast}, \fr_{P}^{\ast}-\fl_{P}^{\ast})$.
Second, for any $\xi_{1}, \xi_{2}\in P^{\ast}$, note that
\begin{align*}
& \langle r^{\sharp}(\partial^{\ast}(\xi_{1})),\; \xi_{2}\rangle
=\langle\partial^{\ast}(\xi_{1})\otimes\xi_{2},\; r\rangle
=\langle\xi_{1}\otimes\xi_{2},\; (\partial\otimes\id)(r)\rangle,\\
& \langle\fd(r^{\sharp}(\xi_{1})),\; \xi_{2}\rangle
=\langle\xi_{1}\otimes\fd^{\ast}(\xi_{2}),\; r\rangle
=\langle\xi_{1}\otimes\xi_{2},\; (\id\otimes\fd)(r)\rangle.
\end{align*}
We get $T\circ\partial^{\ast}=\fd\circ T$ if and only if $(\partial\otimes\id)(r)
=(\id\otimes\fd)(r)$. Moreover, since $r$ is symmetric, $(\partial\otimes\id)(r)
=(\id\otimes\fd)(r)$$\Leftrightarrow$$(\id\otimes\partial)(r)=(\fd\otimes\id)(r)$.
Thus, we get the proof.
\end{proof}

The Drinfeld double plays a crucial role in the construction of Lie bialgebras.
In \cite{Lin}, Lin consider the double of perm bialgebras and show that any double
of a perm bialgebra is quasi-triangular. Next, we will elevate this result to
the differential perm bialgebras.

\begin{pro}\label{pro:doublebia}
Let $(P, \diamond, \vartheta, \fd, \partial)$ be a differential perm bialgebra.
Then, there is a quasi-triangular differential perm bialgebra structure on the direct sum
$P\oplus P^{\ast}$.
\end{pro}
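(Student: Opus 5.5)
We will take the Drinfeld double $D:=P\bowtie P^{\ast}$ given by Theorem \ref{thm:equ}. By that theorem, $(D,\bar{\diamond},\fd\oplus\partial^{\ast})$ is a differential perm algebra, and $\mathfrak{B}_{d}$ is a nondegenerate skew-symmetric invariant form on it. By Lemma \ref{lem:douadm}, the adjoint of $\fd_{D}:=\fd\oplus\partial^{\ast}$ is $\partial_{D}:=\partial\oplus\fd^{\ast}$, and $(D,\bar{\diamond},\fd_{D},-\partial_{D})$ is admissible in the sense of Section \ref{sec:prel}. The admissible differential perm algebra on the double is therefore $(D,\bar{\diamond},\fd_{D},-(\partial\oplus\fd^{\ast}))$.

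Let $\{e_{i}\}$ be a basis of $P$ with dual basis $\{e_{i}^{\ast}\}$. We choose the canonical element
$$
r:=\sum_{i}e_{i}\otimes e_{i}^{\ast}\in P\otimes P^{\ast}\subset D\otimes D,
$$
with an overall sign or a swap to $\sum_{i}e_{i}^{\ast}\otimes e_{i}$ fixed by the sign conventions of $\mathfrak{B}_{d}$ and of \eqref{cobo}. The aim is to apply Proposition \ref{pro:quasi-dpba} to $r$ in $(D,\bar{\diamond},\fd_{D},-\partial_{D})$. This needs three facts.

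\begin{enumerate}
\item[(a)] $\mathbf{P}_{r}=0$ in $(D,\bar{\diamond})$, and $r-\tau(r)$ is perm-invariant. Both are exactly Lin's result that the double of a perm bialgebra is quasi-triangular \cite{Lin}. Since $(P,\diamond,\vartheta)$ is a perm bialgebra, we will simply cite this.
\item[(b)] The two differential conditions of Definition \ref{def:dpYBE}:
$$
(\fd_{D}\otimes\id+\id\otimes\partial_{D})(r)=0,\qquad\qquad
(-\partial_{D}\otimes\id-\id\otimes\fd_{D})(r)=0.
$$
\item[(c)] The coproduct $\vartheta_{r}$ on $D$ restricts to $\vartheta$ on $P$ and to the dual of the $\diamond$-structure on $P^{\ast}$. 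This makes the structure on $P\oplus P^{\ast}$ the natural extension; it is not strictly required by the statement, and we will note it as in \cite{Lin}.
\end{enumerate}

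For (b), we expand $r$ in the basis. Since $r\in P\otimes P^{\ast}$, the first condition reads
$$
\sum_{i}\fd(e_{i})\otimes e_{i}^{\ast}+\sum_{i}e_{i}\otimes\fd^{\ast}(e_{i}^{\ast})=0.
$$
This holds because $\sum_{i}\fd(e_{i})\otimes e_{i}^{\ast}=\sum_{i}e_{i}\otimes\fd^{\ast}(e_{i}^{\ast})$ (both sides represent $\fd$ under $P\otimes P^{\ast}\cong\mathrm{End}(P)$). That identity gives $\sum_{i}\fd(e_{i})\otimes e_{i}^{\ast}-\sum_{i}e_{i}\otimes\fd^{\ast}(e_{i}^{\ast})=0$, so the sign in front of $\fd^{\ast}$ matters. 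Similarly, the second condition reduces to $\sum_{i}\partial(e_{i})\otimes e_{i}^{\ast}=\sum_{i}e_{i}\otimes\partial^{\ast}(e_{i}^{\ast})$, again the canonical identity, provided the signs come out right.

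The step I expect to be the main obstacle is this sign bookkeeping. Depending on the conventions, the naive choices give $\fd_{D}\otimes\id+\id\otimes\partial_{D}$ where the canonical identity requires a minus. If that happens, I will first check whether the intended admissible structure on the double should be $(D,\fd_{D},\partial_{D}')$ with $\partial_{D}'$ chosen so that the identities $\sum_{i}\fd e_{i}\otimes e_{i}^{\ast}=\sum_{i}e_{i}\otimes\fd^{\ast}e_{i}^{\ast}$ apply directly. A natural candidate is $\partial_{D}'=\partial\oplus\fd^{\ast}$ without the minus, which requires verifying admissibility via Proposition \ref{pro:dualmod} and the definition of differential perm bialgebra. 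Alternatively, I will switch to $r=\sum_{i}e_{i}^{\ast}\otimes e_{i}$, which exchanges the roles of the two conditions.

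Once the conventions are consistent, Proposition \ref{pro:quasi-dpba} yields that $(D,\bar{\diamond},\vartheta_{r},\fd_{D},\partial_{D})$ is a quasi-triangular differential perm bialgebra.
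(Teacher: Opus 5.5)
Your plan follows the paper's proof step for step: the double $P\bowtie P^{\ast}$, the canonical element $r=\sum_i e_i\otimes e_i^{\ast}$, Lin's result for $\mathbf{P}_r=0$ and for perm-invariance of $r-\tau(r)$, the identity $\sum_i\fd(e_i)\otimes e_i^{\ast}=\sum_i e_i\otimes\fd^{\ast}(e_i^{\ast})$, and then Proposition~\ref{pro:quasi-dpba}. However, the version you wrote out fails at (b), and the sign problem comes from a misreading in your first paragraph.

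Lemma~\ref{lem:douadm}(i) says that $(P\oplus P^{\ast},\fl_{P\bowtie P^{\ast}},\fr_{P\bowtie P^{\ast}},-(\partial\oplus\fd^{\ast}))$ is a bimodule. In Section~\ref{sec:prel}, a map $\partial$ is called admissible exactly when $(P,\fl_P,\fr_P,-\partial)$ is a bimodule. So the admissible differential perm algebra on the double is $(P\bowtie P^{\ast},\bar{\diamond},\fd\oplus\partial^{\ast},\partial\oplus\fd^{\ast})$, not the one with $-(\partial\oplus\fd^{\ast})$.

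With your operator, the first condition in (b) evaluates to $2\sum_i\fd(e_i)\otimes e_i^{\ast}$, which is nonzero whenever $\fd\neq 0$. With the correct operator $\partial\oplus\fd^{\ast}$, Definition~\ref{def:dpYBE} asks exactly for
$\sum_i\fd(e_i)\otimes e_i^{\ast}=\sum_i e_i\otimes\fd^{\ast}(e_i^{\ast})$ and $\sum_i\partial(e_i)\otimes e_i^{\ast}=\sum_i e_i\otimes\partial^{\ast}(e_i^{\ast})$. These are the canonical identities. This is your first fallback, and it is precisely the paper's argument. No separate admissibility check via Proposition~\ref{pro:dualmod} is needed, because Lemma~\ref{lem:douadm} already supplies it.

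Two smaller remarks.

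\begin{itemize}
\item Your other fallback, swapping to $\sum_i e_i^{\ast}\otimes e_i$, would not repair the sign. With the operator $-(\partial\oplus\fd^{\ast})$, the first condition then becomes $2\sum_i e_i^{\ast}\otimes\partial(e_i)$.
\item Item (c) is not needed for the statement, and with the paper's conventions it holds only for $-r$. For $r=\sum_i e_i\otimes e_i^{\ast}$ one computes $\vartheta_r|_{P}=-\vartheta$. In Example~\ref{ex:YBE-bialg}, for instance, $\vartheta_r(x_1)=x_2\otimes x_2$ while $\bar{\vartheta}_{\tilde r}(x_1)=-x_2\otimes x_2$. This sign is harmless: $\mathbf{P}_r$ is quadratic in $r$ and the remaining conditions are linear in $r$.
\end{itemize}
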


\begin{proof}
Let $\{e_{1}, e_{2}, \cdots, e_{n}\}$ be a basis of $P$, $\{f_{1}, f_{2}, \cdots, f_{n}\}$
be the dual basis in $P^{\ast}$ and $r=\sum_{i=1}^{n}e_{i}\otimes f_{i}
\in P\otimes P^{\ast}\subset(P\oplus P^{\ast})\otimes(P\oplus P^{\ast})$.
Since $(P, \diamond, \vartheta, \fd, \partial)$ is a differential perm bialgebra, there
is a corresponding matched pair $((P, \fd), (P^{\ast}, \partial^{\ast}), -\fl_{P}^{\ast},
\fr_{P}^{\ast}-\fl_{P}^{\ast}, -\fl_{P^{\ast}}^{\ast}, \fr_{P^{\ast}}^{\ast}
-\fl_{P^{\ast}}^{\ast})$. Let $(P\bowtie P^{\ast}, \fd\oplus\partial^{\ast})$ be the
differential perm algebra obtained from this matched pair. By Lemma \ref{lem:douadm},
we obtain that $(P\bowtie P^{\ast}, \fd\oplus\partial^{\ast}, \partial\oplus\fd^{\ast})$
is an admissible differential perm algebra. Moreover, by \cite[Theorem 3.6]{Lin},
we get that $r$ is a solution of the $\PYBE$ in $P\bowtie P^{\ast}$ and $r-\tau(r)$ is
perm-invariant. For any $p\in P$ and $\xi\in P^{\ast}$, since
$$
\Big\langle\sum_{i}\fd(e_{i})\otimes f_{i},\; \xi\otimes p\Big\rangle
=\langle\xi,\; \fd(p)\rangle=\Big\langle\sum_{i}e_{i}\otimes\fd(f_{i}),\;
\xi\otimes p\Big\rangle,
$$
we get $\sum_{i}\fd(e_{i})\otimes f_{i}=\sum_{i}e_{i}\otimes\fd(f_{i})$, and so that
$((\fd\oplus\partial^{\ast})\otimes\id)(r)=(\id\otimes(\partial\oplus\fd^{\ast}))(r)$.
Similarly, $((\partial\oplus\fd^{\ast})\otimes\id)(r)=(\id\otimes(\fd\oplus
\partial^{\ast}))(r)$. Hence, $r$ is a solution of the $\DPYBE$ in $(P\bowtie P^{\ast},
\fd\oplus\partial^{\ast}, \partial\oplus\fd^{\ast})$ and $r-\tau(r)$ is perm-invariant.
By Proposition \ref{pro:quasi-dpba}, we obtain that $(P\bowtie P^{\ast},
\bar{\vartheta}_{r}, \fd\oplus\partial^{\ast}, \partial\oplus\fd^{\ast})$ is a
quasi-triangular differential perm bialgebra, where $\bar{\vartheta}_{r}$ is given by
Eq. \eqref{cobo}.
\end{proof}

Proposition \ref{pro:doublebia} provides a method for constructing quasi-triangular
differential perm bialgebras. More precisely, for any differential perm bialgebra
$(P, \diamond, \vartheta, \fd, \partial)$, we get a quasi-triangular differential perm
bialgebra $(P\bowtie P^{\ast}, \bar{\vartheta}_{r}, \fd\oplus\partial^{\ast},
\partial\oplus\fd^{\ast})$, which is called the {\bf double differential perm
bialgebra} of $(P, \diamond, \vartheta, \fd, \partial)$.

\begin{ex}\label{ex:YBE-bialg}
Let $(P, \diamond, \fd, -\fd)$ be a $2$-dimensional admissible differential perm algebra,
where vector space $P={\rm span}_{\Bbbk}\{x_{1}, x_{2}\}$, $\fd(x_{1})=x_{2}$,
$\fd(x_{2})=0$, and the nonzero products are given by $x_{1}\diamond x_{1}=x_{1}$ and
$x_{1}\diamond x_{2}=x_{2}$. Let $r=x_{2}\otimes x_{2}$. Then $r$ is a symmetric solution
of the $\DPYBE$ in $(P, \diamond, \fd, -\fd)$. Thus we get a triangular differential
perm bialgebra $(P, \diamond, \vartheta_{r}, \fd, -\fd)$, where $\vartheta_{r}(x_{1})
=x_{2}\otimes x_{2}$ and $\vartheta_{r}(x_{2})=0$. Consider the double differential
perm bialgebra $(P\oplus P^{\ast}, \bar{\diamond}, \bar{\vartheta}_{\tilde{r}},
\fd\oplus-\fd^{\ast}, -\fd\oplus\fd^{\ast})$, where $\tilde{r}=x_{1}\otimes y_{1}
+x_{2}\otimes y_{2}$, $\{y_{1}, y_{2}\}\subseteq P^{\ast}$ is the dual basis of
$\{x_{1}, x_{2}\}$. By direct calculation, we get that
\begin{align*}
& x_{1}\bar{\diamond}x_{1}=x_{1},\qquad x_{1}\bar{\diamond}x_{2}=x_{2},\qquad
y_{2}\bar{\diamond}y_{2}=y_{2}\bar{\diamond}x_{2}=-y_{1}=-x_{1}\bar{\diamond}y_{1},\qquad
x_{1}\bar{\diamond}y_{2}=y_{2}, \qquad y_{2}\bar{\diamond}x_{1}=x_{2}+y_{2},\\
&\qquad\qquad \bar{\vartheta}_{\tilde{r}}(x_{1})=-x_{2}\otimes x_{2},\qquad\quad
\bar{\vartheta}_{\tilde{r}}(y_{1})=-y_{1}\otimes y_{1},\qquad\quad
\bar{\vartheta}_{\tilde{r}}(y_{2})=-y_{1}\otimes y_{2}.
\end{align*}
Moreover, one can check that $\tilde{r}$ is a solution of the $\DPYBE$ in
$(P\oplus P^{\ast}, \bar{\diamond}, -\fd\oplus\fd^{\ast}, -\fd\oplus\fd^{\ast})$
and $\tilde{r}-\tau(\tilde{r})$ is perm-invariant. That is, $(P\oplus P^{\ast},
\bar{\diamond}, \bar{\vartheta}_{\tilde{r}}, \fd\oplus-\fd^{\ast}, -\fd\oplus\fd^{\ast})$
is a quasi-triangular differential perm bialgebra.
\end{ex}

Now, we introduce the definition of factorizable differential perm bialgebra.

\begin{defi}\label{def:fact-diffbia}
Let $(P, \diamond, \fd, \partial)$ be an admissible differential perm algebra,
$r\in P\otimes P$ and $(P, \diamond, \vartheta_{r}, \fd, \partial)$ be the quasi-triangular
differential perm bialgebra associated with $r$. If $\mathcal{I}=r^{\sharp}-\tau(r)^{\sharp}:
P^{\ast}\rightarrow P$ is a bijection and $\mathcal{I}\circ\partial^{\ast}
=\fd\circ\mathcal{I}$, then $(P, \diamond, \vartheta_{r}, \fd, \partial)$ is called
a {\bf factorizable differential perm bialgebra}.
\end{defi}

The factorizable perm bialgebras have been studied in \cite{Lin}.
The reason why it is called this name is any element in this perm bialgebra
has a unique decomposition. For the factorizable differential perm bialgebras
This unique decomposition still holds.

\begin{pro}\label{pro:fact-diff}
Let $(P, \diamond, \fd, \partial)$ be an admissible differential perm algebra and
$r\in P\otimes P$. Assume that the differential perm bialgebra $(P, \diamond, \vartheta_{r},
\fd, \theta)$ is factorizable. Then any $p\in P$ has a unique decomposition $p=p_{+}+p_{-}$,
where $p_{+}\in\Img(r^{\sharp})$ and $p_{-}\in\Img(\tau(r)^{\sharp})$.
\end{pro}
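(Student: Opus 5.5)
The plan is to reproduce the standard factorization argument, using only that $\mathcal{I}=r^{\sharp}-\tau(r)^{\sharp}:P^{\ast}\rightarrow P$ is a linear bijection. The differential condition $\mathcal{I}\circ\partial^{\ast}=\fd\circ\mathcal{I}$ and the perm-algebra structure play no role in the decomposition itself. (The statement writes $\theta$ for $\partial$; I read it as $\partial$.)

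For existence, take $p\in P$ and set $\xi:=\mathcal{I}^{-1}(p)\in P^{\ast}$. Put
$$
p_{+}:=r^{\sharp}(\xi),\qquad\qquad p_{-}:=-\tau(r)^{\sharp}(\xi).
$$
Then $p_{+}\in\Img(r^{\sharp})$, and $p_{-}\in\Img(\tau(r)^{\sharp})$ because this image is a subspace. Moreover $p_{+}+p_{-}=\mathcal{I}(\xi)=p$.

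The uniqueness step needs care. Proving that $\Img(r^{\sharp})\cap\Img(\tau(r)^{\sharp})=0$ would suffice, but that does not follow from bijectivity of $\mathcal{I}$ alone, by a dimension count. For example, with $r=x\otimes x$ the map $\mathcal{I}$ is $0$. So I would interpret uniqueness in the sense used for factorizable Lie and perm bialgebras (cf. \cite{Lin}): the pair $(p_{+},p_{-})$ is the unique one of the form $(r^{\sharp}(\xi),-\tau(r)^{\sharp}(\xi))$ with $\xi\in P^{\ast}$ and $p=p_{+}+p_{-}$.

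With that reading, suppose $p=r^{\sharp}(\xi)-\tau(r)^{\sharp}(\xi)=r^{\sharp}(\eta)-\tau(r)^{\sharp}(\eta)$. Then $\mathcal{I}(\xi-\eta)=0$, and injectivity of $\mathcal{I}$ gives $\xi=\eta$. Hence $p_{\pm}$ are uniquely determined.

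Equivalently, the map $P\rightarrow\Img(r^{\sharp})\oplus\Img(\tau(r)^{\sharp})$, $p\mapsto(r^{\sharp}\mathcal{I}^{-1}(p),\,-\tau(r)^{\sharp}\mathcal{I}^{-1}(p))$, is well defined. Composing it with the summation map gives $\id_{P}$.

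Finally, I would note that the decomposition is compatible with $\fd$. From $\mathcal{I}\circ\partial^{\ast}=\fd\circ\mathcal{I}$ we get $\mathcal{I}^{-1}\circ\fd=\partial^{\ast}\circ\mathcal{I}^{-1}$. Together with Eq. \eqref{coba1}-type relations such as $(\partial\otimes\id)(r)=(\id\otimes\fd)(r)$ when the $\DPYBE$ holds, this gives $r^{\sharp}\circ\partial^{\ast}=\fd\circ r^{\sharp}$, and hence $\fd(p)_{\pm}=\fd(p_{\pm})$. This is the only place the differential hypothesis enters.

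The main obstacle is pinning down the precise meaning of \emph{unique}. The literal intersection statement is not available in general, so the proof has to go through $\mathcal{I}^{-1}$ as above.
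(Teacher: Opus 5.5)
The paper states this proposition without proof; it only alludes to the perm-bialgebra case in \cite{Lin}, so there is no argument of the authors to compare with. Your proof is the standard one for factorizable Lie bialgebras \cite{LS}: set $\xi=\mathcal{I}^{-1}(p)$, $p_{+}=r^{\sharp}(\xi)$, $p_{-}=-\tau(r)^{\sharp}(\xi)$, and get uniqueness from injectivity of $\mathcal{I}$. It is correct once uniqueness is read as uniqueness of a pair of the form $(r^{\sharp}(\xi),-\tau(r)^{\sharp}(\xi))$. Your compatibility remark also holds, but the relations you actually use are the two $\DPYBE$ equations, not \eqref{coba1}:
\begin{itemize}
\item $(\partial\otimes\id-\id\otimes\fd)(r)=0$ gives $r^{\sharp}\circ\partial^{\ast}=\fd\circ r^{\sharp}$, as in the proof of Proposition \ref{pro:o-dperm};
\item $(\fd\otimes\id-\id\otimes\partial)(r)=0$ gives $\tau(r)^{\sharp}\circ\partial^{\ast}=\fd\circ\tau(r)^{\sharp}$.
\end{itemize}
In particular, the condition $\mathcal{I}\circ\partial^{\ast}=\fd\circ\mathcal{I}$ in the definition of factorizability is automatic.

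The weak spot is your justification that the literal reading fails. The example $r=x\otimes x$ proves nothing here, because then $\mathcal{I}=0$ is not bijective and the hypotheses are violated. Your dimension count is the right idea:
\begin{itemize}
\item $\tau(r)^{\sharp}$ is the transpose of $r^{\sharp}$, so both images have dimension $\mathrm{rank}(r)$;
\item they span $P$ because $\mathcal{I}$ is onto;
\item hence they are complementary if and only if $\mathrm{rank}(r)=\frac{1}{2}\dim P$.
\end{itemize}
This should be backed by an actual factorizable example. Take $P=\Bbbk^{2}$ with basis $e_{1},e_{2}$, zero product, $\fd=\partial=0$, and $r=e_{1}\otimes e_{1}+e_{2}\otimes e_{2}+e_{1}\otimes e_{2}-e_{2}\otimes e_{1}$. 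Every hypothesis holds trivially: $\mathbf{P}_{r}=0$, and perm-invariance and the differential conditions are vacuous. Also $\mathcal{I}=(r-\tau(r))^{\sharp}$ is bijective. Yet $r^{\sharp}(f_{1})=e_{1}+e_{2}$ and $r^{\sharp}(f_{2})=e_{2}-e_{1}$, so $r^{\sharp}$, and hence its transpose $\tau(r)^{\sharp}$, is bijective. Thus $\Img(r^{\sharp})=\Img(\tau(r)^{\sharp})=P$, and a decomposition with $p_{+}$ and $p_{-}$ chosen independently in these images is far from unique. So the statement as literally printed is false, and your reinterpretation is forced rather than merely convenient.
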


An important example of factorizable Lie bialgebra is implemented using the Drinfeld
double of Lie bialgebra \cite{LS}. Recently, Lin proved that the double of every
perm bialgebra is factorizable. Here we present that this conclusion is also
correct at the level of differential perm algebras.

\begin{pro}\label{pro:doufactdiff}
Let $(P, \diamond, \vartheta, \fd, \theta)$ be a differential perm bialgebra.
Then the double differential perm bialgebra $(P\bowtie P^{\ast}, \bar{\vartheta}_{r},
\fd\oplus\partial^{\ast}, \partial\oplus\fd^{\ast})$ of $(P, \diamond, \vartheta, \fd,
\partial)$ is factorizable, where $r=\sum_{i=1}^{n}e_{i}\otimes f_{i}$, $\{e_{1}, e_{2},
\cdots, e_{n}\}$ is a basis of $P$ and $\{f_{1}, f_{2}, \cdots, f_{n}\}$ is its dual
basis in $P^{\ast}$.
\end{pro}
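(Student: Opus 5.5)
Proposition \ref{pro:doublebia} already shows that the double $(P\bowtie P^{\ast}, \bar{\vartheta}_{r}, \fd\oplus\partial^{\ast}, \partial\oplus\fd^{\ast})$ is a quasi-triangular differential perm bialgebra. So, by Definition \ref{def:fact-diffbia}, only two things remain to be checked for $\mathcal{I}=r^{\sharp}-\tau(r)^{\sharp}:(P\oplus P^{\ast})^{\ast}\rightarrow P\oplus P^{\ast}$: that $\mathcal{I}$ is a bijection, and that $\mathcal{I}\circ(\partial\oplus\fd^{\ast})^{\ast}=(\fd\oplus\partial^{\ast})\circ\mathcal{I}$. Here the role of "$\partial$" in the definition is played by $\partial\oplus\fd^{\ast}$. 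Both checks are explicit, because $r=\sum_{i}e_{i}\otimes f_{i}$ is the canonical element.

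First I identify $(P\oplus P^{\ast})^{\ast}$ with $P^{\ast}\oplus P$ through the natural pairing $\langle(\xi,p),(q,\eta)\rangle=\langle\xi,q\rangle+\langle\eta,p\rangle$. From the definition of $r^{\sharp}$ I get, for $(\xi,p)\in P^{\ast}\oplus P$,
$$
r^{\sharp}(\xi,p)=\sum_{i}\langle\xi,e_{i}\rangle f_{i}=(0,\xi),\qquad
\tau(r)^{\sharp}(\xi,p)=\sum_{i}\langle f_{i},p\rangle e_{i}=(p,0).
$$
Hence $\mathcal{I}(\xi,p)=(-p,\xi)$, which is clearly a linear bijection. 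As a byproduct, $\Img(r^{\sharp})=P^{\ast}$ and $\Img(\tau(r)^{\sharp})=P$, in agreement with Proposition \ref{pro:fact-diff}.

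Next I compute the transpose $(\partial\oplus\fd^{\ast})^{\ast}$ under this identification. Pairing $(\xi,p)$ with $(\partial\oplus\fd^{\ast})(q,\eta)=(\partial(q),\fd^{\ast}(\eta))$ gives $\langle\partial^{\ast}(\xi),q\rangle+\langle\eta,\fd(p)\rangle$. So $(\partial\oplus\fd^{\ast})^{\ast}(\xi,p)=(\partial^{\ast}(\xi),\fd(p))$. Therefore
$$
\mathcal{I}\big((\partial\oplus\fd^{\ast})^{\ast}(\xi,p)\big)=(-\fd(p),\partial^{\ast}(\xi))=(\fd\oplus\partial^{\ast})(-p,\xi)=(\fd\oplus\partial^{\ast})(\mathcal{I}(\xi,p)),
$$
which is the required intertwining relation.

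The main obstacle I expect is bookkeeping rather than substance. I must keep the identification $(P\oplus P^{\ast})^{\ast}\cong P^{\ast}\oplus P$ consistent with the convention $\langle r^{\sharp}(\xi_{1}),\xi_{2}\rangle=\langle\xi_{1}\otimes\xi_{2},r\rangle$. I also need to make sure that "$\partial$" in Definition \ref{def:fact-diffbia} is the second operator $\partial\oplus\fd^{\ast}$ of the admissible pair, and not $\fd\oplus\partial^{\ast}$. Finally, the identity $\sum_{i}\fd(e_{i})\otimes f_{i}=\sum_{i}e_{i}\otimes\fd^{\ast}(f_{i})$ used in Proposition \ref{pro:doublebia} gives an alternative check of the intertwining, and I would use it as a consistency test of the signs.
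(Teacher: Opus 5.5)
Your proposal is correct and follows the paper's proof. You compute $r^{\sharp}(\xi,p)=(0,\xi)$ and $\tau(r)^{\sharp}(\xi,p)=(p,0)$, so $\mathcal{I}(\xi,p)=(-p,\xi)$ is a bijection; the paper only asserts the intertwining with $(\partial\oplus\fd^{\ast})^{\ast}$, whereas you verify it explicitly through $(\partial\oplus\fd^{\ast})^{\ast}(\xi,p)=(\partial^{\ast}(\xi),\fd(p))$, and your signs check out.
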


\begin{proof}
For any $p\in P$ and $\xi\in P^{\ast}$, by direct calculation, we get that the linear maps
$r^{\sharp}, T(r)^{\sharp}: (P\oplus P^{\ast})^{\ast}=P^{\ast}\oplus P
\rightarrow P\oplus P^{\ast}$ are given by
$$
r^{\sharp}(\xi, p)=(0, \xi)\qquad\mbox{ and }\qquad \tau(r)^{\sharp}(\xi, p)=(p, 0).
$$
Thus, $\mathcal{I}=r^{\sharp}-\tau(r)^{\sharp}$ is given by $(\xi, p)\mapsto(-p, \xi)$,
which is a linear isomorphism and $\mathcal{I}\circ(\partial\oplus\fd^{\ast})^{\ast}
=(\fd\oplus\partial^{\ast})\circ\mathcal{I}$. Thus, the double differential perm bialgebra
$(P\bowtie P^{\ast}, \bar{\vartheta}_{r}, \fd\oplus\partial^{\ast},
\partial\oplus\fd^{\ast})$ is factorizable.
\end{proof}

Thus, we obtain that the quasi-triangular differential perm bialgebra $(P\oplus P^{\ast},
\bar{\diamond}, \bar{\vartheta}_{\tilde{r}}, \fd\oplus-\fd^{\ast}, -\fd\oplus\fd^{\ast})$
given in Example \ref{ex:YBE-bialg} is factorizable.
In \cite{Lin}, the notion of quadratic Rota-Baxter perm algebras of weights was
introduced, and a one-to-one correspondence between quadratic Rota-Baxter perm algebras
of nonzero weights and factorizable perm bialgebras has been given.
At the end of this chapter, we would like to point out that these conclusions can be
extended to the level of differential perm algebra by introducing Rota-Baxter operators
on differential perm algebra. This will not be provided here.

\section{Differential perm bialgebras and related bialgebra structures} \label{sec:app}
In this section, we will discuss in detail the relationship between differential
infinitesimal bialgebras, differential perm bialgebras, Novikov bialgebras and
diNovikov bialgebras. Specifically divided into four parts: we show that there is a
differential infinitesimal bialgebra structure on the tensor product of a differential
perm bialgebra and a quadratic Zinbiel algebra; each differential perm bialgebras
induces a diNovikov bialgebra; there is a Novikov bialgebra structure on
the tensor product of a diNovikov bialgebra and a quadratic Zinbiel algebra;
each differential infinitesimal bialgebras induces a Novikov bialgebra. Therefore, we provide
two methods to construct a Novikov bialgebra from a differential perm
bialgebra, and we show that the Novikov bialgebra obtained by
these two methods are consistent.

\subsection{From differential perm bialgebras to differential infinitesimal bialgebras}
\label{subsec:inf-difperm}
Let $(A, \cdot, \fd)$ be a differential algebra and $\partial: A\rightarrow A$ be a linear
map. If for any $a_{1}, a_{2}\in A$,
$$
\partial(a_{1}a_{2})=\partial(a_{1})a_{2}-a_{1}\fd(a_{2}),
$$
then we say that $\partial$ is {\bf admissible to $(A, \cdot, \fd)$}. We also say
that the quadruple $(A, \cdot, \fd, \partial)$ is an {\bf admissible differential algebra}.

\begin{pro}\label{pro:tensor-ad}
Let $(P, \diamond, \fd, \partial)$ be an admissible differential perm algebra and
$(C, \star)$ be a Zinbiel algebra. Define a binary operation $\cdot: (P\otimes C)\otimes
(P\otimes C)\rightarrow(P\otimes C)$ by
$$
(p_{1}\otimes c_{1})\cdot(p_{2}\otimes c_{2})
:=(p_{1}\diamond p_{2})\otimes(c_{1}\star c_{2})
+(p_{2}\diamond p_{1})\otimes(c_{2}\star c_{1}),
$$
for any $p_{1}, p_{2}\in P$ and $c_{1}, c_{2}\in C$. Then $(P\otimes C, \cdot,
\hat{\fd}, \hat{\partial})$ is an admissible differential algebra, which is
called {\bf the admissible differential algebra induced from $(P, \diamond, \fd, \partial)$
by $(C, \star)$}, where $\hat{\fd}=\fd\otimes\id$ and $\hat{\partial}=\partial\otimes\id$.
\end{pro}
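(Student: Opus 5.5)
By Proposition \ref{pro:tensor}$(i)$, $(P\otimes C, \cdot, \hat{\fd})$ is already a differential (commutative associative) algebra; the only new fact is that $\hat{\partial}=\partial\otimes\id$ is admissible to it. We must show
$$
\hat{\partial}(X\cdot Y)=\hat{\partial}(X)\cdot Y-X\cdot\hat{\fd}(Y)
$$
for all $X,Y\in P\otimes C$. Both sides are bilinear in $(X,Y)$, so it suffices to take pure tensors $X=p_{1}\otimes c_{1}$ and $Y=p_{2}\otimes c_{2}$.

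The tool is the characterization of admissibility recalled at the end of Section \ref{sec:prel}: for all $p_{1},p_{2}\in P$,
$$
\partial(p_{1}\diamond p_{2})=p_{1}\diamond\partial(p_{2})-\fd(p_{1})\diamond p_{2}
=\partial(p_{1})\diamond p_{2}-p_{1}\diamond\fd(p_{2}).
$$
Expanding the left side gives
$$
\hat{\partial}\big((p_{1}\otimes c_{1})\cdot(p_{2}\otimes c_{2})\big)
=\partial(p_{1}\diamond p_{2})\otimes(c_{1}\star c_{2})+\partial(p_{2}\diamond p_{1})\otimes(c_{2}\star c_{1}).
$$
The right side expands to
$$
\big(\partial(p_{1})\diamond p_{2}-p_{1}\diamond\fd(p_{2})\big)\otimes(c_{1}\star c_{2})
+\big(p_{2}\diamond\partial(p_{1})-\fd(p_{2})\diamond p_{1}\big)\otimes(c_{2}\star c_{1}).
$$
Since the $C$-components $c_{1}\star c_{2}$ and $c_{2}\star c_{1}$ appear with no further relation used, we compare the $P$-coefficients term by term.

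For the coefficient of $c_{1}\star c_{2}$, use the second form of admissibility applied to $(p_{1},p_{2})$:
$$
\partial(p_{1}\diamond p_{2})=\partial(p_{1})\diamond p_{2}-p_{1}\diamond\fd(p_{2}).
$$
For the coefficient of $c_{2}\star c_{1}$, use the first form applied to the pair $(p_{2},p_{1})$:
$$
\partial(p_{2}\diamond p_{1})=p_{2}\diamond\partial(p_{1})-\fd(p_{2})\diamond p_{1}.
$$
Both coefficients agree, so the identity holds on pure tensors and hence everywhere.

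The only delicate point is choosing the correct one of the two equivalent forms of admissibility for each summand; since both hold, the choice simply matches the term. No Zinbiel identity is needed beyond what Proposition \ref{pro:tensor} already uses. Commutativity of $\cdot$ also makes the identity $\hat{\partial}(a_{1}a_{2})=\partial(a_{1})a_{2}-a_{1}\fd(a_{2})$ the only condition required, so this completes the proof.
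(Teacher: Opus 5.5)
Your proof is correct and follows the same route as the paper: cite Proposition \ref{pro:tensor}$(i)$ for the differential algebra structure, then expand on pure tensors and make the coefficients of $c_{1}\star c_{2}$ and $c_{2}\star c_{1}$ vanish using the two forms of the admissibility identity. The only difference is cosmetic: you verify the definition's form $\hat{\partial}(X\cdot Y)=\hat{\partial}(X)\cdot Y-X\cdot\hat{\fd}(Y)$ directly, while the paper checks the mirrored form $\hat{\partial}(X\cdot Y)=X\cdot\hat{\partial}(Y)-\hat{\fd}(X)\cdot Y$ (equivalent by commutativity), and your $c_{2}\star c_{1}$ coefficient is computed more accurately than the paper's display, which harmlessly swaps $p_{1}$ and $p_{2}$ there.
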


\begin{proof}
First, by Proposition \ref{pro:tensor}, we get that $(P\otimes C, \cdot,
\hat{\fd})$ is a differential algebra. Second, since $\partial$ is
admissible to $(P, \diamond, \fd)$, for any $p_{1}, p_{2}\in P$ and $c_{1}, c_{2}\in C$,
\begin{align*}
&\; \hat{\partial}((p_{1}\otimes c_{1})\cdot(p_{2}\otimes c_{2}))
-(p_{1}\otimes c_{1})\cdot\hat{\partial}(p_{2}\otimes c_{2})
+\hat{\fd}(p_{1}\otimes c_{1})\cdot(p_{2}\otimes c_{2})\\
=&\; \big(\partial(p_{1}\diamond p_{2})-p_{1}\diamond\partial(p_{2})+\fd(p_{1})
\diamond p_{2}\big)\otimes(c_{1}\star c_{2})
+\big(\partial(p_{2}\diamond p_{1})-p_{2}\diamond\partial(p_{1})+\fd(p_{2})
\diamond p_{1}\big)\otimes(c_{2}\star c_{1})\\
=&\; 0.
\end{align*}
That is, $\hat{\partial}((p_{1}\otimes c_{1})\cdot(p_{2}\otimes c_{2}))=
(p_{1}\otimes c_{1})\cdot\hat{\partial}(p_{2}\otimes c_{2})
-\hat{\fd}(p_{1}\otimes c_{1})\cdot(p_{2}\otimes c_{2})$, $\hat{\partial}$ is
admissible to $(P\otimes C, \cdot, \hat{\fd})$.
\end{proof}

Recall that a {\bf coassociative coalgebra} is a vector space $A$ with a linear map
$\Delta: A\rightarrow A\otimes A$ satisfying $(\Delta\otimes\id)\circ\Delta
=(\id\otimes\Delta)\circ\Delta$. A coassociative coalgebra $(A, \Delta)$ is called
{\bf cocommutative} if $\Delta=\tau\circ\Delta$. For a coassociative coalgebra
$(A, \Delta)$, a linear map $\partial: A\rightarrow A$ is called a {\bf coderivation}
on $(A, \Delta)$ if $\Delta\circ\partial=(\id\otimes\partial+\partial\otimes\id)\circ\Delta$.
A {\bf codifferential coalgebra} is a triple $(A, \Delta, \partial)$, where
$(A, \Delta)$ is a cocommutative coassociative coalgebra and $\partial$ is a
coderivation. Obviously, $(A, \Delta, \partial)$ is a codifferential coalgebra if
and only if $(A^{\ast}, \Delta^{\ast}, \partial^{\ast})$ is a differential algebra.
Let $(A, \Delta, \partial)$ be a codifferential coalgebra. If $\fd: A\rightarrow A$
is a linear map satisfying
$$
(\fd\otimes\id-\id\otimes\partial)\circ\Delta=\Delta\circ\fd,
$$
then we call $(A, \Delta, \partial, \fd)$ an {\bf admissible codifferential
coalgebra}. One can check that $(A, \Delta, \partial, \fd)$ is an admissible
codifferential coalgebra if and only if $(A^{\ast}, \Delta^{\ast}, \partial^{\ast},
\fd^{\ast})$ is an admissible differential algebra. Recall that a {\bf Zinbiel coalgebra}
$(C, \theta)$ is a vector space $C$ with a linear map $\theta: C\rightarrow C\otimes C$
such that
$$
(\id\otimes\theta)\circ\theta=(\theta\otimes\id)\circ\theta
+(\tau\otimes\id)\circ(\theta\otimes\id)\circ\theta.
$$
Dual for Proposition \ref{pro:tensor-ad}, we have following conclusion.

\begin{pro}\label{pro:ind-coadass}
Let $(P, \vartheta, \partial, \fd)$ be an admissible codifferential perm coalgebra and
$(C, \theta)$ be a Zinbiel coalgebra. Define a linear map $\hat{\vartheta}: P\otimes C
\rightarrow(P\otimes C)\otimes(P\otimes C)$ by
\begin{align*}
\Delta(p\otimes c)&=\vartheta(p)\bullet\theta(c)+\tau(\vartheta(p))
\bullet\tau(\theta(c))\\
&=\sum_{(c)}\sum_{(p)}\Big((p_{(1)}\otimes c_{(1)})\otimes(p_{(2)}\otimes c_{(2)})
+(p_{(2)}\otimes c_{(2)})\otimes(p_{(1)}\otimes c_{(1)})\Big),
\end{align*}
for any $p\in P$ and $c\in C$, where $\theta(c)=\sum_{(c)}c_{(1)}\otimes c_{(2)}$
and $\vartheta(p)=\sum_{(p)}p_{(1)}\otimes p_{(2)}$ in the Sweedler notation.
Then $(P\otimes C, \cdot, \hat{\partial}, \hat{\fd})$ is an admissible
codifferential coalgebra, which is called the {\bf admissible codifferential
coalgebra induced from $(P, \vartheta, \partial, \fd)$ by $(C, \theta)$}, where
$\hat{\fd}=\fd\otimes\id$ and $\hat{\partial}=\partial\otimes\id$.
\end{pro}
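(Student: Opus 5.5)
The plan is to dualize Proposition \ref{pro:tensor-ad}, so that no coalgebra identities have to be checked directly. Note that the statement should read $(P\otimes C, \Delta, \hat{\partial}, \hat{\fd})$. By the remarks preceding the proposition, $(P, \vartheta, \partial, \fd)$ is an admissible codifferential perm coalgebra if and only if $(P^{\ast}, \vartheta^{\ast}, \partial^{\ast}, \fd^{\ast})$ is an admissible differential perm algebra. Likewise, $(C, \theta)$ is a Zinbiel coalgebra if and only if $(C^{\ast}, \theta^{\ast})$ is a Zinbiel algebra, since the Zinbiel coalgebra identity is exactly the transpose of the Zinbiel identity. Applying Proposition \ref{pro:tensor-ad} to these dual structures, we get that $(P^{\ast}\otimes C^{\ast}, \cdot, \fd^{\ast}\otimes\id, \partial^{\ast}\otimes\id)$ is an admissible differential algebra, where
$$
(\xi_{1}\otimes\eta_{1})\cdot(\xi_{2}\otimes\eta_{2})
=\vartheta^{\ast}(\xi_{1}\otimes\xi_{2})\otimes\theta^{\ast}(\eta_{1}\otimes\eta_{2})
+\vartheta^{\ast}(\xi_{2}\otimes\xi_{1})\otimes\theta^{\ast}(\eta_{2}\otimes\eta_{1}).
$$

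Next, identify $(P\otimes C)^{\ast}$ with $P^{\ast}\otimes C^{\ast}$ (everything is finite-dimensional) and check that $\Delta^{\ast}$ is this product. Pairing $\Delta(p\otimes c)$ against $(\xi_{1}\otimes\eta_{1})\otimes(\xi_{2}\otimes\eta_{2})$, the first term gives
$$
\sum\langle\xi_{1},p_{(1)}\rangle\langle\xi_{2},p_{(2)}\rangle\langle\eta_{1},c_{(1)}\rangle\langle\eta_{2},c_{(2)}\rangle
=\langle\vartheta^{\ast}(\xi_{1}\otimes\xi_{2})\otimes\theta^{\ast}(\eta_{1}\otimes\eta_{2}),\; p\otimes c\rangle .
$$
The twisted term gives the same expression with the indices $1$ and $2$ swapped. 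Also $(\fd\otimes\id)^{\ast}=\fd^{\ast}\otimes\id$ and $(\partial\otimes\id)^{\ast}=\partial^{\ast}\otimes\id$.

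Hence $((P\otimes C)^{\ast}, \Delta^{\ast}, \hat{\partial}^{\ast}, \hat{\fd}^{\ast})$ is an admissible differential algebra; here the roles of $\fd^{\ast}$ and $\partial^{\ast}$ match the paper's convention for admissible differential perm algebras. By the stated equivalence between admissible codifferential coalgebras and admissible differential algebras on the dual space, $(P\otimes C, \Delta, \hat{\partial}, \hat{\fd})$ is an admissible codifferential coalgebra. In particular this gives coassociativity, cocommutativity (immediate from the symmetric form of $\Delta$), the coderivation property of $\hat{\partial}$, and the admissibility identity $(\hat{\fd}\otimes\id-\id\otimes\hat{\partial})\circ\Delta=\Delta\circ\hat{\fd}$.

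The main obstacle is bookkeeping rather than substance. One must check that the dual of the perm coalgebra axiom in the paper's form gives precisely the perm algebra axioms for $\vartheta^{\ast}$ in the left-commutative convention, so that the input really satisfies the hypotheses of Proposition \ref{pro:tensor-ad}. One must also check that the two admissibility identities for the coalgebra transpose to the two identities in the definition of an admissible differential perm algebra with $\partial^{\ast}$ as derivation and $\fd^{\ast}$ as admissible map. If the dualization is awkward, the fallback is a direct Sweedler-notation verification mirroring the proof of Proposition \ref{pro:tensor-ad}. Applying $(\hat{\fd}\otimes\id-\id\otimes\hat{\partial})$ to each of the two summands of $\Delta(p\otimes c)$ and using $\vartheta\circ\fd=(\fd\otimes\id-\id\otimes\partial)\circ\vartheta=(\id\otimes\fd-\partial\otimes\id)\circ\vartheta$ gives $\Delta(\fd(p)\otimes c)$: the first identity handles the untwisted term and the second, after applying $\tau$, handles the twisted one.
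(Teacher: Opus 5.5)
Your proposal is correct, but it argues differently from the paper. The paper works directly on the coalgebra side. It cites \cite[Proposition~3.6]{GH} for the fact that $(P\otimes C,\Delta)$ is a cocommutative coassociative coalgebra. It then verifies the coderivation property and the identity $\Delta\circ\hat{\fd}=(\hat{\fd}\otimes\id-\id\otimes\hat{\partial})\circ\Delta$ by a Sweedler-type computation on the two summands of $\Delta$. That last step is exactly your fallback, and you are right that the twisted summand needs the second admissibility identity, which the paper uses tacitly.

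You instead transpose everything. The duality dictionary stated in the paper turns the hypotheses into the admissible differential perm algebra $(P^{\ast},\vartheta^{\ast},\partial^{\ast},\fd^{\ast})$ and the Zinbiel algebra $(C^{\ast},\theta^{\ast})$. Your pairing computation identifies $\Delta^{\ast}$ with the product of Proposition~\ref{pro:tensor-ad} on $P^{\ast}\otimes C^{\ast}$. The dictionary for admissible codifferential coalgebras then carries the conclusion back.

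This buys you every coalgebra axiom at once from a result already proved in the paper; coassociativity then rests on \cite[Proposition~2.10]{GH} through Proposition~\ref{pro:tensor}. The transposition also forces the correct roles: $\hat{\partial}$ is the coderivation and $\hat{\fd}$ the admissible map. The paper's displayed coderivation step in fact writes $\fd$ where $\partial$ is meant. The cost is that your argument, as written, uses finite-dimensionality to identify $(P\otimes C)^{\ast}$ with $P^{\ast}\otimes C^{\ast}$. That is a standing assumption here, whereas the direct computation works verbatim in any dimension.

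One slip to fix: Proposition~\ref{pro:tensor-ad} applied to $(P^{\ast},\vartheta^{\ast},\partial^{\ast},\fd^{\ast})$ gives $(P^{\ast}\otimes C^{\ast},\cdot,\partial^{\ast}\otimes\id,\fd^{\ast}\otimes\id)$, with $\partial^{\ast}\otimes\id$ as the derivation. Your first paragraph lists the two maps in the reverse order, although your later paragraphs use the correct one.
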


\begin{proof}
First, by \cite[Proposition 3.6]{GH}, we get that $(P\otimes C, \Delta)$ is a
commutative coassociative coalgebra. Second, by direct calculation, for any $p\in P$
and $c\in C$, we have
\begin{align*}
\Delta(\hat{\fd}(p\otimes c))
&=\vartheta(\fd(p))\bullet\theta(c)
+\tau(\vartheta(\fd(p)))\bullet\tau(\theta(c))\\
&=(\id\otimes\fd+\fd\otimes\id)(\vartheta(p))\bullet\theta(c)
+(\id\otimes\fd+\fd\otimes\id)(\tau(\vartheta(p)))\bullet\tau(\theta(c))\\
&=(\id\otimes\hat{\fd}+\hat{\fd}\otimes\id)(\Delta(p\otimes c)).
\end{align*}
This means that $(P\otimes C, \cdot, \hat{\fd})$ is a codifferential coalgebra.
Finally, since $(P, \vartheta, \partial, \fd)$ is admissible, i.e., $(\fd\otimes\id-
\id\otimes\partial)\circ\vartheta=\vartheta\circ\fd=(\id\otimes\fd-\partial\otimes\id)
\circ\vartheta$, for any $p\in P$ and $c\in C$, we have
\begin{align*}
\Delta(\hat{\fd}(p\otimes c))
&=\vartheta(\fd(p))\bullet\theta(c)+\tau(\vartheta(\fd(p)))\bullet\tau(\theta(c))\\
&=(\fd\otimes\id-\id\otimes\partial)(\vartheta(p))\bullet\theta(c)
+\tau((\fd\otimes\id-\id\otimes\partial)(\vartheta(p))\big)\bullet\tau(\theta(c))\\
&=(\hat{\fd}\otimes\id-\id\otimes\hat{\partial})\big(\vartheta(p)\bullet\theta(c)
+\tau(\vartheta(p))\bullet\tau(\theta(c))\big)\\
&=(\hat{\fd}\otimes\id-\id\otimes\hat{\partial})(\Delta(p\otimes c)).
\end{align*}
Thus, $(P\otimes C, \cdot, \hat{\partial}, \hat{\fd})$ is an admissible
codifferential coalgebra.
\end{proof}

Let $(A, \cdot)$ be a commutative associative algebra and $(A, \Delta)$ be a
commutative coassociative coalgebra. If for any $a_{1}, a_{2}\in A$,
\begin{align}
\Delta(a_{1}a_{2})=(\id\otimes\fu_{A}(a_{1}))(\Delta(a_{2}))
+(\fu_{A}(a_{2})\otimes\id)(\Delta(a_{1})),   \label{comm-bia}
\end{align}
then $(A, \cdot, \Delta)$ is called an {\bf infinitesimal bialgebra}, where
$\fu_{A}(a_{1})(a_{2})=a_{1}a_{2}$ for any $a_{1}, a_{2}\in A$.

\begin{defi}[\cite{LLB}]\label{def:diff-bia}
A quintuple $(A, \cdot, \Delta, \fd, \partial)$ is called a {\bf differential
infinitesimal bialgebra} if
\begin{enumerate}
\item[(i)] $(A, \cdot, \Delta)$ is an infinitesimal bialgebra,
\item[(ii)]  $(A, \cdot, \fd, \partial)$ is an admissible differential algebra,
\item[(iii)]  $(A, \Delta, \partial, \fd)$ is an admissible codifferential coalgebra.
\end{enumerate}
\end{defi}

Recall that a bilinear form $\omega(-, -)$ on a Zinbiel algebra $(C, \star)$ is called
{\bf invariant} if
$$
\omega(c_{1}\star c_{2},\; c_{3})=\omega(c_{2},\; c_{1}\star c_{3}+c_{3}\star c_{1}),
$$
for any $c_{1}, c_{2}, c_{3}\in C$. A Zinbiel algebra $(C, \star)$ with a nondegenerate
skew-symmetric invariant bilinear form $\omega(-, -)$ is called a {\bf quadratic Zinbiel
algebra} and denoted by $(C, \star, \omega)$. In this case, the bilinear form
$\omega(-,-)$ can naturally expand to the tensor product $C\otimes C\otimes
\cdots\otimes C$, i.e.,
$$
\omega(-,-):\qquad (\underbrace{C\otimes\cdots\otimes C}_{\mbox{\tiny $k$-fold}})\otimes
(\underbrace{C\otimes\cdots\otimes C}_{\mbox{\tiny $k$-fold}})\longrightarrow\Bbbk,
$$
$\omega(c_{1}\otimes c_{2}\otimes\cdots\otimes c_{k},\ \ c'_{1}\otimes c'_{2}
\otimes\cdots\otimes c'_{k})=\prod_{i=1}^{k}\omega(c_{i}, c'_{i})$,
for any $c_{1}, c_{2},\cdots, c_{k}, c'_{1}, c'_{2},\cdots, c'_{k}\in C$.
Then $\omega(-,-)$ on $C\otimes C\otimes\cdots\otimes C$ is also a
nondegenerate bilinear form.

\begin{lem}\label{lem:Zinb-dual}
Let $(C, \star, \omega)$ be a quadratic Zinbiel algebra. Define a linear map
$\theta_{\omega}: C\rightarrow C\otimes C$ by $\omega(\theta_{\omega}
(c_{1}),\; c_{2}\otimes c_{3})=\omega(c_{1},\; c_{2}\star c_{3})$ for any
$c_{1}, c_{2}, c_{3}\in C$. Then $(C, \theta_{\omega})$ is a Zinbiel coalgebra.
\end{lem}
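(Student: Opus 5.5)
The plan is to dualize the Zinbiel identity through the nondegenerate form $\omega$ extended to tensor powers. Since $\omega$ on $C\otimes C\otimes C$ is nondegenerate, the coalgebra identity
$$
(\id\otimes\theta_{\omega})\circ\theta_{\omega}=(\theta_{\omega}\otimes\id)\circ\theta_{\omega}+(\tau\otimes\id)\circ(\theta_{\omega}\otimes\id)\circ\theta_{\omega}
$$
holds if and only if both sides give the same value when paired via $\omega$ against an arbitrary $c_{2}\otimes c_{3}\otimes c_{4}$, applied to an arbitrary $c_{1}$. The first step is to note that $\theta_{\omega}$ is well defined: by nondegeneracy of $\omega$ on $C\otimes C$, the functional $c_{2}\otimes c_{3}\mapsto\omega(c_{1}, c_{2}\star c_{3})$ is represented by a unique element.

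Next I compute each term. For the left side,
$$
\omega\big((\id\otimes\theta_{\omega})\theta_{\omega}(c_{1}),\, c_{2}\otimes c_{3}\otimes c_{4}\big)=\omega\big(\theta_{\omega}(c_{1}),\, c_{2}\otimes(c_{3}\star c_{4})\big)=\omega\big(c_{1},\, c_{2}\star(c_{3}\star c_{4})\big).
$$
To justify the first equality, write $\theta_{\omega}(c_{1})=\sum c_{(1)}\otimes c_{(2)}$ and apply the defining relation to $c_{(2)}$, using the factorwise product structure of the extended form. In the same way, the first right-hand term gives $\omega(c_{1}, (c_{2}\star c_{3})\star c_{4})$.

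For the twisted term, I use that $\omega\big((\tau\otimes\id)(X),\, c_{2}\otimes c_{3}\otimes c_{4}\big)=\omega\big(X,\, c_{3}\otimes c_{2}\otimes c_{4}\big)$, which holds because the extended form is a product over tensor factors. This term therefore gives $\omega(c_{1}, (c_{3}\star c_{2})\star c_{4})$. The Zinbiel identity
$$
c_{2}\star(c_{3}\star c_{4})=(c_{2}\star c_{3})\star c_{4}+(c_{3}\star c_{2})\star c_{4}
$$
then shows the two sides agree, and nondegeneracy finishes the proof.

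I expect the main obstacle to be bookkeeping rather than anything conceptual: checking that the extended $\omega$ on $C^{\otimes 3}$ interacts correctly with $\id\otimes\theta_{\omega}$ and with $\tau\otimes\id$, i.e. keeping the Sweedler indices straight. Invariance and skew-symmetry of $\omega$ are not needed here; only nondegeneracy is used. They presumably matter in later results.
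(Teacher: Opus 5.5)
Your proposal is correct and is exactly the routine verification the paper has in mind (its proof reads only ``It is straightforward''): pairing both sides against $c_{2}\otimes c_{3}\otimes c_{4}$ through the extended form reduces the coalgebra identity to $c_{2}\star(c_{3}\star c_{4})=(c_{2}\star c_{3})\star c_{4}+(c_{3}\star c_{2})\star c_{4}$, and nondegeneracy finishes the argument, with finite-dimensionality also needed for $\theta_{\omega}$ to exist. Your remark that only nondegeneracy is used, and not invariance or skew-symmetry, is also right, since $\theta_{\omega}$ is just the transpose of $\star$ transported along the isomorphism $c\mapsto\omega(c,-)$.
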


\begin{proof}
It is straightforward.
\end{proof}

\begin{ex}\label{ex:qu-zib}
Let $(C={\rm span}_{\Bbbk}\{e_{1}, e_{2}, e_{3}, e_{4}\}, \star)$ be the $4$-dimensional
Zinbiel algebra given by $e_{1}\star e_{1}=e_{2}$, $e_{4}\star e_{4}=e_{3}$,
$e_{1}\star e_{4}=2e_{3}-e_{2}$ and $e_{4}\star e_{1}=2e_{2}-e_{3}$. If we define
a bilinear form $\omega(-,-)$ on $(C, \star)$ by $\omega(e_{3}, e_{1})=\omega(e_{4}, e_{2})
=1=-\omega(e_{1}, e_{3})=-\omega(e_{2}, e_{4})$ and all others are zero, then
$(C, \star, \omega)$ is a quadratic Zinbiel algebra. Then by Lemma \ref{lem:Zinb-dual},
we get a Zinbiel coalgebra $(C, \theta_{\omega})$, where $\theta_{\omega}(e_{1})
=-e_{2}\otimes e_{2}-e_{2}\otimes e_{3}+2e_{3}\otimes e_{2}$, $\theta_{\omega}(e_{4})
=e_{3}\otimes e_{3}-2e_{2}\otimes e_{3}+e_{3}\otimes e_{2}$ and $\theta_{\omega}(e_{2})
=\theta_{\omega}(e_{3})=0$.
\end{ex}

Now we can construct a differential infinitesimal bialgebra by the tensor product of
a differential perm bialgebra and a quadratic Zinbiel algebra.

\begin{thm}\label{thm:permbia-ASI}
Let $(P, \diamond, \vartheta, \fd, \partial)$ be a differential perm bialgebra,
$(C, \star, \omega)$ be a quadratic Zinbiel algebra and $(P\otimes C, \cdot,
\hat{\fd}, \hat{\partial})$ be the induced admissible differential algebra by
$(P, \diamond, \fd, \partial)$ and $(C, \star)$ in Propositions \ref{pro:tensor-ad}.
Define a linear map $\theta: P\otimes C\rightarrow(P\otimes C)\otimes(P\otimes C)$ by
\begin{align*}
\Delta(p\otimes c)&=\vartheta(p)\bullet\theta_{\omega}(c)+\tau(\vartheta(p))
\bullet\tau(\theta_{\omega}(c))\\
&=\sum_{(c)}\sum_{(p)}\Big((p_{(1)}\otimes c_{(1)})\otimes(p_{(2)}\otimes c_{(2)})
+(p_{(2)}\otimes c_{(2)})\otimes(p_{(1)}\otimes c_{(1)})\Big),
\end{align*}
for any $p\in P$ and $c\in C$, where $\theta_{\omega}(c)=\sum_{(c)}c_{(1)}\otimes c_{(2)}$
and $\vartheta(p)=\sum_{(p)}p_{(1)}\otimes p_{(2)}$ in the Sweedler notation.
Then $(P\otimes C, \cdot, \Delta, \hat{\fd}, \hat{\partial})$ is a differential
infinitesimal bialgebra, which is called {\bf the differential infinitesimal bialgebra induced
from $(P, \diamond, \vartheta, \fd, \partial)$ by $(C, \star, \omega)$}.
\end{thm}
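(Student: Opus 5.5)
The plan is to check the three conditions of Definition \ref{def:diff-bia} one at a time, reusing what is already established.

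Condition (ii) is exactly Proposition \ref{pro:tensor-ad}: $(P\otimes C,\cdot,\hat{\fd},\hat{\partial})$ is an admissible differential algebra.

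For condition (iii), Lemma \ref{lem:Zinb-dual} makes $(C,\theta_{\omega})$ a Zinbiel coalgebra. Definition \ref{def:avebialg} makes $(P,\vartheta,\partial,\fd)$ an admissible codifferential perm coalgebra. Proposition \ref{pro:ind-coadass} then shows that $(P\otimes C,\Delta,\hat{\partial},\hat{\fd})$ is an admissible codifferential coalgebra. In particular $(P\otimes C,\Delta)$ is cocommutative and coassociative, and $\hat{\partial}$ is a coderivation compatible with $\hat{\fd}$.

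It remains to prove condition (i), the compatibility \eqref{comm-bia}:
$$
\Delta(a_{1}a_{2})=(\id\otimes\fu(a_{1}))\Delta(a_{2})+(\fu(a_{2})\otimes\id)\Delta(a_{1})
$$
for $a_{1}=p_{1}\otimes c_{1}$ and $a_{2}=p_{2}\otimes c_{2}$. I would not expand this by hand. Instead I would use that a perm bialgebra is equivalent to a Manin triple $(P\bowtie P^{\ast},\mathfrak{B}_{d})$ of perm algebras (Theorem \ref{thm:equ} and \cite{Hou,LZB}). The form $\omega$ lets us identify $C\cong C^{\ast}$, and under this identification the product on $C^{\ast}$ dual to $\theta_{\omega}$ corresponds to $\star$ itself.

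By \cite[Proposition 2.10]{GH}, $(P\oplus P^{\ast})\otimes C=(P\otimes C)\oplus(P^{\ast}\otimes C)$ is a commutative associative algebra. Consider the bilinear form $\mathfrak{B}_{d}\otimes\omega$ on it. It is symmetric, since it is the product of two skew-symmetric forms, and it is nondegenerate. The key check is that it is invariant in the associative sense, $\mathcal{B}(xy,z)=\mathcal{B}(x,yz)$. Writing $x=u_{1}\otimes c_{1}$, $y=u_{2}\otimes c_{2}$, $z=u_{3}\otimes c_{3}$, this reduces to combining the perm invariance $\mathfrak{B}(u_{1}\diamond u_{2},u_{3})=\mathfrak{B}(u_{1},u_{2}\diamond u_{3}-u_{3}\diamond u_{2})$ with the Zinbiel invariance of $\omega$. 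One expands both sides into their two terms and matches them using skew-symmetry together with the Zinbiel identity. This gives a double construction of a commutative Frobenius algebra, that is, a Manin triple of commutative associative algebras, with isotropic subalgebras $P\otimes C$ and $P^{\ast}\otimes C\cong(P\otimes C)^{\ast}$. The pairing between them is $\langle\xi\otimes c',p\otimes c\rangle\,\omega(c',c)$, up to sign.

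Next I would verify that the product induced on $(P\otimes C)^{\ast}$ by this identification is the dual of $\Delta$. On $P^{\ast}\otimes C$ the product is
$$
(\xi_{1}\otimes c_{1})(\xi_{2}\otimes c_{2})=(\xi_{1}\diamond'\xi_{2})\otimes(c_{1}\star c_{2})+(\xi_{2}\diamond'\xi_{1})\otimes(c_{2}\star c_{1}),
$$
where $\diamond'=\vartheta^{\ast}$. Pairing this with $p\otimes c$ and using the definition of $\theta_{\omega}$ gives exactly $\langle\Delta(p\otimes c),-\rangle$, including the twisted term $\tau(\vartheta(p))\bullet\tau(\theta_{\omega}(c))$. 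The standard equivalence between double constructions of commutative Frobenius algebras and infinitesimal bialgebras then yields \eqref{comm-bia}.

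As a fallback, I would verify \eqref{comm-bia} directly. Expand both sides into four Sweedler-type sums. Use \eqref{bialg2}–\eqref{bialg3} for $\vartheta(p_{1}\diamond p_{2})$, and the dual Zinbiel-coalgebra compatibility of $\theta_{\omega}$ with $\star$, which follows from the invariance of $\omega$. Then group the terms according to the $C$-factor patterns. The main obstacle in either route is the bookkeeping of signs and twists $\tau$ when matching terms; the invariance verification is where I expect most of the effort.
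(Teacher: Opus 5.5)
Your proposal is correct. For condition (i) it takes a genuinely different route from the paper. Conditions (ii) and (iii) you handle exactly as the paper does, via Proposition \ref{pro:tensor-ad}, Lemma \ref{lem:Zinb-dual} and Proposition \ref{pro:ind-coadass}.

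The paper proves \eqref{comm-bia} by direct expansion. It writes the defect $\Delta(a_{1}a_{2})-(\id\otimes\fu(a_{1}))\Delta(a_{2})-(\fu(a_{2})\otimes\id)\Delta(a_{1})$, for $a_{i}=p_{i}\otimes c_{i}$, as a combination of five $\omega$-defined tensors $\Phi,\Phi',\Psi,\Psi',\Omega\in C\otimes C$. It then kills each $P\otimes P$-coefficient with \eqref{bialg1}--\eqref{bialg3}. Your fallback omits \eqref{bialg1}, which is needed there for the $\Psi$-coefficient.

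You instead tensor the entire perm Manin triple $(P\bowtie P^{\ast},\mathfrak{B}_{d})$ with $(C,\star,\omega)$, and the key invariance step does hold. Put $T(x,y,z)=\mathfrak{B}_{d}(x\bar{\diamond}y,z)$ and $S(c,c',c'')=\omega(c\star c',c'')$. Skew-symmetry plus perm invariance give $T(x,z,y)=-T(x,y,z)$ and a vanishing cyclic sum for $T$. Skew-symmetry plus Zinbiel invariance give $S(c'',c',c)=S(c,c',c'')$ and a vanishing cyclic sum for $S$. With $X=x\otimes c$, $Y=y\otimes c'$, $Z=z\otimes c''$, both sides of $\mathcal{B}(XY,Z)=\mathcal{B}(X,YZ)$ then reduce to $T(x,y,z)S(c,c',c'')+T(y,z,x)\big(S(c,c',c'')+S(c,c'',c')\big)$. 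So only skew-symmetry and the two invariance identities are used, not the Zinbiel axiom itself.

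Your identification of the product on $P^{\ast}\otimes C$ with the dual of $\Delta$, twisted term included, is also right. The sign ambiguity in the pairing is harmless, because \eqref{comm-bia} is linear in $\Delta$.

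You need only the easy direction of the double-construction theorem. Once the mixed products are expressed through invariance, take the $P\otimes C$-component of $(a_{1}a_{2})X=a_{1}(a_{2}X)$ with $X\in P^{\ast}\otimes C$. It is precisely \eqref{comm-bia} contracted with $X$ in the first tensor factor.

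Your route is structural. It explains why invariance of $\omega$ is the right hypothesis, gives coassociativity and cocommutativity of $\Delta$ for free (the product on $P^{\ast}\otimes C$ is commutative associative), and replaces the long bookkeeping by one trilinear identity. The cost is importing the equivalence between perm bialgebras and Manin triples from \cite{Hou,LZB}. The paper's computation stays at the level of \eqref{bialg1}--\eqref{bialg3} and shows exactly which axiom cancels which term.
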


\begin{proof}
By Proposition \ref{pro:ind-coadass} and Lemma \ref{lem:Zinb-dual}, we get
that $(P\otimes C, \Delta, \hat{\fd}, \hat{\partial})$ is an admissible
codifferential coalgebra. Following, we show that $(P\otimes C, \cdot, \Delta)$
is an infinitesimal bialgebra. In fact, for any $e, f, c_{1}, c_{2}\in C$, we have
\begin{align*}
& \omega(\theta_{\omega}(c_{1}\star c_{2}),\; e\otimes f)
=-\omega(c_{1},\; (e\star f)\star c_{2}),\\
& \omega(\theta_{\omega}(c_{2}\star c_{1}),\; e\otimes f)
=\omega(c_{1},\; (c_{2}\star e)\star f+(e\star c_{2})\star f+(e\star f)\star c_{2}),\\
& \omega(\tau(\theta_{\omega}(c_{1}\star c_{2})),\; e\otimes f)
=-\omega(c_{1},\; (f\star e)\star c_{2}),\\
& \omega(\tau(\theta_{\omega}(c_{2}\star c_{1})),\; e\otimes f)
=\omega(c_{1},\; (c_{2}\star f)\star e+(f\star c_{2})\star e+(f\star e)\star c_{2}),\\
& \omega((\id\otimes\tilde{\fl}_{C}(c_{1}))(\theta_{\omega}(c_{2})),\; e\otimes f)
=-\omega(c_{1},\; (c_{2}\star f)\star e+(f\star c_{2})\star e+(e\star f)\star c_{2}
+(f\star e)\star c_{2}),\\
& \omega(\tau((\tilde{\fl}_{C}(c_{1})\otimes\id)(\theta_{\omega}(c_{2}))),\; e\otimes f)
=\omega(c_{1},\; (c_{2}\star f)\star e+(f\star c_{2})\star e),\\
& \omega((\id\otimes\tilde{\fr}_{C}(c_{1}))(\theta_{\omega}(c_{2})),\; e\otimes f)
=\omega(c_{1},\; (c_{2}\star f)\star e+(f\star c_{2})\star e+(e\star f)\star c_{2}\\[-1mm]
&\qquad\qquad\qquad\qquad\qquad\qquad\qquad\qquad
+(f\star e)\star c_{2}+(c_{2}\star e)\star f+(e\star c_{2})\star f),\\
& \omega(\tau((\tilde{\fr}_{C}(c_{1})\otimes\id)(\theta_{\omega}(c_{2}))),\; e\otimes f)
=-\omega(c_{1},\; (c_{2}\star f)\star e+(f\star c_{2})\star e+(c_{2}\star e)\star f),\\
& \omega((\tilde{\fl}_{C}(c_{2})\otimes\id)(\theta_{\omega}(c_{1})),\; e\otimes f)
=\omega(c_{1},\; (c_{2}\star e)\star f+(e\star c_{2})\star f),\\
& \omega(\tau((\id\otimes\tilde{\fl}_{C}(c_{2}))(\theta_{\omega}(c_{1}))),\; e\otimes f)
=\omega(c_{1},\; (c_{2}\star f)\star e+(f\star c_{2})\star e+(e\star f)\star c_{2}
+(f\star e)\star c_{2}),\\
& \omega((\tilde{\fr}_{C}(c_{2})\otimes\id)(\theta_{\omega}(c_{1})),\; e\otimes f)
=-\omega(c_{1},\; (e\star c_{2})\star f),\\
& \omega(\tau((\id\otimes\tilde{\fr}_{C}(c_{2}))(\theta_{\omega}(c_{1}))),\; e\otimes f)
=-\omega(c_{1},\; (e\star f)\star c_{2}+(f\star e)\star c_{2}).
\end{align*}
If we denote $\Phi_{c_{1}, c_{2}}, \Phi'_{c_{1}, c_{2}}, \Psi_{c_{1}, c_{2}},
\Psi'_{c_{1}, c_{2}}, \Omega_{c_{1}, c_{2}}\in C\otimes C$ for any $c_{1}, c_{2}\in C$ by
\begin{align*}
&\omega(\Phi_{c_{1}, c_{2}},\; e\otimes f)=\omega(c_{1},\; (e\star f)\star c_{2}),\qquad\qquad
\omega(\Phi'_{c_{1}, c_{2}},\; e\otimes f)=\omega(c_{1},\; (f\star e)\star c_{2}),\\
&\omega(\Psi_{c_{1}, c_{2}},\; e\otimes f)=\omega(c_{1},\; (c_{2}\star e)\star f),\qquad\qquad
\omega(\Psi'_{c_{1}, c_{2}},\; e\otimes f)=\omega(c_{1},\; (e\star c_{2})\star f),\\
&\omega(\Omega_{c_{1}, c_{2}},\; e\otimes f)=\omega(c_{1},\; (c_{2}\star f)\star e
+(f\star c_{2})\star e),
\end{align*}
then we obtain
\begin{align*}
& \Delta((p_{(1)}\otimes c_{(1)})\cdot(p_{(2)}\otimes c_{(2)}))
-(\id\otimes\fu_{P\otimes C}(p_{(1)}\otimes c_{(1)}))(\Delta(p_{(2)}\otimes c_{(2)}))\\[-1mm]
&\ \ -(\fu_{P\otimes C}(p_{(2)}\otimes c_{(2)})\otimes\id)(\Delta(p_{(1)}\otimes c_{(1)}))\\
=&\; \vartheta(p_{1}\diamond p_{2})\bullet\theta_{\omega}(c_{1}\star c_{2})
+\tau(\vartheta(p_{1}\diamond p_{2}))\bullet\tau(\theta_{\omega}(c_{1}\star c_{2}))\\[-1mm]
&\ \ +\vartheta(p_{2}\diamond p_{1})\bullet\theta_{\omega}(c_{2}\star c_{1})
+\tau(\vartheta(p_{2}\diamond p_{1}))\bullet\tau(\theta_{\omega}(c_{2}\star c_{1}))\\[-1mm]
&\ \ -(\id\otimes\fl_{P}(p_{1}))(\vartheta(p_{2}))\bullet
(\id\otimes\tilde{\fl}_{C}(c_{1}))(\theta_{\omega}(c_{2}))
-\tau((\fl_{P}(p_{1})\otimes\id)(\vartheta(p_{2})))\bullet
\tau((\tilde{\fl}_{C}(c_{1})\otimes\id)(\theta_{\omega}(c_{2})))\\[-1mm]
&\ \ -(\id\otimes\fr_{P}(p_{1}))(\vartheta(p_{2}))\bullet
(\id\otimes\tilde{\fr}_{C}(c_{1}))(\theta_{\omega}(c_{2}))
-\tau((\fr_{P}(p_{1})\otimes\id)(\vartheta(p_{2})))\bullet
\tau((\tilde{\fr}_{C}(c_{1})\otimes\id)(\theta_{\omega}(c_{2})))\\[-1mm]
&\ \ -(\fl_{P}(p_{2})\otimes\id)(\vartheta(p_{1}))\bullet
(\tilde{\fl}_{C}(c_{2})\otimes\id)(\theta_{\omega}(c_{1}))
-\tau(\id\otimes(\fl_{P}(p_{2}))(\vartheta(p_{1})))\bullet
\tau((\id\otimes\tilde{\fl}_{C}(c_{2}))(\theta_{\omega}(c_{1})))\\[-1mm]
&\ \ -(\fr_{P}(p_{2})\otimes\id)(\vartheta(p_{1}))\bullet
(\tilde{\fr}_{C}(c_{2})\otimes\id)(\theta_{\omega}(c_{1}))
-\tau(\id\otimes(\fr_{P}(p_{2}))(\vartheta(p_{1})))\bullet
\tau((\id\otimes\tilde{\fr}_{C}(c_{2}))(\theta_{\omega}(c_{1})))\\[-1mm]
=&\; \Big(\vartheta(p_{2}\diamond p_{1})-\vartheta(p_{1}\diamond p_{2})
+(\id\otimes\fl_{P}(p_{1}))(\vartheta(p_{2}))\\[-2mm]
&\quad -(\id\otimes\fr_{P}(p_{1}))(\vartheta(p_{2}))
-\tau(\id\otimes(\fl_{P}(p_{2}))(\vartheta(p_{1})))
+\tau(\id\otimes(\fr_{P}(p_{2}))(\vartheta(p_{1})))\Big)\bullet\Phi_{c_{1}, c_{2}}\\
&\; +\Big(\tau(\vartheta(p_{2}\diamond p_{1}))
-\tau(\vartheta(p_{1}\diamond p_{2}))
+(\id\otimes\fl_{P}(p_{1}))(\vartheta(p_{2}))\\[-2mm]
&\quad -(\id\otimes\fr_{P}(p_{1}))(\vartheta(p_{2}))
-\tau(\id\otimes(\fl_{P}(p_{2}))(\vartheta(p_{1})))
+\tau(\id\otimes(\fr_{P}(p_{2}))(\vartheta(p_{1})))\Big)\bullet\Phi'_{c_{1}, c_{2}}\\
&\; +\Big(\vartheta(p_{2}\diamond p_{1})-(\id\otimes\fr_{P}(p_{1}))(\vartheta(p_{2}))
+\tau((\fr_{P}(p_{1})\otimes\id)(\vartheta(p_{2})))
-(\fl_{P}(p_{2})\otimes\id)(\vartheta(p_{1}))\Big)\bullet\Psi_{c_{1}, c_{2}}\\
&\; +\Big(\vartheta(p_{2}\diamond p_{1})-(\id\otimes\fr_{P}(p_{1}))(\vartheta(p_{2}))
-(\fl_{P}(p_{2})\otimes\id)(\vartheta(p_{1}))
+(\fr_{P}(p_{2})\otimes\id)(\vartheta(p_{1}))\Big)\bullet\Psi'_{c_{1}, c_{2}}\\
&\; +\Big(\tau(\vartheta(p_{2}\diamond p_{1}))
+(\id\otimes\fl_{P}(p_{1}))(\vartheta(p_{2}))
-\tau((\fl_{P}(p_{1})\otimes\id)(\vartheta(p_{2})))\\[-2mm]
&\quad -(\id\otimes\fr_{P}(p_{1}))(\vartheta(p_{2}))
+\tau((\fr_{P}(p_{1})\otimes\id)(\vartheta(p_{2})))
-\tau(\id\otimes(\fl_{P}(p_{2}))(\vartheta(p_{1})))\Big)\bullet\Omega_{c_{1}, c_{2}}.
\end{align*}
Suppose $(P, \diamond, \vartheta, \fd, \partial)$ is a differential perm bialgebra.
Then by Eqs. \eqref{bialg2} and \eqref{bialg3}, we get that the terms for
$\Phi_{c_{1}, c_{2}}$ and $\Phi'_{c_{1}, c_{2}}$ in the above equation are both zero.
By Eq. \eqref{bialg2}, we get that the term for $\Psi'_{c_{1}, c_{2}}$ in the above
equation is zero. By Eqs. \eqref{bialg2} and \eqref{bialg1}, we get that the term
for $\Psi_{c_{1}, c_{2}}$ in the above equation is zero. By Eq. \eqref{bialg3}, we
get that the term for $\Omega_{c_{1}, c_{2}}$ in the above equation is zero.
That is to say, $(P\otimes C, \cdot, \Delta)$ is an infinitesimal bialgebra and
$(P\otimes C, \cdot, \Delta, \hat{\fd}, \hat{\partial})$ is a differential
infinitesimal bialgebra if $(P, \diamond, \vartheta, \fd, \partial)$ is a
differential perm bialgebra.
\end{proof}

Let $(A, \cdot, \fd, \partial)$ be an admissible differential algebra. If there exists
an element $r\in A\otimes A$ such that $(A, \cdot, \Delta_{r}, \fd, \partial)$ is a
differential infinitesimal bialgebra, where $\Delta_{r}: A\rightarrow A\otimes A$ is given by
\begin{align}
\Delta_{r}(a)=\big(\id\otimes\fu_{A}(a)-\fu_{A}(a)\otimes\id\big)(r),   \label{cobass}
\end{align}
for any $a\in A$, then $(A, \cdot, \Delta_{r}, \fd, \partial)$ is called a {\bf coboundary
differential infinitesimal bialgebra}. An element $r=\sum_{i}x_{i}\otimes y_{i}\in
A\otimes A$ is said to be {\bf skew-symmetric} if $r=-\tau(r)$; $r$ is said to be {\bf
ass-invariant} if $(\id\otimes\fu_{A}(a)-\fu_{A}(a)\otimes\id)(r)=0$. The equations
\begin{align*}
&\qquad \mathbf{A}_{r}:=r_{12}r_{13}+r_{13}r_{23}-r_{23}r_{12}=0,\\
& (\fd\otimes\id-\id\otimes\partial)(r)=0=(\id\otimes\fd-\partial\otimes\id)(r),
\end{align*}
is called the {\bf differential associative Yang-Baxter equation} (or $\DAYBE$) in
$(A, \cdot, \fd, \partial)$, where $r_{12}r_{13}=\sum_{i,j}(x_{i}x_{j})$ $\otimes y_{i}
\otimes y_{j}$, $r_{13}r_{23}=\sum_{i,j}x_{i}\otimes x_{j}\otimes(y_{i}y_{j})$ and
$r_{23}r_{12}=\sum_{i,j}x_{i}\otimes(x_{j}y_{i})\otimes y_{j}$.

\begin{pro}[\cite{CH}]\label{pro:diff-bia}
Let $(A, \cdot, \fd, \partial)$ be an admissible differential algebra,
$r\in A\otimes A$ and $\Delta_{r}: A\rightarrow A\otimes A$ defined by Eq. \eqref{cobass}.
\begin{enumerate}
\item[$(i)$] If $r$ is a solution of the $\DAYBE$ in $(A, \cdot, \fd, \partial)$
     and $r+\tau(r)$ is ass-invariant, then $(A, \cdot, \Delta_{r}, \fd, \partial)$
     is a differential infinitesimal bialgebra, which is called a {\bf quasi-triangular
     differential infinitesimal bialgebra} associated with $r$.
\item[$(ii)$] If $r$ is a skew-symmetric solution of the $\DAYBE$ in $(A, \cdot, \fd,
     \partial)$, then $(A, \cdot, \Delta_{r}, \fd, \partial)$ is a differential
     infinitesimal bialgebra, which is called a {\bf triangular differential infinitesimal
     bialgebra} associated with $r$.
\end{enumerate}
\end{pro}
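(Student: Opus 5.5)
The plan is to split the statement into its non-differential and differential parts. First, for the underlying commutative associative algebra $(A,\cdot)$: if $\mathbf{A}_{r}=0$ and $r+\tau(r)$ is ass-invariant, then $(A,\cdot,\Delta_{r})$ is an infinitesimal bialgebra. This is the standard coboundary result for (commutative) infinitesimal bialgebras. I would either cite it or check it directly.

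For the direct check, note that the compatibility \eqref{comm-bia} holds automatically for any coboundary $\Delta_{r}$ given by \eqref{cobass}, since $A$ is associative and commutative. One simply expands $\Delta_{r}(a_{1}a_{2})$ and uses $\fu_{A}(a_{1}a_{2})=\fu_{A}(a_{1})\fu_{A}(a_{2})$. Cocommutativity $\Delta_{r}=\tau\circ\Delta_{r}$ follows because
$$
\Delta_{r}(a)-\tau\Delta_{r}(a)=(\id\otimes\fu_{A}(a)-\fu_{A}(a)\otimes\id)(r+\tau(r)),
$$
which vanishes by ass-invariance. Coassociativity reduces, by the usual computation, to
$$
(\id\otimes\id\otimes\fu_{A}(a)-\fu_{A}(a)\otimes\id\otimes\id)(\mathbf{A}_{r})
$$
plus terms involving $r+\tau(r)$, and these vanish under the hypotheses. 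This bookkeeping is the main obstacle. I would follow the computation in \cite{Bai} (the antisymmetric infinitesimal case), adapted to commutativity, and track the invariance terms carefully.

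Next, I would verify the differential conditions. $(A,\cdot,\fd,\partial)$ is admissible by hypothesis. It then remains to show that $\partial$ is a coderivation of $\Delta_{r}$ and that
$$
\Delta_{r}\circ\fd=(\fd\otimes\id-\id\otimes\partial)\Delta_{r}.
$$
For this I would use admissibility in the operator form $\fu_{A}(\partial a)=\partial\fu_{A}(a)+\fu_{A}(a)\fd$, together with its commutative counterpart $\fu_{A}(\fd a)=\fd\fu_{A}(a)-\fu_{A}(a)\fd$. This mirrors Lemma \ref{lem:coba}.

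Expanding, $\Delta_{r}(\partial a)-(\id\otimes\partial+\partial\otimes\id)\Delta_{r}(a)$ becomes
$$
\big((\id\otimes\fu_{A}(a))(\id\otimes\fd-\partial\otimes\id)-(\fu_{A}(a)\otimes\id)(\fd\otimes\id-\id\otimes\partial)\big)(r),
$$
which is zero by the two linear $\DAYBE$ equations. The same expansion for $\Delta_{r}\circ\fd$ produces only the combinations $(\fd\otimes\id-\id\otimes\partial)(r)$ and $(\id\otimes\fd-\partial\otimes\id)(r)$, which also vanish.

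Finally, part $(ii)$ is a special case of $(i)$: if $r$ is skew-symmetric, then $r+\tau(r)=0$ is trivially ass-invariant.
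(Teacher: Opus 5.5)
Your decomposition is the right one, and it is how the paper treats the perm analogue (Proposition \ref{pro:quasi-dpba} from Proposition \ref{pro:quasi-ba} and Lemma \ref{lem:coba}). For this statement itself the paper gives no argument and simply cites \cite{CH}. Your handling of \eqref{comm-bia} and of cocommutativity is correct, as is your formula for $\Delta_{r}(\partial a)-(\id\otimes\partial+\partial\otimes\id)\Delta_{r}(a)$, and $(ii)$ does follow from $(i)$.

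For coassociativity you can drop the hedge. A direct expansion, in which the two mixed middle terms cancel by commutativity, gives exactly
\[
(\Delta_{r}\otimes\id)\Delta_{r}(a)-(\id\otimes\Delta_{r})\Delta_{r}(a)=\big(\fu_{A}(a)\otimes\id\otimes\id-\id\otimes\id\otimes\fu_{A}(a)\big)(\mathbf{A}_{r}).
\]
There are no $r+\tau(r)$ terms, so ass-invariance of $r+\tau(r)$ is used only for cocommutativity.

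The step that does not close as you describe it is $\Delta_{r}\circ\fd=(\fd\otimes\id-\id\otimes\partial)\circ\Delta_{r}$. The identity you call the commutative counterpart, $\fu_{A}(\fd a)=\fd\fu_{A}(a)-\fu_{A}(a)\fd$, is just the Leibniz rule and involves no commutativity. Suppose you use only it and the given admissibility $\partial(a_{1}a_{2})=\partial(a_{1})a_{2}-a_{1}\fd(a_{2})$. Then the expansion equals $\big(\fu_{A}(a)\otimes\id-\id\otimes\fu_{A}(a)\big)(\fd\otimes\id-\id\otimes\partial)(r)$ plus the residual
\[
\sum_{i}x_{i}\otimes\big((\fd+\partial)(a)\,y_{i}-a\,(\fd+\partial)(y_{i})\big).
\]
This residual is not a combination of the two linear $\DAYBE$ expressions.

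To kill it you must use commutativity. Applying the hypothesis to $a_{2}a_{1}$ gives the other-sided rule $\partial(a_{1}a_{2})=a_{1}\partial(a_{2})-\fd(a_{1})a_{2}$. Subtracting the two rules yields $(\fd+\partial)(a_{1})\,a_{2}=a_{1}\,(\fd+\partial)(a_{2})$, which makes the residual vanish. Hence
\[
\Delta_{r}(\fd a)-(\fd\otimes\id-\id\otimes\partial)\Delta_{r}(a)=\big(\fu_{A}(a)\otimes\id-\id\otimes\fu_{A}(a)\big)(\fd\otimes\id-\id\otimes\partial)(r).
\]
This vanishes by the first linear $\DAYBE$ equation alone; the second linear equation enters only through the coderivation property. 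With this identity added, your argument is complete.
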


A quasi-triangular differential infinitesimal bialgebra $(A, \cdot, \Delta_{r}, \fd,
\partial)$ is called a {\bf factorizable differential infinitesimal bialgebra} if
$\mathcal{I}=r^{\sharp}+\tau(r)^{\sharp}: A^{\ast}\rightarrow A$
is an isomorphism of vector spaces and $\mathcal{I}\circ\partial^{\ast}
=\fd\circ\mathcal{I}$ \cite{CH}.
We now consider the relationship between the solutions of the $\DPYBE$ in an admissible
differential perm algebra and the solutions of the $\DAYBE$ in the induced admissible
differential algebra. Let $(C, \star, \omega)$ be a quadratic Zinbiel algebra
and $\{e_{1}, e_{2},\cdots, e_{n}\}$ be a basis of $C$. Since $\omega(-,-)$ is
skew-symmetric nondegenerate, we get a basis $\{f_{1}, f_{2},\cdots, f_{n}\}$ of $C$,
which is called the dual basis of $\{e_{1}, e_{2},\cdots, e_{n}\}$ with respect to
$\omega(-,-)$, i.e., $\omega(f_{i}, e_{j})=\delta_{ij}$, where $\delta_{ij}$
is the Kronecker delta. For the solutions of the $\DAYBE$ in the induced admissible
differential algebra, we have

\begin{pro}\label{pro:DPYBE-DAYBE}
Let $(P, \diamond, \vartheta, \fd, \partial)$ be an admissible differential perm algebra
$(C, \star, \omega)$ be a quadratic Zinbiel algebra and $(P\otimes C, \cdot,
\hat{\fd}, \hat{\partial})$ be the admissible differential algebra induced
from $(P, \diamond, \fd, \partial)$ by $(C, \star)$. Suppose that $r=\sum_{i}x_{i}
\otimes y_{i}\in P\otimes P$ is a solution of the $\DPYBE$ in
$(P, \diamond, \fd, \partial)$ and $r-\tau(r)$ is perm-invariant. Then
\begin{align}
\widehat{r}=\sum_{i, j}(x_{i}\otimes e_{j})\otimes(y_{i}\otimes f_{j})
\in(P\otimes C)\otimes(P\otimes C)   \label{assrmax}
\end{align}
is a solution of the $\DAYBE$ in $(P\otimes C, \cdot, \hat{\fd}, \hat{\partial})$ and
$\widehat{r}+\tau(\widehat{r})$ is ass-invariant, where $\{e_{1}, e_{2},\cdots, e_{n}\}$
is a basis of $C$ and $\{f_{1}, f_{2},\cdots, f_{n}\}$ is the dual basis of $\{e_{1},
e_{2},\cdots, e_{n}\}$ with respect to $\omega(-,-)$.

In particular, $\widehat{r}$ is a skew-symmetric solution of the $\DAYBE$ in
$(P\otimes C, \cdot, \hat{\fd}, \hat{\partial})$ if $r$ is a symmetric solution of
the $\DPYBE$ in $(P, \diamond, \fd, \partial)$.
\end{pro}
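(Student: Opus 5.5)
The proof splits into three independent parts: the Yang--Baxter part $\mathbf{A}_{\widehat{r}}=0$, the ass-invariance of $\widehat{r}+\tau(\widehat{r})$, and the two differential conditions. The basic tool is a reformulation of $\kappa:=\sum_{j}e_{j}\otimes f_{j}\in C\otimes C$. By the defining property $\omega(f_{i},e_{j})=\delta_{ij}$, we have $\sum_{j}\omega(f_j,c)\,e_j=c$ and $\sum_j\omega(c,e_j)f_j=c$. Together with skew-symmetry of $\omega$, this gives $\tau(\kappa)=-\kappa$, and hence
\[
\tau(\widehat{r})=-\sum_{i,j}(y_i\otimes e_j)\otimes(x_i\otimes f_j).
\]
Thus $\widehat{r}+\tau(\widehat{r})=\widehat{r-\tau(r)}$, where $r\mapsto\widehat{r}$ is the linear map of Eq.~\eqref{assrmax}. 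In particular, if $r$ is symmetric then $\widehat{r}$ is skew-symmetric. This already yields the last sentence, once the $\DAYBE$ part is established.

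The differential conditions are immediate. Since $\hat{\fd}=\fd\otimes\id$ and $\hat{\partial}=\partial\otimes\id$ act only on the $P$-factors, we get
\[
(\hat{\fd}\otimes\id-\id\otimes\hat{\partial})(\widehat{r})=\widehat{(\fd\otimes\id-\id\otimes\partial)(r)}=0,
\]
and likewise for the other equation.

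For ass-invariance, write $s=r-\tau(r)$, which is perm-invariant and skew-symmetric. We must show $(\id\otimes\fu(a)-\fu(a)\otimes\id)(\widehat{s})=0$ for $a=p\otimes c$. Expanding the product of Proposition~\ref{pro:tensor-ad} gives four terms, of the form
\[
(\fl_P(p)\otimes\id)(s)\ \text{or}\ (\fr_P(p)\otimes\id)(s)\ \text{(and the right-slot analogues)}
\]
tensored with $C$-tensors such as $\sum_j (c\star e_j)\otimes f_j$ and $\sum_j (e_j\star c)\otimes f_j$. The key lemma is to transport multiplications across $\kappa$ using invariance of $\omega$. For instance, $\sum_j (c\star e_j)\otimes f_j$ equals $\sum_j e_j\otimes L^{\dagger}(f_j)$, where $L^\dagger$ is the $\omega$-adjoint of $\tilde{\fl}_C(c)$. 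Invariance computes this adjoint as
\[
\omega(c\star x,y)=\omega(x,c\star y+y\star c),
\]
so it is expressed through $\tilde{\fl}_C(c)+\tilde{\fr}_C(c)$. After transporting, all $C$-parts become combinations of $\sum_j e_j\otimes (c\star f_j)$ and $\sum_j e_j\otimes (f_j\star c)$. Collecting the coefficients then gives $P$-parts exactly equal to $\vartheta_s(p)=((\fl_P-\fr_P)(p)\otimes\id-\id\otimes\fr_P(p))(s)$ and its twist. These vanish by perm-invariance together with $\tau(s)=-s$.

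For $\mathbf{A}_{\widehat{r}}$, I would expand $r_{12}r_{13}$, $r_{13}r_{23}$ and $r_{23}r_{12}$ in $(P\otimes C)^{\otimes3}$. Each produces two terms from the two summands of $\cdot$, so there are six terms. Each is a $P$-tensor among $r_{12}\diamond r_{13}$, $r_{13}\diamond r_{12}$, $r_{13}\diamond r_{23}$, $r_{12}\diamond r_{23}$ and their variants with reversed products, tensored with a $C$-tensor like $\sum_{j,k}(e_j\star e_k)\otimes f_j\otimes f_k$. Using the transport lemma, every $C$-tensor is rewritten in a normal form, say as images of the two tensors
\[
\sum_{j,k}e_j\otimes e_k\otimes(f_j\star f_k),\qquad \sum_{j,k}e_j\otimes e_k\otimes(f_k\star f_j).
\]
This rewriting uses the Zinbiel identity. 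The $P$-coefficient of each normal form should then be $\mathbf{P}_r$ (possibly permuted), plus terms involving $r-\tau(r)$ that vanish by perm-invariance, using the perm identities. This is the main obstacle: the bookkeeping of the Zinbiel rewriting, and matching the leftover non-$\mathbf{P}_r$ terms with $\vartheta_{r-\tau(r)}$. To reduce the risk, I would first handle the symmetric case, where those leftovers are absent. Alternatively, I would pair $\mathbf{A}_{\widehat{r}}$ against $\xi\otimes c$ tensors via $\omega$, mirroring the pairing technique in the proof of Theorem~\ref{thm:permbia-ASI}.
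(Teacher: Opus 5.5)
Your plan matches the paper's proof. The paper likewise uses $\tau(\kappa)=-\kappa$ and the invariance and skew-symmetry of $\omega$ to reduce every $C$-tensor to two normal forms: it uses $\sum_{k,l}e_k\otimes(f_k\star e_l)\otimes f_l$ and $\sum_{k,l}e_k\otimes(e_l\star f_k)\otimes f_l$, which span the same space as your pair. It then reads off $\pm\mathbf{P}_r$ as one coefficient and, after applying perm-invariance of $r-\tau(r)$ twice, $(\id\otimes\tau)(\tau\otimes\id)(\mathbf{P}_r)$ as the other, and it treats the ass-invariance and the differential conditions exactly as you describe.

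One correction: the Zinbiel identity is not needed anywhere in the rewriting, because invariance and skew-symmetry alone already cut the trilinear forms $\omega(a\star b,c)$ down to two independent ones.
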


\begin{proof}
First, since for any $1\leq s, u, v\leq n$, since
$$
\omega\Big(\sum_{k,l}e_{k}\otimes(f_{k}\star e_{l})\otimes f_{l},\ \
e_{s}\otimes e_{u}\otimes e_{v}\Big)
=\omega(e_{u},\ \ e_{s}\star e_{v})
=-\omega\Big(\sum_{k,l}(e_{k}\star e_{l})\otimes f_{k}\otimes f_{l},\ \
e_{s}\otimes e_{u}\otimes e_{v}\Big),
$$
and the nondegeneracy of $\omega(-,-)$, we get $\sum_{k,l}e_{k}\otimes(f_{k}\star e_{l})
\otimes f_{l}=-\sum_{k,l}(e_{k}\star e_{l})\otimes f_{k}\otimes f_{l}$. Similarly,
$\sum_{k,l}e_{k}\otimes(e_{l}\star f_{k})\otimes f_{l}=-\sum_{k,l}e_{k}\otimes e_{l}\otimes
(f_{l}\star f_{k})$ and $\sum_{k,l}e_{k}\otimes e_{l}\otimes(f_{k}\star f_{l})
=\sum_{k,l}(e_{l}\star e_{k})\otimes f_{k}\otimes f_{l}=\sum_{k,l}e_{k}\otimes(e_{l}
\star f_{k})\otimes f_{l}+\sum_{k,l}e_{k}\otimes(f_{k}\star e_{l})\otimes f_{l}$.
Thus, we obtain
\begin{align*}
&\; \widehat{r}_{12}\cdot\widehat{r}_{13}+\widehat{r}_{13}\cdot
\widehat{r}_{23}-\widehat{r}_{23}\cdot\widehat{r}_{12}\\
=&\;\sum_{i,j}\sum_{k,l}\Big(\big((x_{i}\diamond x_{j})\otimes y_{i}\otimes y_{j}\big)
\bullet\big((e_{k}\star e_{l})\otimes f_{k}\otimes f_{l}\big)
+\big((x_{j}\diamond x_{i})\otimes y_{i}\otimes y_{j}\big)
\bullet\big((e_{l}\star e_{k})\otimes f_{k}\otimes f_{l}\big)\\[-4mm]
&\qquad\quad +\big(x_{i}\otimes x_{j}\otimes(y_{i}\diamond y_{j})\big)\bullet
\big(e_{k}\otimes e_{l}\otimes(f_{k}\star f_{l})\big)
+\big(x_{i}\otimes x_{j}\otimes(y_{j}\diamond y_{i})\big)\bullet
\big(e_{k}\otimes e_{l}\otimes(f_{l}\star f_{k})\big)\\[-1mm]
&\qquad\quad -\big(x_{i}\otimes(x_{j}\diamond y_{i})\otimes y_{j}\big)
\bullet\big(e_{k}\otimes(e_{l}\star f_{k})\otimes f_{l}\big)
-\big(x_{i}\otimes(y_{i}\diamond x_{j})\otimes y_{j}\big)
\bullet\big(e_{k}\otimes(f_{k}\star e_{l})\otimes f_{l}\big)\Big)\\
=&\;\Big(\sum_{i,j}\big((x_{j}\diamond x_{i})\otimes y_{i}\otimes y_{j}
-(x_{i}\diamond x_{j})\otimes y_{i}\otimes y_{j}
+x_{i}\otimes x_{j}\otimes(y_{i}\diamond y_{j})\\[-5mm]
&\qquad\qquad\qquad\qquad\qquad\qquad\qquad\qquad\qquad
-x_{i}\otimes(y_{i}\diamond x_{j})\otimes y_{j}\big)\Big)\bullet
\Big(\sum_{k,l}e_{k}\otimes(f_{k}\star e_{l})\otimes f_{l}\Big)\\[-2mm]
&\ \ +\Big(\sum_{i,j}\big((x_{j}\diamond x_{i})\otimes y_{i}\otimes y_{j}
+x_{i}\otimes x_{j}\otimes(y_{i}\diamond y_{j})
-x_{i}\otimes x_{j}\otimes(y_{j}\diamond y_{i})\\[-5mm]
&\qquad\qquad\qquad\qquad\qquad\qquad\qquad\qquad\qquad
-x_{i}\otimes(x_{j}\diamond y_{i})\otimes y_{j}\big)\Big)\bullet
\Big(\sum_{k,l}e_{k}\otimes(e_{l}\star f_{k})\otimes f_{l}\Big).
\end{align*}
Since $r-\tau(r)$ is perm-invariant, we get $\sum_{i,j}\big(x_{i}\otimes(y_{i}\diamond x_{j})
\otimes y_{j}+(x_{i}\diamond x_{j})\otimes y_{i}\otimes y_{j}-(x_{j}\diamond x_{i})
\otimes y_{i}\otimes y_{j}\big)=\sum_{i,j}\big(y_{i}\otimes(x_{i}\diamond x_{j})\otimes y_{j}
+(y_{i}\diamond x_{j})\otimes x_{i}\otimes y_{j}-(x_{j}\diamond y_{i})\otimes x_{i}
\otimes y_{j}\big)$ and $\sum_{i,j}\big(x_{i}\otimes x_{j}\otimes(y_{i}\diamond y_{j})
+(x_{i}\diamond y_{j})\otimes x_{j}\otimes y_{i}-(y_{j}\diamond x_{i})\otimes x_{j}
\otimes y_{i}\big)=\sum_{i,j}\big(y_{i}\otimes x_{j}\otimes(x_{i}\diamond y_{j})
+(y_{i}\diamond y_{j})\otimes x_{j}\otimes x_{i}-(y_{j}\diamond y_{i})\otimes x_{j}
\otimes x_{i}\big)$, and so that
\begin{align*}
\mathbf{P}_{r}
&=\sum_{i,j}\Big(x_{i}\otimes(y_{i}\diamond x_{j})\otimes y_{j}
-x_{i}\otimes x_{j}\otimes(y_{i}\diamond y_{j})
+(x_{i}\diamond x_{j})\otimes y_{i}\otimes y_{j}
-(x_{i}\diamond x_{j})\otimes y_{j}\otimes y_{i}\Big)\\[-2mm]
&=\sum_{i,j}\Big(y_{i}\otimes(x_{i}\diamond x_{j})\otimes y_{j}
+(y_{i}\diamond x_{j})\otimes x_{i}\otimes y_{j}
-(x_{j}\diamond y_{i})\otimes x_{i}\otimes y_{j}
-x_{i}\otimes x_{j}\otimes(y_{i}\diamond y_{j})\Big)\\[-2mm]
&=\sum_{i,j}\Big(y_{i}\otimes(x_{i}\diamond x_{j})\otimes y_{j}
-y_{i}\otimes x_{j}\otimes(x_{i}\diamond y_{j})
-(y_{i}\diamond y_{j})\otimes x_{j}\otimes x_{i}
+(y_{j}\diamond y_{i})\otimes x_{j}\otimes x_{i}\Big).
\end{align*}
Thus, we obtain
$$
\widehat{r}_{12}\cdot\widehat{r}_{13}+\widehat{r}_{13}\cdot
\widehat{r}_{23}-\widehat{r}_{23}\cdot\widehat{r}_{12}
=\mathbf{P}_{r}\bullet\Big(\sum_{k,l}e_{k}\otimes(f_{k}\star e_{l})\otimes f_{l}\Big)
+(\id\otimes\tau)((\tau\otimes\id)(\mathbf{P}_{r}))\bullet
\Big(\sum_{k,l}e_{k}\otimes(e_{l}\star f_{k})\otimes f_{l}\Big).
$$
That is, $\mathbf{A}_{\widehat{r}}=0$ if $\mathbf{P}_{r}=0$ and $r-\tau(r)$ is perm-invariant.
Moreover, note that $(\hat{\fd}\otimes\id-\id\otimes\hat{\partial})(\widehat{r})
=(\fd\otimes\id-\id\otimes\partial)(r)\bullet(\sum_{j}e_{j}\otimes f_{j})$ and
$(\hat{\partial}\otimes\id-\id\otimes\hat{\fd})(\widehat{r})
=(\partial\otimes\id-\id\otimes\fd)(r)\bullet(\sum_{j}e_{j}\otimes f_{j})$.
We get that $\widehat{r}$ is a solution of the $\DAYBE$ in $(P\otimes C, \cdot,
\hat{\fd}, \hat{\partial})$ if $r$ is a solution of the $\DPYBE$ in
$(P, \diamond, \fd, \partial)$ and $r-\tau(r)$ is perm-invariant.

Second, for any $1\leq s, t\leq n$ and $c\in C$, since $\omega(-,-)$ is nondegenerate and
$$
\omega\Big(\sum_{j}e_{j}\otimes(c\star f_{j}),\ \ e_{s}\otimes e_{t}\Big)
=\omega(c,\ \ e_{t}\star e_{s})
=-\omega\Big(\sum_{j}f_{j}\otimes(c\star e_{j}),\ \ e_{s}\otimes e_{t}\Big),
$$
we obtain $\sum_{j}e_{j}\otimes(c\star f_{j})=-\sum_{j}f_{j}\otimes(c\star e_{j})$.
Similarly, $\sum_{j}(c\star f_{j})\otimes e_{j}=-\sum_{j}(c\star e_{j})\otimes f_{j}$
and $\sum_{j}f_{j}\otimes(e_{j}\star c)=\sum_{j}(e_{j}\star c)\otimes f_{j}
=-\sum_{j}e_{j}\otimes(f_{j}\star c)=-\sum_{j}(f_{j}\star c)\otimes e_{j}=
\sum_{j}e_{j}\otimes(c\star f_{j})+\sum_{j}(c\star f_{j})\otimes e_{j}$.
Thus, for any $c\in C$ and $p\in P$, we have
\begin{align*}
&\;(\id\otimes\fu_{P\otimes C}(p\otimes c)-\fu_{P\otimes C}(p\otimes c)\otimes\id)
(\widehat{r}+\tau(\widehat{r}))\\
=&\; \sum_{i,j}\Big(
\big(x_{i}\otimes(p\diamond y_{i})\big)\bullet\big(e_{j}\otimes(c\star f_{j})\big)
+\big(x_{i}\otimes(y_{i}\diamond p)\big)\bullet\big(e_{j}\otimes(f_{j}\star c)\big)\\[-4mm]
&\qquad\quad
-\big((p\diamond x_{i})\otimes y_{i}\big)\bullet\big((c\star e_{j})\otimes f_{j}\big)
-\big((x_{i}\diamond p)\otimes y_{i}\big)\bullet\big((e_{j}\star c)\otimes f_{j}\big)\\
&\qquad\quad
+\big(y_{i}\otimes(p\diamond x_{i})\big)\bullet\big(f_{j}\otimes(c\star e_{j})\big)
+\big(y_{i}\otimes(x_{i}\diamond p)\big)\bullet\big(f_{j}\otimes(e_{j}\star c)\big)\\[-1mm]
&\qquad\quad
-\big((p\diamond y_{i})\otimes x_{i}\big)\bullet\big((c\star f_{j})\otimes e_{j}\big)
-\big((y_{i}\diamond p)\otimes x_{i}\big)\bullet\big((f_{j}\star c)\otimes e_{j}\big)\Big)\\
=&\;\Big(\sum_{i}\big((p\diamond x_{i})\otimes y_{i}-x_{i}\otimes(y_{i}\diamond p)
-(x_{i}\diamond p)\otimes y_{i}+y_{i}\otimes(x_{i}\diamond p)\\[-4mm]
&\qquad\qquad\qquad\qquad\qquad\qquad
-(p\diamond y_{i})\otimes x_{i}+(y_{i}\diamond p)\otimes x_{i}\big)
\Big)\bullet\Big(\sum_{j}(c\star f_{j})\otimes e_{j}\Big)\\[-2mm]
&\; +\Big(\sum_{i}\big(x_{i}\otimes(p\diamond y_{i})-x_{i}\otimes(y_{i}\diamond p)
-(x_{i}\diamond p)\otimes y_{i}-y_{i}\otimes(p\diamond x_{i})\\[-4mm]
&\qquad\qquad\qquad\qquad\qquad\qquad
+y_{i}\otimes(x_{i}\diamond p)+(y_{i}\diamond p)\otimes x_{i}\big)
\Big)\bullet\Big(\sum_{j}e_{j}\otimes(c\star f_{j})\Big).
\end{align*}
If $r-\tau(r)$ is perm-invariant, i.e., $\sum_{i}\big((p\diamond x_{i})\otimes y_{i}
-x_{i}\otimes(y_{i}\diamond p)-(x_{i}\diamond p)\otimes y_{i}+y_{i}\otimes(x_{i}\diamond p)
-(p\diamond y_{i})\otimes x_{i}+(y_{i}\diamond p)\otimes x_{i}\big)=0$ for any $p\in P$,
then $(\id\otimes\fu_{P\otimes C}(p\otimes c)-\fu_{P\otimes C}(p\otimes c)\otimes\id)
(\widehat{r}+\tau(\widehat{r}))=0$, and so that $\widehat{r}+\tau(\widehat{r})$
is ass-invariant.

Finally, for any $s, t\in\{1, 2,\cdots, n\}$, note that
$$
\omega\Big(e_{s}\otimes e_{t},\; \sum_{j}e_{j}\otimes f_{j}\Big)
=-\omega(e_{s}, e_{t})
=-\omega\Big(e_{s}\otimes e_{t},\; \sum_{j}f_{j}\otimes e_{j}\Big).
$$
The nondegeneracy of $\omega(-,-)$ yields that $\sum_{j}e_{j}\otimes f_{j}
=-\sum_{j}f_{j}\otimes e_{j}$. Since $r$ is symmetric, we get $\widehat{r}$
is skew-symmetric. The proof is finished.
\end{proof}

We can now use the relationship between the solution of the $\DPYBE$ in a differential
perm algebra and the solution of the $\DAYBE$ in the induced differential algebra given
in Proposition \ref{pro:DPYBE-DAYBE} to provide another main conclusion of this section.

\begin{thm}\label{thm:indu-asssdibia}
Let $(P, \diamond, \vartheta, \fd, \partial)$ be a differential perm bialgebra,
$(C, \star, \omega)$ be a quadratic Zinbiel algebra and $(P\otimes C, \cdot, \Delta,
\hat{\fd}, \hat{\partial})$ be the induced differential infinitesimal bialgebra from
$(P, \diamond, \vartheta, \fd, \partial)$ by $(C, \star, \omega)$. If $\vartheta
=\vartheta_{r}$ is defined by Eq. \eqref{cobo} for some $r\in P\otimes P$ and
$r-\tau(r)$ is perm-invariant, then $(P\otimes C, \cdot, \Delta, \hat{\fd},
\hat{\partial})=(P\otimes C, \cdot, \Delta_{\widehat{r}}, \hat{\fd}, \hat{\partial})$
as differential infinitesimal bialgebras, where $\Delta_{\widehat{r}}$ is defined by
Eq. \eqref{cobass} and $\widehat{r}$ is defined by Eq. \eqref{assrmax}. Therefore, we obtain
\begin{enumerate}\itemsep=0pt
\item[$(i)$] $(P\otimes C, \cdot, \Delta, \hat{\fd}, \hat{\partial})$ is
     quasi-triangular if $(P, \diamond, \vartheta, \fd, \partial)$ is quasi-triangular;
\item[$(ii)$] $(P\otimes C, \cdot, \Delta, \hat{\fd}, \hat{\partial})$ is
     triangular if $(P, \diamond, \vartheta, \fd, \partial)$ is triangular;
\item[$(iii)$] $(P\otimes C, \cdot, \Delta, \hat{\fd}, \hat{\partial})$ is
     factorizable if $(P, \diamond, \vartheta, \fd, \partial)$ is factorizable.
\end{enumerate}
\end{thm}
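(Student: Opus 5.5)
The core of the proof is the identity $\Delta=\Delta_{\widehat{r}}$ on $P\otimes C$. Items (i)--(iii) then follow by transporting properties of $r$ to $\widehat{r}$ via Proposition \ref{pro:DPYBE-DAYBE}.

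\emph{Step 1: $\Delta=\Delta_{\widehat{r}}$.} Fix $p\otimes c\in P\otimes C$ and write $r=\sum_i x_i\otimes y_i$. By Eq. \eqref{cobass} and the definition of the product $\cdot$,
\[
\Delta_{\widehat{r}}(p\otimes c)=\sum_{i,j}\Big(\big(x_i\otimes (p\diamond y_i)\big)\bullet\big(e_j\otimes(c\star f_j)\big)+\big(x_i\otimes(y_i\diamond p)\big)\bullet\big(e_j\otimes(f_j\star c)\big)-\big((p\diamond x_i)\otimes y_i\big)\bullet\big((c\star e_j)\otimes f_j\big)-\big((x_i\diamond p)\otimes y_i\big)\bullet\big((e_j\star c)\otimes f_j\big)\Big).
\]
On the other side, $\Delta(p\otimes c)=\vartheta_r(p)\bullet\theta_\omega(c)+\tau(\vartheta_r(p))\bullet\tau(\theta_\omega(c))$, where $\vartheta_r(p)=\sum_i\big((p\diamond x_i-x_i\diamond p)\otimes y_i-x_i\otimes(y_i\diamond p)\big)$.

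The plan is to express every $C$-tensor appearing on either side in terms of two basic tensors, $\theta_\omega(c)$ and $\tau(\theta_\omega(c))$. For this I will pair with $e_s\otimes e_t$ using $\omega$. The definition of $\theta_\omega$ gives $\omega(\theta_\omega(c),e_s\otimes e_t)=\omega(c,e_s\star e_t)$. The invariance of $\omega$, together with skew-symmetry, rewrites $\omega(c,\cdot)$ against each of $\sum_j e_j\otimes(c\star f_j)$, $\sum_j (c\star e_j)\otimes f_j$, $\sum_j e_j\otimes (f_j\star c)$ and $\sum_j(e_j\star c)\otimes f_j$. The identities already derived in the second part of the proof of Proposition \ref{pro:DPYBE-DAYBE} should give exactly
\[
\sum_j e_j\otimes(c\star f_j)=\pm\tau(\theta_\omega(c)),\qquad \sum_j(e_j\star c)\otimes f_j=\sum_j e_j\otimes(c\star f_j)+\sum_j(c\star f_j)\otimes e_j,
\]
and similar formulas for the others.

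Substituting these collapses $\Delta_{\widehat r}(p\otimes c)$ to
\[
X\bullet\theta_\omega(c)+Y\bullet\tau(\theta_\omega(c))
\]
for explicit $X,Y\in P\otimes P$ built from $r$ and $p$. I then compare with $\Delta(p\otimes c)$. The $X$-component should equal $\vartheta_r(p)$ directly. The $Y$-component should equal $\tau(\vartheta_r(p))$ only after subtracting a term of the form
\[
\big((\fl_P-\fr_P)(p)\otimes\id-\id\otimes\fr_P(p)\big)(r-\tau(r)),
\]
which vanishes by the perm-invariance hypothesis on $r-\tau(r)$. This mirrors how perm-invariance was used in the ass-invariance part of Proposition \ref{pro:DPYBE-DAYBE}.

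\emph{Main obstacle.} I expect the hardest part to be the sign and ordering bookkeeping for the Zinbiel tensors, in particular getting $\sum_j e_j\otimes(c\star f_j)$ with the correct sign against $\tau\theta_\omega(c)$. I would first verify these identities by evaluating both sides on $e_s\otimes e_t$ under $\omega$, and only afterwards expand the $P$-parts.

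\emph{Step 2: (i)--(iii).} For (i), suppose $(P,\diamond,\vartheta,\fd,\partial)$ is quasi-triangular, so that $r$ solves the $\DPYBE$ and $r-\tau(r)$ is perm-invariant. Proposition \ref{pro:DPYBE-DAYBE} then gives that $\widehat r$ solves the $\DAYBE$ and $\widehat r+\tau(\widehat r)$ is ass-invariant. Combined with Step 1 and Proposition \ref{pro:diff-bia}, this shows the induced bialgebra is quasi-triangular.

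For (ii), if $r$ is symmetric then $r-\tau(r)=0$ is trivially perm-invariant. Proposition \ref{pro:DPYBE-DAYBE} then makes $\widehat r$ skew-symmetric, so the induced bialgebra is triangular.

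For (iii), compute $\widehat r^\sharp$ via the identification $(P\otimes C)^*\cong P^*\otimes C^*$. This gives $\widehat r^{\sharp}=r^\sharp\otimes\kappa^\sharp$ and $\tau(\widehat r)^\sharp=\tau(r)^\sharp\otimes\tau(\kappa)^\sharp$, where $\kappa=\sum_j e_j\otimes f_j$. Since $\tau(\kappa)=-\kappa$, we get
\[
\widehat r^\sharp+\tau(\widehat r)^\sharp=(r^\sharp-\tau(r)^\sharp)\otimes\kappa^\sharp=\mathcal I\otimes\kappa^\sharp.
\]
This map is bijective because $\mathcal I$ is bijective by factorizability and $\kappa^\sharp$ is bijective by the nondegeneracy of $\omega$. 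Finally, $(\mathcal I\otimes\kappa^\sharp)\circ(\partial\otimes\id)^*=(\fd\otimes\id)\circ(\mathcal I\otimes\kappa^\sharp)$ follows from $\mathcal I\circ\partial^*=\fd\circ\mathcal I$. Hence the induced bialgebra is factorizable.
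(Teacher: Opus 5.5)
Your proposal is correct and follows the paper's proof essentially step for step. Like the paper, you use $\omega$-duality to rewrite the $C$-tensors in $\Delta_{\widehat r}(p\otimes c)$ in terms of $\theta_\omega(c)$ and $\tau(\theta_\omega(c))$, match the coefficients with $\vartheta_r(p)$ and $\tau(\vartheta_r(p))$ via perm-invariance of $r-\tau(r)$, and then get (i)--(iii) from Proposition~\ref{pro:DPYBE-DAYBE} together with $\widehat r^{\sharp}=r^{\sharp}\otimes\kappa^{\sharp}$ and $\tau(\widehat r)^{\sharp}=-\tau(r)^{\sharp}\otimes\kappa^{\sharp}$. The sign you left open is $+$, i.e.\ $\sum_j e_j\otimes(c\star f_j)=\tau(\theta_\omega(c))$, and the discrepancy in the $\tau(\theta_\omega(c))$-coefficient is exactly $-\tau\big(((\fl_P-\fr_P)(p)\otimes\id-\id\otimes\fr_P(p))(r-\tau(r))\big)$, which vanishes as you say.
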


\begin{proof}
Let $r=\sum_{i}x_{i}\otimes y_{i}\in P\otimes P$. First, for any $c\in C$ and
$p\in P$, we have
\begin{align*}
\Delta(p\otimes c)&=\Big(\sum_{i}\big((p\diamond x_{i}-x_{i}\diamond p)\otimes y_{i}
-x_{i}\otimes(y_{i}\diamond p)\big)\Big)
\bullet\Big(\sum_{(c)}c_{(1)}\otimes c_{(2)}\Big)\\[-2mm]
&\quad +\Big(\sum_{i}\big(y_{i}\otimes(p\diamond x_{i}-x_{i}\diamond p)
-(y_{i}\diamond p)\otimes x_{i}\big)\Big)
\bullet\Big(\sum_{(c)}c_{(2)}\otimes c_{(1)}\Big),
\end{align*}
where $\theta_{\omega}(c)=\sum_{(c)}c_{(1)}\otimes c_{(2)}$ and $\vartheta_{r}(p)=
((\fl_{P}-\fr_{P})(p)\otimes\id-\id\otimes\fr_{P}(p))(r)=\sum_{i}\big((p\diamond x_{i}
-x_{i}\diamond p)\otimes y_{i}-x_{i}\otimes(y_{i}\diamond p)\big)$. On the other hand,
\begin{align*}
\Delta_{\widehat{r}}(p\otimes c)&=\big(\id\otimes\fu_{P\otimes C}(p\otimes c)
-\fu_{P\otimes C}(p\otimes c)\otimes\id\big)(\widehat{r})\\
&=\sum_{i,j}\Big(\big(x_{i}\otimes(p\diamond y_{i})\big)\bullet
\big(e_{j}\otimes(c\star f_{j})\big)+\big(x_{i}\otimes(y_{i}\diamond p)\big)\bullet
\big(e_{j}\otimes(f_{j}\star c)\big)\\[-5mm]
&\qquad\quad-\big((p\diamond x_{i})\otimes y_{i}\big)\bullet
\big((c\star e_{j})\otimes f_{j}\big)-\big((x_{i}\diamond p)\otimes y_{i}\big)\bullet
\big((e_{j}\star c)\otimes f_{j}\big)\Big),
\end{align*}
where $\{e_{1}, e_{2},\cdots, e_{n}\}$ is a basis of $C$ and $\{f_{1}, f_{2},\cdots,
f_{n}\}$ is the dual basis of $\{e_{1}, e_{2}, \cdots, e_{n}\}$ with respect to
$\omega(-,-)$. For any basis elements $e_{s}, e_{t}\in C$, since
$$
\omega\Big(\sum_{(c)}c_{(2)}\otimes c_{(1)},\ \ e_{s}\otimes e_{t}\Big)
=\omega(c,\; e_{t}\star e_{s})
=\omega\Big(\sum_{j}e_{j}\otimes(c\star f_{j}),\ \ e_{s}\otimes e_{t}\Big),
$$
we get that $\sum_{(c)}c_{(2)}\otimes c_{(1)}=\sum_{j}e_{j}\otimes(c\star f_{j})$.
Similarly, $\sum_{(c)}c_{(1)}\otimes c_{(2)}=-\sum_{j}(c\star e_{j})\otimes f_{j}$
and $\sum_{j}(e_{j}\star c)\otimes f_{j}=-\sum_{j}e_{j}\otimes(f_{j}\star c)=
\sum_{(c)}c_{(1)}\otimes c_{(2)}+\sum_{(c)}c_{(2)}\otimes c_{(1)}$. Thus, we obtain
\begin{align*}
\Delta_{\widehat{r}}(p\otimes c)&=\Big(\sum_{i}\big((p\diamond x_{i})\otimes y_{i}
-x_{i}\otimes(y_{i}\diamond p)-(x_{i}\diamond p)\otimes y_{i}\big)\Big)
\bullet\Big(\sum_{(c)}c_{(1)}\otimes c_{(2)}\Big)\\[-2mm]
&\quad +\Big(\sum_{i}\big(x_{i}\otimes(p\diamond y_{i})
-x_{i}\otimes(y_{i}\diamond p)-(x_{i}\diamond p)\otimes y_{i}\big)\Big)
\bullet\Big(\sum_{(c)}c_{(2)}\otimes c_{(1)}\Big).
\end{align*}
Since $r-\tau(r)$ is perm-invariant, i.e., $\sum_{i}\big(x_{i}\otimes(p\diamond y_{i})
-x_{i}\otimes(y_{i}\diamond p)-(x_{i}\diamond p)\otimes y_{i}\big)=\sum_{i}\big(y_{i}
\otimes(p\diamond x_{i}-x_{i}\diamond p)-(y_{i}\diamond p)\otimes x_{i}\big)$,
we get $\Delta=\Delta_{\widehat{r}}$, and so that $(P\otimes C, \cdot, \Delta,
\hat{\fd}, \hat{\partial})=(P\otimes C, \cdot, \Delta_{\widehat{r}}, \hat{\fd},
\hat{\partial})$ as differential infinitesimal bialgebras. Thus, Proposition
\ref{pro:DPYBE-DAYBE}, we get the conclusions $(i)$ and $(ii)$

Suppose now $(P, \diamond, \vartheta, \fd, \partial)$ is factorizable.
Then $\mathcal{I}=r^{\sharp}-\tau(r)^{\sharp}: P^{\ast}\rightarrow P$ is an
isomorphism of vector spaces and $\mathcal{I}\circ\partial^{\ast}=\fd\circ\mathcal{I}$.
We need to show that $\widehat{\mathcal{I}}:=\widehat{r}^{\sharp}
+\tau(\widehat{r})^{\sharp}: (P\otimes C)^{\ast}\rightarrow P\otimes C$ is an
isomorphism of vector spaces and $\widehat{\mathcal{I}}\circ\hat{\partial}^{\ast}
=\hat{\fd}\circ\widehat{\mathcal{I}}$. Denote $\kappa:=\sum_{j}e_{j}\otimes
f_{j}\in C\otimes C$. Define $\kappa^{\sharp}: C^{\ast}\rightarrow C$ by
$\langle\kappa^{\sharp}(\xi_{1}),\; \xi_{2}\rangle=\langle\kappa,\;
\xi_{1}\otimes\xi_{2}\rangle$, for any $\xi_{1}, \xi_{2}\in C^{\ast}$.
Then, one can check that $\kappa^{\sharp}$ is a linear isomorphism and
$\langle\kappa^{\sharp}(\xi_{1}),\; \xi_{2}\rangle=\langle\kappa,\;
\xi_{1}\otimes\xi_{2}\rangle=-\langle\kappa,\; \xi_{2}\otimes\xi_{1}\rangle
=\langle\kappa^{\sharp}(\xi_{2}),\; \xi_{1}\rangle$.
Therefore, for any $\xi_{1}, \xi_{2}\in C^{\ast}$ and $\eta_{1}, \eta_{2}\in P^{\ast}$,
\begin{align*}
\langle\widehat{r}^{\sharp}(\eta_{1}\otimes\xi_{1}),\; \eta_{2}\otimes\xi_{2}\rangle
&=\sum_{i,j}\langle(\eta_{1}\otimes\xi_{1})\otimes(\eta_{2}\otimes\xi_{2}),\ \
(x_{i}\otimes e_{j})\otimes(y_{i}\otimes f_{j})\rangle\\[-2mm]
&=\Big(\sum_{j}\langle\eta_{1}, x_{i}\rangle\langle\eta_{2}, y_{i}\rangle\Big)
\Big(\sum_{i}\langle\xi_{1}, e_{j}\rangle\langle\xi_{2}, f_{j}\rangle\Big)\\[-2mm]
&=\langle r^{\sharp}(\eta_{1}),\; \eta_{2}\rangle
\langle\kappa^{\sharp}(\xi_{1}),\; \xi_{2}\rangle\\
&=\langle r^{\sharp}(\eta_{1})\otimes\kappa^{\sharp}(\xi_{1}),\;
\eta_{2}\otimes\xi_{2}\rangle.
\end{align*}
That is, $\widehat{r}^{\sharp}=r^{\sharp}\otimes\kappa^{\sharp}$, Similarly,
$\tau(\widehat{r})^{\sharp}=-\tau(r)^{\sharp}\otimes\kappa^{\sharp}$. Thus,
$\widehat{\mathcal{I}}=\mathcal{I}\otimes\kappa^{\sharp}$ is an isomorphism of
vector spaces and $\widehat{\mathcal{I}}\circ\hat{\partial}^{\ast}=(\mathcal{I}
\circ\partial)\otimes\kappa^{\sharp}=(\fd\circ\mathcal{I})\otimes\kappa^{\sharp}
=\hat{\fd}\circ\widehat{\mathcal{I}}$. The proof is completed.
\end{proof}

By Theorem \ref{thm:indu-asssdibia}, we have the following commutative diagram:
$$
\xymatrix@C=2cm@R=0.6cm{
\txt{$r$ \\ {\tiny a solution of the $\DPYBE$ in $(P, \diamond,
\fd, \partial)$}\\ {\tiny such that $r-\tau(r)$
is perm-invariant}} \ar[r]^{\qquad{\rm Pro.}~\ref{pro:quasi-dpba}}
\ar[d]_{{\rm Pro.}~\ref{pro:DPYBE-DAYBE}} &
\txt{$(P, \diamond, \vartheta_{r}, \fd, \partial)$ \\
{\tiny a quasi-triangular differential}\\ {\tiny prem bialgebra}}
\ar[d]^{{\rm Thm.}~\ref{thm:indu-asssdibia}}\\
\txt{$\widehat{r}$ \\ {\tiny a solution of the $\DAYBE$ in $(P\otimes C, \cdot,
\hat{\fd}, \hat{\partial})$}\\
{\tiny such that $\widehat{r}+\tau(\widehat{r})$ is ass-invariant}}
\ar[r]^{\quad{\rm Pro.}~\ref{pro:diff-bia}} &
\txt{$(P\otimes C, \cdot, \Delta_{\widehat{r}}, \hat{\fd}, \hat{\partial})$ \\
{\tiny a quasi-triangular differential}\\ {\tiny infinitesimal bialgebra}} }
$$

\begin{ex}\label{ex:ind-triasbi}
Let $(P={\rm span}_{\Bbbk}\{x_{1}, x_{2}\}, \diamond, \vartheta, \fd, -\fd)$ be the
$2$-dimensional triangular differential perm bialgebra associated with $r=x_{2}
\otimes x_{2}$ given in Example \ref{ex:YBE-bialg} and $(C={\rm span}_{\Bbbk}\{e_{1},
e_{2}, e_{3}, e_{4}\}, \star, \omega)$ be the $4$-dimensional quadratic Zinbiel algebra
given in Example \ref{ex:qu-zib}. Then we get a $8$-dimensional differential infinitesimal
bialgebra $(P\otimes C, \cdot, \Delta, \hat{\fd}, -\hat{\fd})$, where the nonzero
commutative products and cocommutative products are given by
\begin{align*}
& (x_{1}\otimes e_{1})\cdot(x_{1}\otimes e_{1})=2x_{1}\otimes e_{2},\qquad
(x_{1}\otimes e_{4})\cdot(x_{1}\otimes e_{4})=2x_{1}\otimes e_{3},\qquad
(x_{1}\otimes e_{1})\cdot(x_{2}\otimes e_{1})=x_{2}\otimes e_{2},\\
&\qquad (x_{1}\otimes e_{1})\cdot(x_{2}\otimes e_{4})
=2x_{2}\otimes e_{3}-x_{2}\otimes e_{2},\qquad\quad
(x_{1}\otimes e_{1})\cdot(x_{1}\otimes e_{4})=x_{1}\otimes e_{2}+x_{1}\otimes e_{3},\\
&\qquad (x_{1}\otimes e_{4})\cdot(x_{2}\otimes e_{1})
=2x_{2}\otimes e_{2}-x_{2}\otimes e_{3},\qquad\quad
(x_{1}\otimes e_{4})\cdot(x_{2}\otimes e_{4})=x_{2}\otimes e_{3},\\
&\qquad \Delta(x_{1}\otimes e_{1})=(x_{2}\otimes e_{2})\otimes(x_{2}\otimes e_{3})
+(x_{2}\otimes e_{3})\otimes(x_{2}\otimes e_{2})
-2(x_{2}\otimes e_{2})\otimes(x_{2}\otimes e_{2}),\\
&\qquad \Delta(x_{1}\otimes e_{4})=2(x_{2}\otimes e_{3})\otimes(x_{2}\otimes e_{3})
-(x_{2}\otimes e_{2})\otimes(x_{2}\otimes e_{3})
-(x_{2}\otimes e_{3})\otimes(x_{2}\otimes e_{2}).
\end{align*}
Moreover, since the dual basis of $\{e_{1}, e_{2}, e_{3}, e_{4}\}$ with respect to
$\omega(-,-)$ is given by $\{e_{3}, e_{4}, -e_{1}, -e_{2}\}$, we get that
$$
\widehat{r}=(x_{2}\otimes e_{1})\otimes(x_{2}\otimes e_{3})
+(x_{2}\otimes e_{2})\otimes(x_{2}\otimes e_{4})
-(x_{2}\otimes e_{3})\otimes(x_{2}\otimes e_{1})
-(x_{2}\otimes e_{4})\otimes(x_{2}\otimes e_{2})
$$
is a skew-symmetric solution of the $\DAYBE$ in $(P\otimes C, \cdot, \hat{\fd}, -\hat{\fd})$.
By direct calculation, one can show that the triangular differential infinitesimal
bialgebra associated with $\widehat{r}$ is exactly the differential infinitesimal
bialgebra $(P\otimes C, \cdot, \Delta, \hat{\fd}, -\hat{\fd})$ given above.
\end{ex}

Let $(A, \cdot)$ be a commutative associative algebra, $V$ be a vector space and $\ku:
A\rightarrow\gl(V)$ be a linear map. Then $(V, \ku)$ is called a {\bf module over
$(A, \cdot)$} if for any $a_{1}, a_{2}\in A$, $\ku(a_{1})\circ\ku(a_{2})=\ku(a_{1}a_{2})$.
Clearly, $(A, \fu_{A})$ is a module over $(A, \cdot)$, which is called the {\bf regular
module}. Let $(A, \cdot, \fd)$ be a differential algebra, $(V, \ku)$ be a module over
$(A, \cdot)$ and $\partial: V\rightarrow V$ be a linear map. Then $(V, \ku, \partial)$
is called a {\bf module over $(A, \cdot, \fd)$} if for any $a\in A$ and $v\in V$,
$\partial(\fu(a)(v))=\fu(\fd(a))(v)+\fu(a)(\partial(v))$. The {\bf regular module and
coregular module} of $(A, \cdot, \fd)$ are given by $(A, \fu_{A}, \fd)$ and $(A^{\ast},
-\fu_{A}^{\ast}, -\fd^{\ast})$.  Let $(A, \cdot, \fd)$ be a differential algebra and
$(V, \ku, \partial)$ be a module over it. Recall that a linear map $T: V\rightarrow A$
is called an {\bf $\mathcal{O}$-operator of $(A, \cdot, \fd)$ associated to $(V, \ku,
\partial)$} if $T\circ\partial=\fd\circ T$ and for any $v_{1}, v_{2}\in V$,
$$
T(v_{1})\cdot T(v_{2})=T\big(\ku(T(v_{1}))(v_{2})+\ku(T(v_{2}))(v_{1})\big).
$$
For the solution of the $\DAYBE$ in an admissible differential algebra and
$\mathcal{O}$-operator, we have

\begin{pro}[\cite{LLB}]\label{pro:o-dass}
Let $(A, \cdot, \fd, \partial)$ be an admissible differential algebra and
$r\in A\otimes A$ be skew-symmetric. Then $r$ is a solution of the $\DAYBE$ in
$(A, \cdot, \fd, \partial)$ if and only if $r^{\sharp}: A^{\ast}\rightarrow A$ is an
$\mathcal{O}$-operator of $(A, \cdot, \fd)$ associated to the coregular module $(A^{\ast},
-\fu_{A}^{\ast}, \partial^{\ast})$.
\end{pro}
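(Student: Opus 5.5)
The proof splits into two independent parts, mirroring the proof of Proposition \ref{pro:o-dperm}. The first concerns the associative Yang--Baxter part $\mathbf{A}_{r}=0$; the second concerns the two linear conditions in the $\DAYBE$ involving $\fd$ and $\partial$. Throughout we write $r=\sum_{i}x_{i}\otimes y_{i}$ and $T=r^{\sharp}$, so that $\langle T(\xi_{1}),\; \xi_{2}\rangle=\sum_{i}\langle\xi_{1}, x_{i}\rangle\langle\xi_{2}, y_{i}\rangle$.

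For the first part, we pair $\mathbf{A}_{r}$ with an arbitrary $\xi_{1}\otimes\xi_{2}\otimes\xi_{3}\in A^{\ast}\otimes A^{\ast}\otimes A^{\ast}$ and rewrite each of the three terms using $T$.
\begin{enumerate}\itemsep=0pt
\item[$\bullet$] $\langle r_{12}r_{13},\; \xi_{1}\otimes\xi_{2}\otimes\xi_{3}\rangle=\langle\xi_{1},\; T^{\ast}(\xi_{2})\,T^{\ast}(\xi_{3})\rangle$. Since $r$ is skew-symmetric, $T^{\ast}=-T$, so this equals $\langle\xi_{1},\; T(\xi_{2})T(\xi_{3})\rangle$.
\item[$\bullet$] The term $r_{13}r_{23}$ gives $\langle\xi_{3},\; T(\xi_{1})T(\xi_{2})\rangle$.
\item[$\bullet$] The term $r_{23}r_{12}$ gives $-\langle\xi_{2},\; T(\xi_{3})T(\xi_{1})\rangle$, up to sign bookkeeping coming from skew-symmetry.
\end{enumerate}
Next, using $\langle-\fu_{A}^{\ast}(a)\xi,\; b\rangle=\langle\xi,\; ab\rangle$ and commutativity of $\cdot$, the quantity
$$\langle\xi_{3},\; T(\xi_{1})T(\xi_{2})-T\big(-\fu_{A}^{\ast}(T(\xi_{1}))\xi_{2}-\fu_{A}^{\ast}(T(\xi_{2}))\xi_{1}\big)\rangle$$
expands into exactly these three expressions. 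The expansion uses $\langle\xi_{3},\; T(\eta)\rangle=-\langle\eta,\; T(\xi_{3})\rangle$. Consequently $\mathbf{A}_{r}=0$ holds if and only if $T$ satisfies the $\mathcal{O}$-operator identity for the module $(A^{\ast}, -\fu_{A}^{\ast})$. This is the classical Aguiar/Bai correspondence for skew-symmetric solutions of the associative Yang--Baxter equation, specialized to the commutative case, and could alternatively just be cited. We also need $(A^{\ast}, -\fu_{A}^{\ast}, \partial^{\ast})$ to be a module over $(A, \cdot, \fd)$. This follows from the admissibility of $\partial$, by the dual computation of Proposition \ref{pro:dualmod}.

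For the second part, we compute exactly as in the proof of Proposition \ref{pro:o-dperm}:
\begin{align*}
\langle T(\partial^{\ast}\xi_{1}),\; \xi_{2}\rangle&=\langle\xi_{1}\otimes\xi_{2},\; (\partial\otimes\id)(r)\rangle,\\
\langle\fd(T(\xi_{1})),\; \xi_{2}\rangle&=\langle\xi_{1}\otimes\xi_{2},\; (\id\otimes\fd)(r)\rangle.
\end{align*}
Hence $T\circ\partial^{\ast}=\fd\circ T$ if and only if $(\partial\otimes\id-\id\otimes\fd)(r)=0$. Applying $\tau$ and using $\tau(r)=-r$ turns this into $(\id\otimes\partial-\fd\otimes\id)(r)=0$, so the two linear conditions in the $\DAYBE$ are equivalent to each other under skew-symmetry. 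Combining the two parts gives the claimed equivalence.

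The main obstacle is the sign bookkeeping in the first part. Four sign sources interact: skew-symmetry ($T^{\ast}=-T$), the minus sign in $\psi^{\ast}=-\psi(\cdot)^{\ast}$, commutativity of $\cdot$, and the ordering of the three legs. The plan is to handle this by writing every term as a pairing against a single $\xi_{k}$ before comparing.
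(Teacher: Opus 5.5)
Your proposal is correct. Pairing gives $\langle\mathbf{A}_{r},\xi_{1}\otimes\xi_{2}\otimes\xi_{3}\rangle=\langle\xi_{1},T(\xi_{2})T(\xi_{3})\rangle+\langle\xi_{2},T(\xi_{3})T(\xi_{1})\rangle+\langle\xi_{3},T(\xi_{1})T(\xi_{2})\rangle$, which by commutativity is exactly the $\mathcal{O}$-operator defect paired with $\xi_{3}$, and you handle the module check and the two linear $\DAYBE$ conditions correctly using admissibility and $\tau(r)=-r$. The paper gives no proof of this statement, only a citation to the literature, but your two-part argument has the same structure as its proof of the perm analogue, Proposition \ref{pro:o-dperm}; the only difference is that you do the Yang--Baxter part by direct computation rather than by citing the non-differential correspondence.
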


Give an admissible differential perm algebra, we have construct a differential algebra
by the tensor product of this admissible differential perm algebra and a Zinbiel algebra.
What is the relationship between the $\mathcal{O}$-operator of a differential perm algebra
and the $\mathcal{O}$-operator of the induced differential algebra? The proof process
of Theorem \ref{thm:indu-asssdibia} also provides the answer.

\begin{cor}\label{cor:o-dass-dperm}
Let $(A, \cdot, \fd, \partial)$ be an admissible differential algebra,
$(P, \diamond, \mathfrak{B})$ be a quadratic perm algebra and $(A\otimes P, \hat{\diamond},
\hat{\fd}, \hat{\partial})$ be the admissible differential perm algebra induced
from $(A, \cdot, \fd, \partial)$ by $(P, \diamond)$. Suppose $r$ is a skew-symmetric
solution of the $\DAYBE$ in $(A, \cdot, \fd, \partial)$, $\widehat{r}$ is a symmetric
solution of the $\DPYBE$ in $(A\otimes P, \hat{\diamond}, \hat{\fd}, \hat{\partial})$
induced by $r$ and $\kappa:=\sum_{j}e_{j}\otimes f_{j}\in P\otimes P$ is given in the
proof of Theorem \ref{thm:indu-asssdibia}. Then we have the following
commutative diagram:\\[-6mm]
$$
\xymatrix@C=3cm@R=0.5cm{
\txt{$r$ \\ {\tiny a symmetric solution} \\ {\tiny of the $\DPYBE$ in
$(P, \diamond, \fd, \partial)$}} \ar[r]^-{{\rm Pro.}~\ref{pro:o-dperm}}
\ar[d]_-{{\rm Pro.}~\ref{pro:DPYBE-DAYBE}}
& \txt{$r^{\sharp}$ \\ {\tiny an $\mathcal{O}$-operator of $(P, \diamond, \fd)$ } \\
{\tiny associated to coregular bimodule}}
\ar[d]^-{\mbox{$-\otimes\kappa^{\sharp}$}}\\
\txt{$\widehat{r}$ \\ {\tiny a skew-symmetric solution} \\ {\tiny of the $\DAYBE$ in
$(P\otimes C, \cdot, \hat{\fd}, \hat{\partial})$}}
\ar[r]^-{{\rm Pro.}~\ref{pro:o-dass}} &
\txt{$\widehat{r}^{\sharp}=r^{\sharp}\otimes\kappa^{\sharp}$\\
{\tiny an $\mathcal{O}$-operator of $(P\otimes C, \cdot, \hat{\fd})$} \\
{\tiny associated to coregular bimodule}}}
$$
\end{cor}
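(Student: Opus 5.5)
The diagram has four arrows, and three of them are already provided by earlier results. The top arrow is Proposition \ref{pro:o-dperm}: since $r$ is a symmetric solution of the $\DPYBE$ in $(P, \diamond, \fd, \partial)$, the map $r^{\sharp}$ is an $\mathcal{O}$-operator of $(P, \diamond, \fd)$ associated to the coregular bimodule $(P^{\ast}, -\fl_{P}^{\ast}, \fr_{P}^{\ast}-\fl_{P}^{\ast}, \partial^{\ast})$. The left arrow is the last part of Proposition \ref{pro:DPYBE-DAYBE}: a symmetric $r$ has $r-\tau(r)=0$, which is trivially perm-invariant, so $\widehat{r}$ is a skew-symmetric solution of the $\DAYBE$ in $(P\otimes C, \cdot, \hat{\fd}, \hat{\partial})$. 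The bottom arrow is then Proposition \ref{pro:o-dass} applied to $\widehat{r}$ in the admissible differential algebra of Proposition \ref{pro:tensor-ad}. So $\widehat{r}^{\sharp}$ is an $\mathcal{O}$-operator of $(P\otimes C, \cdot, \hat{\fd})$ associated to the coregular module $((P\otimes C)^{\ast}, -\fu_{P\otimes C}^{\ast}, \hat{\partial}^{\ast})$.

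What remains is commutativity, i.e. the identity $\widehat{r}^{\sharp}=r^{\sharp}\otimes\kappa^{\sharp}$ under $(P\otimes C)^{\ast}\cong P^{\ast}\otimes C^{\ast}$. I would reuse the pairing computation from the proof of Theorem \ref{thm:indu-asssdibia}. Write $r=\sum_{i}x_{i}\otimes y_{i}$ and $\widehat{r}=\sum_{i,j}(x_{i}\otimes e_{j})\otimes(y_{i}\otimes f_{j})$. Then
$$\langle\widehat{r}^{\sharp}(\eta_{1}\otimes\xi_{1}),\; \eta_{2}\otimes\xi_{2}\rangle=\langle r^{\sharp}(\eta_{1}),\eta_{2}\rangle\,\langle\kappa^{\sharp}(\xi_{1}),\xi_{2}\rangle .$$
Decomposable tensors span the dual, so this gives the identity. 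Consequently the right vertical arrow, $T\mapsto T\otimes\kappa^{\sharp}$, sends the $\mathcal{O}$-operator $r^{\sharp}$ exactly to $\widehat{r}^{\sharp}$, and both paths around the square agree.

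For a self-contained right arrow, I would also check directly that $T\otimes\kappa^{\sharp}$ is an $\mathcal{O}$-operator. The compatibility with the differentials is immediate: $(r^{\sharp}\otimes\kappa^{\sharp})\circ(\partial^{\ast}\otimes\id)=(\fd\circ r^{\sharp})\otimes\kappa^{\sharp}=\hat{\fd}\circ(r^{\sharp}\otimes\kappa^{\sharp})$. The $\mathcal{O}$-operator identity can be deduced through the commutative square rather than by direct expansion.

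The main obstacle is a bookkeeping point in the statement rather than any real mathematics: the hypotheses are phrased with the roles of $(A,\cdot)$ and $(P,\diamond)$ swapped. I would read them as $(P, \diamond, \fd, \partial)$ an admissible differential perm algebra, $(C, \star, \omega)$ a quadratic Zinbiel algebra, and $\kappa=\sum_{j}e_{j}\otimes f_{j}\in C\otimes C$, matching the diagram. One also has to confirm that the dual of the coregular module is taken with $\hat{\partial}^{\ast}=\partial^{\ast}\otimes\id$, so that Proposition \ref{pro:o-dass} applies with $\hat{\partial}$ and the differential compatibility reduces as above.
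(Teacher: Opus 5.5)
Your proposal is correct and follows the paper's own (implicit) argument: the three arrows come from Propositions \ref{pro:o-dperm}, \ref{pro:DPYBE-DAYBE} and \ref{pro:o-dass}, and commutativity is the identity $\widehat{r}^{\sharp}=r^{\sharp}\otimes\kappa^{\sharp}$, checked by the same pairing computation as in the proof of Theorem \ref{thm:indu-asssdibia}. Your reading of the misstated hypotheses is also the intended one, namely $(P,\diamond,\fd,\partial)$ an admissible differential perm algebra, $(C,\star,\omega)$ a quadratic Zinbiel algebra, and $\kappa\in C\otimes C$.
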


\subsection{From differential perm bialgebras to diNovikov bialgebras} \label{subsec:dperm-diN}
In this subsection, we consider the diNovikov bialgebra induced by a special differential
perm bialgebra $(P, \diamond, \vartheta, \fd, -\fd)$, where $\fd$ is a derivation on perm
bialgebra $(P, \diamond, \vartheta)$, i.e., $\vartheta\circ\fd=(\fd\otimes\id+
\id\otimes\fd)\circ\vartheta$ and $\fd(p_{1}\diamond p_{2})=\fd(p_{1})\diamond p_{2}
+p_{1}\diamond\fd(p_{2})$ for any $p_{1}, p_{2}\in P$.
Let $(D, \dashv, \vdash)$ be a diNovikov algebra, $V$ be a vector space and
$\kl_{\dashv}, \kr_{\dashv}, \kl_{\vdash}, \kr_{\vdash}: D\rightarrow\gl(V)$ be
four linear maps. If for any $d_{1}, d_{2}\in D$,
\begin{align*}
& \kl_{\dashv}(d_{1}\dashv d_{2})=\kr_{\dashv}(d_{2})\circ\kl_{\dashv}(d_{1}),\qquad
\kl_{\dashv}(d_{1}\dashv d_{2})-\kl_{\dashv}(d_{1})\circ\kl_{\dashv}(d_{2})
=\kl_{\dashv}(d_{2}\vdash d_{1})-\kl_{\vdash}(d_{2})\circ\kl_{\dashv}(d_{1}),\\
& \kr_{\dashv}(d_{2})\circ\kr_{\dashv}(d_{1})
=\kr_{\dashv}(d_{1})\circ\kr_{\dashv}(d_{2}),\qquad
\kr_{\dashv}(d_{2})\circ\kl_{\dashv}(d_{1})-\kl_{\dashv}(d_{1})\circ\kr_{\dashv}(d_{2})
=\kr_{\dashv}(d_{2})\circ\kr_{\vdash}(d_{1})-\kr_{\vdash}(d_{1}\dashv d_{2}),\\
& \kl_{\dashv}(d_{1})\circ\kl_{\dashv}(d_{2})
=\kl_{\dashv}(d_{1})\circ\kl_{\vdash}(d_{2}),\qquad
\kr_{\dashv}(d_{2})\circ\kr_{\dashv}(d_{1})-\kr_{\dashv}(d_{1}\dashv d_{2})
=\kr_{\dashv}(d_{2})\circ\kl_{\vdash}(d_{1})-\kl_{\vdash}(d_{1})\circ\kr_{\dashv}(d_{2}),\\
& \kl_{\dashv}(d_{1})\circ\kr_{\dashv}(d_{2})
=\kl_{\dashv}(d_{1})\circ\kr_{\vdash}(d_{2}),\qquad
\kl_{\vdash}(d_{1}\dashv d_{2})-\kl_{\vdash}(d_{1})\circ\kl_{\vdash}(d_{2})
=\kl_{\vdash}(d_{2}\vdash d_{1})-\kl_{\vdash}(d_{2})\circ\kl_{\vdash}(d_{1}),\\
& \kr_{\dashv}(d_{1}\dashv d_{2})=\kr_{\dashv}(d_{1}\vdash d_{2}),\qquad
\kr_{\vdash}(d_{2})\circ\kl_{\dashv}(d_{1})-\kl_{\vdash}(d_{1})\circ\kr_{\vdash}(d_{2})
=\kr_{\vdash}(d_{2})\circ\kr_{\vdash}(d_{1})-\kr_{\vdash}(d_{1}\vdash d_{2}),\\
& \kl_{\dashv}(d_{1}\vdash d_{2})=\kr_{\vdash}(d_{2})\circ\kl_{\vdash}(d_{1})
=\kr_{\vdash}(d_{2})\circ\kl_{\dashv}(d_{1}), \qquad
\kr_{\dashv}(d_{2})\circ\kl_{\vdash}(d_{1})
=\kl_{\vdash}(d_{1}\vdash d_{2})=\kl_{\vdash}(d_{1}\dashv d_{2}),\\
&\qquad\qquad\qquad\qquad \kr_{\dashv}(d_{2})\circ\kr_{\vdash}(d_{1})
=\kr_{\vdash}(d_{1})\circ\kr_{\vdash}(d_{2})
=\kr_{\vdash}(d_{1})\circ\kr_{\dashv}(d_{2}),
\end{align*}
then $(V, \kl_{\dashv}, \kr_{\dashv}, \kl_{\vdash}, \kr_{\vdash})$ is called a
{\bf representation of $(D, \dashv, \vdash)$}. In particular, if we define
$\fl_{\dashv}, \fr_{\dashv}, \fl_{\vdash}, \fr_{\vdash}: D\rightarrow\gl(D)$ by
$\fl_{\dashv}(d_{1})(d_{2})=d_{1}\dashv d_{2}=\fr_{\dashv}(d_{2})(d_{1})$,
$\fl_{\vdash}(d_{1})(d_{2})=d_{1}\vdash d_{2}=\fr_{\vdash}(d_{2})(d_{1})$ for any
$d_{1}, d_{2}\in D$, then $(D, \fl_{\dashv}, \fr_{\dashv}, \fl_{\vdash},
\fr_{\vdash})$ is a representation of $(D, \dashv, \vdash)$, which is called the
{\bf regular representation}. The {\bf coregular representation} of $(D, \dashv,
\vdash)$ is given by $(D^{\ast}, \fr_{\dashv}^{\ast}-\fl_{\dashv}^{\ast}
+\fl_{\vdash}^{\ast}-\fr_{\vdash}^{\ast}, -\fr_{\dashv}^{\ast}, \fl_{\vdash}^{\ast}
+\fr_{\dashv}^{\ast}, \fr_{\vdash}^{\ast}-\fr_{\dashv}^{\ast})$.
Next, we recall the definition of diNovikov coalgebra.

\begin{defi}\label{def:diNco}
A {\bf diNovikov coalgebra} is a triple $(D, \nu_{\dashv}, \nu_{\vdash})$, where
$\nu_{\dashv}, \nu_{\vdash}: D\rightarrow D\otimes D$ are linear maps satisfying
\begin{align}
&\qquad\qquad\qquad\quad (\id\otimes\nu_{\dashv})\circ\nu_{\dashv}
=(\id\otimes\nu_{\vdash})\circ\nu_{\dashv},           \label{ncdi1}\\
&\qquad\qquad\quad (\nu_{\dashv}\otimes\id)\circ\nu_{\dashv}
=(\id\otimes\tau)\circ(\nu_{\dashv}\otimes\id)\circ\nu_{\dashv},   \label{ncdi2}\\
& (\nu_{\vdash}\otimes\id)\circ\nu_{\dashv}
=(\id\otimes\tau)\circ(\nu_{\vdash}\otimes\id)\circ\nu_{\vdash}
=(\id\otimes\tau)\circ(\nu_{\dashv}\otimes\id)\circ\nu_{\vdash},    \label{ncdi3}
\end{align}
\begin{align}
& (\nu_{\dashv}\otimes \id-\id\otimes\nu_{\dashv})\circ\nu_{\dashv}
=(\tau\otimes\id)\circ\big((\nu_{\vdash}\otimes\id)\circ\nu_{\dashv}
-(\id\otimes\nu_{\dashv})\circ\nu_{\vdash}\big),      \label{ncdi4}\\
&\qquad (\nu_{\vdash}\otimes\id-\id\otimes\nu_{\vdash})\circ\nu_{\vdash}
=(\tau\otimes\id)\circ(\nu_{\vdash}\otimes\id-\id\otimes
\nu_{\vdash})\circ\nu_{\vdash}.            \label{ncdi5}
\end{align}
\end{defi}

In Proposition \ref{pro:dperm-diN}, we have shown that each differential perm algebra
induced a diNovikov algebra. Dual, for diNovikov coalgebras, we have the following conclusion.

\begin{pro}\label{pro:ind-codinov}
Let $(P, \vartheta, -\fd)$ be a codifferential perm coalgebra.
Define two coproducts $\nu_{\dashv,\fd}, \nu_{\vdash,\fd}: P\rightarrow P\otimes P$ by
\begin{align}
\nu_{\dashv,\fd}=\tau\circ(\fd\otimes\id)\circ\vartheta,  \qquad\qquad
\nu_{\vdash,\fd}=(\id\otimes\fd)\circ\vartheta.  \label{ind-codinov}
\end{align}
Then $(P, \nu_{\dashv,\fd}, \nu_{\vdash,\fd})$ is a diNovikov coalgebra, which is
called the {\bf diNovikov coalgebra induced from $(P, \vartheta, -\fd)$}.
\end{pro}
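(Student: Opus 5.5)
The cleanest route is dualization. Proposition \ref{pro:dperm-diN} is already available, so I will not verify the five axioms \eqref{ncdi1}--\eqref{ncdi5} by hand. The idea is to transport the coalgebra to the dual space and apply that proposition there.

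\textbf{Step 1: the dual is a differential perm algebra.} Since $P$ is finite-dimensional, $(P,\vartheta)$ is a perm coalgebra if and only if $(P^{\ast},\diamond^{\ast})$ is a perm algebra, where $\diamond^{\ast}:=\vartheta^{\ast}$, i.e.
$$
\langle\xi_{1}\diamond^{\ast}\xi_{2},\; p\rangle=\langle\xi_{1}\otimes\xi_{2},\; \vartheta(p)\rangle .
$$
Indeed, dualizing $(\vartheta\otimes\id)\circ\vartheta=(\id\otimes\vartheta)\circ\vartheta=((\tau\circ\vartheta)\otimes\id)\circ\vartheta$ gives exactly the left commutative and associative identities of Definition \ref{def:anti}. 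Similarly, $-\fd$ is a coderivation, so $\vartheta\circ\fd=(\fd\otimes\id+\id\otimes\fd)\circ\vartheta$. Dualizing this shows that $\fd^{\ast}$ is a derivation of $(P^{\ast},\diamond^{\ast})$. The sign plays no role here, because the coderivation condition is linear. Hence $(P^{\ast},\diamond^{\ast},\fd^{\ast})$ is a differential perm algebra.

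\textbf{Step 2: apply Proposition \ref{pro:dperm-diN} and identify the dual coproducts.} Proposition \ref{pro:dperm-diN} makes $(P^{\ast},\dashv_{\fd^{\ast}},\vdash_{\fd^{\ast}})$ a diNovikov algebra. I then check that its transposes are the maps in \eqref{ind-codinov}. For $\xi_{1},\xi_{2}\in P^{\ast}$ and $p\in P$,
$$
\langle\xi_{1}\vdash_{\fd^{\ast}}\xi_{2},\; p\rangle
=\langle\xi_{1}\otimes\fd^{\ast}(\xi_{2}),\; \vartheta(p)\rangle
=\langle\xi_{1}\otimes\xi_{2},\; (\id\otimes\fd)\vartheta(p)\rangle,
$$
so $\vdash_{\fd^{\ast}}=\nu_{\vdash,\fd}^{\ast}$. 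Likewise,
$$
\langle\xi_{1}\dashv_{\fd^{\ast}}\xi_{2},\; p\rangle
=\langle\fd^{\ast}(\xi_{2})\otimes\xi_{1},\; \vartheta(p)\rangle
=\langle\xi_{1}\otimes\xi_{2},\; \tau(\fd\otimes\id)\vartheta(p)\rangle,
$$
so $\dashv_{\fd^{\ast}}=\nu_{\dashv,\fd}^{\ast}$.

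\textbf{Step 3: dualize back.} It remains to confirm that Definition \ref{def:diNco} is precisely the transpose of the diNovikov algebra axioms under $\dashv=\nu_{\dashv}^{\ast}$ and $\vdash=\nu_{\vdash}^{\ast}$. The matching goes axiom by axiom: $d_{1}\dashv(d_{2}\dashv d_{3})=d_{1}\dashv(d_{2}\vdash d_{3})$ corresponds to \eqref{ncdi1}; right commutativity of $\dashv$ corresponds to \eqref{ncdi2}; the triple identity corresponds to \eqref{ncdi3}; and the two left-symmetry identities correspond to \eqref{ncdi4} and \eqref{ncdi5}. With this dictionary, $(P,\nu_{\dashv,\fd},\nu_{\vdash,\fd})$ is a diNovikov coalgebra, and since everything is finite-dimensional the argument goes through.

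The main obstacle is the bookkeeping of the twists $\tau$, $\tau\otimes\id$ and $\id\otimes\tau$ when transposing the cubic identities in Step 3. If \eqref{ncdi3} or \eqref{ncdi4} does not match literally, I would fall back on a direct Sweedler-notation verification. In that verification I would commute $\fd$ past $\vartheta$ using the coderivation property, and then use the perm coalgebra axioms to reorder tensor factors.
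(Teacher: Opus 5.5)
Your argument is correct, and it takes a different route from the paper.

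\textbf{The paper's route.} The paper works directly on the coalgebra side. It writes out \eqref{ncdi1} as a composite of tensor maps and commutes $\fd$ past $\vartheta$ via $\vartheta\circ\fd=(\fd\otimes\id+\id\otimes\fd)\circ\vartheta$. It then uses the coalgebraic left-commutativity $(\vartheta\otimes\id)\circ\vartheta=((\tau\circ\vartheta)\otimes\id)\circ\vartheta$, and simply asserts \eqref{ncdi2}--\eqref{ncdi5} ``similarly''.

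\textbf{Your route.} You transfer everything to $(P^{\ast},\vartheta^{\ast},\fd^{\ast})$ and reuse Proposition~\ref{pro:dperm-diN}. Your identifications $\vdash_{\fd^{\ast}}=\nu_{\vdash,\fd}^{\ast}$ and $\dashv_{\fd^{\ast}}=\nu_{\dashv,\fd}^{\ast}$ are right.

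The worry in your Step 3 is unfounded. Use the convention $\langle\xi_{1}\cdot\xi_{2},p\rangle=\langle\xi_{1}\otimes\xi_{2},\nu(p)\rangle$. Then $(d_{2}\vdash d_{1})\dashv d_{3}$ transposes to $(\tau\otimes\id)\circ(\nu_{\vdash}\otimes\id)\circ\nu_{\dashv}$. Likewise $d_{2}\vdash(d_{1}\dashv d_{3})$ transposes to $(\tau\otimes\id)\circ(\id\otimes\nu_{\dashv})\circ\nu_{\vdash}$. Finally $(d_{1}\vdash d_{3})\vdash d_{2}$ and $(d_{1}\dashv d_{3})\vdash d_{2}$ transpose to $(\id\otimes\tau)\circ(\nu_{\vdash}\otimes\id)\circ\nu_{\vdash}$ and $(\id\otimes\tau)\circ(\nu_{\dashv}\otimes\id)\circ\nu_{\vdash}$. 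So Definition~\ref{def:diNco} is literally the transpose of the five diNovikov axioms, including \eqref{ncdi3} and \eqref{ncdi4}, and no Sweedler fallback is needed.

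\textbf{What each approach buys.} Your route handles all five identities uniformly through a result already in hand, whereas the paper displays only one. It also matches how the paper treats other coalgebra statements as duals of algebra ones, for example Proposition~\ref{pro:ind-coadass}. The direct route is self-contained: it does not rely on the cited Proposition~\ref{pro:dperm-diN} or on getting the transposition convention right. It also shows exactly where the coderivation property and coalgebraic left-commutativity enter.
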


\begin{proof}
Since $(P, \vartheta, -\fd)$ be a codifferential perm coalgebra, i.e.,
$(\vartheta\otimes\id)\circ\vartheta=(\id\otimes\vartheta)\circ\vartheta
=((\tau\circ\vartheta)\otimes\id)\circ\vartheta$ and
$\vartheta\circ\fd=(\id\otimes\fd+\fd\otimes\id)\circ\vartheta$, by direct calculation,
we have
\begin{align*}
(\id\otimes\nu_{\dashv,\fd})\circ\nu_{\dashv,\fd}
=&\;(\id\otimes(\tau\circ(\fd\otimes\id)\circ\vartheta))\circ\tau
\circ(\fd\otimes\id)\circ\vartheta\\
=&\;(\id\otimes\tau)\circ(\tau\otimes\id)\circ(\id\otimes\tau)\circ(\fd\otimes\id\otimes\id)
\circ((\vartheta\circ\fd)\otimes\id)\circ\vartheta\\
=&\;(\tau\otimes\id)\circ(\id\otimes\tau)\circ(\fd\otimes\id\otimes\id)
\circ((\vartheta\circ\fd)\otimes\id)\circ\vartheta\\
=&\;(\id\otimes((\fd\otimes\id)\circ\vartheta))\circ\tau
\circ(\fd\otimes\id)\circ\vartheta\\
=&\;(\id\otimes\nu_{\vdash,\fd})\circ\nu_{\dashv,\fd}.
\end{align*}
That is, Eq. \eqref{ncdi1} holds. Similarly, Eqs. \eqref{ncdi2}-\eqref{ncdi5} hold.
Hence $(P, \nu_{\dashv,\fd}, \nu_{\vdash,\fd})$ is a diNovikov coalgebra.
\end{proof}

Recently, Xu, Bai and Hong provided the definition of diNovikov bialgebras and
studied the connection between diNovikov bialgebras and Leibniz conformal
bialgebras \cite{XBH}.

\begin{defi}[\cite{XBH}]\label{def:diNbi}
Let $(D, \dashv, \vdash)$ be a diNovikov algebra and $(D, \nu_{\dashv}, \nu_{\vdash})$
be a diNovikov coalgebra. If for all $d_{1}, d_{2}\in D$,
\begin{align}
&\qquad\qquad\qquad\quad (\id\otimes\fl_{\dashv}(d_{2}))(\nu_{\dashv}(d_{1}))
=(\fl_{\dashv}(d_{1})\otimes\id)(\tau(\nu_{\dashv}(d_{2}))),  \label{ndba1}\\
&\qquad\qquad\qquad (\fl_{\dashv}(d_{1})\otimes\id)(\nu_{\vdash}(d_{2}))
=-(\id\otimes(\fl_{\dashv}+\fr_{\vdash})(d_{2}))(\nu_{\dashv}(d_{1})), \label{ndba2}\\
&\qquad\qquad (\id\otimes(\fl_{\dashv}+\fr_{\vdash})(d_{1}))\big(\nu_{\dashv}(d_{2})
+\tau(\nu_{\vdash}(d_{2}))\big)=(\fr_{\vdash}(d_{2})\otimes\id)
(\nu_{\vdash}(d_{1})),         \label{ndba3}\\
&\quad \nu_{\dashv}(d_{1}\vdash d_{2})=(\fr_{\vdash}(d_{2})\otimes \id)(\nu_{\dashv}(d_{1})
-\nu_{\vdash}(d_{1}))+(\id\otimes(\fl_{\vdash}
+\fr_{\dashv})(d_{1}))(\nu_{\dashv}(d_{2})+\tau(\nu_{\vdash}(d_{2}))),  \label{ndba4}\\
&\quad \nu_{\vdash}(d_{1}\dashv d_{2})=(\fr_{\dashv}(d_{2})\otimes\id)(\nu_{\vdash}(d_{1}))
+(\id\otimes(\fl_{\dashv}+\fr_{\vdash})(d_{1}))\big((\id\otimes\id-\tau)
(\nu_{\vdash}(d_{2})-\nu_{\dashv}(d_{2}))\big),                   \label{ndba5}\\
&\; \nu_{\vdash}(d_{1}\vdash d_{2})=((\fr_{\vdash}-\fr_{\dashv})(d_{2})\otimes\id)
(\nu_{\dashv}(d_{1})-\nu_{\vdash}(d_{1}))+(\id\otimes(\fl_{\vdash}+\fr_{\dashv})(d_{1}))
(\tau(\nu_{\dashv}(d_{2}))+\nu_{\vdash}(d_{2})),       \label{ndba6}\\
& \nu_{\dashv}(d_{1}\dashv d_{2})=(\fr_{\dashv}(d_{2})\otimes\id)
\nu_{\dashv}(d_{1})+(\id\otimes(\fr_{\dashv}-\fl_{\dashv}+\fl_{\vdash}-\fr_{\vdash})(d_{1}))
\big((\id\otimes\id-\tau)(\nu_{\dashv}(d_{2})-\nu_{\vdash}(d_{2}))\big),   \label{ndba7}\\
&\quad((\fl_{\dashv}+\fr_{\vdash})(d_{2})\otimes \id)(\nu_{\dashv}(d_{1})-\nu_{\vdash}(d_{1}))
+(\id\otimes(\fr_{\dashv}-\fl_{\dashv}+\fl_{\vdash}-\fr_{\vdash})(d_{2}))
\big(\tau((\nu_{\dashv}(d_{1})-\nu_{\vdash}(d_{1})))\big) \label{ndba8}\\[-1mm]
&=((\fl_{\vdash}+\fr_{\dashv})(d_{1})\otimes\id)(\nu_{\dashv}(d_{2}))
-(\id\otimes(\fl_{\vdash}+\fr_{\dashv})(d_{1}))(\tau(\nu_{\vdash}(d_{2}))),\nonumber\\
&\quad\; (\id\otimes\fr_{\vdash}(d_{1}))\big((\id\otimes\id-\tau)(\nu_{\dashv}(d_{2})
-\nu_{\vdash}(d_{2}))\big)-(\fr_{\dashv}(d_{2})\otimes\id)(\nu_{\vdash}(d_{1})
+\tau(\nu_{\dashv}(d_{1})))                   \label{ndba9}\\[-1mm]
&\; =-(\id\otimes\fr_{\dashv}(d_{2}))(\nu_{\vdash}(d_{1})+\tau(\nu_{\dashv}(d_{1})))
+((\fr_{\vdash}-\fr_{\dashv})(d_{1})\otimes\id)\big((\id\otimes\id-\tau)
(\nu_{\vdash}(d_{2})-\nu_{\dashv}(d_{2}))\big),    \nonumber
\end{align}
then $(D, \dashv, \vdash, \nu_{\dashv}, \nu_{\vdash})$ is called a {\bf diNovikov bialgebra}.
\end{defi}

We now give a construction of diNovikov bialgebra from a differential perm bialgebra.

\begin{thm}\label{thm:ind-dinovbia}
Let $(P, \diamond, \vartheta, \fd, -\fd)$ be a differential perm bialgebra.
Define binary operations $\dashv_{\fd}, \vdash_{\fd}$ and coproducts $\nu_{\dashv,\fd},
\nu_{\vdash,\fd}$ on $P$ by Eqs. \eqref{ind-dinov} and \eqref{ind-codinov} respectively.
Then $(P, \dashv_{\fd}, \vdash_{\fd}, \nu_{\dashv,\fd}, \nu_{\vdash,\fd})$ is a diNovikov
bialgebra, which is called {\bf the diNovikov bialgebra induced by the
differential perm bialgebra $(P, \diamond, \vartheta, \fd, -\fd)$}.
\end{thm}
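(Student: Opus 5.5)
The plan is to check the three ingredients of a diNovikov bialgebra separately. First, since $\fd$ is a derivation on $(P,\diamond)$, Proposition \ref{pro:dperm-diN} shows that $(P,\dashv_{\fd},\vdash_{\fd})$ is a diNovikov algebra. Second, the admissibility condition for $(P,\vartheta,-\fd,\fd)$ says exactly that $\vartheta\circ\fd=(\fd\otimes\id+\id\otimes\fd)\circ\vartheta$, so $(P,\vartheta,-\fd)$ is a codifferential perm coalgebra. Proposition \ref{pro:ind-codinov} then shows that $(P,\nu_{\dashv,\fd},\nu_{\vdash,\fd})$ is a diNovikov coalgebra. What remains is the nine compatibility conditions \eqref{ndba1}--\eqref{ndba9}.

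For these I would rewrite all the operators of the induced diNovikov structure in terms of the perm structure:
$$\fl_{\vdash}(p)=\fl_{P}(p)\circ\fd,\qquad \fr_{\vdash}(p)=\fr_{P}(\fd(p)),\qquad \fl_{\dashv}(p)=\fr_{P}(p)\circ\fd,\qquad \fr_{\dashv}(p)=\fl_{P}(\fd(p)),$$
together with $\nu_{\vdash,\fd}=(\id\otimes\fd)\vartheta$ and $\nu_{\dashv,\fd}=\tau(\fd\otimes\id)\vartheta$. Each condition then becomes an identity in $P\otimes P$, involving $\vartheta$, the maps $\fl_{P},\fr_{P}$, and up to two applications of $\fd$.

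I would simplify these identities with three tools:
\begin{enumerate}
\item[(a)] the derivation rule $\fd\circ\fl_{P}(p)=\fl_{P}(\fd p)+\fl_{P}(p)\circ\fd$, and its analogue for $\fr_{P}$;
\item[(b)] the coderivation rule for $\vartheta$, used to move $\fd$ through $\vartheta$;
\item[(c)] the perm bialgebra axioms \eqref{bialg1}--\eqref{bialg3}, applied to suitable arguments such as $(p_{1},\fd p_{2})$ or $(\fd p_{2},p_{1})$.
\end{enumerate}
Before using \eqref{bialg1}--\eqref{bialg3}, I would apply $\fd\otimes\id$, $\id\otimes\fd$ or $\fd\otimes\fd$ to them, whichever matches the $\fd$-pattern of the identity at hand.

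For example, \eqref{ndba1} reads $(\id\otimes\fr_{P}(\fd p_{2})\fd)\,\tau(\fd\otimes\id)\vartheta(p_{1})=(\fr_{P}(\fd p_{1})\otimes\id)(\fd\otimes\id)\vartheta(p_{2})$ after a twist. This should follow from \eqref{bialg1} with $p_{1},p_{2}$ replaced by $\fd p_{1},\fd p_{2}$, combined with (b) and the fact that $\fd\otimes\fd$ commutes with $\tau$. Conditions \eqref{ndba4}--\eqref{ndba7} express $\nu(d_{1}\circ d_{2})$. For these I would expand, e.g., $\nu_{\vdash,\fd}(p_{1}\dashv_{\fd}p_{2})=(\id\otimes\fd)\vartheta(\fd(p_{2})\diamond p_{1})$, then use \eqref{bialg2} or \eqref{bialg3} to expand $\vartheta$ of the product, and then distribute $\fd$ using (a) and (b). The remaining conditions \eqref{ndba2}, \eqref{ndba3}, \eqref{ndba8} and \eqref{ndba9} are mixed symmetric identities. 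I expect them to follow from suitable linear combinations of \eqref{bialg1}--\eqref{bialg3}, each applied with $\fd$ on the appropriate arguments.

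An alternative is to dualize, so that the compatibilities become statements about a matched pair. By Theorem \ref{thm:equ}, $P\bowtie P^{\ast}$ with $\fd\oplus(-\fd^{\ast})$ is a differential perm algebra, and Proposition \ref{pro:dperm-diN} makes it a diNovikov algebra containing the induced diNovikov algebras on $P$ and $P^{\ast}$. The point to check is that the product on $P^{\ast}$ induced by $-\fd^{\ast}$ is the dual of $(\nu_{\dashv,\fd},\nu_{\vdash,\fd})$ up to sign. If the characterization in \cite{XBH} of diNovikov bialgebras by matched pairs or Manin triples with a suitable bilinear form applies, this route would avoid checking the nine identities by hand. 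The main obstacle in either approach is the bookkeeping in \eqref{ndba8} and \eqref{ndba9}, where the twist $\tau$, the differences $\nu_{\dashv}-\nu_{\vdash}$ and several $\fd$'s interact. For these I would first try the Manin-triple argument, checking only that the sign conventions of the coregular representations agree.
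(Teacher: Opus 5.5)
Your overall strategy matches the paper's proof. The paper also obtains the diNovikov algebra from Proposition \ref{pro:dperm-diN} and the diNovikov coalgebra from Proposition \ref{pro:ind-codinov}. It then checks the compatibilities by rewriting them in terms of $\diamond$, $\vartheta$, $\fd$ and using three tools: the Leibniz rule, the coderivation rule $\vartheta\circ\fd=(\fd\otimes\id+\id\otimes\fd)\circ\vartheta$, and \eqref{bialg1}--\eqref{bialg3}. It writes this out only for \eqref{ndba1} and says ``similarly'' for \eqref{ndba2}--\eqref{ndba9}, so your plan is at the same level of detail there.

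The Manin-triple shortcut is not used in the paper. It would need a Manin-triple characterization of diNovikov bialgebras from \cite{XBH}. It would also need a check that $\mathfrak{B}_{d}$ satisfies the matching invariance for the products induced by $\fd\oplus(-\fd^{\ast})$. Neither is available here, so treat it as a possible alternative rather than a proof. (The sign you anticipate is indeed $-1$: the dual of $(\nu_{\dashv,\fd},\nu_{\vdash,\fd})$ is the structure induced by $+\fd^{\ast}$ on $(P^{\ast},\vartheta^{\ast})$.)

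The one computation you actually write down is wrong. By your own dictionary, $\fl_{\dashv}(p)=\fr_{P}(p)\circ\fd$. Your left side uses $\fr_{P}(\fd p)\circ\fd$ instead, and your right side uses $\fr_{P}(\fd p)$. Writing $\vartheta(p_{1})=\sum p_{1(1)}\otimes p_{1(2)}$, \eqref{ndba1} actually reads
\[
\sum p_{1(2)}\otimes\big(\fd^{2}(p_{1(1)})\diamond p_{2}\big)=\tau\big((\fr_{P}(p_{2})\circ\fd^{2}\otimes\id)(\vartheta(p_{1}))\big)=(\fr_{P}(p_{1})\circ\fd^{2}\otimes\id)(\vartheta(p_{2})).
\]
Your version has three $\fd$'s on the left and two on the right. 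All hypotheses are invariant under $\fd\mapsto t\fd$, so an identity with different $\fd$-degrees on its two sides could only follow from them if both sides vanished separately.

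Correspondingly, \eqref{bialg1} at $(\fd p_{1},\fd p_{2})$ is not the relevant instance: it brings in $\fr_{P}(\fd p_{i})$, which does not occur in the correct identity. The paper instead does three things:
\begin{enumerate}
\item[(1)] expand $\fd^{2}(x)\diamond p_{2}=\fd^{2}(x\diamond p_{2})-2\fd(x\diamond\fd p_{2})+x\diamond\fd^{2}(p_{2})$;
\item[(2)] apply \eqref{bialg1} at $(p_{1},p_{2})$, $(p_{1},\fd p_{2})$ and $(p_{1},\fd^{2}p_{2})$;
\item[(3)] cancel the remaining terms with the coderivation rule.
\end{enumerate}
A tidy way to package this, which also handles \eqref{ndba2}, is the following lemma:
\[
\tau\big((\fr_{P}(b)\circ\fd^{k}\otimes\id)(\vartheta(a))\big)=(-1)^{k}(\fr_{P}(a)\circ\fd^{k}\otimes\id)(\vartheta(b)),\qquad a,b\in P,\ k\geq 0.
\]
It is proved by induction on $k$. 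The case $k=0$ is \eqref{bialg1}. For the step, write $(\fd^{k+1}\otimes\id)\vartheta(a)=(\fd^{k}\otimes\id)\big(\vartheta(\fd a)-(\id\otimes\fd)\vartheta(a)\big)$ and apply the hypothesis to both pieces. Then recombine using $\fr_{P}(\fd a)-\fd\circ\fr_{P}(a)=-\fr_{P}(a)\circ\fd$.

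With this lemma, \eqref{ndba1} is the case $k=2$. Since $(\fl_{\dashv}+\fr_{\vdash})(p)=\fd\circ\fr_{P}(p)$, \eqref{ndba2} is $(\id\otimes\fd)$ applied to the case $k=1$.

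Redo the translation carefully before attacking \eqref{ndba3}--\eqref{ndba9}. There \eqref{bialg2} and \eqref{bialg3} enter in the same pattern: Leibniz to strip $\fd$'s off products, the undifferentiated axioms at $\fd$-shifted arguments, then the coderivation rule.
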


\begin{proof}
By Proposition \ref{pro:dperm-diN}, we get that $(P, \dashv_{\fd}, \vdash_{\fd})$ is a
diNovikov algebra, and by Proposition \ref{pro:ind-codinov}, we get
that $(P, \nu_{\dashv,\fd}, \nu_{\vdash,\fd})$ is a diNovikov coalgebra.
Following, we show that Eqs. \eqref{ndba1}-\eqref{ndba9} hold. Since $(P, \diamond,
\vartheta, \fd, -\fd)$ is a differential perm bialgebra, we get that $\vartheta\circ\fd=
(\id\otimes\fd+\fd\otimes\id)\circ\vartheta$ and $(\fr_{P}(p_{1})\otimes\id)
(\vartheta(p_{2}))=\tau((\fr_{P}(p_{2})\otimes\id)(\vartheta(p_{1})))$ for any
$p_{1}, p_{2}\in P$. If denote $\nu_{\dashv,\fd}(p)=\sum_{(p)}p_{(1)}\otimes p_{(2)}$,
then for any $p, p'\in P$, we obtain
\begin{align*}
&\quad (\id\otimes\fl_{\dashv_{\fd}}(p'))(\nu_{\dashv,\fd}(p))\\
&=\sum_{(p)}p_{(2)}\otimes(\fd^{2}(p_{(1)})\diamond p')
\qquad\qquad\qquad\qquad\qquad\qquad\qquad\qquad\qquad\qquad\qquad\qquad\qquad\quad
\end{align*}
\begin{align*}
&=\sum_{(p)}p_{(2)}\otimes\fd(\fd(p_{(1)})\diamond p')
-p_{(2)}\otimes\fd(p_{(1)}\diamond\fd(p'))+p_{(2)}\otimes(p_{(1)}\diamond\fd^{2}(p'))\\[-2mm]
&=(\id\otimes\fd^{2})((\fr_{P}(p)\otimes\id)(\vartheta(p')))
-2(\id\otimes\fd)((\fr_{P}(p)\otimes\id)(\vartheta(\fd(p'))))
+(\fr_{P}(p)\otimes\id)(\vartheta(\fd^{2}(p')))\\
&=(\id\otimes\fd^{2})((\fr_{P}(p)\otimes\id)(\vartheta(p')))
-2(\id\otimes\fd)((\fr_{P}(p)\otimes\id)((\id\otimes\fd)(\vartheta(p'))
+(\fd\otimes\id)(\vartheta(p'))))\\
&\qquad+(\fr_{P}(p)\otimes\id)((\id\otimes\fd)(\vartheta(\fd(p')))
+(\fd\otimes\id)(\vartheta(\fd(p'))))\\
&=(\fr_{P}(p)\otimes\id)((\fd\otimes\id)(\vartheta(\fd(p'))))
-(\id\otimes\fd)((\fr_{P}(p)\otimes\id)((\fd\otimes\id)(\vartheta(p'))))\\
&=(\fr_{P}(p)\otimes\id)((\fd\otimes\id)((\id\otimes\fd)(\vartheta(p'))
+(\fd\otimes\id)(\vartheta(p'))))
-(\id\otimes\fd)((\fr_{P}(p)\otimes\id)((\fd\otimes\id)(\vartheta(p'))))\\
&=(\fr_{P}(p)\otimes\id)((\fd^{2}\otimes\id)(\vartheta(p')))\\
&=\sum_{(p')}(\fd^{2}(p'_{(1)})\diamond p)\otimes p'_{(2)}\\[-2mm]
&=(\fl_{\dashv_{\fd}}(p)\otimes\id)(\tau(\nu_{\dashv,\fd}(p'))).
\end{align*}
That is, Eq. \eqref{ndba1} holds. Similarly, we also have Eqs.
\eqref{ndba2}-\eqref{ndba9} hold. Hence, $(P, \dashv_{\fd}, \vdash_{\fd},
\nu_{\dashv,\fd}, \nu_{\vdash,\fd})$ is a diNovikov bialgebra.
\end{proof}

Let $(D, \dashv, \vdash)$ be a diNovikov algebra and $r=\sum_{i}x_{i}\otimes y_{i}
\in D\otimes D$. We define
\begin{align*}
\mathbf{DN}_{r}&=r_{12}\vdash r_{13}+r_{12}\vdash r_{23}
-r_{12}\dashv r_{13}+r_{13}\vdash r_{23}+r_{23}\dashv r_{13},
\end{align*}
where $r_{12}\vdash r_{13}=\sum_{i,j}(x_{i}\vdash x_{j})\otimes y_{i}\otimes y_{j}$,
$r_{12}\vdash r_{23}=\sum_{i,j}x_{i}\otimes(y_{i}\vdash x_{j})\otimes y_{j}$,
$r_{12}\dashv r_{13}=\sum_{i,j}(x_{i}\dashv x_{j})\otimes y_{i}\otimes y_{j}$,
$r_{13}\vdash r_{23}=\sum_{i,j}x_{i}\otimes x_{j}\otimes(y_{i}\vdash y_{j})$ and
$r_{23}\dashv r_{13}=\sum_{i,j}x_{i}\otimes x_{j}\otimes(y_{j}\dashv y_{i})$.
The equation $\mathbf{DN}_{r}=0$ is called the {\bf diNovikov Yang-Baxter equation}
(or $\DNYBE$) in $(D, \dashv, \vdash)$. We define linear maps
\begin{align}
&\nu_{\vdash, r}: D\rightarrow D\otimes D,\qquad
\nu_{\vdash, r}(d)=((\fl_{\dashv}-\fl_{\vdash})(d)\otimes\id
-\id\otimes(\fl_{\dashv}+\fr_{\vdash})(d))(r),\label{ndadr1}\\
&\nu_{\dashv, r}: D\rightarrow D\otimes D,\qquad
\nu_{\dashv, r}(d)=(\id\otimes(\fr_{\dashv}-\fl_{\dashv}+\fl_{\vdash}-\fr_{\vdash})(d)
+\fl_{\dashv}(d)\otimes\id)(\tau(r)), \label{ndadr2}
\end{align}
for any $d\in D$. If $(D, \dashv, \vdash, \nu_{\vdash, r}, \nu_{\dashv, r})$ is a
diNovikov bialgebra, then it will be called a {\bf coboundary diNovikov bialgebra}.
In particular, we have

\begin{pro}[\cite{XBH}]\label{pro:tri-dinov}
Let $(D, \dashv, \vdash)$ be a diNovikov algebra and $r=\sum_{i}x_{i}\otimes y_{i}
\in D\otimes D$. If $r$ is a symmetric solution of the $\DNYBE$ in $(D, \dashv, \vdash)$,
then $(D, \dashv, \vdash, \nu_{\vdash, r}, \nu_{\dashv, r})$ is a diNovikov bialgebra,
which is called a {\bf triangular diNovikov bialgebra} associated with $r$, where
$\nu_{\vdash, r}$ and $\nu_{\dashv, r}$ are given by Eqs. \eqref{ndadr1} and
\eqref{ndadr2} respectively.
\end{pro}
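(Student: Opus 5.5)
The plan is to verify directly that the pair $(\nu_{\vdash,r},\nu_{\dashv,r})$ satisfies the two groups of conditions in Definition \ref{def:diNbi}. First, $(D,\nu_{\dashv,r},\nu_{\vdash,r})$ must be a diNovikov coalgebra, i.e. Eqs. \eqref{ncdi1}--\eqref{ncdi5} hold. Second, the compatibility conditions Eqs. \eqref{ndba1}--\eqref{ndba9} must hold. This mirrors the standard coboundary argument for perm, Novikov and Lie bialgebras, as in Proposition \ref{pro:quasi-ba}.

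I would organise the work in three steps.

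\textbf{Step 1: compatibility conditions.} For each of Eqs. \eqref{ndba1}--\eqref{ndba9}, substitute the coboundary formulas \eqref{ndadr1} and \eqref{ndadr2} and expand in terms of $r=\sum_i x_i\otimes y_i$. Then use the representation identities satisfied by the regular representation $(D,\fl_{\dashv},\fr_{\dashv},\fl_{\vdash},\fr_{\vdash})$, which are just the diNovikov axioms. The goal is to show that each difference (left side minus right side) is an expression of the form $\big(\phi(d_1)\otimes\psi(d_2)\big)(r-\tau(r))$ or similar. Such terms vanish because $r$ is symmetric. Here I expect symmetry alone to suffice and no Yang--Baxter term to appear, since the compatibility conditions are linear in the coproduct.

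\textbf{Step 2: coalgebra axioms.} For the coalgebra axioms \eqref{ncdi1}--\eqref{ncdi5}, compute each difference as a function of $d$. The aim is to express it as a sum of operators of the form $\big(\Phi_1(d)\otimes\id\otimes\id+\id\otimes\Phi_2(d)\otimes\id+\id\otimes\id\otimes\Phi_3(d)\big)$, with $\Phi_k$ built from $\fl_{\dashv},\fr_{\dashv},\fl_{\vdash},\fr_{\vdash}$, applied to $\mathbf{DN}_{r}$ or to its images under the permutations $\sigma_{12},\sigma_{13},\sigma_{23}$ of tensor factors. The remaining terms should again be killed by the diNovikov identities together with $r=\tau(r)$. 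Since $r$ is symmetric, the permuted versions of $\mathbf{DN}_r$ can be rewritten in terms of $\mathbf{DN}_r$ itself, so every such expression vanishes when $\mathbf{DN}_r=0$.

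A cleaner alternative for Step 2 is to dualise. Define products on $D^{\ast}$ by $\nu_{\dashv,r}^{\ast}$ and $\nu_{\vdash,r}^{\ast}$. One would then check that, for symmetric $r$, these are
\[
\xi_1\dashv'\xi_2=\text{(coregular action of } r^{\sharp}(\xi_1)\text{ on }\xi_2\text{)},
\]
and similarly for $\vdash'$. In that form, the axioms follow once $r^{\sharp}$ is an $\mathcal{O}$-operator associated to the coregular representation. Establishing that $\mathcal{O}$-operator property from $\mathbf{DN}_r=0$ is itself the analogue of Proposition \ref{pro:o-dperm}, proved by pairing $\mathbf{DN}_r$ with $\xi_1\otimes\xi_2\otimes\xi_3$.

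\textbf{Step 3: conclusion.} Combining Steps 1 and 2, $(D,\dashv,\vdash,\nu_{\vdash,r},\nu_{\dashv,r})$ is a diNovikov bialgebra. Equivalently, one could pass through the Manin triple $D\oplus D^{\ast}$ with its natural bilinear form, if such a characterisation of diNovikov bialgebras is available, as in \cite{XBH}.

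The main obstacle is the bookkeeping in Step 2. There are five coassociativity-type identities with two coproducts, and each involves several placements of $r$. Locating exactly which permutation of $\mathbf{DN}_r$ appears, and confirming that the leftover terms cancel via the mixed axioms of the diNovikov algebra, is delicate. I would first do \eqref{ncdi1}, the simplest, to calibrate the pattern. I would then try the dual $\mathcal{O}$-operator formulation if the direct expansion becomes unmanageable.
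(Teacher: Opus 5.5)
The paper gives no argument of its own here: Proposition~\ref{pro:tri-dinov} is quoted from \cite{XBH}. Your outline is therefore a self-contained proof rather than a reconstruction of the paper's. Its architecture is sound: the linear compatibility conditions are handled by symmetry, and the quadratic coalgebra axioms by $\mathbf{DN}_{r}=0$.

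Your expectation in Step~1 is borne out. Each of \eqref{ndba1}--\eqref{ndba9} holds for an arbitrary symmetric $r$, using only the diNovikov axioms. For instance, in \eqref{ndba1} the terms $y_{i}\otimes d_{2}\dashv(x_{i}\dashv d_{1}-d_{1}\dashv x_{i}+d_{1}\vdash x_{i}-x_{i}\vdash d_{1})$ vanish by the first axiom. What remains is $(\fl_{\dashv}(d_{1})\otimes\fl_{\dashv}(d_{2}))(\tau(r))$ on the left and $(\fl_{\dashv}(d_{1})\otimes\fl_{\dashv}(d_{2}))(r)$ on the right. The other conditions reduce, after cross terms cancel, to pointwise identities in $D$ that together use all five axioms; the last axiom enters, for example, in \eqref{ndba4} and \eqref{ndba8}.

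For the coalgebra axioms, your dual route is the efficient one, but your description of the dual products needs correcting. Using $r=\tau(r)$, so that $r^{\sharp}(\xi)=\sum_{i}\langle\xi, y_{i}\rangle x_{i}$, one finds
\[
\nu_{\dashv,r}^{\ast}(\xi_{1}\otimes\xi_{2})=\kl_{\dashv}(r^{\sharp}(\xi_{1}))(\xi_{2})+\kr_{\dashv}(r^{\sharp}(\xi_{2}))(\xi_{1}),\qquad
\nu_{\vdash,r}^{\ast}(\xi_{1}\otimes\xi_{2})=\kl_{\vdash}(r^{\sharp}(\xi_{1}))(\xi_{2})+\kr_{\vdash}(r^{\sharp}(\xi_{2}))(\xi_{1}),
\]
where $(\kl_{\dashv},\kr_{\dashv},\kl_{\vdash},\kr_{\vdash})=(\fr_{\dashv}^{\ast}-\fl_{\dashv}^{\ast}+\fl_{\vdash}^{\ast}-\fr_{\vdash}^{\ast},\,-\fr_{\dashv}^{\ast},\,\fl_{\vdash}^{\ast}+\fr_{\dashv}^{\ast},\,\fr_{\vdash}^{\ast}-\fr_{\dashv}^{\ast})$ is the coregular representation. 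Your formula keeps only the action of $r^{\sharp}(\xi_{1})$ on $\xi_{2}$. The right-action terms it omits come from the summand $\fl_{\dashv}(d)\otimes\id$ in \eqref{ndadr2} and from the first summand of \eqref{ndadr1}.

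You also need the standard lemma that makes ``the axioms follow'' true. Let $T:V\to D$ be an $\mathcal{O}$-operator associated to a representation $V$. Form the semidirect product on $D\oplus V$ with
\[
(a,u)\dashv(b,v)=(a\dashv b,\ \kl_{\dashv}(a)(v)+\kr_{\dashv}(b)(u)),
\]
and similarly for $\vdash$; it is a diNovikov algebra because $V$ is a representation. Put $u\dashv_{T}v=\kl_{\dashv}(Tu)(v)+\kr_{\dashv}(Tv)(u)$ and define $\vdash_{T}$ likewise. Then $(Tu,u)\dashv(Tv,v)=(T(u\dashv_{T}v),\,u\dashv_{T}v)$, and likewise for $\vdash$. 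So the graph of $T$ is a subalgebra, and $(V,\dashv_{T},\vdash_{T})$ is a diNovikov algebra.

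Equations \eqref{ncdi1}--\eqref{ncdi5} are exactly the transposes of the five diNovikov axioms for $(\nu_{\dashv}^{\ast},\nu_{\vdash}^{\ast})$. Hence this lemma gives the coalgebra part once $r^{\sharp}$ is an $\mathcal{O}$-operator. That input is the paper's Proposition~\ref{pro:o-dinov}, and only the implication from $\mathbf{DN}_{r}=0$ is needed. For the $\dashv$-identity, pairing with $\xi_{2}$ produces $\mathbf{DN}_{r}-(\tau\otimes\id)(\mathbf{DN}_{r})$ rather than $\mathbf{DN}_{r}$ itself, which still vanishes.

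Compared with expanding \eqref{ncdi1}--\eqref{ncdi5} directly in $r$, this route removes the permutation bookkeeping you flagged as the main obstacle, and it explains why $\mathbf{DN}_{r}$ is the relevant equation. The direct expansion would only add a general coboundary criterion for non-symmetric $r$, which this statement does not need.
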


\begin{rmk}\label{rmk:DNYBE}
In paper \cite{XBH}, the $\DNYBE$ in a diNovikov algebra $(D, \dashv, \vdash)$
is given by $\mathbf{DN}'_{r}:=r_{12}\vdash r_{13}+r_{12}\vdash r_{23}-r_{12}\dashv r_{23}
+r_{23}\vdash r_{13}+r_{13}\dashv r_{23}=0$, which is different from $\mathbf{DN}_{r}=0$
we present here. But it is not difficult to see that $r\in D\otimes D$ is a
symmetric solution of $\mathbf{DN}_{r}=0$ if and only if $r$ is a symmetric solution
of $\mathbf{DN}'_{r}=0$.
\end{rmk}

Each admissible differential perm algebra induces a diNovikov algebra. Furthermore,
each solution of the $\DPYBE$ in an admissible differential perm algebra is also
a solution of the $\DNYBE$ in the induced diNovikov algebra.

\begin{pro}\label{pro:ind-DPYBE}
Let $(P, \diamond, \fd, -\fd)$ be an admissible differential perm algebra, $r=\sum_{i}x_{i}
\otimes y_{i}\in P\otimes P$ and $(P, \dashv_{\fd}, \vdash_{\fd})$ be the induced
diNovikov algebra. Then $r$ is a solution of the $\DNYBE$ in $(P, \dashv_{\fd},
\vdash_{\fd})$ if $r$ is a solution of the $\DPYBE$ in $(P, \diamond, \fd, -\fd)$.
In particular, each symmetric solution of the $\DPYBE$ in $(P, \diamond, \fd, -\fd)$ is
also a symmetric solution of the $\DNYBE$ in the induced diNovikov algebra
$(P, \dashv_{\fd}, \vdash_{\fd})$.
\end{pro}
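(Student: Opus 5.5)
The plan is to express $\mathbf{DN}_{r}$ for the induced operations $p_{1}\vdash_{\fd}p_{2}=p_{1}\diamond\fd(p_{2})$ and $p_{1}\dashv_{\fd}p_{2}=\fd(p_{2})\diamond p_{1}$ directly in terms of $\mathbf{P}_{r}$. I expect to obtain the identity
$$
\mathbf{DN}_{r}=-(\id\otimes\id\otimes\fd)(\mathbf{P}_{r}),
$$
which holds whenever $(\fd\otimes\id+\id\otimes\fd)(r)=0$. For $\partial=-\fd$, the two extra equations of the $\DPYBE$ in Definition~\ref{def:dpYBE} both reduce to exactly this condition. So if $r$ solves the $\DPYBE$, then $\mathbf{P}_{r}=0$ and the identity gives $\mathbf{DN}_{r}=0$.

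The key step is to move every $\fd$ into the third tensor slot, handling the five terms of $\mathbf{DN}_{r}$ separately. Write $r=\sum_{i}x_{i}\otimes y_{i}$ and use $\sum_{j}\fd(x_{j})\otimes y_{j}=-\sum_{j}x_{j}\otimes\fd(y_{j})$.
\begin{itemize}
\item In $r_{12}\vdash_{\fd} r_{13}=\sum_{i,j}(x_{i}\diamond\fd(x_{j}))\otimes y_{i}\otimes y_{j}$, the $\fd$ falls on $x_{j}$, whose partner $y_{j}$ sits in slot $3$. Hence this term equals $-(\id\otimes\id\otimes\fd)(r_{12}\diamond r_{13})$.
\item By the same move, $r_{12}\vdash_{\fd} r_{23}=-(\id\otimes\id\otimes\fd)(r_{12}\diamond r_{23})$.
\item For $r_{12}\dashv_{\fd} r_{13}=\sum_{i,j}(\fd(x_{j})\diamond x_{i})\otimes y_{i}\otimes y_{j}$, relabelling $i\leftrightarrow j$ in $r_{13}\diamond r_{12}$ shows this term equals $-(\id\otimes\id\otimes\fd)(r_{13}\diamond r_{12})$.
\item The last two terms combine using the Leibniz rule in the third slot. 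Since $y_{i}\diamond\fd(y_{j})+\fd(y_{i})\diamond y_{j}=\fd(y_{i}\diamond y_{j})$, we get $r_{13}\vdash_{\fd} r_{23}+r_{23}\dashv_{\fd} r_{13}=(\id\otimes\id\otimes\fd)(r_{13}\diamond r_{23})$.
\end{itemize}

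Summing with the signs in $\mathbf{DN}_{r}$ gives
$$
\mathbf{DN}_{r}=-(\id\otimes\id\otimes\fd)\big(r_{12}\diamond r_{13}+r_{12}\diamond r_{23}-r_{13}\diamond r_{12}-r_{13}\diamond r_{23}\big)=-(\id\otimes\id\otimes\fd)(\mathbf{P}_{r}),
$$
which proves the first claim. The "in particular" part is immediate, because symmetry of $r$ is a property of $r$ alone and does not depend on which algebra structure is used.

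The only delicate point is the bookkeeping of the index relabelling in the $r_{12}\dashv_{\fd} r_{13}$ term. One must check that, in each of the three single-term moves, the element hit by $\fd$ is paired with the factor lying in slot $3$, so that the relation $\sum_{j}\fd(x_{j})\otimes y_{j}=-\sum_{j}x_{j}\otimes\fd(y_{j})$ applies and produces the sign as stated.
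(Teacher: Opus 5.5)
Your proposal is correct and is essentially the paper's proof. The paper likewise uses $(\fd\otimes\id+\id\otimes\fd)(r)=0$ to move every $\fd$ into the third tensor factor and the Leibniz rule to combine the last two terms, arriving at exactly $\mathbf{DN}_{r}=-(\id\otimes\id\otimes\fd)(\mathbf{P}_{r})$; your term-by-term signs and the $i\leftrightarrow j$ relabelling all check out.
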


\begin{proof}
If $r$ is a solution of the $\DPYBE$ in $(P, \diamond, \fd, -\fd)$, we get that
$(\fd\otimes\id+\id\otimes\fd)(r)=0$ and $\mathbf{P}_{r}=0$. By direct calculation,
we have
\begin{align*}
\mathbf{DN}_{r}&=r_{12}\vdash_{\fd}r_{23}+r_{12}\vdash_{\fd}r_{13}
-r_{12}\dashv_{\fd}r_{13}+r_{13}\vdash_{\fd}r_{23}+r_{23}\dashv_{\fd}r_{13}\\
&=\sum_{i,j}\Big(x_{i}\otimes(y_{i}\diamond\fd(x_{j}))\otimes y_{j}
+(x_{i}\diamond\fd(x_{j}))\otimes y_{i}\otimes y_{j}
-(\fd(x_{j})\diamond x_{i})\otimes y_{i}\otimes y_{j}\\[-5mm]
&\qquad\qquad+x_{i}\otimes x_{j}\otimes(y_{i}\diamond\fd(y_{j}))
+x_{i}\otimes x_{j}\otimes(\fd(y_{i})\diamond y_{j})\Big)\\
&=\sum_{i,j}\Big(-x_{i}\otimes(y_{i}\diamond x_{j})\otimes\fd(y_{j})
-(x_{i}\diamond x_{j})\otimes y_{i}\otimes\fd(y_{j})
+(x_{j}\diamond x_{i})\otimes y_{i}\otimes\fd(y_{j})\\[-5mm]
&\qquad\qquad+x_{i}\otimes x_{j}\otimes(y_{i}\diamond\fd(y_{j}))
+x_{i}\otimes x_{j}\otimes(\fd(y_{i})\diamond y_{j})\Big)\\
&=-(\id\otimes\id\otimes\fd)(\mathbf{P}_{r})\\
&= 0.
\end{align*}
That is, $r$ is a solution of the $\DNYBE$ in $(P, \dashv_{\fd}, \vdash_{\fd})$.
\end{proof}

Now we can show that the the induced diNovikov bialgebra given in Theorem
\ref{thm:ind-dinovbia} by a triangular differential perm bialgebra is also triangular.

\begin{thm}\label{thm:indu-spdiNbia}
Let $(P, \diamond, \vartheta_{r}, \fd, -\fd)$ be a triangular differential perm
bialgebra associated with a symmetric solution $r$ of the $\DPYBE$ in $(P, \diamond,
\fd, -\fd)$, where $\vartheta_{r}$ is given by Eq. \eqref{cobo}. Suppose
$(P, \dashv_{\fd}, \vdash_{\fd}, \nu_{\dashv,\fd}, \nu_{\vdash,\fd})$ is the induced
diNovikov bialgebra by the differential perm bialgebra $(P, \diamond, \vartheta_{r},
\fd, -\fd)$. Then $\nu_{\dashv,\fd}=\nu_{\dashv,r}$ and $\nu_{\vdash,\fd}=\nu_{\vdash,r}$,
and so that $(P, \dashv_{\fd}, \vdash_{\fd}, \nu_{\dashv,\fd}, \nu_{\vdash,\fd})$ is a
triangular diNovikov bialgebra associated with $r$, where $\nu_{\dashv,\fd}$ and
$\nu_{\vdash,\fd}$ are given by Eq. \eqref{ind-codinov}. Therefore, we obtain the following
commutative diagram:
$$
\xymatrix@C=2cm@R=0.6cm{
\txt{$r$ \\ {\tiny a symmetric solution of}\\
{\tiny the $\DPYBE$ in $(P, \diamond, \fd, -\fd)$}}
\ar[d]_{{\rm Pro.}~\ref{pro:ind-DPYBE}}\ar[r]^{{\rm Pro.}~\ref{pro:quasi-dpba}} &
\txt{$(P, \diamond, \vartheta_{r}, \fd, -\fd)$ \\ {\tiny a triangular differential}\\
{\tiny perm bialgebra}} \ar[d]^{{\rm Thm.}~\ref{thm:ind-dinovbia}} \\
\txt{$r$ \\ {\tiny a symmetric solution of}\\
{\tiny the $\DNYBE$ in $(P, \dashv_{\fd}, \vdash_{\fd})$}}
\ar[r]^{{\rm Pro.}~\ref{pro:tri-dinov}} &
\txt{$(P, \dashv_{\fd}, \vdash_{\fd}, \nu_{\dashv,r}, \nu_{\vdash,r})$ \\
{\tiny a triangular}\\ {\tiny diNovikov bialgebra}}}
$$
\end{thm}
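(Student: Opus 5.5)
The plan is to compare the coproducts directly. Write $r=\sum_{i}x_{i}\otimes y_{i}$ and use the hypotheses in the following form:
\begin{itemize}
\item $r=\tau(r)$;
\item $\mathbf{P}_{r}=0$;
\item the $\DPYBE$ with $\partial=-\fd$ gives $(\fd\otimes\id+\id\otimes\fd)(r)=0$, i.e. $\sum_{i}\fd(x_{i})\otimes y_{i}=-\sum_{i}x_{i}\otimes\fd(y_{i})$.
\end{itemize}
Once the two equalities $\nu_{\dashv,\fd}=\nu_{\dashv,r}$ and $\nu_{\vdash,\fd}=\nu_{\vdash,r}$ are proved, the rest is formal. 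By Proposition \ref{pro:ind-DPYBE}, $r$ is a symmetric solution of the $\DNYBE$ in $(P,\dashv_{\fd},\vdash_{\fd})$. Proposition \ref{pro:tri-dinov} then gives the triangular diNovikov bialgebra $(P,\dashv_{\fd},\vdash_{\fd},\nu_{\vdash,r},\nu_{\dashv,r})$. This coincides with the bialgebra of Theorem \ref{thm:ind-dinovbia}, so the diagram commutes.

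For $\nu_{\vdash}$, first expand Eq. \eqref{cobo}:
$$\vartheta_{r}(p)=\sum_{i}\big((p\diamond x_{i}-x_{i}\diamond p)\otimes y_{i}-x_{i}\otimes(y_{i}\diamond p)\big).$$
Applying $\id\otimes\fd$ gives
$$\nu_{\vdash,\fd}(p)=\sum_{i}(p\diamond x_{i}-x_{i}\diamond p)\otimes\fd(y_{i})-x_{i}\otimes\fd(y_{i}\diamond p).$$
Next expand Eq. \eqref{ndadr1} with
$$\fl_{\dashv}(p)(y)=\fd(y)\diamond p,\qquad \fl_{\vdash}(p)(y)=p\diamond\fd(y),\qquad \fr_{\vdash}(p)(y)=y\diamond\fd(p).$$
By the Leibniz rule the second tensor slot becomes $\fd(y_{i})\diamond p+y_{i}\diamond\fd(p)=\fd(y_{i}\diamond p)$, so
$$\nu_{\vdash,r}(p)=\sum_{i}(\fd(x_{i})\diamond p-p\diamond\fd(x_{i}))\otimes y_{i}-x_{i}\otimes\fd(y_{i}\diamond p).$$
Transporting $\fd$ across the tensor sign via $(\fd\otimes\id)(r)=-(\id\otimes\fd)(r)$ turns the first sum into the first sum of $\nu_{\vdash,\fd}(p)$. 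Hence $\nu_{\vdash,\fd}=\nu_{\vdash,r}$.

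For $\nu_{\dashv}$, compute
$$\nu_{\dashv,\fd}(p)=\tau\Big(\sum_{i}\fd(p\diamond x_{i}-x_{i}\diamond p)\otimes y_{i}-\fd(x_{i})\otimes(y_{i}\diamond p)\Big).$$
By Eq. \eqref{ndadr2}, using $\tau(r)=r$ and $\fr_{\dashv}(p)(y)=\fd(p)\diamond y$,
$$\nu_{\dashv,r}(p)=\sum_{i}\fd(x_{i})\diamond p\otimes y_{i}+x_{i}\otimes\big(\fd(p)\diamond y_{i}-\fd(y_{i})\diamond p+p\diamond\fd(y_{i})-y_{i}\diamond\fd(p)\big).$$
The steps are as follows.
\begin{enumerate}
\item Apply $\tau$ and use symmetry of $r$ to relabel $x_{i}\leftrightarrow y_{i}$. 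This puts both expressions in the shape $\sum_{i}x_{i}\otimes(\cdots)+\sum_{i}(\cdots)\otimes y_{i}$.
\item Expand $\fd(p\diamond y_{i})$ and $\fd(y_{i}\diamond p)$ by the Leibniz rule.
\item Move every $\fd$ off $x_{i}$ or $y_{i}$ to the other factor with $(\fd\otimes\id+\id\otimes\fd)(r)=0$, again using symmetry, until both sides contain only terms with $\fd$ on $p$ or with a single normalized placement of $\fd$.
\end{enumerate}
The terms containing $\fd(p)$ should match directly. The terms with $\fd$ on $r$ should cancel pairwise after this normalization.

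I expect the $\nu_{\dashv}$ comparison to be the main obstacle. The bookkeeping of $\tau$ together with the swap $x_{i}\leftrightarrow y_{i}$ is error-prone. It is also possible that a residual pair such as $p\diamond\fd(y_{i})$ against $\fd(y_{i})\diamond p$ survives. If so, I would first try to absorb it using the perm identity $(a\diamond b)\diamond c=(b\diamond a)\diamond c$ inside the products. Failing that, I would use the perm-invariance of $r-\tau(r)=0$, which holds trivially, together with the symmetric form of the coproduct $\vartheta_{r}$. I would fix the normalization $(\fd\otimes\id)(r)\mapsto-(\id\otimes\fd)(r)$ once at the start and apply it uniformly to both sides.
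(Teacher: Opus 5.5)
Your proposal is correct and takes the paper's route: you compare the coproducts directly using $r=\tau(r)$ and $(\fd\otimes\id+\id\otimes\fd)(r)=0$, then invoke Propositions \ref{pro:ind-DPYBE} and \ref{pro:tri-dinov}. The $\nu_{\dashv}$ comparison you flagged as the obstacle (which the paper settles with ``similarly'') closes term by term: after applying $\tau$, relabeling, and expanding by Leibniz, $\nu_{\dashv,\fd}(p)$ equals your expression for $\nu_{\dashv,r}(p)$ once $\sum_{i}(\fd(x_{i})\diamond p)\otimes y_{i}$ is rewritten as $-\sum_{i}(x_{i}\diamond p)\otimes\fd(y_{i})$, so no residual terms arise and no fallback is needed (the perm-identity fallback would not have worked anyway, since $p\diamond\fd(y_{i})\neq\fd(y_{i})\diamond p$ in general).
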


\begin{proof}
Since $r$ is a solution of the $\DPYBE$ in $(P, \diamond, \fd, -\fd)$, we have
$(\fd\otimes\id+\id\otimes\fd)(r)=0$. Thus, by direct calculation, for any $p\in P$, we have
\begin{align*}
\nu_{\vdash, r}(p)&=((\fl_{\dashv}-\fl_{\vdash})(p)\otimes\id
-\id\otimes(\fl_{\dashv}+\fr_{\vdash})(p))(r)\\
&=\sum_{i}\big((\fd(x_{i})\diamond p)\otimes y_{i}-(p\diamond\fd(x_{i}))\otimes y_{i}
-x_{i}\otimes(\fd(y_{i})\diamond p)-x_{i}\otimes(y_{i}\diamond\fd(p))\big)\\[-2mm]
&=\sum_{i}\big((p\diamond x_{i})\otimes\fd(y_{i})-x_{i}\otimes\fd(y_{i}\diamond p)
-(x_{i}\diamond p)\otimes\fd(y_{i})\big)\\[-2mm]
&=(\id\otimes\fd)\Big(((\fl_{P}-\fr_{P})(p)\otimes\id-\id\otimes\fr_{P}(p))(r)\Big)\\
&=\nu_{\vdash,\fd}(p).
\end{align*}
Similarly, $\nu_{\dashv, r}=\nu_{\dashv,\fd}$. Thus, $(P, \dashv_{\fd}, \vdash_{\fd},
\nu_{\dashv,\fd}, \nu_{\vdash,\fd})$ is a triangular diNovikov bialgebra,
and the diagram is commutative.
\end{proof}

\begin{ex}\label{ex:ind-tridN}
Let $(P={\rm span}_{\Bbbk}\{x_{1}, x_{2}\}, \diamond, \vartheta, \fd, -\fd)$ be the
$2$-dimensional triangular differential perm bialgebra associated with $r=x_{2}
\otimes x_{2}$ given in Example \ref{ex:YBE-bialg}. Then this differential perm bialgebra
induces a diNovikov bialgebra, where the nonzero product is given by $x_{1}\vdash_{\fd}
x_{1}=x_{2}$ and the coproducts $\nu_{\dashv,\fd}$, $\nu_{\vdash,\fd}$ are zero.
This diNovikov bialgebra is triangular since it is exactly the triangular diNovikov
bialgebra associated with the symmetric solution $r=x_{2}\otimes x_{2}$ of the
$\DNYBE$ in $(P, \dashv_{\fd}, \vdash_{\fd})$.
\end{ex}

Next, we consider the operators forms of solution of the $\DNYBE$ in a diNovikov algebra.

\begin{defi}\label{def:o-diN}
Let $(D, \dashv, \vdash)$ be a diNovikov algebra and $(V, \kl_{\dashv}, \kr_{\dashv},
\kl_{\vdash}, \kr_{\vdash})$ be a representation of it. A linear map $T: V\rightarrow D$
is called an {\bf $\mathcal{O}$-operator of $(D, \dashv, \vdash)$ associated to
$(V, \kl_{\dashv}, \kr_{\dashv}, \kl_{\vdash}, \kr_{\vdash})$} if for any $v_{1}, v_{2}\in V$,
\begin{align*}
T(v_{1})\dashv T(v_{2})&=T\big(\kl_{\dashv}(T(v_{1}))(v_{2})
+\kr_{\dashv}(T(v_{2}))(v_{1})\big),\\
T(v_{1})\vdash T(v_{2})&=T\big(\kl_{\vdash}(T(v_{1}))(v_{2})
+\kr_{\vdash}(T(v_{2}))(v_{1})\big).
\end{align*}
\end{defi}

For the symmetric solution of the $\DNYBE$ in a diNovikov algebra and
$\mathcal{O}$-operator, we have:

\begin{pro}\label{pro:o-dinov}
Let $(D, \dashv, \vdash)$ be a diNovikov algebra and $r\in D\otimes D$ be symmetric.
Then $r$ is a solution of the $\DNYBE$ in $(D, \dashv, \vdash)$ if and only if
$r^{\sharp}: D^{\ast}\rightarrow D$ is an $\mathcal{O}$-operator of $(D, \dashv, \vdash)$
associated to the coregular representation $(D^{\ast}, \fr_{\dashv}^{\ast}-\fl_{\dashv}^{\ast}
+\fl_{\vdash}^{\ast}-\fr_{\vdash}^{\ast}, -\fr_{\dashv}^{\ast}, \fl_{\vdash}^{\ast}
+\fr_{\dashv}^{\ast}, \fr_{\vdash}^{\ast}-\fr_{\dashv}^{\ast})$.
\end{pro}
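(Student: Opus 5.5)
Set $T:=r^{\sharp}$ and take $r=\sum_{i}x_{i}\otimes y_{i}$ with $r=\tau(r)$. Symmetry gives $\langle T(\xi),\eta\rangle=\langle T(\eta),\xi\rangle$ and $T(\xi)=\sum_{i}\langle\xi,x_{i}\rangle y_{i}=\sum_{i}\langle\xi,y_{i}\rangle x_{i}$. The plan is to pair each of the two defining identities of an $\mathcal{O}$-operator (Definition \ref{def:o-diN}) against an arbitrary $\xi_{3}\in D^{\ast}$. Each pairing becomes an evaluation $\langle\xi_{1}\otimes\xi_{2}\otimes\xi_{3},\,X\rangle$ for some explicit $X\in D^{\otimes 3}$ built from $r_{12}\dashv r_{13}$, $r_{12}\vdash r_{23}$, and similar terms. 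We then compare these tensors with $\mathbf{DN}_{r}$ and its images under permutations of the tensor factors. This follows the proof of the perm case quoted in Proposition \ref{pro:o-dperm} (Corollary 3.35 of \cite{LZB}) and the analogous result for Novikov algebras.

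First I compute the $\vdash$-identity. The left side pairs as
\[
\langle T(\xi_{1})\vdash T(\xi_{2}),\xi_{3}\rangle=\sum_{i,j}\langle\xi_{1},x_{i}\rangle\langle\xi_{2},x_{j}\rangle\langle\xi_{3},y_{i}\vdash y_{j}\rangle,
\]
which is the $(1,2,3)$-evaluation of $r_{13}\vdash r_{23}$. On the right side, $\kl=\fl_{\vdash}^{\ast}+\fr_{\dashv}^{\ast}$ and $\kr=\fr_{\vdash}^{\ast}-\fr_{\dashv}^{\ast}$. Using $\langle\psi^{\ast}(d)\xi,v\rangle=-\langle\xi,\psi(d)v\rangle$, I move every $T$ onto the other side of the pairing, where it becomes a copy of $r$. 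The result is an evaluation of
\[
-\,r_{12}\vdash r_{23}\ \ (\text{up to relabelling}) - (\text{a }\dashv\text{ term}) - r_{12}\vdash r_{13} + r_{12}\dashv r_{13},
\]
each term with a suitable permutation of tensor factors. Collecting everything, the $\vdash$-identity should be equivalent to the vanishing of $\sigma(\mathbf{DN}_{r})$ for one fixed permutation $\sigma$ (I expect $\sigma=\id$ once the factors are arranged so that $\xi_{3}$ pairs with the third slot). The five terms of $\mathbf{DN}_{r}$ should correspond exactly to one left term and four right terms. Symmetry of $r$ is used here to rewrite, for example, $\sum x_{i}\otimes(x_{j}\vdash y_{i})\otimes y_{j}$ into the standard $r_{12}\vdash r_{23}$ form, and $r_{23}\dashv r_{13}$ into the form that appears.

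Next I treat the $\dashv$-identity with $\kl=\fr_{\dashv}^{\ast}-\fl_{\dashv}^{\ast}+\fl_{\vdash}^{\ast}-\fr_{\vdash}^{\ast}$ and $\kr=-\fr_{\dashv}^{\ast}$. In the same way it becomes the vanishing of some tensor $Y$, which has more terms. I expect $Y$ to be a combination of permuted copies of $\mathbf{DN}_{r}$, probably $Y=-(\tau\otimes\id)\mathbf{DN}_{r}$ or $\mathbf{DN}_{r}$ with the first and third slots swapped. Here the symmetry of $r$ is essential, because it lets one trade $r_{12}\dashv r_{13}$ for $r_{13}\dashv r_{12}$-type terms. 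Remark \ref{rmk:DNYBE}, stating that $\mathbf{DN}_{r}=0$ and $\mathbf{DN}'_{r}=0$ agree on symmetric $r$, may be needed to match the expression. If $Y$ does not reduce to a permuted copy of $\mathbf{DN}_{r}$ on the nose, I would invoke the diNovikov identities, in the contracted form they take inside $D^{\otimes 3}$, to close the gap.

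With both identities shown equivalent to $\mathbf{DN}_{r}=0$ up to a permutation, which is injective, the "if and only if" follows: $r$ solves the $\DNYBE$ exactly when both $\mathcal{O}$-operator identities hold. The main obstacle is the second step. The coregular representation has four terms in $\kl_{\dashv}$, so careful bookkeeping of signs and slot permutations is needed to recognize $Y$ as $\sigma(\mathbf{DN}_{r})$. The first step is a routine check.
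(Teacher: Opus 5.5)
The $\vdash$-step is right and matches the paper's computation. In your arrangement (inputs $\xi_1,\xi_2$, paired with $\xi_3$) the $\vdash$-identity is the vanishing of $(\id\otimes\tau)\mathbf{DN}_r$ rather than of $\mathbf{DN}_r$ itself, so it alone is equivalent to $\mathbf{DN}_r=0$.

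The gap is in the $\dashv$-step and in your concluding sentence. Write $T=r^{\sharp}$ and let $Y$ be the tensor expressing the $\dashv$-identity. Then
\begin{align*}
\langle\xi_1\otimes\xi_2\otimes\xi_3,\,Y\rangle={}&\xi_3(T\xi_1\dashv T\xi_2)+\xi_2(T\xi_3\dashv T\xi_1)-\xi_2(T\xi_1\dashv T\xi_3)-\xi_1(T\xi_3\dashv T\xi_2)\\
&+\xi_2(T\xi_1\vdash T\xi_3)-\xi_2(T\xi_3\vdash T\xi_1).
\end{align*}
This has four $\dashv$-monomials and two $\vdash$-monomials. $\mathbf{DN}_r$ has three $\vdash$-monomials and two $\dashv$-monomials, so $Y$ is not $\pm\sigma(\mathbf{DN}_r)$ for any permutation $\sigma$.

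What does hold, using only $r=\tau(r)$, is
\[
Y=-\mathbf{DN}_r+(\tau\otimes\id)\mathbf{DN}_r+(\id\otimes\tau)\mathbf{DN}_r-\tau_{13}\mathbf{DN}_r,
\]
where $\tau_{13}$ flips the outer factors. This operator kills every totally symmetric tensor, and in fact the $\dashv$-identity alone is strictly weaker than the $\DNYBE$. Take $D=\Bbbk u\oplus\Bbbk z$ with $u\vdash u=z$ and all other products zero, including all of $\dashv$. This is a diNovikov algebra, since every triple product vanishes. For $r=u\otimes u$ you get $\mathbf{DN}_r=z\otimes u\otimes u+u\otimes z\otimes u+u\otimes u\otimes z\neq0$. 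On the other hand, since $\dashv=0$ you have $\kl_{\dashv}=\fl_{\vdash}^{\ast}-\fr_{\vdash}^{\ast}$, which vanishes because $\vdash$ is commutative, and $\kr_{\dashv}=-\fr_{\dashv}^{\ast}=0$. So the $\dashv$-identity holds trivially.

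Hence the claim that both identities are equivalent to $\mathbf{DN}_r=0$ up to an injective permutation cannot be proved. Your fallback cannot rescue it either: each monomial contains a single product, while the diNovikov axioms relate double products, and the equivalence is false anyway. The paper's ``Similarly'' for the $\dashv$-identity asserts the same false equivalence.

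The fix is to change the logic, not the computation. If $r^{\sharp}$ is an $\mathcal{O}$-operator, the $\vdash$-identity alone gives $\mathbf{DN}_r=0$. Conversely, $\mathbf{DN}_r=0$ gives the $\vdash$-identity directly and the $\dashv$-identity through the decomposition of $Y$ above. With that change your argument proves the proposition.
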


\begin{proof}
Let $r=\sum_{i}x_{i}\otimes y_{i}$ be a symmetric element in $D\otimes D$.
Then for any $\xi_{1}, \xi_{2}\in D^{\ast}$, we get $\langle r^{\sharp}(\xi_{1}),\;
\xi_{2}\rangle=\langle\xi_{1},\; r^{\sharp}(\xi_{2})\rangle$. Thus, for any
$\xi_{1}, \xi_{2}, \xi_{3}\in D^{\ast}$, we have
\begin{align*}
& \langle\xi_{1}\otimes\xi_{2}\otimes\xi_{3},\; (x_{i}\vdash x_{j})\otimes y_{i}
\otimes y_{j}\rangle
=\langle\xi_{1},\; r^{\sharp}(\xi_{2})\vdash r^{\sharp}(\xi_{3})\rangle
=-\langle\xi_{2},\; r^{\sharp}(\fr_{\vdash}^{\ast}(r^{\sharp}(\xi_{3}))(\xi_{1}))\rangle,\\
& \langle\xi_{1}\otimes\xi_{2}\otimes\xi_{3},\; x_{i}\otimes(y_{i}\vdash x_{j})
\otimes y_{j}\rangle
=\langle\xi_{2},\; r^{\sharp}(\xi_{1})\vdash r^{\sharp}(\xi_{3})\rangle,\\
& \langle\xi_{1}\otimes\xi_{2}\otimes\xi_{3},\; (x_{i}\dashv x_{j})\otimes y_{i}
\otimes y_{j}\rangle
=\langle\xi_{1},\; r^{\sharp}(\xi_{2})\dashv r^{\sharp}(\xi_{3})\rangle
=-\langle\xi_{2},\; r^{\sharp}(\fr_{\dashv}^{\ast}(r^{\sharp}(\xi_{3}))(\xi_{1}))\rangle,\\
& \langle\xi_{1}\otimes\xi_{2}\otimes\xi_{3},\; x_{i}\otimes x_{j}\otimes(y_{i}
\vdash y_{j})\rangle
=\langle\xi_{3},\; r^{\sharp}(\xi_{1})\vdash r^{\sharp}(\xi_{2})\rangle
=-\langle\xi_{2},\; r^{\sharp}(\fl_{\vdash}^{\ast}(r^{\sharp}(\xi_{1}))(\xi_{3}))\rangle,\\
& \langle\xi_{1}\otimes\xi_{2}\otimes\xi_{3},\; x_{i}\otimes x_{j}\otimes(y_{j}
\dashv y_{i})\rangle
=\langle\xi_{3},\; r^{\sharp}(\xi_{2})\dashv r^{\sharp}(\xi_{1})\rangle
=-\langle\xi_{2},\; r^{\sharp}(\fr_{\dashv}^{\ast}(r^{\sharp}(\xi_{1}))(\xi_{3}))\rangle.
\end{align*}
That is, $\langle\xi_{1}\otimes\xi_{2}\otimes\xi_{3},\; \mathbf{DN}_{r}\rangle=
\langle\xi_{1}\otimes\xi_{2}\otimes\xi_{3},\; r^{\sharp}(\xi_{1})\vdash r^{\sharp}(\xi_{3})
-r^{\sharp}\big(\fl_{\vdash}^{\ast}(r^{\sharp}(\xi_{1}))(\xi_{3})
+\fr_{\dashv}^{\ast}(r^{\sharp}(\xi_{1}))(\xi_{3})
+\fr_{\vdash}^{\ast}(r^{\sharp}(\xi_{3}))(\xi_{1})
-\fr_{\dashv}^{\ast}(r^{\sharp}(\xi_{3}))(\xi_{1})\big)\rangle$. Thus, we get that
$r^{\sharp}(\xi_{1})\vdash r^{\sharp}(\xi_{3})=r^{\sharp}\big(\fl_{\vdash}^{\ast}(r^{\sharp}
(\xi_{1}))(\xi_{3})+\fr_{\dashv}^{\ast}(r^{\sharp}(\xi_{1}))(\xi_{3})
+\fr_{\vdash}^{\ast}(r^{\sharp}(\xi_{3}))(\xi_{1})
-\fr_{\dashv}^{\ast}(r^{\sharp}(\xi_{3}))(\xi_{1})\big)$ if and only if $\mathbf{DN}_{r}=0$.
Similarly, we also have $r^{\sharp}(\xi_{1})\dashv r^{\sharp}(\xi_{3})=r^{\sharp}\big(
\fr_{\dashv}^{\ast}(r^{\sharp}(\xi_{1}))(\xi_{3})
-\fl_{\dashv}^{\ast}(r^{\sharp}(\xi_{1}))(\xi_{3})
+\fl_{\vdash}^{\ast}(r^{\sharp}(\xi_{1}))(\xi_{3})
-\fr_{\vdash}^{\ast}(r^{\sharp}(\xi_{1}))(\xi_{3})
-\fr_{\dashv}^{\ast}(r^{\sharp}(\xi_{3}))(\xi_{1})\big)$ if and only if $\mathbf{DN}_{r}=0$.
Therefore, we obtain that $r$ is a solution of the $\DNYBE$ in $(D, \dashv, \vdash)$
if and only if $r^{\sharp}: D^{\ast}\rightarrow D$ is an $\mathcal{O}$-operator of
$(D, \dashv, \vdash)$ associated to the coregular representation.
\end{proof}

Thus, for the $\mathcal{O}$-operator of a admissible differential perm algebra and
the $\mathcal{O}$-operator of the induced diNovikov algebra, by Proposition
\ref{pro:o-dperm} and \ref{pro:o-dinov}, we have:

\begin{cor}\label{cor:o-dass-dN}
Let $(P, \diamond, \fd, -\fd)$ be an admissible differential perm algebra
and $(P, \vdash_{\fd}, \dashv_{\fd})$ be the induced diNovikov algebra by $(A, \cdot, \fd)$.
If $r$ is a symmetric solution of the $\DPYBE$ in $(P, \diamond, \fd, -\fd)$,
Then we have the following commutative diagram:
$$
\xymatrix@C=3cm@R=0.5cm{
\txt{$r$ \\ {\tiny a symmetric solution} \\ {\tiny of the $\DPYBE$ in
$(P, \diamond, \fd, -\fd)$}}
\ar[d]_-{{\rm Pro.}~\ref{pro:ind-DPYBE}}\ar[r]^-{{\rm Pro.}~\ref{pro:o-dperm}} &
\txt{$r^{\sharp}$\\ {\tiny an $\mathcal{O}$-operator of $(P, \diamond, \fd)$} \\
{\tiny associated to $(P^{\ast}, -\fl_{P}^{\ast}, \fr_{P}^{\ast}-\fl_{P}^{\ast}
-\fd^{\ast})$}} \ar[d] \\
\txt{$r$ \\ {\tiny a symmetric solution} \\ {\tiny of the $\DNYBE$ in
$(P, \vdash_{\fd}, \dashv_{\fd})$}} \ar[r]^-{{\rm Pro.}~\ref{pro:o-dinov}}
& \txt{$r^{\sharp}$ \\ {\tiny an $\mathcal{O}$-operator of $(P, \vdash_{\fd},
\dashv_{\fd})$ associated to } \\
{\tiny $(P^{\ast}, \fr_{\dashv_{\fd}}^{\ast}-\fl_{\dashv_{\fd}}^{\ast}
+\fl_{\vdash_{\fd}}^{\ast}-\fr_{\vdash_{\fd}}^{\ast}, -\fr_{\dashv_{\fd}}^{\ast},
\fl_{\vdash_{\fd}}^{\ast}+\fr_{\dashv_{\fd}}^{\ast},
\fr_{\vdash_{\fd}}^{\ast}-\fr_{\dashv_{\fd}}^{\ast})$}}}
$$
\end{cor}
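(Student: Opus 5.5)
The plan is to read the diagram as a statement about a single linear map $r^{\sharp}: P^{\ast}\rightarrow P$. Every vertex of the diagram is either the element $r$ or the map $r^{\sharp}$, and every arrow is the identity on the underlying object. So commutativity is automatic once each arrow is shown to be well defined, i.e.\ once the object at its target really has the stated property. The work therefore splits into four well-definedness checks.

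The top arrow is Proposition \ref{pro:o-dperm} applied with $\partial=-\fd$. Since $r$ is a symmetric solution of the $\DPYBE$ in $(P, \diamond, \fd, -\fd)$, $r^{\sharp}$ is an $\mathcal{O}$-operator of $(P, \diamond, \fd)$ associated to the coregular bimodule $(P^{\ast}, -\fl_{P}^{\ast}, \fr_{P}^{\ast}-\fl_{P}^{\ast}, -\fd^{\ast})$. The left arrow is Proposition \ref{pro:ind-DPYBE}: $r$ is then a symmetric solution of the $\DNYBE$ in $(P, \dashv_{\fd}, \vdash_{\fd})$. The bottom arrow is Proposition \ref{pro:o-dinov} applied to the diNovikov algebra $(P, \dashv_{\fd}, \vdash_{\fd})$ and this symmetric $r$. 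It yields that $r^{\sharp}$ is an $\mathcal{O}$-operator associated to the coregular representation built from $\fl_{\dashv_{\fd}}, \fr_{\dashv_{\fd}}, \fl_{\vdash_{\fd}}, \fr_{\vdash_{\fd}}$.

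The right vertical arrow is the only point needing care. I would define it on $\mathcal{O}$-operators of the form $T=r^{\sharp}$ with $r$ symmetric, sending $T$ to itself. It is well defined by the following route.
\begin{enumerate}
\item Suppose $T=r^{\sharp}$ is an $\mathcal{O}$-operator of $(P, \diamond, \fd)$ associated to the coregular bimodule.
\item The ``only if'' direction of Proposition \ref{pro:o-dperm} recovers that $r$ is a symmetric solution of the $\DPYBE$.
\item Going down the left and along the bottom then shows that $T$ is an $\mathcal{O}$-operator of the induced diNovikov algebra.
\end{enumerate}
Both composites $r\mapsto r^{\sharp}$ coincide, so the square commutes.

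As a consistency check, and the step I expect to be the main obstacle, I would also verify the right arrow directly. Starting from $T(\xi_{1})\vdash_{\fd}T(\xi_{2})=T(\xi_{1})\diamond\fd(T(\xi_{2}))$, I would use $\fd\circ T=-T\circ\fd^{\ast}$ to write this as $T(\xi_{1})\diamond T(-\fd^{\ast}\xi_{2})$. Applying the perm $\mathcal{O}$-operator identity then expresses it as $T$ of a combination of $\fl_{P}^{\ast}$, $\fr_{P}^{\ast}$ and $\fd^{\ast}$. Finally, I would pair with test vectors and use that $\fd$ is a derivation to match it with $T\big((\fl_{\vdash_{\fd}}^{\ast}+\fr_{\dashv_{\fd}}^{\ast})(T\xi_{1})\xi_{2}+(\fr_{\vdash_{\fd}}^{\ast}-\fr_{\dashv_{\fd}}^{\ast})(T\xi_{2})\xi_{1}\big)$, and do the same for $\dashv_{\fd}$. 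Matching these dual operators term by term is the delicate bookkeeping. If it becomes unwieldy, I would fall back on the equivalence-chain argument above, which already suffices.
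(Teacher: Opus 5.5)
Your proposal is correct and is essentially the paper's argument: the corollary is obtained just by chaining Propositions \ref{pro:o-dperm} (with $\partial=-\fd$), \ref{pro:ind-DPYBE} and \ref{pro:o-dinov}, every arrow being the identity on $r$ or $r^{\sharp}$. (The direction of Proposition \ref{pro:o-dperm} you use in your step 2 is the ``if'' direction, not ``only if''.)

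Your optional direct check is routine rather than an obstacle. Use $\fl_{\vdash_{\fd}}(p)=\fl_{P}(p)\circ\fd$, $\fr_{\vdash_{\fd}}(p)=\fr_{P}(\fd(p))$, $\fl_{\dashv_{\fd}}(p)=\fr_{P}(p)\circ\fd$ and $\fr_{\dashv_{\fd}}(p)=\fl_{P}(\fd(p))$, together with $\fd\circ T=-T\circ\fd^{\ast}$ and the derivation rule; then both $\mathcal{O}$-operator identities match term by term. This even shows the right arrow holds for every $\mathcal{O}$-operator of $(P,\diamond,\fd)$ associated to the coregular bimodule, whether or not it comes from a symmetric $r$.
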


\subsection{From diNovikov bialgebras to Novikov bialgebras} \label{subsec:diNov-Nov}
In this subsection, we show that there is naturally a Novikov bialgebra structure on
the tensor product of a diNovikov bialgebra and a quadratic Zinbiel algebra.
We have shown that there is a Novikov algebra structure on the tensor product of
a diNovikov algebra and a Zinbiel algebra. Dual, we can construct a Novikov coalgebra from
the tensor product of a diNovikov coalgebra and a Zinbiel coalgebra. Recall that
a {\bf Novikov coalgebra} $(B, \delta)$ is a vector space
$B$ with a linear map $\delta: B\rightarrow B\otimes B$ such that
\begin{align*}
(\id\otimes\tau)\circ(\delta\otimes\id)\circ\delta&=(\delta\otimes\id)\circ\delta,\\
(\id\otimes\delta)\circ\delta-(\tau\otimes\id)\circ(\id\otimes\delta)\circ\delta
&=(\delta\otimes\id)\circ\delta-(\tau\otimes\id)\circ(\delta\otimes\id)\circ\delta.
\end{align*}

\begin{pro}\label{pro:nco-dinco}
Let $(D, \nu_{\dashv}, \nu_{\vdash})$ be a diNovikov coalgebra and $(C, \theta)$ be a
Zinbiel coalgebra. We define a linear map $\delta: D\otimes C\rightarrow(D\otimes C)
\otimes(D\otimes C)$ by
\begin{align*}
\delta(d\otimes c)&=\nu_{\vdash}(d)\bullet\theta(c)+\nu_{\dashv}(d)\bullet\tau(\theta(c))\\
&=\sum_{(d)}\sum_{(c)}(d_{(1)}\otimes c_{(1)})\otimes(d_{(2)}\otimes c_{(2)})
+\sum_{[d]}\sum_{(c)}(d_{[1]}\otimes c_{(2)})\otimes(d_{[2]}\otimes c_{(1)}),
\end{align*}
for any $d\in D$ and $c\in C$, where $\nu_{\vdash}(d)=\sum_{(d)}d_{(1)}\otimes d_{(2)}$,
$\nu_{\dashv}(d)=\sum_{[d]}d_{[1]}\otimes d_{[2]}$ and $\theta(c)=\sum_{(c)}c_{(1)}
\otimes c_{(2)}$ in the Sweedler notation. Then $(D\otimes C, \delta)$ is a Novikov coalgebra.
\end{pro}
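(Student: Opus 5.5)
The cleanest route is dualization: we reduce the statement to Proposition \ref{pro:tensor}$(ii)$ applied to the dual algebras. Since all spaces are finite-dimensional, $(D, \nu_{\dashv}, \nu_{\vdash})$ is a diNovikov coalgebra if and only if $(D^{\ast}, \nu_{\dashv}^{\ast}, \nu_{\vdash}^{\ast})$ is a diNovikov algebra. Here the operations are $\langle \eta_{1}\dashv^{\ast}\eta_{2}, d\rangle=\langle\eta_{1}\otimes\eta_{2}, \nu_{\dashv}(d)\rangle$, and similarly for $\vdash^{\ast}$. Each of \eqref{ncdi1}--\eqref{ncdi5} is exactly the transpose of the corresponding defining identity of a diNovikov algebra, so checking this is a bookkeeping step that matches the twist maps $\tau$, $\id\otimes\tau$ and $\tau\otimes\id$ with the permutations of arguments. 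In the same way, $(C^{\ast}, \theta^{\ast})$ is a Zinbiel algebra, with $\langle\zeta_{1}\star^{\ast}\zeta_{2}, c\rangle=\langle\zeta_{1}\otimes\zeta_{2}, \theta(c)\rangle$. Likewise, $(B, \delta)$ is a Novikov coalgebra if and only if $(B^{\ast}, \delta^{\ast})$ is a Novikov algebra; the two coalgebra identities displayed above are precisely the transposes of right commutativity and left symmetry.

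Next, identify $(D\otimes C)^{\ast}=D^{\ast}\otimes C^{\ast}$ and compute $\delta^{\ast}$. For $\eta_{1},\eta_{2}\in D^{\ast}$ and $\zeta_{1},\zeta_{2}\in C^{\ast}$ we have
$$
\langle(\eta_{1}\otimes\zeta_{1})\otimes(\eta_{2}\otimes\zeta_{2}),\ \delta(d\otimes c)\rangle
=\langle\eta_{1}\vdash^{\ast}\eta_{2}, d\rangle\langle\zeta_{1}\star^{\ast}\zeta_{2}, c\rangle
+\langle\eta_{1}\dashv^{\ast}\eta_{2}, d\rangle\langle\zeta_{2}\star^{\ast}\zeta_{1}, c\rangle .
$$
This uses that the second summand pairs $d_{[1]}$ with $\eta_{1}$ and $c_{(2)}$ with $\zeta_{1}$. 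Hence
$$
\delta^{\ast}\big((\eta_{1}\otimes\zeta_{1})\otimes(\eta_{2}\otimes\zeta_{2})\big)
=(\eta_{1}\vdash^{\ast}\eta_{2})\otimes(\zeta_{1}\star^{\ast}\zeta_{2})
+(\eta_{1}\dashv^{\ast}\eta_{2})\otimes(\zeta_{2}\star^{\ast}\zeta_{1}),
$$
which is exactly the product $\ast$ of Proposition \ref{pro:tensor}$(ii)$ on $D^{\ast}\otimes C^{\ast}$. By that proposition, $(D^{\ast}\otimes C^{\ast}, \delta^{\ast})$ is a Novikov algebra, so $(D\otimes C, \delta)$ is a Novikov coalgebra.

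The main obstacle is the first step: one must check that the chosen conventions in Definition \ref{def:diNco}, in the Zinbiel coalgebra identity and in the Novikov coalgebra identities really are the transposes of the algebra axioms with $\vdash^{\ast}=\nu_{\vdash}^{\ast}$ and $\dashv^{\ast}=\nu_{\dashv}^{\ast}$, and not of some opposite or twisted version. I would verify this identity by identity by pairing with $\eta_{1}\otimes\eta_{2}\otimes\eta_{3}$; for instance, \eqref{ncdi2} pairs to $(\eta_{1}\dashv^{\ast}\eta_{2})\dashv^{\ast}\eta_{3}=(\eta_{1}\dashv^{\ast}\eta_{3})\dashv^{\ast}\eta_{2}$. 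If some convention turned out to be transposed, the fallback is a direct Sweedler-notation computation mirroring the proof of Proposition \ref{pro:tensor}$(ii)$. That computation would expand $(\delta\otimes\id)\circ\delta$ into six terms and match them using \eqref{ncdi1}--\eqref{ncdi5} together with the Zinbiel coalgebra identity.
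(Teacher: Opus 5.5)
Your proposal is correct, but it takes a different route from the paper. The paper stays on the coalgebra side. It expands $(\id\otimes\tau)(\delta\otimes\id)\delta(d\otimes c)$ into six Sweedler-type terms and uses the diNovikov coalgebra axioms and the Zinbiel coalgebra identity to match them with $(\delta\otimes\id)\delta(d\otimes c)$. It then dismisses the left-symmetric identity with ``similarly''.

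You instead transpose everything and invoke Proposition \ref{pro:tensor}$(ii)$. The convention worry you flag does not arise:
\begin{itemize}
\item Pairing with $\eta_{1}\otimes\eta_{2}\otimes\eta_{3}$, the identities \eqref{ncdi1}--\eqref{ncdi5} become exactly the five diNovikov axioms for $\dashv^{\ast}=\nu_{\dashv}^{\ast}$, $\vdash^{\ast}=\nu_{\vdash}^{\ast}$. The $\tau\otimes\id$ in \eqref{ncdi4} and \eqref{ncdi5} implements the swap $\eta_{1}\leftrightarrow\eta_{2}$.
\item The Zinbiel coalgebra identity transposes to the Zinbiel algebra identity, and the two Novikov coalgebra identities transpose to right commutativity and left symmetry.
\item Your formula for $\delta^{\ast}$, including the reversed $\zeta_{2}\star^{\ast}\zeta_{1}$ in the $\dashv$-term, is literally the product of Proposition \ref{pro:tensor}$(ii)$ on $D^{\ast}\otimes C^{\ast}$.
\end{itemize}
So no fallback computation is needed.

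Your route is shorter, explains why $\delta$ must have this particular shape, and gives both Novikov coalgebra identities at once. The cost is that the result rests on Proposition \ref{pro:tensor}$(ii)$, whose left-symmetry half the paper also only asserts with ``similarly''. That half does hold: comparing coefficients of each $(c_{a}\star c_{b})\star c_{c}$ reduces it to the diNovikov axioms. The paper's direct computation is self-contained at the coalgebra level, but it is essentially the transpose of the algebra-level computation you cite.
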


\begin{proof}
For any $d\in D$ and $c\in C$, since $(C, \theta)$ is a Zinbiel coalgebra and
$(D, \nu_{\dashv}, \nu_{\vdash})$ is a diNovikov coalgebra, we have
\begin{align*}
(\id\otimes\tau)((\delta\otimes\id)(\delta(d\otimes c)))
&= (\id\otimes\tau)((\nu_{\vdash}\otimes\id)(\nu_{\vdash}(d)))
\bullet(\id\otimes\tau)((\theta\otimes\id)(\theta(c)))\\[-1mm]
&\quad +(\id\otimes\tau)((\nu_{\vdash}\otimes\id)(\nu_{\dashv}(d)))
\bullet(\theta\otimes\id)(\theta(c))\\[-1mm]
&\quad +(\id\otimes\tau)((\nu_{\vdash}\otimes\id)(\nu_{\dashv}(d)))
\bullet(\tau\otimes\id)((\theta\otimes\id)(\theta(c)))\\[-1mm]
&\quad +(\id\otimes\tau)((\nu_{\dashv}\otimes\id)(\nu_{\vdash}(d)))
\bullet(\id\otimes\tau)((\tau\otimes\id)((\theta\otimes\id)(\theta(c))))\\[-1mm]
&\quad +(\id\otimes\tau)((\nu_{\dashv}\otimes\id)(\nu_{\dashv}(d)))
\bullet(\tau\otimes\id)((\id\otimes\tau)((\theta\otimes\id)(\theta(c))))\\
&\quad +(\id\otimes\tau)((\nu_{\dashv}\otimes\id)(\nu_{\dashv}(d)))
\bullet(\tau\otimes\id)((\id\otimes\tau)((\tau\otimes\id)((\theta\otimes\id)(\theta(c)))))\\
&= (\nu_{\vdash}\otimes\id)(\nu_{\vdash}(d))
\bullet(\theta\otimes\id)(\theta(c))\\[-1mm]
&\quad +(\nu_{\vdash}\otimes\id)(\nu_{\dashv}(d))
\bullet(\id\otimes\tau)((\theta\otimes\id)(\theta(c)))\\[-1mm]
&\quad +(\nu_{\vdash}\otimes\id)(\nu_{\dashv}(d))
\bullet(\id\otimes\tau)((\tau\otimes\id)((\theta\otimes\id)(\theta(c))))\\[-1mm]
&\quad +(\nu_{\dashv}\otimes\id)(\nu_{\vdash}(d))
\bullet(\tau\otimes\id)((\theta\otimes\id)(\theta(c)))\\[-1mm]
&\quad +(\nu_{\dashv}\otimes\id)(\nu_{\dashv}(d))
\bullet(\tau\otimes\id)((\id\otimes\tau)((\theta\otimes\id)(\theta(c))))\\[-1mm]
&\quad +(\nu_{\dashv}\otimes\id)(\nu_{\dashv}(d))
\bullet(\tau\otimes\id)((\id\otimes\tau)((\tau\otimes\id)
((\theta\otimes\id)(\theta(c)))))\\
&= (\delta\otimes\id)(\delta(d\otimes c)).
\end{align*}
That is, $(\id\otimes\tau)\circ(\delta\otimes\id)\circ\delta=(\delta\otimes\id)\circ\delta$.
Similarly, we can obtain $(\id\otimes\delta)\circ\delta-(\tau\otimes\id)\circ
(\id\otimes\delta)\circ\delta=(\delta\otimes\id)\circ\delta-(\tau\otimes\id)\circ
(\delta\otimes\id)\circ\delta$. Thus, $(D\otimes C, \delta)$ is a Novikov coalgebra.
\end{proof}

Let $(B, \ast)$ be a Novikov algebra, $V$ be a vector space and $\tilde{\kl}, \tilde{\kr}:
B\rightarrow\gl(V)$ be a linear map. Then $(V, \tilde{\kl}, \tilde{\kr})$ is called a
{\bf representation of $(B, \ast)$} if for any $b_{1}, b_{2}\in B$,
\begin{align*}
&\tilde{\kl}(b_{1}\ast b_{2}-b_{2}\ast b_{1})=\tilde{\kl}(b_{1})\circ\tilde{\kl}(b_{2})
-\tilde{\kl}(b_{2})\circ\tilde{\kl}(b_{1}),
\qquad\qquad\qquad\; \tilde{\kl}(b_{1}\ast b_{2})=\tilde{\kr}(b_{2})\circ\tilde{\kl}(b_{1}),\\
&\tilde{\kl}(b_{1})\circ\tilde{\kr}(b_{2})-\tilde{\kr}(b_{2})\circ\tilde{\kl}(b_{1})
=\tilde{\kr}(b_{1}\ast b_{2})-\tilde{\kr}(b_{2})\circ\tilde{\kr}(b_{1}),
\qquad\qquad\tilde{\kr}(b_{1})\circ\tilde{\kr}(b_{2})=\tilde{\kr}(b_{2})\circ\tilde{\kr}(b_{1}).
\end{align*}
Define the left product map and right product map $\tilde{\fl}_{B}, \tilde{\fr}_{B}:
B\rightarrow\gl(V)$ by $\tilde{\fl}_{B}(b_{1})(b_{2})=b_{1}\ast b_{2}=
\tilde{\fr}_{B}(b_{2})(b_{1})$ for any $b_{1}, b_{2}\in B$. Then $(B, \tilde{\fl}_{B},
\tilde{\fr}_{B})$ is a representation of Novikov algebra $(B, \ast)$, which is called the
{\bf regular representation} of $(B, \ast)$. The {\bf coregular representation} of $(B,
\ast)$ is given by $(B^{\ast}, \tilde{\fl}_{B}+\tilde{\fr}_{B}, -\tilde{\fr}_{B})$.

\begin{defi}[\cite{LLB}]\label{def:Nov-bia}
A {\bf Novikov bialgebra} is a triple $(B, \ast, \delta)$ where $(B, \ast)$
is a Novikov algebra and $(B, \delta)$ is a Novikov coalgebra such that
\begin{align}
&\qquad\qquad \delta(b_{1}\ast b_{2})=(\tilde{\fr}_{B}(b_{2})\otimes\id)(\delta(b_{1}))
+(\id\otimes(\tilde{\fl}_{B}+\tilde{\fr}_{B})(b_{1}))(\delta(b_{2})+\tau(\delta(b_{2}))),
\label{Nbialg1}\\
&\qquad\qquad\qquad((\tilde{\fl}_{B}+\tilde{\fr}_{B})(b_{1})\otimes\id)(\delta(b_{2}))
-(\id\otimes(\tilde{\fl}_{B}+\tilde{\fr}_{B})(b_{1}))(\tau(\delta(b_{2})))
\label{Nbialg2}\\[-1mm]
&\qquad\qquad\quad=((\tilde{\fl}_{B}+\tilde{\fr}_{B})(b_{2})\otimes\id)(\delta(b_{1}))
-(\id\otimes(\tilde{\fl}_{B}+\tilde{\fr}_{B})(b_{2}))(\tau(\delta(b_{1}))), \nonumber\\
&(\id\otimes\tilde{\fr}_{B}(b_{1})-\tilde{\fr}_{B}(b_{1})\otimes\id)(\delta(b_{2})
+\tau(\delta(b_{2})))
=(\id\otimes\tilde{\fr}_{B}(b_{2})-\tilde{\fr}_{B}(b_{2})\otimes\id)(\delta(b_{1})
+\tau(\delta(b_{1}))), \label{Nbialg3}
\end{align}
for all $b_{1}, b_{2}\in B$.
\end{defi}

Now, we show that there is a Novikov bialgebra structure on the tensor product of a
diNovikov bialgebra and a quadratic Zinbiel algebra.

\begin{thm}\label{thm:diNbia-Nbia}
Let $(D, \dashv, \vdash, \nu_{\dashv}, \nu_{\vdash})$ be a diNovikov bialgebra,
$(C, \star, \omega)$ be a quadratic Zinbiel algebra and $(D\otimes C, \ast)$ be the
induced Novikov algebra by $(D, \dashv, \vdash)$ and $(C, \star)$ in Propositions
\ref{pro:tensor}. Define a linear map $\delta: D\otimes C\rightarrow(D\otimes C)
\otimes(D\otimes C)$ by
\begin{align}
\delta(d\otimes c)&=\nu_{\vdash}(d)\bullet\theta_{\omega}(c)
+\nu_{\dashv}(d)\bullet\tau(\theta_{\omega}(c))  \label{coNalg}\\
&=\sum_{(d)}\sum_{(c)}(d_{(1)}\otimes c_{(1)})\otimes(d_{(2)}\otimes c_{(2)})
+\sum_{[d]}\sum_{(c)}(d_{[1]}\otimes c_{(2)})\otimes(d_{[2]}\otimes c_{(1)}), \nonumber
\end{align}
for any $d\in D$ and $c\in C$, where $\nu_{\vdash}(d)=\sum_{(d)}d_{(1)}\otimes d_{(2)}$,
$\nu_{\dashv}(d)=\sum_{[d]}d_{[1]}\otimes d_{[2]}$ and $\theta(c)=\sum_{(c)}c_{(1)}
\otimes c_{(2)}$ in the Sweedler notation. Then $(D\otimes C, \ast, \delta)$ is a Novikov
bialgebra, which is called {\bf the Novikov bialgebra induced from $(D, \dashv, \vdash,
\nu_{\dashv}, \nu_{\vdash})$ by $(C, \star, \omega)$}.
\end{thm}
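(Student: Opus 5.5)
The plan follows the proof of Theorem \ref{thm:permbia-ASI}: separate the Zinbiel part from the diNovikov part by pairing with $\omega(-,-)$.

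\textbf{Step 1: the structures.} By Proposition \ref{pro:tensor}(ii), $(D\otimes C,\ast)$ is a Novikov algebra. By Lemma \ref{lem:Zinb-dual}, $(C,\theta_{\omega})$ is a Zinbiel coalgebra, so Proposition \ref{pro:nco-dinco} shows that $(D\otimes C,\delta)$ is a Novikov coalgebra. It remains to verify the compatibility conditions \eqref{Nbialg1}--\eqref{Nbialg3} for $b_{1}=d_{1}\otimes c_{1}$ and $b_{2}=d_{2}\otimes c_{2}$.

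\textbf{Step 2: expanding the conditions.} For such $b_{1},b_{2}$ we have
$$\tilde{\fl}_{B}(b_{1})(d\otimes c)=(d_{1}\vdash d)\otimes(c_{1}\star c)+(d_{1}\dashv d)\otimes(c\star c_{1})$$
and
$$\tilde{\fr}_{B}(b_{1})(d\otimes c)=(d\vdash d_{1})\otimes(c\star c_{1})+(d\dashv d_{1})\otimes(c_{1}\star c).$$
Substituting these and the definition \eqref{coNalg} of $\delta$, each side of \eqref{Nbialg1}--\eqref{Nbialg3} becomes a sum of terms $X\bullet Y$. Here $X\in D\otimes D$ is built from $\nu_{\dashv},\nu_{\vdash},\fl_{\dashv},\fr_{\dashv},\fl_{\vdash},\fr_{\vdash}$, possibly composed with $\tau$. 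The factor $Y\in C\otimes C$ is one of $\theta_{\omega}(c_{1}\star c_{2})$, $\theta_{\omega}(c_{2}\star c_{1})$, $(\id\otimes\tilde{\fl}_{C}(c_{1}))\theta_{\omega}(c_{2})$, and similar expressions, each possibly twisted by $\tau$.

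\textbf{Step 3: reducing the $C$-factors to a basis.} Pair every $C$-factor with $e\otimes f$ using $\omega$, as in the twelve identities in the proof of Theorem \ref{thm:permbia-ASI}. Invariance of $\omega$ and the Zinbiel identity rewrite each pairing as $\omega(c_{1},-)$ applied to a combination of the six words $(e\star f)\star c_{2}$, $(f\star e)\star c_{2}$, $(c_{2}\star e)\star f$, $(e\star c_{2})\star f$, $(c_{2}\star f)\star e$, $(f\star c_{2})\star e$, together with the symmetric versions in which $c_{1}$ and $c_{2}$ are exchanged. This gives elements $\Phi,\Phi',\Psi,\Psi',\Omega,\ldots$ of $C\otimes C$. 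Each condition then takes the form $\sum_{k}X_{k}(d_{1},d_{2})\bullet Y_{k}(c_{1},c_{2})=0$. It suffices to show that every coefficient $X_{k}$ vanishes.

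\textbf{Step 4: matching coefficients with the diNovikov axioms.} Each coefficient $X_{k}$ should be exactly one of the compatibility identities \eqref{ndba1}--\eqref{ndba9}, or a difference of two of them, possibly after applying $\tau$. For example, the $\delta(b_{1}\ast b_{2})$ condition \eqref{Nbialg1} splits according to whether the product is $\vdash$ or $\dashv$ and according to the $C$-word. The $\vdash$ part should reduce to \eqref{ndba4} and \eqref{ndba6}, and the $\dashv$ part to \eqref{ndba5} and \eqref{ndba7}. Conditions \eqref{Nbialg2} and \eqref{Nbialg3} should correspond to \eqref{ndba1}--\eqref{ndba3} and \eqref{ndba8}--\eqref{ndba9}.

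\textbf{Main obstacle.} The hard part is the bookkeeping in Step 4. The six Zinbiel words are not linearly independent in general, so one must choose a spanning set in which the diNovikov identities appear cleanly. I would first group the terms by the $\tau$-pattern of the $C$-factor, i.e.\ whether it comes from $\theta_{\omega}$ or from $\tau\theta_{\omega}$. Then I would use the Zinbiel relation $(e\star f)\star c+(f\star e)\star c=e\star(f\star c)$ to eliminate the symmetric combinations, and only afterwards identify the remaining coefficients with \eqref{ndba1}--\eqref{ndba9}.
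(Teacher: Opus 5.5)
Your proposal follows the paper's proof. The Novikov algebra and coalgebra structures come from Proposition \ref{pro:tensor}(ii), Lemma \ref{lem:Zinb-dual} and Proposition \ref{pro:nco-dinco}. Then \eqref{Nbialg1} is expanded, paired with $e\otimes f$, collected along $\Phi,\Phi',\Psi,\Psi',\Omega$, and each coefficient is killed by the diNovikov bialgebra axioms; \eqref{Nbialg2}--\eqref{Nbialg3} are handled ``similarly''.

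Two bookkeeping points need adjusting. First, coefficientwise cancellation only works after every pairing, including those coming from $\theta_{\omega}(c_{2})$, is rewritten as $\omega(c_{1},-)$ of words in $e,f,c_{2}$, so there should be no separate $c_{1}\leftrightarrow c_{2}$ bins. Second, a single coefficient typically uses several axioms at once (the $\Psi'$-coefficient of \eqref{Nbialg1} already needs \eqref{ndba3} together with \eqref{ndba5}), so the axioms do not split between \eqref{Nbialg1} and \eqref{Nbialg2}--\eqref{Nbialg3} as you forecast.
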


\begin{proof}
By Proposition \ref{pro:nco-dinco} and Lemma \ref{lem:Zinb-dual}, we get that
$(D\otimes C, \delta)$ is a Novikov coalgebra. Following, we show that $(D\otimes C,
\ast, \delta)$ is a Novikov bialgebra.
Similar to the proof of Theorem \ref{thm:permbia-ASI}, if we denote $\Phi_{c_{1}, c_{2}},
\Phi'_{c_{1}, c_{2}}, \Psi_{c_{1}, c_{2}}$, $\Psi'_{c_{1}, c_{2}}, \Omega_{c_{1}, c_{2}}
\in C\otimes C$ for any $c_{1}, c_{2}\in C$ by
\begin{align*}
&\omega(\Phi_{c_{1}, c_{2}},\; e\otimes f)=\omega(c_{1},\; (e\star f)\star c_{2}),\qquad\qquad
\omega(\Phi'_{c_{1}, c_{2}},\; e\otimes f)=\omega(c_{1},\; (f\star e)\star c_{2}),\\
&\omega(\Psi_{c_{1}, c_{2}},\; e\otimes f)=\omega(c_{1},\; (c_{2}\star e)\star f),\qquad\qquad
\omega(\Psi'_{c_{1}, c_{2}},\; e\otimes f)=\omega(c_{1},\; (e\star c_{2})\star f),\\
&\omega(\Omega_{c_{1}, c_{2}},\; e\otimes f)=\omega(c_{1},\; (c_{2}\star f)\star e
+(f\star c_{2})\star e),
\end{align*}
then we obtain
\begin{align*}
&\; \delta((d_{1}\otimes c_{1})\ast(d_{2}\otimes c_{2}))-(\bar{\fr}_{D\otimes C}
(d_{2}\otimes c_{2})\otimes\id)(\delta(d_{1}\otimes c_{1}))\\[-1mm]
&\qquad-(\id\otimes(\bar{\fl}_{D\otimes C}+\bar{\fr}_{D\otimes C})(d_{1}\otimes c_{1}))
(\delta(d_{2}\otimes c_{2})+\tau(\delta(d_{2}\otimes c_{2})))\\
=&\; \nu_{\vdash}(d_{1}\vdash d_{2})\bullet\theta_{\omega}(c_{1}\star c_{2})
+\nu_{\dashv}(d_{1}\vdash d_{2})\bullet\tau(\theta_{\omega}(c_{1}\star c_{2}))\\[-1mm]
&\quad +\nu_{\vdash}(d_{1}\dashv d_{2})\bullet\theta_{\omega}(c_{2}\star c_{1})
+\nu_{\dashv}(d_{1}\dashv d_{2})\bullet\tau(\theta_{\omega}(c_{2}\star c_{1}))\\[-1mm]
&\quad -(\fr_{\vdash}(d_{2})\otimes\id)(\nu_{\vdash}(d_{1}))
\bullet(\tilde{\fr}_{C}(c_{2})\otimes\id)(\theta_{\omega}(c_{1}))
-(\fr_{\dashv}(d_{2})\otimes\id)(\nu_{\vdash}(d_{1}))
\bullet(\tilde{\fl}_{C}(c_{2})\otimes\id)(\theta_{\omega}(c_{1}))\\[-1mm]
&\quad -(\fr_{\vdash}(d_{2})\otimes\id)(\nu_{\dashv}(d_{1}))
\bullet(\tilde{\fr}_{C}(c_{2})\otimes\id)(\tau(\theta_{\omega}(c_{1})))
-(\fr_{\dashv}(d_{2})\otimes\id)(\nu_{\dashv}(d_{1}))
\bullet(\tilde{\fl}_{C}(c_{2})\otimes\id)(\tau(\theta_{\omega}(c_{1}))\\[-1mm]
&\quad -(\id\otimes\fr_{\vdash}(d_{1}))(\nu_{\vdash}(d_{2}))
\bullet(\id\otimes\tilde{\fr}_{C}(c_{1}))(\theta_{\omega}(c_{2}))
-(\id\otimes\fr_{\dashv}(d_{1}))(\nu_{\vdash}(d_{2}))
\bullet(\id\otimes\tilde{\fl}_{C}(c_{1}))(\theta_{\omega}(c_{2}))\\[-1mm]
&\quad -(\id\otimes\fr_{\vdash}(d_{1}))(\nu_{\dashv}(d_{2}))
\bullet(\id\otimes\tilde{\fr}_{C}(c_{1}))(\tau(\theta_{\omega}(c_{2})))
-(\id\otimes\fr_{\dashv}(d_{1}))(\nu_{\dashv}(d_{2}))
\bullet(\id\otimes\tilde{\fl}_{C}(c_{1}))(\tau(\theta_{\omega}(c_{2})))\\[-1mm]
&\quad -(\id\otimes\fl_{\vdash}(d_{1}))(\nu_{\vdash}(d_{2}))
\bullet(\id\otimes\tilde{\fl}_{C}(c_{1}))(\theta_{\omega}(c_{2}))
-(\id\otimes\fl_{\dashv}(d_{1}))(\nu_{\vdash}(d_{2}))
\bullet(\id\otimes\tilde{\fr}_{C}(c_{1}))(\theta_{\omega}(c_{2}))\qquad\qquad\quad
\end{align*}
\begin{align*}
&\quad -(\id\otimes\fl_{\vdash}(d_{1}))(\nu_{\dashv}(d_{2}))
\bullet(\id\otimes\tilde{\fl}_{C}(c_{1}))(\tau(\theta_{\omega}(c_{2})))
-(\id\otimes\fl_{\dashv}(d_{1}))(\nu_{\dashv}(d_{2}))
\bullet(\id\otimes\tilde{\fr}_{C}(c_{1}))(\tau(\theta_{\omega}(c_{2})))\\[-1mm]
&\quad -(\id\otimes\fr_{\vdash}(d_{1}))(\tau(\nu_{\vdash}(d_{2})))
\bullet(\id\otimes\tilde{\fr}_{C}(c_{1}))(\tau(\theta_{\omega}(c_{2})))
-(\id\otimes\fr_{\dashv}(d_{1}))(\tau(\nu_{\vdash}(d_{2})))
\bullet(\id\otimes\tilde{\fl}_{C}(c_{1}))(\tau(\theta_{\omega}(c_{2})))\\[-1mm]
&\quad -(\id\otimes\fr_{\vdash}(d_{1}))(\tau(\nu_{\dashv}(d_{2})))
\bullet(\id\otimes\tilde{\fr}_{C}(c_{1}))(\theta_{\omega}(c_{2}))
-(\id\otimes\fr_{\dashv}(d_{1}))(\tau(\nu_{\dashv}(d_{2})))
\bullet(\id\otimes\tilde{\fl}_{C}(c_{1}))(\theta_{\omega}(c_{2}))\\[-1mm]
&\quad -(\id\otimes\fl_{\vdash}(d_{1}))(\tau(\nu_{\vdash}(d_{2})))
\bullet(\id\otimes\tilde{\fl}_{C}(c_{1}))(\tau(\theta_{\omega}(c_{2})))
-(\id\otimes\fl_{\dashv}(d_{1}))(\tau(\nu_{\vdash}(d_{2})))
\bullet(\id\otimes\tilde{\fr}_{C}(c_{1}))(\tau(\theta_{\omega}(c_{2})))\\[-1mm]
&\quad -(\id\otimes\fl_{\vdash}(d_{1}))(\tau(\nu_{\dashv}(d_{2})))
\bullet(\id\otimes\tilde{\fl}_{C}(c_{1}))(\theta_{\omega}(c_{2}))
-(\id\otimes\fl_{\dashv}(d_{1}))(\tau(\nu_{\dashv}(d_{2})))
\bullet(\id\otimes\tilde{\fr}_{C}(c_{1}))(\theta_{\omega}(c_{2}))\\
=&\; \Big(\nu_{\vdash}(d_{1}\dashv d_{2})-\nu_{\vdash}(d_{1}\vdash d_{2})
+(\fr_{\vdash}(d_{2})\otimes\id)(\nu_{\dashv}(d_{1}))
-(\fr_{\dashv}(d_{2})\otimes\id)(\nu_{\dashv}(d_{1}))
-(\id\otimes\fr_{\vdash}(d_{1}))(\nu_{\vdash}(d_{2}))\\[-1mm]
&\quad +(\id\otimes\fr_{\dashv}(d_{1}))(\nu_{\vdash}(d_{2}))
+(\id\otimes\fl_{\vdash}(d_{1}))(\nu_{\vdash}(d_{2}))
-(\id\otimes\fl_{\dashv}(d_{1}))(\nu_{\vdash}(d_{2}))
-(\id\otimes\fr_{\vdash}(d_{1}))(\tau(\nu_{\dashv}(d_{2})))\\[-1mm]
&\quad +(\id\otimes\fr_{\dashv}(d_{1}))(\tau(\nu_{\dashv}(d_{2})))
+(\id\otimes\fl_{\vdash}(d_{1}))(\tau(\nu_{\dashv}(d_{2})))
-(\id\otimes\fl_{\dashv}(d_{1}))(\tau(\nu_{\dashv}(d_{2})))\Big)\bullet\Phi\\
&\; +\Big(\nu_{\dashv}(d_{1}\dashv d_{2})-\nu_{\dashv}(d_{1}\vdash d_{2})
+(\fr_{\vdash}(d_{2})\otimes\id)(\nu_{\dashv}(d_{1}))
-(\fr_{\dashv}(d_{2})\otimes\id)(\nu_{\dashv}(d_{1}))
-(\id\otimes\fr_{\vdash}(d_{1}))(\nu_{\vdash}(d_{2}))\\[-1mm]
&\quad +(\id\otimes\fr_{\dashv}(d_{1}))(\nu_{\vdash}(d_{2}))
+(\id\otimes\fl_{\vdash}(d_{1}))(\nu_{\vdash}(d_{2}))
-(\id\otimes\fl_{\dashv}(d_{1}))(\nu_{\vdash}(d_{2}))
-(\id\otimes\fr_{\vdash}(d_{1}))(\tau(\nu_{\dashv}(d_{2})))\\[-1mm]
&\quad +(\id\otimes\fr_{\dashv}(d_{1}))(\tau(\nu_{\dashv}(d_{2})))
+(\id\otimes\fl_{\vdash}(d_{1}))(\tau(\nu_{\dashv}(d_{2})))
-(\id\otimes\fl_{\dashv}(d_{1}))(\tau(\nu_{\dashv}(d_{2})))\Big)\bullet\Phi'\\
&\; +\Big(\nu_{\vdash}(d_{1}\dashv d_{2})-(\fr_{\dashv}(d_{2})\otimes\id)(\nu_{\vdash}(d_{1}))
-(\id\otimes\fr_{\vdash}(d_{1}))(\nu_{\vdash}(d_{2}))
+(\id\otimes\fr_{\vdash}(d_{1}))(\nu_{\dashv}(d_{2}))\\[-1mm]
&\quad -(\id\otimes\fl_{\dashv}(d_{1}))(\nu_{\vdash}(d_{2}))
+(\id\otimes\fl_{\dashv}(d_{1}))(\nu_{\dashv}(d_{2}))
+(\id\otimes\fr_{\vdash}(d_{1}))(\tau(\nu_{\vdash}(d_{2})))\\[-1mm]
&\quad -(\id\otimes\fr_{\vdash}(d_{1}))(\tau(\nu_{\dashv}(d_{2})))
+(\id\otimes\fl_{\dashv}(d_{1}))(\tau(\nu_{\vdash}(d_{2})))
-(\id\otimes\fl_{\dashv}(d_{1}))(\tau(\nu_{\dashv}(d_{2})))\Big)\bullet\Psi\\
&\; +\Big(\nu_{\vdash}(d_{1}\dashv d_{2})
+(\fr_{\vdash}(d_{2})\otimes\id)(\nu_{\vdash}(d_{1}))
-(\fr_{\dashv}(d_{2})\otimes\id)(\nu_{\vdash}(d_{1}))
-(\id\otimes\fr_{\vdash}(d_{1}))(\nu_{\vdash}(d_{2}))\\[-2mm]
&\quad -(\id\otimes\fl_{\dashv}(d_{1}))(\nu_{\vdash}(d_{2}))
-(\id\otimes\fr_{\vdash}(d_{1}))(\tau(\nu_{\dashv}(d_{2})))
-(\id\otimes\fl_{\dashv}(d_{1}))(\tau(\nu_{\dashv}(d_{2})))\Big)\bullet\Psi'\\
&\; +\Big(\nu_{\dashv}(d_{1}\dashv d_{2})
+(\id\otimes\fr_{\dashv}(d_{1}))(\nu_{\vdash}(d_{2}))
+(\id\otimes\fr_{\vdash}(d_{1}))(\nu_{\dashv}(d_{2}))
-(\id\otimes\fr_{\dashv}(d_{1}))(\nu_{\dashv}(d_{2}))\\[-1mm]
&\quad -(\id\otimes\fl_{\vdash}(d_{1}))(\nu_{\dashv}(d_{2}))
-(\fr_{\dashv}(d_{2})\otimes\id)(\nu_{\dashv}(d_{1}))
+(\id\otimes\fl_{\dashv}(d_{1}))(\nu_{\dashv}(d_{2}))
+(\id\otimes\fr_{\vdash}(d_{1}))(\tau(\nu_{\vdash}(d_{2})))\\
&\quad -(\id\otimes\fr_{\dashv}(d_{1}))(\tau(\nu_{\vdash}(d_{2})))
+(\id\otimes\fl_{\vdash}(d_{1}))(\tau(\nu_{\dashv}(d_{2})))
-(\id\otimes\fl_{\vdash}(d_{1}))(\tau(\nu_{\vdash}(d_{2})))\\
&\quad +(\id\otimes\fr_{\dashv}(d_{1}))(\tau(\nu_{\dashv}(d_{2})))
+(\id\otimes\fl_{\dashv}(d_{1}))(\tau(\nu_{\vdash}(d_{2})))
-(\id\otimes\fr_{\vdash}(d_{1}))(\nu_{\vdash}(d_{2}))
+(\id\otimes\fl_{\dashv}(d_{1}))(\nu_{\dashv}(d_{2}))\\[-1mm]
&\quad -(\id\otimes\fl_{\dashv}(d_{1}))(\nu_{\vdash}(d_{2}))
-(\id\otimes\fr_{\vdash}(d_{1}))(\tau(\nu_{\dashv}(d_{2})))
-(\id\otimes\fl_{\dashv}(d_{1}))(\tau(\nu_{\dashv}(d_{2})))\Big)\bullet\Omega.
\end{align*}
Suppose $(D, \dashv, \vdash, \nu_{\dashv}, \nu_{\vdash})$ is a diNovikov bialgebra.
Then by Eqs. \eqref{ndba5} and \eqref{ndba6}, we get that the term for $\Phi_{c_{1}, c_{2}}$
in the above equation is zero. By Eqs. \eqref{ndba4} and \eqref{ndba7}, we get that
the term for $\Phi'_{c_{1}, c_{2}}$ in the above equation is zero.
By Eqs. \eqref{ndba3} and \eqref{ndba5}, we get that the term for $\Psi'_{c_{1}, c_{2}}$
in the above equation is zero. By Eq. \eqref{ndba5}, we get that the term
for $\Psi_{c_{1}, c_{2}}$ in the above equation is zero. By Eq. \eqref{ndba7}, we
get that the term for $\Omega_{c_{1}, c_{2}}$ in the above equation is zero.
That is, Eq. \eqref{Nbialg1} holds. Similar to the discussion above, we can also obtain
that Eqs. \eqref{Nbialg2} and \eqref{Nbialg3} hold. Thus, $(D\otimes C, \ast, \delta)$
is a Novikov bialgebra.
\end{proof}

Recall that a Novikov bialgebra $(B, \ast, \delta)$ is called {\bf coboundary} if there
exists $r\in B\otimes B$ such that
\begin{align}
\delta(b)=\delta_{r}(b):=\big(\tilde{\fl}_{B}(b)\otimes\id
+\id\otimes(\tilde{\fl}_{B}+\tilde{\fr}_{B})(b)\big)(r),      \label{cobnov}
\end{align}
for any $b\in B$. In this case, we denote this coboundary Novikov bialgebra by
$(B, \ast, \delta_{r})$.

\begin{defi}\label{def:dNPbi}
Let $(B, \ast)$ be a Novikov algebra, $r=\sum_{i}x_{i}\otimes y_{i}\in B\otimes B$.
$$
\mathbf{N}_{r}:=r_{13}\ast r_{23}+r_{12}\ast r_{23}+r_{23}\ast r_{12}+r_{13}\ast r_{12}=0
$$
is called the {\bf Novikov Yang-Baxter equation} (or $\NYBE$) in $(B, \ast)$,
where $r_{13}\ast r_{23}=\sum_{i,j}x_{i}\otimes x_{j}\otimes(y_{i}\ast y_{j})$,
$r_{12}\ast r_{23}=\sum_{i,j}x_{i}\otimes(y_{i}\ast x_{j})\otimes y_{j}$,
$r_{23}\ast r_{12}=\sum_{i,j}x_{j}\otimes(x_{i}\ast y_{j})\otimes y_{i}$ and
$r_{13}\ast r_{12}=\sum_{i,j}(x_{i}\ast x_{j})\otimes y_{j}\otimes y_{i}$.
\end{defi}

Let $(B, \ast)$ be a Novikov algebra. An element $r\in B\otimes B$ is called
{\bf Nov-invariant} if for any $b\in B$,
$$
\big(\tilde{\fl}_{B}(b)\otimes\id+\id\otimes(\tilde{\fl}_{B}+\tilde{\fr}_{B})(b)\big)(r)=0.
$$

\begin{pro}[\cite{CH}]\label{pro:quasass-Nbia}
Let $(B, \ast)$ be a Novikov algebra, $r\in B\otimes B$ and $\delta_{r}:
B\rightarrow B\otimes B$ be the linear map defined by Eq. \eqref{cobnov}.
\begin{enumerate}\itemsep=0pt
\item[$(i)$] If $r$ is a skew-symmetric solution of the $\NYBE$ in $(B, \ast)$, then
     $(B, \ast, \delta_{r})$ is a Novikov bialgebra, which is called a {\bf triangular
     Novikov bialgebra} associated with $r$.
\item[$(ii)$] If $r$ is a solution of the $\NYBE$ in $(B, \ast)$ and $r-\tau(r)$ is
     Nov-invariant, then $(B, \ast, \delta_{r})$ is a Novikov bialgebra,
     which is called a {\bf quasi-triangular Novikov bialgebra} associated with $r$.
\end{enumerate}
\end{pro}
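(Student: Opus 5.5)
The plan is to reduce the statement to the general coboundary criterion for Novikov bialgebras, in the same way the perm and infinitesimal cases are handled. For a Novikov algebra $(B,\ast)$ with $r\in B\otimes B$ and $\delta_{r}$ defined by Eq. \eqref{cobnov}, I would first compute, for arbitrary $r$, how far each of the three structural conditions is from holding: coassociativity of Novikov type, and Eqs. \eqref{Nbialg1}--\eqref{Nbialg3}. Each defect should be expressible through $\mathbf{N}_{r}$ (suitably permuted by $\tau_{12},\tau_{23}$ and acted on by operators such as $\tilde{\fl}_{B}(b)\otimes\id\otimes\id$, $\id\otimes(\tilde{\fl}_{B}+\tilde{\fr}_{B})(b)\otimes\id$), together with terms in which these operators act on the symmetric part $r+\tau(r)$. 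Up to the sign convention, $r+\tau(r)$ is what the statement writes as $r-\tau(r)$ for skew data.

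Concretely, I will proceed in three steps. First, write $r=\sum_i x_i\otimes y_i$ and expand $\delta_{r}(b_1\ast b_2)$. I will use the Novikov identities $(b_1\ast b_2)\ast b_3=(b_1\ast b_3)\ast b_2$ and left-symmetry to rewrite $\tilde{\fl}_B(b_1\ast b_2)$ and $\tilde{\fr}_B(b_1\ast b_2)$ via compositions; this is exactly the statement that $(B,\tilde{\fl}_B,\tilde{\fr}_B)$ and the coregular $(B^{\ast},\tilde{\fl}_B+\tilde{\fr}_B,-\tilde{\fr}_B)$ are representations. Comparing with the right side of \eqref{Nbialg1}, I expect the difference to be of the form $\big(\id\otimes(\tilde{\fl}_B+\tilde{\fr}_B)(b_1)\big)\big(\text{Nov-invariance expression of }r+\tau(r)\text{ at }b_2\big)$ plus similar terms. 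These vanish under either hypothesis. Second, treat \eqref{Nbialg2} and \eqref{Nbialg3} the same way. Each becomes a combination of invariance expressions of $r+\tau(r)$, since they are symmetric in $b_1,b_2$ and only see $\delta+\tau\delta$ or antisymmetrized pieces.

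Third, I will verify that $(B,\delta_r)$ is a Novikov coalgebra. I would compute $(\delta_r\otimes\id)\delta_r(b)-(\id\otimes\tau)(\delta_r\otimes\id)\delta_r(b)$ and the second coalgebra identity. The target is expressions of the form $\mathcal{L}(b)\,\mathbf{N}_r$ plus permutations, where $\mathcal{L}(b)$ runs over $\tilde{\fl}_B(b)\otimes\id\otimes\id$, $\id\otimes(\tilde{\fl}_B+\tilde{\fr}_B)(b)\otimes\id$, $\id\otimes\id\otimes(\tilde{\fl}_B+\tilde{\fr}_B)(b)$, plus terms involving invariance of $r+\tau(r)$. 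Under (ii), $\mathbf{N}_r=0$ and Nov-invariance kill all of these. Case (i) is the special case in which $r+\tau(r)=0$ is trivially invariant. Alternatively, for (i) one can dualize: by the coregular $\mathcal{O}$-operator correspondence, skew-symmetric solutions give an $\mathcal{O}$-operator $r^{\sharp}$, and $\delta_r^{\ast}$ becomes the product $\xi_1\circ\xi_2=(\tilde{\fl}_B+\tilde{\fr}_B)^{\ast}(r^\sharp\xi_1)\xi_2-\tilde{\fr}_B^{\ast}(r^\sharp\xi_2)\xi_1$, which is Novikov by the standard $\mathcal{O}$-operator argument. This gives the coalgebra axiom more cleanly.

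The main obstacle is the third step for (ii). The coalgebra identities are cubic in $r$, and isolating exactly $\mathbf{N}_r$ and its $\tau$-permutations, with leftover terms recognized as invariance conditions on $r+\tau(r)$, requires careful bookkeeping of the twelve or so tensor terms. I would first do the skew case to fix the correct combination, then add the symmetric part $s=r+\tau(r)$ and check that the additional terms assemble into $(\tilde{\fl}_B(x)\otimes\id+\id\otimes(\tilde{\fl}_B+\tilde{\fr}_B)(x))(s)$ for suitable $x$ built from components of $r$.
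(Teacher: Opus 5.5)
The paper does not prove this proposition; it is quoted from \cite{CH}. Your plan therefore has to stand on its own, and two things need fixing.

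First, the hypothesis. Replacing ``$r-\tau(r)$ Nov-invariant'' by ``$r+\tau(r)$ Nov-invariant'' is not a matter of sign convention: the printed (ii) is false. Take $B=\mathrm{span}_{\Bbbk}\{e_{1},e_{2},e_{3}\}$ with the single nonzero product $e_{1}\ast e_{2}=e_{3}$ (all triple products vanish, so $B$ is Novikov), and $r=e_{1}\otimes e_{1}$. Then $\mathbf{N}_{r}=0$ and $r-\tau(r)=0$, but $\delta_{r}(e_{2})=e_{1}\otimes e_{3}$, and \eqref{Nbialg1} at $b_{1}=b_{2}=e_{2}$ reads $0=2e_{3}\otimes e_{3}$. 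You should say explicitly that you prove (ii) under ``$r+\tau(r)$ Nov-invariant''. That is the condition the paper itself uses later (the factorizable condition via $r^{\sharp}+\tau(r)^{\sharp}$, and Proposition \ref{pro:indu-NYBE}), and only in that version is (i) the special case $r+\tau(r)=0$.

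With this reading your steps 1--2 are correct, and in fact exact. Write $L=\tilde{\fl}_{B}$, $R=\tilde{\fr}_{B}$, $L_{\star}=L+R$, $s=r+\tau(r)$. Using $L(b_{1}\ast b_{2})=R(b_{2})L(b_{1})$, $L_{\star}(b_{1}\ast b_{2})=L_{\star}(b_{1})R(b_{2})$, $L_{\star}(b_{1})L(b_{2})=L_{\star}(b_{2})L(b_{1})$ and $[R(b_{1}),R(b_{2})]=0$, the defects of \eqref{Nbialg1}, \eqref{Nbialg2}, \eqref{Nbialg3} are, up to sign and for arbitrary $r$:
$(\id\otimes L_{\star}(b_{1}))(\tau(\delta_{s}(b_{2})))$;
$(L_{\star}(b_{1})\otimes\id)(\delta_{s}(b_{2}))-(L_{\star}(b_{2})\otimes\id)(\delta_{s}(b_{1}))$;
$(\id\otimes R(b_{1})-R(b_{1})\otimes\id)(\tau(\delta_{s}(b_{2})))-(b_{1}\leftrightarrow b_{2})$.
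No Yang--Baxter equation enters here.

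The genuine gap is step 3 for (ii). It carries all the content of (ii), and you only announce it; the missing idea is as follows.

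Nov-invariance of $s$ says exactly $\delta_{s}=0$, so $\delta_{r}=\delta_{a}$ with $a=\frac{1}{2}(r-\tau(r))$ skew-symmetric. Adding the symmetric part therefore changes nothing in the coalgebra, only the hypothesis ($\mathbf{N}_{r}=0$ rather than $\mathbf{N}_{a}=0$). Put $A=a^{\sharp}$, $S=s^{\sharp}$, and let $\cdot$ be the product on $B^{\ast}$ dual to $\delta_{a}$. Then $\langle\mathbf{N}_{a},\ \xi\otimes\eta\otimes\zeta\rangle=\langle\zeta,\ A\xi\ast A\eta+A(\xi\cdot\eta)\rangle$. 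Invariance of $s$ is equivalent to $L(b)S=-S\,L_{\star}(b)^{t}$ and $R(b)S=S\,R(b)^{t}$ (plain transposes). It makes all $a$--$s$ cross terms in $\mathbf{N}_{r}$ cancel and collapses $\mathbf{N}_{s}$ to $s_{13}\ast s_{12}$. Hence $\mathbf{N}_{r}=0$ becomes the modified identity $A\xi\ast A\eta+A(\xi\cdot\eta)=-\frac{1}{4}S\xi\ast S\eta$.

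Since $-\cdot$ is the $\mathcal{O}$-type product of $A$ for the coregular representation, the Novikov defects of $(B^{\ast},\cdot)$ are linear in the left-hand side of this identity (the semidirect-product argument you use for (i)). Substituting $-\frac{1}{4}S\xi\ast S\eta$ and pairing with $b\in B$, they reduce to multiples of $\langle\xi,\ (b\ast S\eta)\ast S\zeta-(b\ast S\zeta)\ast S\eta\rangle$ and of $\langle\xi,\ [R(S\zeta),L(b)]S\eta\rangle-\langle\eta,\ [R(S\zeta),L(b)]S\xi\rangle$. The first vanishes by right-commutativity. The second vanishes because $[R(w),L(b)]S=S\,([R(w),L(b)])^{t}$ (from the intertwining relations and $[R(w),R(b)]=0$) and $S$ is symmetric. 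So the leftover is not merely a combination of invariance expressions of $s$, as you anticipated; the Novikov identities of $B$ are needed as well.

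Your $\mathcal{O}$-operator treatment of (i) is fine. Note only that with the paper's convention $\psi^{\ast}=-\psi^{t}$, the product you display is minus $\delta_{r}^{\ast}$; this is harmless because the Novikov identities are quadratic.
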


A quasi-triangular Novikov bialgebra $(B, \ast, \delta_{r})$ is called a {\bf factorizable
Novikov bialgebra} if $\mathcal{I}=r^{\sharp}+\tau(r)^{\sharp}: B^{\ast}\rightarrow B$
is an isomorphism of vector spaces.
For some special diNovikov bialgebra induced by a differential perm bialgebra, we
first need to consider the relationship between the solution of the $\DPYBE$ in an
admissible differential perm algebra and the solution of $\DPYBE$ in the induced
diNovikov algebra.

\begin{pro}\label{pro:DNYBE-NYBE}
Let $(D, \dashv, \vdash)$ be a diNovikov algebra, $(C, \star, \omega)$ be a quadratic
Zinbiel algebra and $(D\otimes C, \ast)$ be the induced Novikov algebra by
$(D, \dashv, \vdash)$ and $(C, \star)$. Suppose that $r=\sum_{i}x_{i}\otimes y_{i}
\in D\otimes D$ is a symmetric solution of the $\DNYBE$ in $(D, \dashv, \vdash)$. Then
\begin{align}
\widehat{r}=\sum_{i, j}(x_{i}\otimes e_{j})\otimes(y_{i}\otimes f_{j})
\in(D\otimes C)\otimes(D\otimes C)   \label{N-rmax}
\end{align}
is a skew-symmetric solution of the $\NYBE$ in $(D\otimes C, \ast)$, where
$\{e_{1}, e_{2}, \cdots, e_{n}\}$ is a basis of $C$ and $\{f_{1}, f_{2},\cdots, f_{n}\}$
is the dual basis of $\{e_{1}, e_{2},\cdots, e_{n}\}$ with respect to $\omega(-,-)$.
\end{pro}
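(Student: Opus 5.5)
The argument follows the proof of Proposition \ref{pro:DPYBE-DAYBE} closely. There are two parts: skew-symmetry, which is immediate, and the vanishing of $\mathbf{N}_{\widehat{r}}$, which comes from expanding and regrouping.

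\emph{Skew-symmetry.} From the nondegeneracy and skew-symmetry of $\omega$ we already have $\kappa:=\sum_{j}e_{j}\otimes f_{j}=-\sum_{j}f_{j}\otimes e_{j}$. Since $r=\tau(r)$, it follows that $\tau(\widehat{r})=-\widehat{r}$.

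\emph{Expanding $\mathbf{N}_{\widehat{r}}$.} Insert the product
$(d_{1}\otimes c_{1})\ast(d_{2}\otimes c_{2})=(d_{1}\vdash d_{2})\otimes(c_{1}\star c_{2})+(d_{1}\dashv d_{2})\otimes(c_{2}\star c_{1})$
into each of the four terms $\widehat{r}_{13}\ast\widehat{r}_{23}$, $\widehat{r}_{12}\ast\widehat{r}_{23}$, $\widehat{r}_{23}\ast\widehat{r}_{12}$ and $\widehat{r}_{13}\ast\widehat{r}_{12}$. This produces eight summands. Each has the form $X\bullet Y$, where $X\in D^{\otimes 3}$ is a product term built from $r$ with $\vdash$ or $\dashv$, and $Y\in C^{\otimes 3}$ is one of the $\kappa$-tensors
$\sum_{k,l}e_{k}\otimes e_{l}\otimes(f_{k}\star f_{l})$, $\sum_{k,l}e_{k}\otimes(f_{k}\star e_{l})\otimes f_{l}$, $\sum_{k,l}(e_{k}\star e_{l})\otimes f_{k}\otimes f_{l}$,
together with the versions with the two factors of $\star$ reversed.

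\emph{Reducing the $C$-tensors.} Next I use the identities already derived in the proof of Proposition \ref{pro:DPYBE-DAYBE}, which come from invariance of $\omega$ and the Zinbiel identity:
\begin{align*}
\sum_{k,l}e_{k}\otimes(f_{k}\star e_{l})\otimes f_{l}&=-\sum_{k,l}(e_{k}\star e_{l})\otimes f_{k}\otimes f_{l},\\
\sum_{k,l}e_{k}\otimes e_{l}\otimes(f_{k}\star f_{l})&=\sum_{k,l}e_{k}\otimes(e_{l}\star f_{k})\otimes f_{l}+\sum_{k,l}e_{k}\otimes(f_{k}\star e_{l})\otimes f_{l}.
\end{align*}
With these, every $C$-factor can be rewritten in terms of the two independent tensors
$\Theta_{1}:=\sum_{k,l}e_{k}\otimes(f_{k}\star e_{l})\otimes f_{l}$ and $\Theta_{2}:=\sum_{k,l}e_{k}\otimes(e_{l}\star f_{k})\otimes f_{l}$.
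Where a summand carries a permuted index pattern, I reindex the $D$-part using the symmetry of $r$, so that every $D$-part is written in the standard slot order.

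\emph{Target form and main obstacle.} The goal is to reach
$$
\mathbf{N}_{\widehat{r}}=\mathbf{DN}_{r}\bullet\Theta_{1}+\sigma(\mathbf{DN}_{r})\bullet\Theta_{2},
$$
or a variant of it using $\mathbf{DN}'_{r}$ from Remark \ref{rmk:DNYBE}, where $\sigma$ is a permutation of tensor factors such as $(\id\otimes\tau)(\tau\otimes\id)$. The main obstacle is the bookkeeping needed to match the eight $D$-terms with the five terms of $\mathbf{DN}_{r}$, which has only five terms, fewer than the eight summands. Symmetry of $r$ is what makes the extra terms combine or cancel. If the coefficient of $\Theta_{2}$ does not come out as a permuted copy of $\mathbf{DN}_{r}$ itself, I will try to identify it with a permuted copy of $\mathbf{DN}'_{r}$, and then invoke Remark \ref{rmk:DNYBE}: for symmetric $r$, $\mathbf{DN}_{r}=0$ if and only if $\mathbf{DN}'_{r}=0$. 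In either case both coefficients vanish, so $\mathbf{N}_{\widehat{r}}=0$, and $\widehat{r}$ is a skew-symmetric solution of the $\NYBE$ in $(D\otimes C, \ast)$.
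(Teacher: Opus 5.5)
Your strategy is the same as the paper's. You expand $\mathbf{N}_{\widehat{r}}$ into eight summands, push every $C$-factor onto $\Theta_{1},\Theta_{2}$, and use $r=\tau(r)$ to recognize the coefficients as permuted copies of $\mathbf{DN}_{r}$. The skew-symmetry part is fine. But the proposal stops where the proof starts: the identification of the two coefficients is only announced as a goal, with a fallback through $\mathbf{DN}'_{r}$, and the form you display for the $\Theta_{1}$-coefficient is not the one that comes out.

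The two $C$-identities you list are also not enough. The $\dashv$-part of $\widehat{r}_{13}\ast\widehat{r}_{23}$ and the $\vdash$-part of $\widehat{r}_{13}\ast\widehat{r}_{12}$ additionally need
\[
\sum_{k,l}e_{k}\otimes e_{l}\otimes(f_{l}\star f_{k})=-\Theta_{2},\qquad
\sum_{k,l}(e_{l}\star e_{k})\otimes f_{k}\otimes f_{l}=\Theta_{1}+\Theta_{2}.
\]
Both follow from invariance and skew-symmetry of $\omega$ alone; the Zinbiel identity plays no role.

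With these identities, the computation closes as follows. For $\circ\in\{\vdash,\dashv\}$ put
\[
A_{\circ}=\sum_{i,j}x_{i}\otimes x_{j}\otimes(y_{i}\circ y_{j}),\qquad
B_{\circ}=\sum_{i,j}x_{i}\otimes(y_{i}\circ x_{j})\otimes y_{j},
\]
\[
C_{\circ}=\sum_{i,j}x_{j}\otimes(x_{i}\circ y_{j})\otimes y_{i},\qquad
D_{\circ}=\sum_{i,j}(x_{i}\circ x_{j})\otimes y_{j}\otimes y_{i}.
\]
The four terms of $\mathbf{N}_{\widehat{r}}$ become
\begin{align*}
\widehat{r}_{13}\ast\widehat{r}_{23}&=A_{\vdash}\bullet(\Theta_{1}+\Theta_{2})-A_{\dashv}\bullet\Theta_{2},\qquad
\widehat{r}_{12}\ast\widehat{r}_{23}=B_{\vdash}\bullet\Theta_{1}+B_{\dashv}\bullet\Theta_{2},\\
\widehat{r}_{23}\ast\widehat{r}_{12}&=C_{\dashv}\bullet\Theta_{1}+C_{\vdash}\bullet\Theta_{2},\qquad
\widehat{r}_{13}\ast\widehat{r}_{12}=D_{\vdash}\bullet(\Theta_{1}+\Theta_{2})-D_{\dashv}\bullet\Theta_{1}.
\end{align*}
Collecting coefficients, and using $r=\tau(r)$, gives
\begin{align*}
A_{\vdash}+B_{\vdash}+C_{\dashv}+D_{\vdash}-D_{\dashv}&=(\id\otimes\tau)(\mathbf{DN}_{r}),\\
A_{\vdash}-A_{\dashv}+B_{\dashv}+C_{\vdash}+D_{\vdash}&=(\id\otimes\tau)(\tau\otimes\id)(\mathbf{DN}_{r}).
\end{align*}
For the first line, $\id\otimes\tau$ sends $r_{12}\vdash r_{13}$ and $r_{12}\dashv r_{13}$ to $D_{\vdash}$ and $D_{\dashv}$. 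By symmetry of $r$ it exchanges $r_{12}\vdash r_{23}$ with $r_{13}\vdash r_{23}$, and it sends $r_{23}\dashv r_{13}$ to $C_{\dashv}$. The second line is checked term by term with the cyclic shift $a\otimes b\otimes c\mapsto b\otimes c\otimes a$.

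So your guess $\sigma=(\id\otimes\tau)(\tau\otimes\id)$ for $\Theta_{2}$ is right. The $\Theta_{1}$-coefficient, however, is $(\id\otimes\tau)(\mathbf{DN}_{r})$ rather than $\mathbf{DN}_{r}$. Both coefficients are permutations of $\mathbf{DN}_{r}$, so they vanish. Neither $\mathbf{DN}'_{r}$, Remark \ref{rmk:DNYBE}, nor any independence of $\Theta_{1},\Theta_{2}$ is needed. This is exactly the identity in the paper's proof; once you supply this matching, your argument coincides with it.
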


\begin{proof}
For any $1\leq s, u, v\leq n$, since
$$
\omega\Big(\sum_{k,l}e_{k}\otimes(f_{k}\star e_{l})\otimes f_{l},\ \
e_{s}\otimes e_{u}\otimes e_{v}\Big)
=\omega(e_{u},\ \ e_{s}\star e_{v})
=\omega\Big(\sum_{k,l}e_{l}\otimes(f_{l}\star e_{k})\otimes f_{k},\ \
e_{s}\otimes e_{u}\otimes e_{v}\Big),
$$
and the nondegeneracy of $\omega(-,-)$, we get $\sum_{k,l}e_{k}\otimes(f_{k}\star e_{l})
\otimes f_{l}=\sum_{k,l}e_{l}\otimes(f_{l}\star e_{k})\otimes f_{k}$. Similarly,
$\sum_{k,l}e_{k}\otimes(f_{k}\star e_{l})\otimes f_{l}=-\sum_{k,l}(e_{l}\star e_{k})
\otimes f_{l}\otimes f_{k}$, $\sum_{k,l}e_{l}\otimes(e_{k}\star f_{l})\otimes f_{k}
=\sum_{k,l}e_{k}\otimes(e_{l}\star f_{k})\otimes f_{l}=-\sum_{k,l}e_{k}\otimes e_{l}
\otimes(f_{l}\star f_{k})$ and $\sum_{k,l}e_{k}\otimes e_{l}\otimes(f_{k}\star f_{l})
=\sum_{k,l}(e_{k}\star e_{l})\otimes f_{l}\otimes f_{k}=\sum_{k,l}e_{k}\otimes(f_{k}
\star e_{l})\otimes f_{l}+\sum_{k,l}e_{k}\otimes(e_{l}\star f_{k})\otimes f_{l}$.
Therefore, we obtain
\begin{align*}
&\;\widehat{r}_{13}\ast\widehat{r}_{23}+\widehat{r}_{12}\ast\widehat{r}_{23}
+\widehat{r}_{23}\ast\widehat{r}_{12}+\widehat{r}_{13}\ast\widehat{r}_{12}\\
=&\;\sum_{i,j}\sum_{k,l}\Big(\big(x_{i}\otimes x_{j}\otimes(y_{i}\vdash y_{j})\big)\bullet
\big(e_{k}\otimes e_{l}\otimes(f_{k}\star f_{l})\big)
+\big(x_{i}\otimes x_{j}\otimes(y_{i}\dashv y_{j})\big)\bullet
\big(e_{k}\otimes e_{l}\otimes(f_{l}\star f_{k})\big)\\[-4mm]
&\qquad\quad +\big(x_{i}\otimes(y_{i}\vdash x_{j})\otimes y_{j}\big)\bullet
\big(e_{k}\otimes(f_{k}\star e_{l})\otimes f_{l}\big)
+\big(x_{i}\otimes(y_{i}\dashv x_{j})\otimes y_{j}\big)\bullet
\big(e_{k}\otimes(e_{l}\star f_{k})\otimes f_{l}\big)\\
&\qquad\quad +\big(x_{j}\otimes(x_{i}\vdash y_{j})\otimes y_{i}\big)\bullet
\big(e_{l}\otimes(e_{k}\star f_{l})\otimes f_{k}\big)
+\big(x_{j}\otimes(x_{i}\dashv y_{j})\otimes y_{i}\big)\bullet
\big(e_{l}\otimes(f_{l}\star e_{k})\otimes f_{k}\big)\\[-1mm]
&\qquad\quad +\big((x_{i}\vdash x_{j})\otimes y_{j}\otimes y_{i}\big)\bullet
\big((e_{k}\star e_{l})\otimes f_{l}\otimes f_{k}\big)
+\big((x_{i}\dashv x_{j})\otimes y_{j}\otimes y_{i}\big)\bullet
\big((e_{l}\star e_{k})\otimes f_{l}\otimes f_{k}\big)\Big)\\
=&\;\sum_{i,j}\sum_{k,l}\Big(x_{i}\otimes x_{j}\otimes(y_{i}\vdash y_{j})
-x_{i}\otimes x_{j}\otimes(y_{i}\dashv y_{j})
+x_{i}\otimes(y_{i}\dashv x_{j})\otimes y_{j}
+x_{j}\otimes(x_{i}\vdash y_{j})\otimes y_{i}\\[-5mm]
&\qquad\qquad\qquad\qquad\qquad\qquad\qquad\qquad\qquad\qquad
+(x_{i}\vdash x_{j})\otimes y_{j}\otimes y_{i}\Big)
\bullet\big(e_{k}\otimes(e_{l}\star f_{k})\otimes f_{l}\big)\\
&\;+\sum_{i,j}\sum_{k,l}\Big(x_{i}\otimes x_{j}\otimes(y_{i}\vdash y_{j})
+x_{i}\otimes(y_{i}\vdash x_{j})\otimes y_{j}
+x_{j}\otimes(x_{i}\dashv y_{j})\otimes y_{i}
+(x_{i}\vdash x_{j})\otimes y_{j}\otimes y_{i}\\[-5mm]
&\qquad\qquad\qquad\qquad\qquad\qquad\qquad\qquad\qquad\qquad
-(x_{i}\dashv x_{j})\otimes y_{j}\otimes y_{i}\Big)
\bullet\big(e_{k}\otimes(f_{k}\star e_{l})\otimes f_{l}\big).
\end{align*}
Moreover, if $r$ is symmetric, we get
\begin{align*}
&\; \widehat{r}_{13}\ast\widehat{r}_{23}+\widehat{r}_{12}\ast\widehat{r}_{23}
+\widehat{r}_{23}\ast\widehat{r}_{12}+\widehat{r}_{13}\ast\widehat{r}_{12}\\
=&\; (\id\otimes\tau)((\tau\otimes\id)(\mathbf{DN}_{r}))\bullet
\Big(\sum_{k,l}e_{k}\otimes(e_{l}\star f_{k})\otimes f_{l}\Big)
+(\id\otimes\tau)(\mathbf{DN}_{r})\bullet
\Big(\sum_{k,l}e_{k}\otimes(f_{k}\star e_{l})\otimes f_{l}\Big).
\end{align*}
Thus, $\widehat{r}$ is a solution of the $\NYBE$ in $(D\otimes C, \ast)$
if $r$ is a symmetric solution of the $\DNYBE$ in $(D, \dashv, \vdash)$.
Finally, since $\sum_{j}e_{j}\otimes f_{j}$ is skew-symmetric (see the proof of
Proposition \ref{pro:DPYBE-DAYBE}), we get $\widehat{r}$ is a skew-symmetric solution of
the $\NYBE$ in $(D\otimes C, \ast)$ if $r$ is a symmetric solution of the
$\DNYBE$ in $(D, \dashv, \vdash)$.
\end{proof}

Thus, by Theorem \ref{thm:diNbia-Nbia} and Proposition \ref{pro:DNYBE-NYBE},
we can give a construction of triangular Novikov bialgebras from triangular
diNovikov bialgebras.

\begin{thm}\label{thm:indu-sdiNbia}
Let $(D, \dashv, \vdash, \nu_{\dashv}, \nu_{\vdash})$ be a diNovikov bialgebra,
$(C, \star, \omega)$ be a quadratic Zinbiel algebra and $(D\otimes C, \ast)$ be the
induced Novikov algebra by $(D, \dashv, \vdash)$ and $(C, \star)$.
If $\nu_{\dashv}=\nu_{\dashv,r}$ and $\nu_{\vdash}=\nu_{\vdash,r}$ are defined
by Eqs. \eqref{ndadr1} and \eqref{ndadr2} for a symmetric solution $r$ of the
$\DNYBE$ in $(D, \dashv, \vdash)$, then the induced Novikov bialgebra
$(D\otimes C, \ast, \delta)=(D\otimes C, \ast, \delta_{\widehat{r}})$, where
$\delta_{\widehat{r}}$ is given by Eq. \eqref{cobnov}, $\widehat{r}$ is defined
by Eq. \eqref{N-rmax}. That is, the induced Novikov bialgebra $(D\otimes C, \ast,
\delta)$ is triangular. Furthermore, we have the following commutative diagram:
$$
\xymatrix@C=2cm@R=0.6cm{
\txt{$r$ \\ {\tiny a symmetric solution}\\ {\tiny of the $\DNYBE$ in $(D, \dashv, \vdash)$}}
\ar[r]^{{\rm Pro.}~\ref{pro:tri-dinov}} \ar[d]_{{\rm Pro.}~\ref{pro:DNYBE-NYBE}}&
\txt{$(D, \dashv, \vdash, \nu_{\dashv,r}, \nu_{\vdash,r})$ \\
{\tiny a triangular diNovikov bialgebra}}\ar[d]^{{\rm Thm.}~\ref{thm:diNbia-Nbia}}\\
\txt{$\widehat{r}$ \\ {\tiny a skew-symmetric solution}\\ {\tiny of the $\NYBE$ in
$(D\otimes C, \ast)$}}  \ar[r]^{{\rm Pro.}~\ref{pro:quasass-Nbia}} &
\txt{$(D\otimes C, \ast, \delta_{r})$ \\ {\tiny a triangular Novikov bialgebra}}}
$$
\end{thm}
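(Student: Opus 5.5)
The plan mirrors the proof of Theorem \ref{thm:indu-asssdibia}. The only real content is the identity $\delta=\delta_{\widehat{r}}$ on $D\otimes C$; the triangularity and the commutative diagram then follow from Proposition \ref{pro:DNYBE-NYBE} and Proposition \ref{pro:quasass-Nbia}$(i)$. Indeed, Proposition \ref{pro:DNYBE-NYBE} already shows that $\widehat{r}$ is a skew-symmetric solution of the $\NYBE$ in $(D\otimes C,\ast)$. Hence $(D\otimes C,\ast,\delta_{\widehat{r}})$ is a triangular Novikov bialgebra, and it coincides with the Novikov bialgebra of Theorem \ref{thm:diNbia-Nbia} once we know $\delta=\delta_{\widehat{r}}$.

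\textbf{Step 1: expand $\delta_{\widehat{r}}$.} Write $r=\sum_i x_i\otimes y_i$ and $\kappa=\sum_j e_j\otimes f_j$. For $d\otimes c\in D\otimes C$, expand
$$
\delta_{\widehat{r}}(d\otimes c)=\big(\tilde{\fl}(d\otimes c)\otimes\id+\id\otimes(\tilde{\fl}+\tilde{\fr})(d\otimes c)\big)(\widehat{r})
$$
using the product of Proposition \ref{pro:tensor}$(ii)$. This produces six families of terms, of the form $(D\text{-part})\bullet(C\text{-part})$:
\begin{itemize}
\item $(d\vdash x_i)\otimes y_i$ and $(d\dashv x_i)\otimes y_i$, paired with $(c\star e_j)\otimes f_j$ and $(e_j\star c)\otimes f_j$ respectively;
\item $x_i\otimes(d\vdash y_i)$ and $x_i\otimes(d\dashv y_i)$, paired with $e_j\otimes(c\star f_j)$ and $e_j\otimes(f_j\star c)$;
\item $x_i\otimes(y_i\vdash d)$ and $x_i\otimes(y_i\dashv d)$, paired with $e_j\otimes(f_j\star c)$ and $e_j\otimes(c\star f_j)$.
\end{itemize}

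\textbf{Step 2: rewrite the $C$-factors via $\theta_\omega$.} Use the identities already established in the proofs of Theorem \ref{thm:indu-asssdibia} and Proposition \ref{pro:DPYBE-DAYBE}, which follow from nondegeneracy, skew-symmetry and invariance of $\omega$:
$$
\sum_j e_j\otimes(c\star f_j)=\tau(\theta_\omega(c)),\qquad \sum_j (c\star e_j)\otimes f_j=-\theta_\omega(c),
$$
$$
\sum_j (e_j\star c)\otimes f_j=-\sum_j e_j\otimes(f_j\star c)=\theta_\omega(c)+\tau(\theta_\omega(c)).
$$
I also need the flipped versions, which follow from $\kappa=-\tau(\kappa)$. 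After these substitutions every term becomes an element of $D\otimes D$ tensored (via $\bullet$) with either $\theta_\omega(c)$ or $\tau(\theta_\omega(c))$. Collecting gives $\delta_{\widehat{r}}(d\otimes c)=X_d\bullet\theta_\omega(c)+Y_d\bullet\tau(\theta_\omega(c))$, where $X_d,Y_d\in D\otimes D$ are explicit linear combinations of the six $D$-families.

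\textbf{Step 3: match with the coproducts.} Compare with $\delta(d\otimes c)=\nu_{\vdash,r}(d)\bullet\theta_\omega(c)+\nu_{\dashv,r}(d)\bullet\tau(\theta_\omega(c))$, using Eqs. \eqref{ndadr1}--\eqref{ndadr2}. Because $r=\tau(r)$, the term $\tau(r)$ in \eqref{ndadr2} can be replaced by $r$, and sums such as $\sum_i x_i\otimes(d\dashv y_i)$ and $\sum_i y_i\otimes(d\dashv x_i)$ may be freely interchanged. The claim is then $X_d=\nu_{\vdash,r}(d)$ and $Y_d=\nu_{\dashv,r}(d)$, checked coefficient by coefficient.

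The main obstacle is the sign and placement bookkeeping in Step 2: for each of the six pairings one must decide whether the $C$-factor becomes $\pm\theta_\omega(c)$, $\pm\tau(\theta_\omega(c))$, or their sum. For instance, $(e_j\star c)\otimes f_j$ contributes to both $X_d$ and $Y_d$, which is exactly what produces the four-term operator $\fr_\dashv-\fl_\dashv+\fl_\vdash-\fr_\vdash$ in $\nu_{\dashv,r}$. I would verify each pairing by evaluating against $e_s\otimes e_t$ with $\omega$, as the paper does. If a residual discrepancy remains, I would resolve it using the symmetry of $r$ rather than the $\DNYBE$, since the identity $\delta=\delta_{\widehat{r}}$ should hold for any symmetric $r$.
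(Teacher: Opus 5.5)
Your proposal is correct and follows the paper's proof essentially step for step. You expand $\delta_{\widehat{r}}(d\otimes c)$ into the six families and rewrite the $C$-factors via the $\omega$-identities as combinations of $\theta_\omega(c)$ and $\tau(\theta_\omega(c))$. The $\theta_\omega(c)$-coefficient then equals $\nu_{\vdash,r}(d)$ on the nose, and the $\tau(\theta_\omega(c))$-coefficient equals $\nu_{\dashv,r}(d)$ using only $r=\tau(r)$.

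One small correction to your heuristic remark: the four-term operator $\fr_\dashv-\fl_\dashv+\fl_\vdash-\fr_\vdash$ in $\nu_{\dashv,r}$ comes from the pairings with $e_j\otimes(f_j\star c)$ and $e_j\otimes(c\star f_j)$. The pairing with $(e_j\star c)\otimes f_j$ instead produces the $\fl_\dashv(d)\otimes\id$ term in both coproducts.
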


\begin{proof}
Let $r=\sum_{i}x_{i}\otimes y_{i}\in D\otimes D$. First, for any $d\in D$ and
$c\in C$, we have
\begin{align*}
&\; \delta(d\otimes c)\\
=&\; \nu_{\vdash,r}(d)\bullet\theta_{\omega}(c)
+\nu_{\dashv,r}(d)\bullet\tau(\theta_{\omega}(c))\\
=&\;\Big(\sum_{i}\big((d\dashv x_{i})\otimes y_{i}-(d\vdash x_{i})\otimes y_{i}
-x_{i}\otimes(d\dashv y_{i})-x_{i}\otimes(y_{i}\vdash d)\big)\Big)
\bullet\Big(\sum_{(c)}c_{(1)}\otimes c_{(2)}\Big)\\[-2mm]
&\qquad+\Big(\sum_{i}\big(y_{i}\otimes(x_{i}\dashv d)-y_{i}\otimes(d\dashv x_{i})
+y_{i}\otimes(d\vdash x_{i})-y_{i}\otimes(x_{i}\vdash d)+(d\dashv y_{i})\otimes x_{i}\big)\Big)
\bullet\Big(\sum_{(c)}c_{(2)}\otimes c_{(1)}\Big),
\end{align*}
where $\theta_{\omega}(c)=\sum_{(c)}c_{(1)}\otimes c_{(2)}$,
$\nu_{\vdash,r}(d)=((\fl_{\dashv}-\fl_{\vdash})(d)\otimes\id-\id\otimes(\fl_{\dashv}
+\fr_{\vdash})(d))(r)=\sum_{i}\big((d\dashv x_{i})\otimes y_{i}-(d\vdash x_{i})\otimes y_{i}
-x_{i}\otimes(d\dashv y_{i})-x_{i}\otimes(y_{i}\vdash d)\big)$ and
$\nu_{\dashv,r}(d)=(\id\otimes(\fr_{\dashv}-\fl_{\dashv}+\fl_{\vdash}-\fr_{\vdash})(d)
+\fl_{\dashv}(d)\otimes\id)(\tau(r))=\sum_{i}\big(y_{i}\otimes(x_{i}\dashv d)
-y_{i}\otimes(d\dashv x_{i})+y_{i}\otimes(d\vdash x_{i})-y_{i}\otimes(x_{i}\vdash d)
+(d\dashv y_{i})\otimes x_{i}\big)$. On the other hand, we have
\begin{align*}
\delta_{\widehat{r}}(d\otimes c)
&=\big(\tilde{\fl}_{D\otimes C}(d\otimes c)\otimes\id+\id\otimes
(\tilde{\fl}_{D\otimes C}+\tilde{\fr}_{D\otimes C})(d\otimes c)\big)(\widehat{r})\\
&=\sum_{i,j}\Big(
\big((d\vdash x_{i})\otimes y_{i}\big)\bullet\big((c\star e_{j})\otimes f_{j}\big)
+\big((d\dashv x_{i})\otimes y_{i}\big)\bullet\big((e_{j}\star c)\otimes f_{j}\big)\\[-5mm]
&\qquad\quad
+\big(x_{i}\otimes(d\vdash y_{i})\big)\bullet\big(e_{j}\otimes(c\star f_{j})\big)
+\big(x_{i}\otimes(d\dashv y_{i})\big)\bullet\big(e_{j}\otimes(f_{j}\star c)\big)\\[-1mm]
&\qquad\quad
+\big(x_{i}\otimes(y_{i}\vdash d)\big)\bullet\big(e_{j}\otimes(f_{j}\star c)\big)
+\big(x_{i}\otimes(y_{i}\dashv d)\big)\bullet\big(e_{j}\otimes(c\star f_{j})\big)\Big).
\\
&=\Big(\sum_{i}\big((d\dashv x_{i})\otimes y_{i}-(d\vdash x_{i})\otimes y_{i}
-x_{i}\otimes(d\dashv y_{i})-x_{i}\otimes(y_{i}\vdash d)\big)\Big)\bullet
\Big(\sum_{(c)}c_{(1)}\otimes c_{(2)}\Big)\\[-2mm]
&\quad +\Big(\sum_{i}\big((d\dashv x_{i})\otimes y_{i}+x_{i}\otimes(d\vdash y_{i})
-x_{i}\otimes(d\dashv y_{i})-x_{i}\otimes(y_{i}\vdash d)+x_{i}\otimes(y_{i}\dashv d)
\big)\Big)\bullet\Big(\sum_{(c)}c_{(2)}\otimes c_{(1)}\Big).
\end{align*}
Since for any basis element $e_{s}, e_{t}\in P$,
$$
\omega\Big(\sum_{(c)}c_{(1)}\otimes c_{(2)},\; e_{s}\otimes e_{t}\Big)
=\omega(c,\; e_{s}\diamond e_{t})
=-\omega\Big(\sum_{j}(c\star e_{j})\otimes f_{j},\ \ e_{s}\otimes e_{t}\Big)
$$
and $\omega(-,-)$ is nondegenerate, we get $\sum_{(c)}c_{(1)}\otimes c_{(2)}=-
\sum_{j}(c\star e_{j})\otimes f_{j}$. Similarly, $\sum_{(c)}c_{(2)}\otimes c_{(1)}
=\sum_{j}e_{j}\otimes(c\star f_{j})$ and $\sum_{j}(e_{j}\star c)\otimes f_{j}
=-\sum_{j}e_{j}\otimes(f_{j}\star c)=\sum_{(c)}c_{(1)}\otimes c_{(2)}+\sum_{(c)}c_{(2)}
\otimes c_{(1)}$.
Hence we obtain that
\begin{align*}
&\; \delta_{\widehat{r}}(d\otimes c)\\
=&\; \Big(\sum_{i}\big((d\dashv x_{i})\otimes y_{i}-(d\vdash x_{i})\otimes y_{i}
-x_{i}\otimes(d\dashv y_{i})-x_{i}\otimes(y_{i}\vdash d)\big)\Big)\bullet
\Big(\sum_{(c)}c_{(1)}\otimes c_{(2)}\Big)\\[-2mm]
&\quad +\Big(\sum_{i}\big((d\dashv x_{i})\otimes y_{i}+x_{i}\otimes(d\vdash y_{i})
-x_{i}\otimes(d\dashv y_{i})-x_{i}\otimes(y_{i}\vdash d)+x_{i}\otimes(y_{i}\dashv d)
\big)\Big)\bullet\Big(\sum_{(c)}c_{(2)}\otimes c_{(1)}\Big)\\
=&\; \delta(d\otimes c),
\end{align*}
since $r$ is symmetric. Thus, we get that $(D\otimes C, \ast, \delta)=(D\otimes C, \ast,
\delta_{\widehat{r}})$ as Novikov bialgebras, and the diagram is commutative.
\end{proof}

\begin{ex}\label{ex:diN-Nov}
Let $(P={\rm span}_{\Bbbk}\{x_{1}, x_{2}\}, \dashv, \vdash, \nu_{\dashv}, \nu_{\vdash})$
be the $2$-dimensional triangular diNovikov bialgebra associated with $r=x_{2}
\otimes x_{2}$ given in Example \ref{ex:ind-tridN} and $(C={\rm span}_{\Bbbk}\{e_{1},
e_{2}, e_{3}, e_{4}\}, \star, \omega)$ be the $4$-dimensional quadratic Zinbiel algebra
given in Example \ref{ex:qu-zib}. Then we get a $8$-dimensional Novikov bialgebra
$(P\otimes C, \ast, \delta)$, where the nonzero products are given by
\begin{align*}
&(x_{1}\otimes e_{1})\ast(x_{1}\otimes e_{1})=x_{2}\otimes e_{2},\qquad\quad
(x_{1}\otimes e_{1})\ast(x_{1}\otimes e_{4})=2x_{2}\otimes e_{3}-x_{2}\otimes e_{2},\\
&(x_{1}\otimes e_{4})\ast(x_{1}\otimes e_{4})=x_{2}\otimes e_{3},\qquad\quad
(x_{1}\otimes e_{4})\ast(x_{1}\otimes e_{1})=2x_{2}\otimes e_{2}-x_{2}\otimes e_{3},
\end{align*}
and the product $\delta$ is zero. Moreover, one can check that
$$
\widehat{r}=(x_{2}\otimes e_{1})\otimes(x_{2}\otimes e_{3})
+(x_{2}\otimes e_{2})\otimes(x_{2}\otimes e_{4})
-(x_{2}\otimes e_{3})\otimes(x_{2}\otimes e_{1})
-(x_{2}\otimes e_{4})\otimes(x_{2}\otimes e_{2})
$$
is a skew-symmetric solution of the $\NYBE$ in $(P\otimes C, \ast)$ and $\delta_{\hat{r}}=0$.
\end{ex}

Next, we consider the operators forms of solution of the $\NYBE$ in a Novikov algebra.
Let $(B, \ast)$ be a Novikov algebra and $(V, \tilde{\kl}, \tilde{\kr})$ be a representation
of $(B, \ast)$. Recall that a linear map $T: V\rightarrow B$ is called an {\bf
$\mathcal{O}$-operator of $(B, \ast)$ associated to $(V, \tilde{\kl}, \tilde{\kr})$} if
for any $v_{1}, v_{2}\in V$,
$$
T(v_{1})\ast T(v_{2})=T\big(\tilde{\kl}(T(v_{1}))(v_{2})+\tilde{\kr}(T(v_{2}))(v_{1})\big).
$$
For the solution of the $\NYBE$ in a Novikov algebra and
$\mathcal{O}$-operator, we have

\begin{pro}[\cite{HBG}]\label{pro:o-nov}
Let $(B, \ast)$ be a Novikov algebra and $r\in B\otimes B$ be skew-symmetric. Then $r$
is a solution of the $\NYBE$ in $(B, \ast)$ if and only if $r^{\sharp}: B^{\ast}\rightarrow
B$ is an $\mathcal{O}$-operator of $(B, \ast)$ associated to the coregular representation
$(B^{\ast}, \tilde{\fl}_{B}+\tilde{\fr}_{B}, -\tilde{\fr}_{B})$.
\end{pro}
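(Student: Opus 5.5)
The plan is to dualize the $\mathbf{N}_{r}$ component by component, as in the proof of Proposition \ref{pro:o-dinov}. Write $r=\sum_{i}x_{i}\otimes y_{i}$. Skew-symmetry gives $\langle r^{\sharp}(\xi_{1}),\;\xi_{2}\rangle=-\langle\xi_{1},\;r^{\sharp}(\xi_{2})\rangle$ for all $\xi_{1},\xi_{2}\in B^{\ast}$. I would pair $\mathbf{N}_{r}$ with an arbitrary $\xi_{1}\otimes\xi_{2}\otimes\xi_{3}$ and rewrite each of the four terms as a pairing of a single covector with an expression built from $r^{\sharp}$.

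The two terms $r_{13}\ast r_{23}$ and $r_{12}\ast r_{23}$ are direct. For the first,
$$
\langle\xi_{1}\otimes\xi_{2}\otimes\xi_{3},\; r_{13}\ast r_{23}\rangle
=\langle\xi_{3},\; r^{\sharp}(\xi_{1})\ast r^{\sharp}(\xi_{2})\rangle.
$$
For the second, the inner factor $y_{i}\ast x_{j}$ produces $r^{\sharp}(\xi_{1})$ and $r^{\sharp}(\xi_{3})$, with one sign coming from skew-symmetry. The terms $r_{23}\ast r_{12}$ and $r_{13}\ast r_{12}$ are handled by moving the product onto the dual side through $\langle\tilde{\fl}_{B}^{\ast}(b)\xi,\;u\rangle=-\langle\xi,\;b\ast u\rangle$ and the analogous identity for $\tilde{\fr}_{B}$. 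This turns each of them into $\langle\xi_{3},\;r^{\sharp}(\eta)\rangle$, where $\eta$ is one of $\tilde{\fl}_{B}^{\ast}(r^{\sharp}(\xi_{1}))\xi_{2}$, $\tilde{\fr}_{B}^{\ast}(r^{\sharp}(\xi_{2}))\xi_{1}$, and so on, again up to sign from skew-symmetry.

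Collecting terms, I aim for the identity
$$
\langle\xi_{1}\otimes\xi_{2}\otimes\xi_{3},\;\mathbf{N}_{r}\rangle
=\Big\langle\xi_{3},\; r^{\sharp}(\xi_{1})\ast r^{\sharp}(\xi_{2})
-r^{\sharp}\big((\tilde{\fl}_{B}^{\ast}+\tilde{\fr}_{B}^{\ast})(r^{\sharp}(\xi_{1}))(\xi_{2})
-\tilde{\fr}_{B}^{\ast}(r^{\sharp}(\xi_{2}))(\xi_{1})\big)\Big\rangle,
$$
possibly after relabelling $\xi_{1},\xi_{2},\xi_{3}$ so that the free slot lands on one fixed index. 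This step needs care with the index placement in $r_{23}\ast r_{12}=\sum x_{j}\otimes(x_{i}\ast y_{j})\otimes y_{i}$ and $r_{13}\ast r_{12}$, since both involve the swapped indices and only the correct relabelling brings the free covector into the third slot. I expect this to be the main obstacle: sign tracking that decides whether the pieces assemble into exactly the coregular representation $(\tilde{\fl}_{B}+\tilde{\fr}_{B},-\tilde{\fr}_{B})$ as the paper writes it, or into an equivalent form. If the first grouping fails, I would reuse the skew-symmetry of $r$ to move the free covector to a different slot and regroup.

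Once the identity holds, the conclusion follows from nondegeneracy of the pairing, since $\xi_{1},\xi_{2},\xi_{3}$ are arbitrary. The vanishing of $\mathbf{N}_{r}$ is then equivalent to
$$
r^{\sharp}(\xi_{1})\ast r^{\sharp}(\xi_{2})
=r^{\sharp}\big(\tilde{\kl}(r^{\sharp}(\xi_{1}))\xi_{2}+\tilde{\kr}(r^{\sharp}(\xi_{2}))\xi_{1}\big)
$$
for all $\xi_{1},\xi_{2}$, with $\tilde{\kl}$ and $\tilde{\kr}$ the coregular maps. This is exactly the $\mathcal{O}$-operator condition, which gives both directions at once.
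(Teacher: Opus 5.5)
Your argument is correct. The identity you aim for holds exactly as written, with $\xi_{3}$ as the free slot and no relabelling needed, and nondegeneracy of the pairing then gives both directions. The paper gives no proof of this statement (it refers to \cite{HBG}), and your route is the same dual-pairing computation the paper uses for the diNovikov analogue, Proposition~\ref{pro:o-dinov}.

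One bookkeeping correction: $r_{12}\ast r_{23}$ is not already in final form. It pairs to $-\langle\xi_{2},\; r^{\sharp}(\xi_{1})\ast r^{\sharp}(\xi_{3})\rangle$ and must itself be dualized through $\tilde{\fl}_{B}^{\ast}$, which yields $-\langle\xi_{3},\; r^{\sharp}(\tilde{\fl}_{B}^{\ast}(r^{\sharp}(\xi_{1}))(\xi_{2}))\rangle$. The other two terms give the $\tilde{\fr}_{B}^{\ast}$ pieces: $r_{23}\ast r_{12}$ yields $-\langle\xi_{3},\; r^{\sharp}(\tilde{\fr}_{B}^{\ast}(r^{\sharp}(\xi_{1}))(\xi_{2}))\rangle$, and $r_{13}\ast r_{12}$ yields $+\langle\xi_{3},\; r^{\sharp}(\tilde{\fr}_{B}^{\ast}(r^{\sharp}(\xi_{2}))(\xi_{1}))\rangle$.
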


For the $\mathcal{O}$-operator of a diNovikov algebra and
the $\mathcal{O}$-operator of the induced Novikov algebra, by Proposition
\ref{pro:o-nov} and \ref{pro:o-dinov}, we have the following corollary.

\begin{cor}\label{cor:o-N-diN}
Let $(D, \dashv, \vdash)$ be a diNovikov algebra, $(C, \star, \omega)$ be a quadratic
Zinbiel algebra and $(D\otimes C, \ast)$ be the induced Novikov algebra.
Suppose $r$ is a symmetric solution of the $\DNYBE$ in $(D, \dashv, \vdash)$. Then we have
the following commutative diagram:
$$
\xymatrix@C=3cm@R=0.5cm{
\txt{$r$ \\ {\tiny a symmetric solution} \\ {\tiny of the $\DNYBE$ in $(D, \dashv, \vdash)$}}
\ar[d]_-{{\rm Pro.}~\ref{pro:DNYBE-NYBE}}\ar[r]^-{{\rm Pro.}~\ref{pro:o-dinov}} &
\txt{$r^{\sharp}$\\ {\tiny an $\mathcal{O}$-operator of $(D, \dashv, \vdash)$} \\
{\tiny associated to coregular representation}}
\ar[d]^-{\mbox{$-\otimes\kappa^{\sharp}$}} \\
\txt{$\widehat{r}$ \\ {\tiny a skew-symmetric solution} \\ {\tiny of the $\NYBE$ in
$(D\otimes C, \ast)$}} \ar[r]^-{{\rm Pro.}~\ref{pro:o-nov}}
& \txt{$\widehat{r}^{\sharp}=r^{\sharp}\otimes\kappa^{\sharp}$ \\
{\tiny an $\mathcal{O}$-operator of $(D\otimes C, \ast)$} \\
{\tiny associated to coregular representation}}}
$$
where $\widehat{r}$ is defined by Eq. \eqref{N-rmax} and $\kappa:=\sum_{j}e_{j}\otimes
f_{j}\in P\otimes P$ is given in the proof of Theorem \ref{thm:indu-asssdibia}.
\end{cor}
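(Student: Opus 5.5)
The plan is to reduce the commutativity of the diagram to a single identity between linear maps, namely $\widehat{r}^{\sharp}=r^{\sharp}\otimes\kappa^{\sharp}$ under the identification $(D\otimes C)^{\ast}=D^{\ast}\otimes C^{\ast}$. The four arrows themselves already exist by earlier results:
\begin{enumerate}
\item[(a)] the top arrow is Proposition \ref{pro:o-dinov}, since $r$ is a symmetric solution of the $\DNYBE$;
\item[(b)] the left arrow is Proposition \ref{pro:DNYBE-NYBE}, which gives that $\widehat{r}$ defined by Eq. \eqref{N-rmax} is a skew-symmetric solution of the $\NYBE$ in $(D\otimes C, \ast)$;
\item[(c)] the bottom arrow is Proposition \ref{pro:o-nov} applied to this skew-symmetric $\widehat{r}$, so $\widehat{r}^{\sharp}$ is an $\mathcal{O}$-operator of $(D\otimes C, \ast)$ associated to the coregular representation $((D\otimes C)^{\ast}, \tilde{\fl}_{D\otimes C}+\tilde{\fr}_{D\otimes C}, -\tilde{\fr}_{D\otimes C})$.
\end{enumerate}
It therefore remains to check that the right vertical arrow $-\otimes\kappa^{\sharp}$ sends $r^{\sharp}$ to $\widehat{r}^{\sharp}$. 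Once this holds, $r^{\sharp}\otimes\kappa^{\sharp}$ is automatically an $\mathcal{O}$-operator by (c), and the diagram commutes.

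For the identity, write $r=\sum_{i}x_{i}\otimes y_{i}$ and $\kappa=\sum_{j}e_{j}\otimes f_{j}$, where $\{f_{j}\}$ is the $\omega$-dual basis as in Proposition \ref{pro:DNYBE-NYBE}. Take $\eta_{1},\eta_{2}\in D^{\ast}$ and $\xi_{1},\xi_{2}\in C^{\ast}$, and pair exactly as in the factorizable part of the proof of Theorem \ref{thm:indu-asssdibia}:
$$
\langle\widehat{r}^{\sharp}(\eta_{1}\otimes\xi_{1}),\; \eta_{2}\otimes\xi_{2}\rangle
=\sum_{i,j}\langle\eta_{1},x_{i}\rangle\langle\eta_{2},y_{i}\rangle
\langle\xi_{1},e_{j}\rangle\langle\xi_{2},f_{j}\rangle
=\langle r^{\sharp}(\eta_{1}),\eta_{2}\rangle\langle\kappa^{\sharp}(\xi_{1}),\xi_{2}\rangle .
$$
The right-hand side equals $\langle r^{\sharp}(\eta_{1})\otimes\kappa^{\sharp}(\xi_{1}),\; \eta_{2}\otimes\xi_{2}\rangle$. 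Since elements $\eta\otimes\xi$ span $(D\otimes C)^{\ast}$ and the pairing is nondegenerate, this gives $\widehat{r}^{\sharp}=r^{\sharp}\otimes\kappa^{\sharp}$.

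There is no sign issue here, unlike the $\tau(\widehat{r})^{\sharp}$ term in Theorem \ref{thm:indu-asssdibia}, because $\widehat{r}$ itself is used rather than its flip. The only point needing care is that the factorization $D\otimes C\otimes D\otimes C$ in $\widehat{r}$ is matched correctly with $D^{\ast}\otimes C^{\ast}$ on both sides, which is the bookkeeping above.

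I expect no real obstacle. As an optional consistency check, one could verify the $\mathcal{O}$-operator identity for $r^{\sharp}\otimes\kappa^{\sharp}$ directly from those of $r^{\sharp}$ and the Zinbiel identities for $\kappa$; this would mirror the computation in Proposition \ref{pro:DNYBE-NYBE}. It is unnecessary, however, given the route through Propositions \ref{pro:DNYBE-NYBE} and \ref{pro:o-nov}.
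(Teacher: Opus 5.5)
Your proposal is correct and follows the same route as the paper. The paper also reduces commutativity to the identity $\widehat{r}^{\sharp}=r^{\sharp}\otimes\kappa^{\sharp}$, obtained by the same pairing computation as in the proof of Theorem \ref{thm:indu-asssdibia}. It then cites Propositions \ref{pro:DNYBE-NYBE}, \ref{pro:o-nov} and \ref{pro:o-dinov} for the remaining arrows, just as you do.
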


\begin{proof}
Similarly to the proof of Theorem \ref{thm:indu-asssdibia}, we get
$\widehat{r}^{\sharp}=r^{\sharp}\otimes\kappa^{\sharp}$. Thus, by Propositions
\ref{pro:DNYBE-NYBE}, \ref{pro:o-nov} and \ref{pro:o-dinov}, we obtain the
diagram above is commutative.
\end{proof}

\subsection{From differential infinitesimal bialgebras to Novikov bialgebras}
\label{subsec:ASI-Nov}
The Novikov bialgebras induced by differential infinitesimal bialgebras have been
studied in \cite{HBG1} and \cite{CH}. Let $(A, \cdot, \Delta)$ be a infinitesimal bialgebra
and $\fd: A\rightarrow A$ be a derivation on $(A, \cdot, \Delta)$. Then $(A, \cdot,
\Delta, \fd, -\fd)$ is a differential infinitesimal bialgebra. In this subsection,
we consider the Novikov bialgebras induced by the differential infinitesimal bialgebras
in the form of $(A, \cdot, \Delta, \fd, -\fd)$. First, we consider the Novikov coalgebras
induced by codifferential coalgebras.

\begin{pro}[\cite{HBG1}]\label{pro:ind-conov}
Let $(A, \Delta, -\fd)$ be a codifferential coalgebra.
Define a coproduct $\delta_{\fd}: A\rightarrow A\otimes A$ by
\begin{align}
\delta_{\fd}=(\id\otimes\fd)\circ\Delta.     \label{ind-conov}
\end{align}
Then $(A, \delta_{\fd})$ is a Novikov coalgebra, which is called the {\bf Novikov
coalgebra induced from $(A, \Delta, -\fd)$}.
\end{pro}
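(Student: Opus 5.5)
The plan is to verify the two Novikov coalgebra identities for $\delta_{\fd}=(\id\otimes\fd)\circ\Delta$ by writing every threefold composite as a single operator applied to $(\Delta\otimes\id)\circ\Delta$. We have three tools for this. Coassociativity lets us move freely between $(\Delta\otimes\id)\circ\Delta$ and $(\id\otimes\Delta)\circ\Delta$. Cocommutativity $\Delta=\tau\circ\Delta$ says that $(\Delta\otimes\id)\circ\Delta$ is invariant under every permutation of the three tensor factors. The coderivation property of $-\fd$ (equivalently of $\fd$) gives $\Delta\circ\fd=(\fd\otimes\id+\id\otimes\fd)\circ\Delta$. Write $\Delta^{(2)}:=(\Delta\otimes\id)\circ\Delta$, which is then a fully symmetric element.

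First identity. We compute
$$(\delta_{\fd}\otimes\id)\circ\delta_{\fd}=(\id\otimes\fd\otimes\id)\circ(\Delta\otimes\id)\circ(\id\otimes\fd)\circ\Delta=(\id\otimes\fd\otimes\fd)\circ\Delta^{(2)}.$$
This expression is visibly invariant under $\id\otimes\tau$, because $\Delta^{(2)}$ is symmetric in its last two slots. That gives $(\id\otimes\tau)\circ(\delta_{\fd}\otimes\id)\circ\delta_{\fd}=(\delta_{\fd}\otimes\id)\circ\delta_{\fd}$.

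Second identity. Here the derivation enters, since
$$(\id\otimes\delta_{\fd})\circ\delta_{\fd}=(\id\otimes\id\otimes\fd)\circ(\id\otimes\Delta)\circ(\id\otimes\fd)\circ\Delta.$$
Applying the coderivation rule to $\Delta\circ\fd$ turns this into $(\id\otimes\fd\otimes\fd+\id\otimes\id\otimes\fd^{2})\circ\Delta^{(2)}$. Subtracting its $(\tau\otimes\id)$-conjugate, and using the symmetry of $\Delta^{(2)}$, the left side of the second identity becomes
$$(\id\otimes\fd\otimes\fd-\fd\otimes\id\otimes\fd)\circ\Delta^{(2)}.$$
The $\fd^{2}$ term cancels because it is symmetric in the first two slots. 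On the right side, $(\delta_{\fd}\otimes\id)\circ\delta_{\fd}=(\id\otimes\fd\otimes\fd)\circ\Delta^{(2)}$, and its $(\tau\otimes\id)$-conjugate is $(\fd\otimes\id\otimes\fd)\circ\Delta^{(2)}$. So both sides agree.

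The main obstacle is bookkeeping. One has to check carefully that conjugating by $\tau\otimes\id$ only permutes the operator placements, which is legitimate because $\Delta^{(2)}$ is fully symmetric. One also has to make sure the sign convention for the coderivation is right: $-\fd$ being a coderivation is equivalent to $\fd$ being one, since the condition is linear. Alternatively, the whole argument can be dualized. $(A^{\ast},\Delta^{\ast},\fd^{\ast})$ is a commutative differential algebra, $\delta_{\fd}^{\ast}$ is the product $\xi_{1}\otimes\xi_{2}\mapsto\xi_{1}\fd^{\ast}(\xi_{2})$, and that product is Novikov by the induced-Novikov-algebra construction recalled earlier. I would present this dual argument as the short proof, in finite dimensions, and keep the direct computation as a check.
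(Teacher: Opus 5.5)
Your proof is correct. Both of your computations check out:
\[
(\delta_{\fd}\otimes\id)\circ\delta_{\fd}=(\id\otimes\fd\otimes\fd)\circ\Delta^{(2)},\qquad
(\id\otimes\delta_{\fd})\circ\delta_{\fd}=(\id\otimes\fd\otimes\fd+\id\otimes\id\otimes\fd^{2})\circ\Delta^{(2)}.
\]
Full symmetry of $\Delta^{(2)}$ follows from coassociativity together with cocommutativity, and cocommutativity is built into the paper's definition of a codifferential coalgebra. That symmetry then gives both Novikov coalgebra axioms. Your dual argument is also sound. Indeed $\delta_{\fd}^{\ast}(\xi_{1}\otimes\xi_{2})=\xi_{1}\fd^{\ast}(\xi_{2})$ is exactly the product of Eq.~\eqref{ind-nov} on the differential algebra $(A^{\ast},\Delta^{\ast},\fd^{\ast})$. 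Moreover, the paper's two Novikov coalgebra axioms are precisely the transposes of its right-commutativity and left-symmetry axioms.

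The paper itself does not prove this proposition; it quotes it from \cite{HBG1}, so there is no in-text argument to compare against directly. The closest in-paper analogue is the proof of Proposition~\ref{pro:ind-codinov} for the induced diNovikov coalgebra. That proof works as your direct computation does: it rewrites iterated coproducts as composites and pushes $\fd$ through the coproduct with the coderivation rule.

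In the perm setting, however, the double coproduct is not fully symmetric. There one has to invoke the specific perm-coalgebra identities case by case, rather than your one-line symmetry argument. Your dual route is the shortest, and it shows the statement is just the transpose of the induced Novikov algebra construction. The direct route is self-contained and avoids having to match coalgebra axioms with algebra axioms.
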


In \cite{HBG1}, Hong, Bai and Guo have provided some conditions that a differential
infinitesimal bialgebra can induce a Novikov bialgebra. Specifically, for differential
infinitesimal bialgebra of the form $(A, \cdot, \Delta, \fd, -\fd)$, we have
the following conclusion.

\begin{thm}[\cite{HBG1}]\label{thm:ind-novbia}
Let $(A, \cdot, \Delta, \fd, -\fd)$ be a differential infinitesimal bialgebra.
Define a binary operation $\diamond_{\fd}$ and a coproduct $\delta_{\fd}$ on $A$ by
Eqs. \eqref{ind-nov} and \eqref{ind-conov} respectively. Then $(A, \diamond_{\fd},
\delta_{\fd})$ is a Novikov bialgebra, which is called {\bf the Novikov bialgebra
induced by the differential infinitesimal bialgebra $(A, \cdot, \Delta, \fd, -\fd)$}.
\end{thm}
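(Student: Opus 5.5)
The plan has three parts. First, $(A,\ast_{\fd})$ is a Novikov algebra, which is already recorded after Eq. \eqref{ind-nov}. Second, $(A,\delta_{\fd})$ is a Novikov coalgebra: since $(A,\Delta,\fd,-\fd)$ is an admissible codifferential coalgebra, $(A,\Delta,-\fd)$ is a codifferential coalgebra, so Proposition \ref{pro:ind-conov} applies. It therefore remains to verify the three compatibility conditions \eqref{Nbialg1}--\eqref{Nbialg3} for $\ast_{\fd}$ and $\delta_{\fd}=(\id\otimes\fd)\circ\Delta$.

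Before doing so, I will extract what the hypotheses give. Admissibility with $\partial=-\fd$ means $\fd$ is a derivation of $(A,\cdot)$ and also a coderivation, $\Delta\circ\fd=(\fd\otimes\id+\id\otimes\fd)\circ\Delta$. Together with cocommutativity $\tau\circ\Delta=\Delta$ and the infinitesimal compatibility \eqref{comm-bia}, these are the only identities needed. In operator form I write $\tilde{\fl}_{A}(a)=\fu_{A}(a)\circ\fd$ and $\tilde{\fr}_{A}(b)=\fu_{A}(\fd(b))$, so that
$$(\tilde{\fl}_{A}+\tilde{\fr}_{A})(a)=\fd\circ\fu_{A}(a)$$
by the Leibniz rule. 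This identity turns every occurrence of $\tilde{\fl}+\tilde{\fr}$ into a $\fd$ applied after multiplication, and that is what makes the computation collapse.

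For \eqref{Nbialg1}, I expand $\delta_{\fd}(a_{1}\ast_{\fd}a_{2})=(\id\otimes\fd)\Delta(a_{1}\fd(a_{2}))$ using \eqref{comm-bia}. This gives $(\id\otimes\fd\fu_{A}(a_{1}))\Delta(\fd a_{2})+(\fu_{A}(\fd a_{2})\otimes\fd)\Delta(a_{1})$. The second term is exactly $(\tilde{\fr}_{A}(a_{2})\otimes\id)\delta_{\fd}(a_{1})$. For the first term, the coderivation property gives $\Delta(\fd a_{2})=(\fd\otimes\id+\id\otimes\fd)\Delta(a_{2})$, and by cocommutativity $(\fd\otimes\id)\Delta(a_{2})=\tau\delta_{\fd}(a_{2})$. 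Hence the first term equals $(\id\otimes(\tilde{\fl}_{A}+\tilde{\fr}_{A})(a_{1}))(\delta_{\fd}(a_{2})+\tau\delta_{\fd}(a_{2}))$, which is \eqref{Nbialg1}.

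Conditions \eqref{Nbialg2} and \eqref{Nbialg3} are symmetric in $a_{1},a_{2}$. For each, I rewrite both sides using $\delta_{\fd}+\tau\delta_{\fd}=\Delta\circ\fd$ (from the coderivation property and cocommutativity) and $\tau\delta_{\fd}=(\fd\otimes\id)\Delta$. For \eqref{Nbialg2}, the left side becomes
$$(\fd\fu_{A}(a_{1})\otimes\fd)\Delta(a_{2})-(\fd\otimes\fd\fu_{A}(a_{1}))\Delta(a_{2})=(\fd\otimes\fd)\big((\fu_{A}(a_{1})\otimes\id-\id\otimes\fu_{A}(a_{1}))\Delta(a_{2})\big).$$
I then need this bracket to be symmetric in $a_{1},a_{2}$. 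This follows from \eqref{comm-bia} applied to $a_{1}a_{2}=a_{2}a_{1}$ and combined with cocommutativity: equating the two expansions of $\Delta(a_{1}a_{2})$ and $\Delta(a_{2}a_{1})$ gives
$$(\id\otimes\fu_{A}(a_{1}))\Delta(a_{2})-(\fu_{A}(a_{1})\otimes\id)\Delta(a_{2})=(\id\otimes\fu_{A}(a_{2}))\Delta(a_{1})-(\fu_{A}(a_{2})\otimes\id)\Delta(a_{1}).$$
For \eqref{Nbialg3}, the left side is $(\id\otimes\fu_{A}(\fd a_{1})-\fu_{A}(\fd a_{1})\otimes\id)\Delta(\fd a_{2})$. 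I expect the main obstacle to be showing that this is symmetric. I will apply the displayed identity to the pair $(\fd a_{1},\fd a_{2})$ and check directly that the leftover pieces agree. If that fails, the fallback is to express everything through $\Delta(\fd a_{1}\cdot\fd a_{2})$ and invoke \eqref{comm-bia} once more.
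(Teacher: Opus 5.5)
Your proof is correct and complete. The rewriting rules $(\tilde{\fl}_{A}+\tilde{\fr}_{A})(a)=\fd\circ\fu_{A}(a)$, $\tau\circ\delta_{\fd}=(\fd\otimes\id)\circ\Delta$ and $\delta_{\fd}+\tau\circ\delta_{\fd}=\Delta\circ\fd$, together with the symmetry identity you derive from \eqref{comm-bia} and $a_{1}a_{2}=a_{2}a_{1}$, reduce \eqref{Nbialg1}--\eqref{Nbialg3} to one-line checks. Your treatment of \eqref{Nbialg1} and \eqref{Nbialg2} is complete as written.

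The step you flag as the main obstacle is in fact immediate. Since $\tilde{\fr}_{A}(a_{1})=\fu_{A}(\fd(a_{1}))$ and $\delta_{\fd}(a_{2})+\tau(\delta_{\fd}(a_{2}))=\Delta(\fd(a_{2}))$, the two sides of \eqref{Nbialg3} are exactly the two sides of your displayed identity evaluated at the pair $(\fd(a_{1}),\fd(a_{2}))$. There are no leftover terms, and no fallback is needed.

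Two minor points. First, the displayed identity needs only \eqref{comm-bia} and commutativity, not cocommutativity. Second, in the paper's ordering the admissible codifferential coalgebra is $(A,\Delta,-\fd,\fd)$ rather than $(A,\Delta,\fd,-\fd)$. This changes nothing, since $\fd$ is a coderivation if and only if $-\fd$ is.

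The paper does not prove this theorem itself; it quotes it from \cite{HBG1}, so there is no in-paper argument to compare against. Your direct verification is more elementary than a Manin-triple argument of the kind the paper uses for differential perm bialgebras (Theorem \ref{thm:equ}). Such an argument would pass to the double $A\oplus A^{\ast}$ with the derivation $\fd\oplus(-\fd^{\ast})$, which is skew-adjoint for the symmetric pairing. It would then induce a quadratic Novikov structure on the double and invoke a Manin-triple characterization of Novikov bialgebras. Your route needs only \eqref{comm-bia}, (co)commutativity and the (co)derivation property of $\fd$. The cost is that it does not explain conceptually why the Novikov compatibility conditions take the form they do.
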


Let $(P, \diamond, \vartheta, \fd, -\fd)$ be a differential perm bialgebra and
$(C, \star, \omega)$ be a quadratic Zinbiel algebra. On the one hand, this
differential perm bialgebra induces a diNovikov bialgebra $(P, \dashv_{\fd},
\vdash_{\fd}, \nu_{\dashv,\fd}, \nu_{\vdash,\fd})$. By considering the tensor product
of $(P, \dashv_{\fd}, \vdash_{\fd}, \nu_{\dashv,\fd}, \nu_{\vdash,\fd})$
and $(C, \star, \omega)$, we get a Novikov bialgebra $(P\otimes C, \ast, \delta)$.
On the other hand, there is a differential infinitesimal bialgebra $(P\otimes C,
\cdot, \Delta, \hat{\fd}, -\hat{\fd})$. This differential infinitesimal bialgebra
induces a Novikov bialgebra $(P\otimes C, \ast_{\hat{\fd}}, \delta_{\hat{\fd}})$.
In fact, we can prove that the two diNovikov bialgebras obtained are the same.

\begin{thm}\label{thm:commdig}
Let $(P, \diamond, \vartheta, \fd, -\fd)$ be a differential perm bialgebra and
$(C, \star, \omega)$ be a quadratic Zinbiel algebra. With the above notations,
we have the following commutative diagram:
$$
\xymatrix@C=2cm@R=0.7cm{
\txt{$(P, \diamond, \vartheta, \fd, -\fd)$ \\ {\tiny a differential perm bialgebra}}
\ar[d]_{{\rm Thm.}~\ref{thm:ind-dinovbia}} \ar[r]^{{\rm Thm.}~\ref{thm:permbia-ASI}}
&\txt{$(P\otimes C, \cdot, \Delta, \hat{\fd}, -\hat{\fd})$\\
{\tiny a differential infinitesimal bialgebra}}\ar[d]^{{\rm Thm.}~\ref{thm:ind-novbia}} \\
\txt{$(P, \dashv_{\fd}, \vdash_{\fd}, \nu_{\dashv,\fd}, \nu_{\vdash,\fd})$ \\
{\tiny a diNovikov bialgebra}}
\ar[r]^{{\rm Thm.}~\ref{thm:diNbia-Nbia}\qquad}
& \txt{$(P\otimes C, \ast_{\hat{\fd}}, \delta_{\hat{\fd}})
=(P\otimes C, \ast, \delta)$ \\ {\tiny a Novikov bialgebra}}}
$$
\end{thm}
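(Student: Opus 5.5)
Since both routes start from the same data and land on the same vector space $P\otimes C$, the plan is to show that the two Novikov bialgebras have the same product and the same coproduct. Because a Novikov bialgebra is determined by its product and coproduct, this gives commutativity of the diagram. All bialgebra compatibilities have already been established along each path (Theorems \ref{thm:permbia-ASI}, \ref{thm:ind-novbia}, \ref{thm:ind-dinovbia}, \ref{thm:diNbia-Nbia}), so nothing about the compatibility conditions needs to be re-checked.

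For the products, this was already done at the end of Section \ref{sec:prel}. For $p_1,p_2\in P$ and $c_1,c_2\in C$, both $\ast_{\hat{\fd}}$ and the product $\ast$ built from $(P,\dashv_{\fd},\vdash_{\fd})$ and $(C,\star)$ by Proposition \ref{pro:tensor} $(ii)$ equal
$$
(p_{1}\diamond\fd(p_{2}))\otimes(c_{1}\star c_{2})+(\fd(p_{2})\diamond p_{1})\otimes(c_{2}\star c_{1}),
$$
where we use $p_1\vdash_{\fd}p_2=p_1\diamond\fd(p_2)$ and $p_1\dashv_{\fd}p_2=\fd(p_2)\diamond p_1$. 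I would simply recall this computation.

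For the coproducts, write $\vartheta(p)=\sum p_{(1)}\otimes p_{(2)}$ and $\theta_\omega(c)=\sum c_{(1)}\otimes c_{(2)}$. On one side, Eq. \eqref{ind-conov} with $\hat{\fd}=\fd\otimes\id$ gives
$$
\delta_{\hat{\fd}}(p\otimes c)=(\id\otimes\hat{\fd})\Delta(p\otimes c)
=\sum (p_{(1)}\otimes c_{(1)})\otimes(\fd(p_{(2)})\otimes c_{(2)})+\sum(p_{(2)}\otimes c_{(2)})\otimes(\fd(p_{(1)})\otimes c_{(1)}).
$$
On the other side, Eq. \eqref{coNalg} gives $\delta(p\otimes c)=\nu_{\vdash,\fd}(p)\bullet\theta_\omega(c)+\nu_{\dashv,\fd}(p)\bullet\tau(\theta_\omega(c))$. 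Here $\nu_{\vdash,\fd}(p)=\sum p_{(1)}\otimes\fd(p_{(2)})$, so the first summand matches the first term above. Also $\nu_{\dashv,\fd}(p)=\tau(\fd\otimes\id)\vartheta(p)=\sum p_{(2)}\otimes\fd(p_{(1)})$, and pairing it with $\tau(\theta_\omega(c))=\sum c_{(2)}\otimes c_{(1)}$ produces exactly $\sum(p_{(2)}\otimes c_{(2)})\otimes(\fd(p_{(1)})\otimes c_{(1)})$, which is the second term. Hence $\delta=\delta_{\hat{\fd}}$.

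The only real obstacle is bookkeeping. The $\bullet$ notation pairs the $i$-th tensor factor of the $P$-part with the $i$-th factor of the $C$-part, and one must track carefully how $\tau$ acts on the $P$-part and on the $C$-part separately. One should also confirm that the hypotheses needed along each path are met. Theorem \ref{thm:ind-novbia} needs $\partial=-\hat{\fd}$, and $\hat{\partial}=-\fd\otimes\id=-\hat{\fd}$ holds. Proposition \ref{pro:ind-codinov} uses $(P,\vartheta,-\fd)$ being codifferential, which is part of the differential perm bialgebra hypothesis. No identities of $C$ or $P$ beyond these are needed.
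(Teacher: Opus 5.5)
Your proposal is correct and matches the paper's proof. The products agree by the computation at the end of Section~\ref{sec:prel}, and the coproducts agree because $\nu_{\vdash,\fd}(p)\bullet\theta_{\omega}(c)+\nu_{\dashv,\fd}(p)\bullet\tau(\theta_{\omega}(c))=(\id\otimes\hat{\fd})\big(\vartheta(p)\bullet\theta_{\omega}(c)+\tau(\vartheta(p))\bullet\tau(\theta_{\omega}(c))\big)=\delta_{\hat{\fd}}(p\otimes c)$; the only difference is that you check this term by term in Sweedler notation, whereas the paper writes it in operator form.
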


\begin{proof}
Here we only show that $(P\otimes C, \ast_{\hat{\fd}}, \delta_{\hat{\fd}})
=(P\otimes C, \ast, \delta)$ as Novikov bialgebras. In Section \ref{sec:prel}, we have
shown that $(P\otimes C, \ast_{\hat{\fd}})=(P\otimes C, \ast)$ as Novikov algebras.
For any $c\in C$ and $p\in P$,
\begin{align*}
\delta(p\otimes c)&=\nu_{\vdash,\fd}(p)\bullet\theta_{\omega}(c)
+\nu_{\dashv,\fd}(p)\bullet\tau(\theta_{\omega}(c))\\
&=(\id\otimes\fd)(\vartheta(p))\bullet\theta_{\omega}(c)
+\tau((\fd\otimes\id)(\vartheta(p)))\bullet\tau(\theta_{\omega}(c))\\
&=(\id\otimes\hat{\fd})(\vartheta(p)\bullet\theta_{\omega}(c)+\tau(\vartheta(p))
\bullet\tau(\theta_{\omega}(c)))\\
&=\delta_{\hat{\fd}}(p\otimes c).
\end{align*}
That is, $\delta=\delta_{\hat{\fd}}$, $(P\otimes C, \ast_{\hat{\fd}}, \delta_{\hat{\fd}})
=(P\otimes C, \ast, \delta)$ as Novikov bialgebras.
\end{proof}

In \cite{CH}, we have discussed in detail the relationship between the solutions of
the $\DAYBE$ in an admissible differential algebra and the solutions of the $\NYBE$
in the induced Novikov algebra. Specifically, for differential bialgebras
of the form $(A, \cdot, \fd, -\fd)$ and the induced Novikov algebras, and we have
the following conclusion.

\begin{pro}[\cite{CH}]\label{pro:indu-NYBE}
Let $(A, \cdot, \fd, -\fd)$ be an admissible differential algebra, $r=\sum_{i}x_{i}
\otimes y_{i}\in A\otimes A$ and $(A, \ast_{\fd})$ be the induced Novikov algebra.
Then $r$ is a solution of the $\NYBE$ in $(A, \ast_{\fd})$ if $r$ is a solution
of the $\DAYBE$ in $(A, \cdot, \fd, -\fd)$. Furthermore, $r+\tau(r)$ is also Nov-invariant
if $r+\tau(r)$ is ass-invariant.

In particular, each skew-symmetric solution of the $\DAYBE$ in $(A, \cdot, \fd, -\fd)$
is also a skew-symmetric solution of the $\NYBE$ in the induced Novikov algebra
$(A, \ast_{\fd})$.
\end{pro}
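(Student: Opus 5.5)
We argue directly, working with an admissible differential algebra $(A,\cdot,\fd,-\fd)$, so that $\fd$ is a derivation of the commutative associative product, and with $r=\sum_i x_i\otimes y_i$ satisfying $\mathbf{A}_r=0$ and $(\fd\otimes\id+\id\otimes\fd)(r)=0$. The last condition is the differential part of the $\DAYBE$ for $\partial=-\fd$; both of its equations reduce to it.

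\textbf{Step 1: the $\NYBE$.} Write out $\mathbf{N}_r$ for $\ast_{\fd}$. The four terms are
$\sum_{i,j}x_i\otimes x_j\otimes y_i\fd(y_j)$, $\sum_{i,j} x_i\otimes y_i\fd(x_j)\otimes y_j$, $\sum_{i,j} x_j\otimes x_i\fd(y_j)\otimes y_i$ and $\sum_{i,j} x_i\fd(x_j)\otimes y_j\otimes y_i$.
We use $\sum_j \fd(x_j)\otimes y_j=-\sum_j x_j\otimes\fd(y_j)$ to move every $\fd$ onto the third tensor factor. After relabeling $i\leftrightarrow j$ where needed, each term becomes $\pm(\id\otimes\id\otimes\fd)$ applied to one of $r_{13}r_{23}$, $r_{23}r_{12}$, $r_{12}r_{13}$. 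The only exception is the first term, where the derivation rule $\fd(y_iy_j)=\fd(y_i)y_j+y_i\fd(y_j)$ must be used to split off a piece.

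The expected outcome is
\[\mathbf{N}_r=-(\id\otimes\id\otimes\fd)(\mathbf{A}_r),\]
possibly up to a further term of the form $(\id\otimes\id\otimes\fd)$ applied to something that vanishes by commutativity, e.g.\ $r_{13}r_{23}$ compared with its $i\leftrightarrow j$ swap. This is exactly parallel to the computation in the proof of Proposition \ref{pro:ind-DPYBE}, and hence $\mathbf{N}_r=0$.

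\textbf{Main obstacle.} The sign and index bookkeeping in Step 1 is where I expect difficulty. In particular, the term $x_i\fd(x_j)\otimes y_j\otimes y_i$ must be paired correctly with $r_{12}r_{13}$, and the $r_{23}\ast r_{12}$ term with $r_{23}r_{12}$. I would first test the identity on a rank-one $r$ to fix the signs.

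\textbf{Step 2: invariance.} Let $s=r+\tau(r)$. From the differential condition, $s$ also satisfies $(\fd\otimes\id+\id\otimes\fd)(s)=0$. Ass-invariance says $(\id\otimes\fu(a)-\fu(a)\otimes\id)(s)=0$ for all $a$. Nov-invariance for $\ast_{\fd}$ reads
\[\sum_k\big(a\fd(u_k)\otimes v_k+u_k\otimes a\fd(v_k)+u_k\otimes v_k\fd(a)\big)=0,\qquad s=\textstyle\sum_k u_k\otimes v_k .\]
Applying $\fd\otimes\id$ to the ass-invariance identity and using the derivation rule together with $\fd\otimes\id=-\id\otimes\fd$ on $s$, one should obtain this expression. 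If a sign mismatch appears, the fix is to combine with the ass-invariance identity for the element $\fd(a)$.

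\textbf{Step 3: the particular case.} If $r$ is skew-symmetric then $r+\tau(r)=0$ and the statement follows immediately from Step 1.
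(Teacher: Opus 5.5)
\textbf{Steps 2 and 3 are fine, but Step 1 rests on a false identity.}

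On Step 2: the ass-invariance identity at $\fd(a)$ is required, not merely a fallback. Applying $\fd\otimes\id$ to ass-invariance and using $(\fd\otimes\id)(s)=-(\id\otimes\fd)(s)$ gives the Nov-invariance expression with $\sum_k\fd(a)u_k\otimes v_k$ in place of $\sum_k u_k\otimes v_k\fd(a)$. Ass-invariance for $\fd(a)$ then identifies the two.

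\textbf{The gap in Step 1.} You aim for $\mathbf{N}_r=-(\id\otimes\id\otimes\fd)(\mathbf{A}_r)$. This is false with either sign, even when $(\fd\otimes\id+\id\otimes\fd)(r)=0$. Take $A={\rm span}_{\Bbbk}\{e_{-1},e_0,e_1\}$ whose only nonzero products are $e_{-1}e_1=e_1e_{-1}=e_0$, with $\fd(e_k)=ke_k$ and $r=e_1\otimes e_{-1}$. Then:
\begin{itemize}
\item $(\fd\otimes\id+\id\otimes\fd)(r)=0$ and $\mathbf{A}_r=-e_1\otimes e_0\otimes e_{-1}$;
\item in $\mathbf{N}_r$, the terms $r_{12}\ast r_{23}$ and $r_{23}\ast r_{12}$ equal $\pm e_1\otimes e_0\otimes e_{-1}$ and cancel, and the other two vanish, so $\mathbf{N}_r=0$;
\item but $(\id\otimes\id\otimes\fd)(\mathbf{A}_r)=e_1\otimes e_0\otimes e_{-1}\neq 0$.
\end{itemize}
So no correction term that ``vanishes by commutativity'' can rescue the identity.

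The mechanism fails because $(\fd\otimes\id+\id\otimes\fd)(r)=0$ only moves $\fd$ between the two legs of the \emph{same} copy of $r$. In $r_{23}\ast r_{12}=\sum_{i,j}x_j\otimes x_i\fd(y_j)\otimes y_i$, the partner $x_j$ of the differentiated $y_j$ sits in the first factor. In $r_{13}\ast r_{12}=\sum_{i,j}x_i\fd(x_j)\otimes y_j\otimes y_i$, the partner $y_j$ sits in the second factor. Neither term can be pushed to the third factor without the Leibniz rule leaving pieces with $\fd$ on factor $1$ or $2$.

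\textbf{The repair.} Collect $\fd$ on the \emph{middle} factor:
\begin{align*}
r_{13}\ast r_{23}&=\sum_{i,j}x_i\otimes x_j\otimes y_i\fd(y_j)=-(\id\otimes\fd\otimes\id)(r_{13}r_{23}),\\
r_{13}\ast r_{12}&=\sum_{i,j}x_i\fd(x_j)\otimes y_j\otimes y_i=-(\id\otimes\fd\otimes\id)(r_{12}r_{13}),\\
r_{12}\ast r_{23}+r_{23}\ast r_{12}&=\sum_{i,j}x_i\otimes\big(\fd(x_j)y_i+x_j\fd(y_i)\big)\otimes y_j=(\id\otimes\fd\otimes\id)(r_{23}r_{12}).
\end{align*}
The first two lines use $(\fd\otimes\id+\id\otimes\fd)(r)=0$; the second also needs the relabeling $i\leftrightarrow j$ and commutativity. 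The third line uses only the Leibniz rule and commutativity.

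Hence $\mathbf{N}_r=-(\id\otimes\fd\otimes\id)(\mathbf{A}_r)$, which vanishes when $\mathbf{A}_r=0$, and Step 1 follows. The analogy with Proposition \ref{pro:ind-DPYBE} misled you about which factor carries $\fd$: with the index conventions used here for $\mathbf{N}_r$ and $\mathbf{A}_r$, it is the second factor. The rank-one test you planned would have revealed this immediately.
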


Therefore, by Proposition \ref{pro:indu-NYBE}, we get that the the induced Novikov
bialgebra given in Theorem \ref{thm:ind-novbia} by a quasi-triangular (resp.
triangular, factorizable) differential infinitesimal bialgebra is also
quasi-triangular (resp. triangular, factorizable).

\begin{thm}[\cite{CH}]\label{thm:indu-spNbia}
Let $(A, \cdot, \Delta, \fd, -\fd)$ be a differential infinitesimal bialgebra,
where $\Delta=\Delta_{r}$ is given by Eq. \eqref{cobass} for some $r\in A\otimes A$.
Suppose linear maps $\delta_{r}, \delta_{\fd}: A\rightarrow A\otimes A$ are defined
by Eqs. \eqref{cobnov} and \eqref{ind-conov} for $\Delta=\Delta_{r}$ respectively.
If $r$ is a solution of the $\DAYBE$ in $(A, \cdot, \fd, -\fd)$, then
$\delta_{r}=\delta_{\fd}$. Therefore, we obtain
\begin{enumerate}\itemsep=0pt
\item[$(i)$] $(A, \ast_{\fd}, \delta_{\fd})$ is a quasi-triangular Novikov bialgerba if
     $(A, \cdot, \Delta, \fd, -\fd)$ is quasi-triangular;
\item[$(ii)$] $(A, \ast_{\fd}, \delta_{\fd})$ is a triangular Novikov bialgerba if
     $(A, \cdot, \Delta, \fd, -\fd)$ is triangular;
\item[$(iii)$] $(A, \ast_{\fd}, \delta_{\fd})$ is a factorizable Novikov bialgerba if
     $(A, \cdot, \Delta, \fd, -\fd)$ is factorizable.
\end{enumerate}
\end{thm}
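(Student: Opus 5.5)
We will prove $\delta_{r}=\delta_{\fd}$ by a direct comparison, and then read off (i)--(iii) from Proposition \ref{pro:indu-NYBE}. Write $r=\sum_{i}x_{i}\otimes y_{i}$. For $a\in A$, expand both sides.

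First, by Eq. \eqref{cobass},
$$
\delta_{\fd}(a)=(\id\otimes\fd)(\Delta_{r}(a))=\sum_{i}\big(x_{i}\otimes\fd(ay_{i})-ax_{i}\otimes\fd(y_{i})\big).
$$
Second, in the induced Novikov algebra $b\ast_{\fd}b'=b\fd(b')$ we have $\tilde{\fl}_{A}(a)(b)=a\fd(b)$ and $(\tilde{\fl}_{A}+\tilde{\fr}_{A})(a)(b)=a\fd(b)+b\fd(a)$. Hence Eq. \eqref{cobnov} gives
$$
\delta_{r}(a)=\sum_{i}\big(a\fd(x_{i})\otimes y_{i}+x_{i}\otimes(a\fd(y_{i})+y_{i}\fd(a))\big).
$$
Since $\fd$ is a derivation, $x_{i}\otimes\fd(ay_{i})=x_{i}\otimes(\fd(a)y_{i}+a\fd(y_{i}))$. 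By commutativity, $y_{i}\fd(a)=\fd(a)y_{i}$. The difference therefore reduces to
$$
\delta_{r}(a)-\delta_{\fd}(a)=\sum_{i}\big(a\fd(x_{i})\otimes y_{i}+ax_{i}\otimes\fd(y_{i})\big)=(\fu_{A}(a)\otimes\id)\big((\fd\otimes\id+\id\otimes\fd)(r)\big).
$$
Because $r$ solves the $\DAYBE$ in $(A,\cdot,\fd,-\fd)$, we have $(\fd\otimes\id+\id\otimes\fd)(r)=0$. So $\delta_{r}=\delta_{\fd}$. This comparison is the only computational step. The main point of care is keeping track of the order of factors in $\ast_{\fd}$ and of which tensor slot each of $\tilde{\fl}_{A}$, $\tilde{\fr}_{A}$ acts on.

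For (i) and (ii), if $(A,\cdot,\Delta_{r},\fd,-\fd)$ is quasi-triangular (resp. triangular), Proposition \ref{pro:indu-NYBE} shows that $r$ is a solution of the $\NYBE$ in $(A,\ast_{\fd})$. In the quasi-triangular case it also shows the required Nov-invariance, taking the symmetric part in the convention of Proposition \ref{pro:quasass-Nbia}; in the triangular case it shows that $r$ is skew-symmetric. Proposition \ref{pro:quasass-Nbia} then makes $(A,\ast_{\fd},\delta_{r})=(A,\ast_{\fd},\delta_{\fd})$ quasi-triangular (resp. triangular).

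For (iii), factorizability of the Novikov bialgebra only additionally requires that $\mathcal{I}=r^{\sharp}+\tau(r)^{\sharp}$ be a linear isomorphism. This already holds from the factorizability of the differential infinitesimal bialgebra, so combined with (i) we get (iii).
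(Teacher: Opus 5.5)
Your proposal is correct and follows essentially the route the paper indicates. The paper cites this result and obtains (i)--(iii) from Proposition \ref{pro:indu-NYBE} combined with Proposition \ref{pro:quasass-Nbia}. Your identity $\delta_{r}(a)-\delta_{\fd}(a)=(\fu_{A}(a)\otimes\id)\big((\fd\otimes\id+\id\otimes\fd)(r)\big)$ is the same kind of direct check the paper performs for $\nu_{\vdash,r}=\nu_{\vdash,\fd}$ in Theorem \ref{thm:indu-spdiNbia}.

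You were right to read the quasi-triangular condition as Nov-invariance of the symmetric part $r+\tau(r)$. The printed $r-\tau(r)$ in Proposition \ref{pro:quasass-Nbia}(ii) is a slip: it is inconsistent both with the skew-symmetric triangular case and with $\mathcal{I}=r^{\sharp}+\tau(r)^{\sharp}$.
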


By this theorem, we obtain the following commutative diagram:
$$
\xymatrix@C=2cm@R=0.6cm{
\txt{$r$ \\ {\tiny a solution of the $\DAYBE$ in $(A, \cdot, \fd, \partial)$}\\
{\tiny such that $r+\tau(r)$ is ass-invariant}}
\ar[d]_{{\rm Pro.}~\ref{pro:indu-NYBE}}\ar[r]^{\quad{\rm Pro.}~\ref{pro:diff-bia}} &
\txt{$(A, \cdot, \Delta_{r}, \fd, \partial)$ \\ {\tiny a quasi-triangular differential}\\
{\tiny infinitesimal bialgebra}} \ar[d]^{{\rm Thm.}~\ref{thm:indu-spNbia}} \\
\txt{$r$ \\ {\tiny a solution of the $\NYBE$ in $(A, \ast_{\fd})$}\\
{\tiny such that $r+\tau(r)$ is Nov-invariant}}
\ar[r]^{\qquad {\rm Pro.}~\ref{pro:quasass-Nbia}\qquad} &
\txt{$(A, \ast_{\fd}, \delta_{r})=(A, \ast_{\fd}, \delta_{\fd})$ \\
{\tiny a quasi-triangular}\\ {\tiny Novikov bialgebra}}}
$$

\begin{ex}\label{ex:ind-comm}
Let $(P\otimes C, \cdot, \Delta, \hat{\fd}, -\hat{\fd})$ be the $8$-dimensional
differential infinitesimal bialgebra given in Example \ref{ex:ind-triasbi}. By using
Eqs. \eqref{ind-nov} and \eqref{ind-conov}, we can directly calculate:
\begin{align*}
&(x_{1}\otimes e_{1})\ast_{\hat{\fd}}(x_{1}\otimes e_{1})=x_{2}\otimes e_{2},\qquad\quad
(x_{1}\otimes e_{1})\ast_{\hat{\fd}}(x_{1}\otimes e_{4})=2x_{2}\otimes e_{3}
-x_{2}\otimes e_{2},\\
&(x_{1}\otimes e_{4})\ast_{\hat{\fd}}(x_{1}\otimes e_{4})=x_{2}\otimes e_{3},\qquad\quad
(x_{1}\otimes e_{4})\ast_{\hat{\fd}}(x_{1}\otimes e_{1})=2x_{2}\otimes e_{2}
-x_{2}\otimes e_{3},
\end{align*}
and $\delta_{\hat{\fd}}=0$. Thus, the Novikov bialgebra $(P\otimes C, \ast_{\hat{\fd}},
\delta_{\hat{\fd}})$ induced by $(P\otimes C, \cdot, \Delta, \hat{\fd}, -\hat{\fd})$
is not only the Novikov bialgebra given in Example \ref{ex:diN-Nov}, but also
triangular Novikov bialgebra associated with the skew-symmetric solution $\widehat{r}$
of the $\NYBE$ in $(P\otimes C, \ast_{\hat{\fd}})$.
Thus, we provide a specific example of the commutative graph in Theorem \ref{thm:commdig}
for triangular bialgebras.
\end{ex}

Moreover, for the $\mathcal{O}$-operator of a differential algebra and
the $\mathcal{O}$-operator of the induced Novikov algebra, we have the following
corollary.

\begin{cor}\label{cor:r-dass-N}
Let $(A, \cdot, \fd, -\fd)$ be an admissible differential algebra and $(A, \ast_{\fd})$
be the Novikov algebra induced from $(A, \cdot, \fd)$. Suppose $r$ is a
skew-symmetric solution of the $\DAYBE$ in $(A, \cdot, \fd, -\fd)$. Then we have the
following commutative diagram:
$$
\xymatrix@C=3cm@R=0.5cm{
\txt{$r$ \\ {\tiny a skew-symmetric solution} \\ {\tiny of the $\DAYBE$ in
$(A, \cdot, \fd, -\fd)$}}
\ar[d]_-{{\rm Pro.}~\ref{pro:indu-NYBE}}\ar[r]^-{{\rm Pro.}~\ref{pro:o-dass}} &
\txt{$r^{\sharp}$\\ {\tiny an $\mathcal{O}$-operator of $(A, \cdot, \fd)$} \\
{\tiny associated to $(A^{\ast}, -\fu_{A}^{\ast}, -\fd^{\ast})$}} \ar[d] \\
\txt{$r$ \\ {\tiny a skew-symmetric solution} \\ {\tiny of the $\NYBE$ in
$(A, \ast_{\fd})$}} \ar[r]^-{{\rm Pro.}~\ref{pro:o-nov}}
& \txt{$r^{\sharp}$ \\ {\tiny an $\mathcal{O}$-operator of $(A, \ast_{\fd})$ } \\
{\tiny associated to $(A^{\ast}, \tilde{\fl}_{A}+\tilde{\fr}_{A}, -\tilde{\fr}_{A})$}}}
$$
\end{cor}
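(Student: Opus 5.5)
The plan is to verify commutativity of the square by checking the two vertical arrows separately, since the horizontal arrows are given by Propositions \ref{pro:o-dass} and \ref{pro:o-nov}. The left vertical arrow is Proposition \ref{pro:indu-NYBE}: a skew-symmetric solution $r$ of the $\DAYBE$ in $(A, \cdot, \fd, -\fd)$ is a skew-symmetric solution of the $\NYBE$ in $(A, \ast_{\fd})$. Then Proposition \ref{pro:o-nov} shows that $r^{\sharp}$ is an $\mathcal{O}$-operator of $(A, \ast_{\fd})$ associated to $(A^{\ast}, \tilde{\fl}_{A}+\tilde{\fr}_{A}, -\tilde{\fr}_{A})$. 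On the other route, Proposition \ref{pro:o-dass} (with $\partial=-\fd$, so $\partial^{\ast}=-\fd^{\ast}$) shows that $r^{\sharp}$ is an $\mathcal{O}$-operator of $(A, \cdot, \fd)$ associated to $(A^{\ast}, -\fu_{A}^{\ast}, -\fd^{\ast})$. Both routes therefore produce the same linear map $r^{\sharp}$, and what must be shown is that the right vertical arrow, the identity on the underlying map, is well defined.

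The key step is to show directly that if $T=r^{\sharp}$ satisfies $T\circ(-\fd^{\ast})=\fd\circ T$ and
$$T(\xi_{1})T(\xi_{2})=T\big(-\fu_{A}^{\ast}(T(\xi_{1}))\xi_{2}-\fu_{A}^{\ast}(T(\xi_{2}))\xi_{1}\big),$$
then
$$T(\xi_{1})\ast_{\fd}T(\xi_{2})=T\big((\tilde{\fl}_{A}+\tilde{\fr}_{A})(T(\xi_{1}))\xi_{2}-\tilde{\fr}_{A}(T(\xi_{2}))\xi_{1}\big).$$
To prove this, I would replace $\xi_{2}$ by $-\fd^{\ast}\xi_{2}$ in the associative $\mathcal{O}$-operator identity. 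Using $T(-\fd^{\ast}\xi_{2})=\fd T(\xi_{2})$, the left side becomes $T(\xi_{1})\fd(T(\xi_{2}))=T(\xi_{1})\ast_{\fd}T(\xi_{2})$. The right side then has to be rewritten by dualizing: for $\tilde{\fl}_{A}(a)=\fu_{A}(a)\circ\fd$ and $\tilde{\fr}_{A}(a)=\fu_{A}(\fd a)$, the transposes (with the sign convention $\psi^{\ast}(a)=-\psi(a)^{\ast}$) are $\tilde{\fl}_{A}^{\ast}(a)=-\fd^{\ast}\circ\fu_{A}(a)^{\ast}$ and $\tilde{\fr}_{A}^{\ast}(a)=-\fu_{A}(\fd a)^{\ast}$. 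The admissibility of $-\fd$ (equivalently, the fact that $\fd$ is a derivation) lets $\fd^{\ast}$ be commuted past $\fu_{A}(a)^{\ast}$ at the cost of a $\fu_{A}(\fd a)^{\ast}$ term. These terms should recombine into the required Novikov coregular expression.

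The main obstacle is the bookkeeping of signs and conventions: whether the coregular representation $(A^{\ast}, \tilde{\fl}_{A}+\tilde{\fr}_{A}, -\tilde{\fr}_{A})$ is meant with dual maps in the paper's $\psi^{\ast}$ sense, and matching the term $\fu^{\ast}(T(\xi_{2}))\fd^{\ast}\xi_{2}$-type pieces. Should this direct manipulation get tangled, I would fall back on the tensor route, which avoids operator algebra. Since $r$ is skew-symmetric, Proposition \ref{pro:indu-NYBE} together with the proof pattern of Proposition \ref{pro:o-dinov} already gives $\langle\xi_{1}\otimes\xi_{2}\otimes\xi_{3},\mathbf{N}_{r}\rangle$ as a pairing against the $\mathcal{O}$-operator defect. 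Commutativity then follows formally from combining Propositions \ref{pro:indu-NYBE}, \ref{pro:o-dass} and \ref{pro:o-nov}, exactly as in Corollary \ref{cor:o-N-diN}.
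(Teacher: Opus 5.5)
Your proposal is correct. The paper gives no separate proof of this corollary. As with Corollary~\ref{cor:o-N-diN}, it is obtained by chaining Propositions~\ref{pro:o-dass}, \ref{pro:indu-NYBE} and \ref{pro:o-nov}. Commutativity is then automatic, because every corner carries the same $r$ or the same map $r^{\sharp}$, and the right vertical arrow holds for $r^{\sharp}$ by the left--bottom composite. Your first paragraph and your fallback are exactly this argument.

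Your ``key step'' is a genuinely different and stronger argument for the right vertical arrow, and the recombination you anticipate does happen. Read the coregular representation with the paper's signed duals $\psi^{\ast}(a)=-\psi(a)^{\ast}$; this is the reading under which it is a representation at all. Use $\tilde{\fl}_{A}(a)=\fu_{A}(a)\circ\fd$, $\tilde{\fr}_{A}(a)=\fu_{A}(\fd(a))$ and $\fd\circ\fu_{A}(a)=\fu_{A}(\fd(a))+\fu_{A}(a)\circ\fd$. Then the two $\fu_{A}(\fd(a))^{\ast}$ terms cancel, and
\[
(\tilde{\fl}_{A}^{\ast}+\tilde{\fr}_{A}^{\ast})(a)=\fu_{A}^{\ast}(a)\circ\fd^{\ast},\qquad
-\tilde{\fr}_{A}^{\ast}(a)=-\fu_{A}^{\ast}(\fd(a)).
\]
Hence the Novikov condition reads $T(\xi_{1})\ast_{\fd}T(\xi_{2})=T\big(\fu_{A}^{\ast}(T(\xi_{1}))(\fd^{\ast}(\xi_{2}))-\fu_{A}^{\ast}(\fd(T(\xi_{2})))(\xi_{1})\big)$. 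This is literally the associative $\mathcal{O}$-operator identity with $\xi_{2}$ replaced by $-\fd^{\ast}(\xi_{2})$, after using $T\circ(-\fd^{\ast})=\fd\circ T$ on both sides.

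The trade-off is as follows. The paper's route needs no computation, but it only covers $T=r^{\sharp}$ with $r$ skew-symmetric, and it rests on Proposition~\ref{pro:indu-NYBE}. Your route shows that every $\mathcal{O}$-operator of $(A,\cdot,\fd)$ associated to $(A^{\ast},-\fu_{A}^{\ast},-\fd^{\ast})$ is an $\mathcal{O}$-operator of $(A,\ast_{\fd})$ associated to its coregular representation, with no tensor or symmetry hypothesis. Combined with the two ``if and only if'' statements in Propositions~\ref{pro:o-dass} and~\ref{pro:o-nov}, it also independently recovers the skew-symmetric case of Proposition~\ref{pro:indu-NYBE}.
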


Thus, by the commutative diagrams obtained in Theorems \ref{thm:indu-sdiNbia} and
\ref{thm:indu-spdiNbia}, Corollaries \ref{cor:o-dass-dperm}, \ref{cor:r-dass-N},
\ref{cor:o-N-diN} and \ref{cor:o-dass-dN}, and the commutative diagrams after Theorem
\ref{thm:indu-asssdibia} and \ref{thm:indu-spNbia}, we can obtain the
three-dimensional commutative diagram given in Section \ref{sec:intr}.

\begin{rmk}\label{rmk:Leibconfor}
Let $(B, \ast)$ be a Novikov algebra and $\Bbbk[\partial]B$ be the free $\Bbbk[\partial]$
module. Define a $\lambda$-bracket $[-_{\lambda}-]$ on $\Bbbk[\partial]B$ by $[b_{\lambda}
b']=\partial(b'\ast b)+\lambda(b\ast b'+b'\ast b)$. Then $(\Bbbk[\partial]B,
[-_{\lambda}-])$ is a Lie conformal algebra. In \cite{HBG2}, the authors have given a
construction of Lie conformal bialgebras from Novikov bialgebras. By
using our results in this paper, we can get a Lie conformal bialgebra
from a differential infinitesimal bialgebra.
\end{rmk}

\bigskip
\noindent
{\bf Acknowledgements. } This work was financially supported by National
Natural Science Foundation of China (No.11771122).

\smallskip
\noindent
{\bf Declaration of interests.} The authors have no conflicts of interest to disclose.

\smallskip
\noindent
{\bf Data availability.} Data sharing is not applicable to this article as no new data were
created or analyzed in this study.

 \end{document}